\documentclass[a4paper,reqno]{amsart} 

\usepackage{amssymb, mathtools, mathrsfs, xcolor, environ, hyperref}
\usepackage[alphabetic,short-journals,initials,nobysame]{amsrefs}

\setlength{\textwidth}{\paperwidth}\addtolength{\textwidth}{-3in}\calclayout 
\mathtoolsset{showonlyrefs}
\numberwithin{equation}{section}\allowdisplaybreaks

\theoremstyle{plain}
\newtheorem{theorem}{Theorem}[section]
\newtheorem{lemma}[theorem]{Lemma}
\newtheorem{proposition}[theorem]{Proposition}
\newtheorem{corollary}[theorem]{Corollary}
\theoremstyle{remark}

\newtheorem*{example*}{Example}
\newtheorem{remark}[theorem]{Remark}
\newtheorem*{remark*}{Remark}
\newtheorem{note}{Note}
	
\newcommand{\1}{\mathbf 1}
\newcommand{\R}{\mathbb R}
\newcommand{\eq}{\refstepcounter{equation}(\theequation)}
\newcounter{constant}\newcommand{\C}{\refstepcounter{constant}C_{\theconstant}}\newcommand{\Cr}[1]{C_{\ref*{#1}}}
\newcounter{time}\newcommand{\T}{\refstepcounter{time}T_{\thetime}}\newcommand{\Tr}[1]{T_{\ref*{#1}}}
\DeclarePairedDelimiter{\RBA}{(}{)} \newcommand{\RB}{\RBA*}
\DeclarePairedDelimiter{\SBA}{[}{]} \newcommand{\SB}{\SBA*}
\DeclarePairedDelimiter{\CBA}{\{}{\}} \newcommand{\CB}{\CBA*}
\DeclarePairedDelimiter{\ABA}{\langle}{\rangle} \newcommand{\AB}{\ABA*}
\DeclarePairedDelimiter{\ABSA}{|}{|} \newcommand{\ABS}{\ABSA*}
\DeclarePairedDelimiter{\NORMA}{\|}{\|} \newcommand{\NORM}{\NORMA*}

\newenvironment{extra}{\color{gray}}{\ignorespacesafterend}
\RenewEnviron{extra}{}
%\newenvironment{szy}{\color{blue}}{\ignorespacesafterend}
%\newenvironment{old}{}{\ignorespacesafterend}
%\RenewEnviron{old}{}
%\newenvironment{issue}{\color{red}}{\ignorespacesafterend}
\begin{document}

\title[Wright-Fisher Equations with irregular drifts]{ Wright-Fisher stochastic heat equations with irregular drifts}

\author[C. Barnes]{Clayton Barnes}
%\address[C. Barnes]{The Faculty of Data and Decision Sciences \\ Technion --- Israel Institute of Technology \\ Haifa 3200003\\ Israel}
%\email{clayleroy2@gmail.com}

\author[L. Mytnik]{Leonid Mytnik}
%\address[L. Mytnik]{The Faculty of Data and Decision Sciences \\ Technion --- Israel Institute of Technology \\ Haifa 3200003\\ Israel}
%\email{leonid@ie.technion.ac.il}

\author[Z. Sun]{Zhenyao Sun}
%\address[Z. Sun]{School of Mathematics and Statistics \\ Beijing Institute of Technology \\ Beijing 100081 \\ China} 
%\email{zhenyao.sun@gmail.com}

%NEW
\address{The Faculty of Data and Decision Sciences \\ Technion --- Israel Institute of Technology \\ Haifa 3200003\\ Israel}
\email[C. Barnes]{clayleroy2@gmail.com}
\email[L. Mytnik]{leonid@ie.technion.ac.il}
\address{School of Mathematics and Statistics \\ Beijing Institute of Technology \\ Beijing 100081 \\ China} 
\email[Z. Sun]{zhenyao.sun@gmail.com}
%END NEW

\subjclass[2020]{60H15, 60H50, 60J80, 60J90}
\keywords{Wright-Fisher stochastic heat equation, weak uniqueness, weak existence, duality, branching-coalescing Brownian motions}
\begin{abstract}
	Consider the $[0,1]$-valued continuous random field solution $(u_t(x))_{t\geq 0, x\in \mathbb R}$ to the one-dimensional stochastic heat equation \[ \partial_t u_t = \frac{1}{2}\Delta u_t + b(u_t) + \sqrt{u_t(1-u_t)} \dot W, \] where $b(1)\leq 0\leq b(0)$ and $\dot W$ is space-time white noise. In this paper, we establish the weak existence and uniqueness of the above equation for a class of drifts $b(u)$ that may be irregular at the points where the noise coefficient is non-Lipschitz and degenerate, specifically at $u=0$ or $u=1$. This class of drifts includes non-Lipschitz drifts like $b(u) = u^q(1-u)$ for every $q\in (0,1)$, and some discontinuous drifts like $b(u) = \mathbf 1_{(0,1]}(u)-u$. This demonstrates a regularization effect of the multiplicative space-time white noise without the standard assumption that the noise coefficient is Lipschitz and non-degenerate. 
	
	The method we apply is a further development of a moment duality technique that uses branching-coalescing Brownian motions as the dual particle system. To handle an irregular drift in the above equation, particles in the dual system  are allowed to have a number of offspring with infinite expectation, and even an infinite number of offspring with positive probability. We show that, even though the branching mechanism with an infinite number of offspring causes explosions in finite time, immediately after each explosion, the total population comes down from infinity due to the coalescing mechanism. Our results on this dual particle system are of independent interest.
\end{abstract}
\maketitle

\section{Introduction}
\subsection{Motivation}
	In this paper, we consider the $[0,1]$-valued continuous random field solution $(u_t(x))_{t\geq 0,x\in \mathbb R}$ to the stochastic partial differential equation (SPDE)
\begin{equation} \label{eq:SPDE}
\begin{cases} 
	\partial_t u_t(x) 
	= \frac{1}{2} \Delta u_{t}(x) + b(u_t(x)) + \sigma(u_t(x)) \dot W_{t,x}, 
	&\quad t>0, x\in \mathbb R,
	\\ u_0(x) = f(x), 
	&\quad x\in \mathbb R,
\end{cases}
\end{equation}
	with Wright-Fisher noise coefficient  $\sigma(z) := \sqrt{z(1-z)}$ 
and drift
\begin{equation} \label{eq:drift}
	b(z) 
	:=
	\sum_{k\in \bar{\mathbb N}} b_k z^k = \sum_{k=0}^\infty b_k z^k + b_\infty \mathbf 1_{\{1\}}(z), \quad z\in [0,1],
\end{equation}
	satisfying $b(0)\geq 0 \geq b(1)>-\infty$.
	Here, $\dot W$ is a space-time white noise; $f$ is a $[0,1]$-valued continuous function on $\mathbb R$; $\bar {\mathbb N} = \mathbb N \cup \{\infty\}$; $(b_k)_{k \in \bar {\mathbb N}}$ is a family of real numbers; and $z^\infty := \mathbf 1_{\{1\}}(z)$ for every $z\in [0,1]$.
	(Note that $b(\cdot)$, without the subscript, is a map from $[0,1]$ to $\mathbb R$, while $(b_k)_{k\in \bar{\mathbb N}}$ is a family of real numbers.)

	We are going to  show that there exists a unique in law solution to the above equation \eqref{eq:SPDE} provided there exists an $R\geq 1$ such that 
\begin{equation} \label{eq:CLZsCondition}
	b_1 
	\leq - \sum_{k \in \bar{\mathbb N}\setminus \{1\}} |b_k| R^{k-1}.
\end{equation}
	We will see later that under this assumption the drift $b$ can be non-Lipschitz, and sometimes even discontinuous.  
	In particular, we can handle one-dimensional equations like
	\begin{equation}
	\partial_t u_t
	= \frac{1}{2} \Delta u_{t} + u_t^q(1-u_t) + \sqrt{u_t(1-u_t)} \dot W
	\end{equation} 
	for every $q\in(0,1)$, and 
		\begin{equation}
		\partial_t u_t
		= \frac{1}{2} \Delta u_{t} +  \mathbf 1_{(0,1]}(u_t)-u_t + \sqrt{u_t(1-u_t)} \dot W.
	\end{equation} 
	In the former case, this gives uniqueness in law for the stochastic reaction-diffusion equation with Wright-Fisher noise and H\"older drift near zero (example \ref{rmk:example} below). In the latter case, the drift coefficient $b(\cdot)$ is discontinuous at $u = 0$, exactly the place where the noise coefficient $\sigma(u) = \sqrt{u(1 - u)}$ is non-Lipschitz and degenerate---the well-posedness results for this type of stochastic partial differential equations are rare.
	In fact, as we will discuss later with more details, the corresponding stochastic differential equation (SDE)
\begin{equation}
	\mathrm d X_t =  \RB{\mathbf 1_{(0,1]}(X_t)-X_t}  \mathrm dt+ \sqrt{X_t(1-X_t)} \mathrm dB_t
\end{equation}
	is ill-posed. 

Our interest in investigating the equation \eqref{eq:SPDE} arises from an enormous body of literature studying the SPDE~\eqref{eq:SPDE} with $b(\cdot)$ belonging to a class of certain smooth functions. 
Such equation is sometimes called the
heat equation with Wright-Fisher noise and it  arises as the scaling limits of the stepping stone model in 
population genetics (see~\cite{MR948717}) and other important particle systems (see e.g.~\cites{MR1346264, MR3582808, MR4278798}).
In the above papers the drift belongs to a particular class of smooth functions, however, more general models may give rise to more general drifts (see ~\cite{koskela2024bernoulli} for an interesting discussion on non-spatial models), whose corresponding equations have
been also studied~(see~\cite{barnes2023effect} and \cite{MR4259374}).   
This justifies studying well-posedness for~\eqref{eq:SPDE} in presence of various types of drifts. 
The weak existence of \eqref{eq:SPDE}  is standard for some continuous drift $b(\cdot)$ (see \cite{MR1271224} and \cite{MR4259374}). 
As for uniqueness of solutions, let us mention that the pathwise and strong uniqueness is still not resolved for~\eqref{eq:SPDE} even in the case of zero drift.
Thus, as we have mentioned above,  we will concentrate on deriving weak existence/uniqueness of~\eqref{eq:SPDE} for a class of irregular drifts.

First note, the weak uniqueness for~\eqref{eq:SPDE} with a smooth drift of the form
\begin{equation}
b(z)=c_1(1-z)-c_2z+c_3z(1-z), \quad z\in[0,1], 
\end{equation}
(for  $c_1,c_2 \geq 0, c_3\in \R$)  has been derived in~\cite{MR948717} via a duality argument where the dual process is a system of coalescing Brownian motions with binary branching.
Then, great progress was made in~\cite{MR1813840} where the weak uniqueness has been verified  
for a class of Lipschitz drifts that can be expressed in terms of power series whose coefficients satisfy certain assumptions. 
Note that noise coefficients more general than $\sigma(u)=\sqrt{u(1-u)}$  are allowed in~\cite{MR1813840}, and duality with 
	self-catalytic branching
 Brownian motions (including the branching-coalescing Brownian motions as one of the special cases) is again used for the proof of weak uniqueness. 
 However, one should keep in mind, that in~\cite{MR1813840} only branching with finite mean is allowed in the dual model, which imposes certain restrictions on the drift coefficient
$b$, such as the Lipschitz assumption among others.  

Then an immediate question arises: is it possible to show well-posedness for~\eqref{eq:SPDE} with drifts that are not-necessarily Lipschitz? 
Here we should mention another technique that is often used for resolving the weak uniqueness for stochastic equations. 
Namely, the Girsanov theorem (sometimes its version applied to SPDEs is called  Dawson-Girsanov theorem). 
The Girsanov theorem was used in~\cite{MR4259374} to derive weak uniqeness for \eqref{eq:SPDE} with the drift bounded as follows:
\begin{equation}
|b(z)|\leq K\sqrt{z(1-z)},\quad z\in [0,1],
\end{equation}  
which, at points $z=0$ and $z=1$, are
H\"older continuous with exponent  $1/2$. 
This leads us to a further question:  \emph{Does weak-uniqueness hold for ~\eqref{eq:SPDE} with a drift whose H\"older exponent is less than 1/2 at the points where the noise coefficient degenerates?}

As we have mentioned above, this paper gives an affirmative answer to this question.
To prove it we use a modification of the duality method used in~\cite{MR1813840}.
This modification is by no means trivial. It requires construction of the dual branching-coalescing Brownian motions with branching mechanism not-necessarily having a finite first moment. 
Even more than that, to treat some irregular drifts, the particles are asked to possibly have an {\it infinite number} of children at its branching time! 
In what follows we call this ``infinite" branching.
We establish a set of novel results for the branching-coalescing particle system with infinite branching, including its construction, which we believe are of independent interest. 
As we will show below, the total number of alive particles in this system is ``reflecting from infinity''; the expected number of alive particles at any {\it fixed positive} time is almost surely finite; and the expectation of the number of births in the process over any finite time interval is also finite (see Theorem~\ref{prop:Key} below). 
	The similar phenomenon of ``reflecting from infinity'' is also observed in other branching-coalescing type models, see \cite{MR3729616} and \cite{MR3940763}. In order to handle infinite branching, we used techniques from our previous work \cite{barnes2022coming}, where we prove a ``coming down from infinity'' result for coalescing Brownian motions, where we give necessary and sufficient conditions for an initially infinite collection of coalescing Brownian motions to collapse down to a finite number.

Another motivation for this work comes from the so-called \emph{regularization by noise} area (see~\cite{bib:flandoli15}) which is flourishing nowadays. 
In regularization by noise, one addresses the following question: does adding noise 
transform an ill-posed deterministic differential equations into a well-posed equation?
In this context, the following ordinary SDE has been extensively studied in the literature:  
\begin{equation}
\label{eq:sde1}
\mathrm dX_t=b(X_t)\mathrm dt + \mathrm dB_t, \quad X_0=x_0\in \mathbb R^d, 
\end{equation}   
where $B$ is a $d$-dimensional  Brownian motion and  $b: \R^d\mapsto \R^d$ is a possibly irregular drift, see for example~\cites{bib:zvonkin74, bib:veret80, bib:krylov_rockner05}.
Also  in many cases, well-posedness has been established for ~\eqref{eq:sde1} for the drift $b$ being a generalized function.  
For example, in~\cites{bib:HS81,  bib:legall84, bib:bass_chen01, bib:HLM17} strong existence and uniqueness of solutions to~\eqref{eq:sde1}  has been established for different types of distributional drifts. Weak existence and uniqueness  of solutions to~\eqref{eq:sde1}  has been also established in a number of papers, see for example~\cite{bib:zz17}. Let us note that regularization by additive noise for ordinary  differential equations has been studied for other noises as well,  (L\'evy processes, fractional Brownian processes), see for example~\cites{bib:tanake_tsu74, bib:priola12, bib:CG16, bib:ABM20, bib:Le20, bib:Kremp_Perk22, bib:BLM23,butkovsky2024weak}

As for regularization by additive noise for partial differential equations, the prominent example here  is the
 stochastic heat equation driven by the additive space-time white noise, that is, one takes $\sigma\equiv 1$ in 
\eqref{eq:SPDE}. One of the first results on strong existence and uniqueness for such SPDEs
 with irregular function-valued drifts $b$,  have been obtained in~\cite{bib:gyon_p93a} and \cite{bib:gyon_p93b}. 
 Recently, there have been results on strong existence and uniqueness for such SPDE with $b$ being a generalized function in a certain class (see e.g.~\cite{bib:ABLM22} and \cite{butkovsky2024weak}). 

The well-posedness for  equations with irregular drift driven by muliplicative noise has been studied mainly in the case of non-degenerate and Lipschitz noise coefficients: see, for example, 
\cite{bib:veret80} and \cite{bib:zz17} in the SDE setting and~~\cite{bib:gyon98} in the SPDE setting. As for the well-posedness of equations with degenerate noise coefficients and  irregular drifts see, for example,~\cite{bib:chery_engel05}  for the SDE setting. 
For SPDEs driven by noises with degenerate non-Lipschitz coefficients, the results are not that rich. 
Strong well-posedness has been proved  in~\cite{MR2773025}  for an SPDE in the form~\eqref{eq:SPDE} with  $\sigma$ belonging to H\"older continuous functions with exponent greater than  $3/4$ and Lipschitz drift $b$. 
Weak well-posedness has been recently given in~\cite{han2022exponential} with some non-degenerate $\sigma$ belonging to H\"older continuous functions with exponent greater than  $3/4$ and H\"older drift $b$. 
As for the SPDE~\eqref{eq:SPDE} with  $\sigma$ being a H\"older function with exponent less than or equal to $3/4$, only weak uniqueness for very particular noise coefficients $\sigma$ (such as 
$\sigma(u)=u^\gamma$ with $\gamma\geq 1/2$, or $\sigma(u)=\sqrt{u(1-u)}$) and ``nice" drifts $b$ is known. By ``nice" drift we mean either it satisfies conditions of Girsanov theorem or it is suitable for the duality technique (see~\cite{MR4259374} and \cite{MR1813840} which are already mentioned above). 

Thus, the goal of this paper is to extend the class of drifts in the stochastic heat equation driven by the Wright-Fisher space-time white noise for which the weak well-posedness holds.

\subsection{Main Results}   \label{sec:Main}
Before we introduce our main results, let us first discuss the rigorous definition  of the solution to~\eqref{eq:SPDE}.
	Denote by $\mathcal C(\mathbb R, [0,1])$ the collection of $[0,1]$-valued continuous functions on $\mathbb R$, equipped with the topology of uniform convergence on compact sets.
	If there exists a filtered probability space $(\Omega, \mathscr F, (\mathscr F_t)_{t\geq 0}, \mathbb P_f)$, and on this space, an adapted  $\mathcal C(\mathbb R,[0,1])$-valued continuous process   $(u_{t})_{t\geq 0}$, and an adapted space-time white noise $\dot W$, satisfying $u_0 = f$, and that for every $(t,x)\in (0,\infty)\times \mathbb R$ almost surely
\begin{align} \label{eq:mild}
	u_{t}(x)
	& = \int p_{t}(x-y)f(y)\mathrm dy + \iint_0^t p_{t-s}(x-y) b(u_s(y))\mathrm ds \mathrm dy + {}
	\\&\quad \iint_0^t p_{t-s}(x-y) \sigma(u_s(y)) W(\mathrm ds\mathrm dy),
\end{align}
	then we say $(u_t)_{t\geq 0}$ is a solution to the SPDE \eqref{eq:SPDE}.
	Here, 
\begin{equation} \label{eq:HK}
	p_{t}(x):= e^{-x^2/(2t)}/\sqrt{2\pi t}, \quad (t,x)\in (0,\infty)\times \mathbb R
\end{equation} 
	is the heat kernel, and the third term on the right hand side of \eqref{eq:mild}
		is
	Walsh's stochastic integral  driven by the space-time white noise \cite{MR876085}. 
	Equation \eqref{eq:mild} is also known as the mild form of the SPDE \eqref{eq:SPDE}.

		Let us be more precise about the existence of the solutions.
		In this paper, we will be considering the weak existence. 
		By that, we mean the existence of a filtered probability space $(\Omega, \mathscr F, (\mathscr F_t)_{t\geq 0}, \mathbb P_f)$, a random field $(u_{t}(x))_{t\geq 0,x\in \mathbb R}$, as well as a space-time white noise $W$, satisfying all the requirements above.   
		If $b_\infty = 0$, then the drift coefficient $b(\cdot)$ is continuous; in this case, the weak existence of SPDE \eqref{eq:SPDE} is standard (see \cite[Theorem 2.6]{MR1271224} and \cite[Section 2.1]{MR4259374}). 
		However, if $b_\infty \neq 0$, then the drift coefficient $b(\cdot)$ is discontinuous; in this case, the weak existence of SPDE \eqref{eq:SPDE} is not trivial, and will be part of our main result. 
		
	Let us also be more precise about the uniqueness of the solutions.
Recall two uniqueness concepts---the pathwise  uniqueness and the weak uniqueness.  
	We say that pathwise uniqueness holds for the SPDE \eqref{eq:SPDE} if any two solutions on the same probability space driven by the same white noise are indistinguishable, i.e.\ they are equal for all time, almost surely.
	We say weak uniqueness holds for the SPDE \eqref{eq:SPDE} if any two solutions sharing the same initial value, not necessarily living in the same probability space nor driven by the same white noise, induce the same law in the path space $\mathcal C([0,\infty), \mathcal C(\mathbb R, [0,1]))$.

	As it has been mentioned in the first subsection, the pathwise uniqueness for the SPDE \eqref{eq:SPDE} is still open even in the case of zero drift. 
	The weak uniqueness results are established in \cites{MR948717, MR1813840} and \cite{MR4259374} for a class of ``nice" drifts. 
	In what follows we will say that weak well-posedness holds for the SPDE~\eqref{eq:SPDE} if both weak existence and weak uniqueness hold for it.
	Let us now present our main result.
	\begin{extra}
		\begin{note} \label{sec:ATC}
			The condition for Athreya and Tribe's result \cite{MR1813840}*{Theorem 1} is quite technical.
			Let us explain here that \eqref{eq:AandTsCondition} is exactly what they needed when the noise term $\sigma(z) := \sqrt{ z(1-z)}$ for $z \in [0,1]$. 
			Using the terminology in \cite{MR1813840}*{Theorem 1}, the solution $(u_t(x))_{t\geq 0, x\in \mathbb R}$ is bounded by $R_0 = 1$.
			Notice that \eqref{eq:AandTsCondition} implies that the convergence radius of the power series $b(z) = \sum_{k=0}^\infty b_k u^k$ is strictly larger than $R_0 = 1$.
			If one define $\tilde b(z) = \sum_{k\in \mathbb Z_+\setminus\{1\}} |b_k| z^{k-1}$, then \eqref{eq:AandTsCondition} can be written as $b_1 < - \tilde b(R)$ for some $R>R_0$.
			Therefore, the drift term satisfies (H2) of \cite{MR1813840}*{Theorem 1}. 
			Let us also mention that the Wright-Fisher noise term $\sigma(z) = \sqrt{z(1-z)}$ also satisfies Athreya and Tribe's condition.
			Actually, using Athreya and Tribe's language, we can write the Wright-Fisher coefficient into
			\[
			u(1-u) = \sigma(u)^2 = \sum_{k=0}^\infty \sigma_k u^k
			\]
			with
			\[
			\sigma_k = 
			\begin{cases}
				1, &\quad k = 1,\\
				-1, &\quad k = 2,\\
				0, &\quad \text{otherwise}.
			\end{cases}
			\]
			The convergence radius of this power series is $\infty$.
			Define $\tilde \sigma (z) := \sum_{k\neq 2} |\sigma_k| z^{k-2} = z^{-1}.$
			Now it is clear that for any $R>R_0 = 1$ we have $-1=\sigma_2 < -\tilde \sigma(R)= -R^{-1}$ hold.
			In other word, the noise term also satisfies (H2) of \cite{MR1813840}*{Theorem 1}.
		\end{note}
	\end{extra}
\begin{theorem}
	\label{thm:Main}
	Let $f \in \mathcal C(\mathbb R, [0,1])$ be arbitrary.
	Let $(b_k)_{k\in \bar{\mathbb N}}$ be a family of real numbers. 
	Let the function $b: z\mapsto b(z)$ be given as in \eqref{eq:drift} satisfying $b(0)\geq 0\geq b(1)>-\infty$.
	If there exists an $R\geq 1$ such that \eqref{eq:CLZsCondition} holds,
	then weak well-posedness holds for the SPDE~\eqref{eq:SPDE}.
\end{theorem}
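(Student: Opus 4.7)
The approach is to use a moment duality between the SPDE solution $(u_t)$ and a branching-coalescing Brownian motion (BCBM) $(\eta_t)$ on $\mathbb R$ whose branching mechanism is read off from the coefficients $(b_k)_{k \in \bar{\mathbb N}}$. This generalises the Athreya--Tribe argument of \cite{MR1813840} by allowing offspring distributions with infinite mean and positive mass at $\{\infty\}$. The entire scheme rests on the construction and regularity of the dual already announced as Theorem~\ref{prop:Key} in the introduction: under condition \eqref{eq:CLZsCondition}, the BCBM is well-defined despite the possibility of infinite-branching explosions, because the coalescence instantly brings the population back down from infinity, and the expected number of alive particles at any fixed time $t>0$, as well as the expected number of births over any finite interval, is finite.

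For a finite multiset $\eta = \{x_1,\dots,x_m\} \subset \mathbb R$ and $g \in \mathcal C(\mathbb R,[0,1])$, set $H(\eta,g) := \prod_{i=1}^m (1-g(x_i))$; the plan is to establish the moment duality
\begin{equation}
\mathbb E_f\bigl[H(\eta, u_t)\bigr] = \mathbb E_\eta\bigl[H(\eta_t, f)\bigr], \qquad t \geq 0.
\end{equation}
To motivate the branching rates, expand $b(u) = \sum_k b_k (1-v)^k$ with $v := 1-u$; the generator of $(u_t)$ applied to $H(\eta,\cdot)$ then produces terms that are matched by declaring each particle to branch into $k$ offspring (at the same location) at a rate controlled by $|b_k|$, with the Wright--Fisher term $u(1-u)$ producing pairwise coalescence and the Laplacian producing independent Brownian motion of particles. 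The sign conditions $b(0) \geq 0 \geq b(1)$ guarantee the resulting mechanism is a genuine probabilistic one, and the summability \eqref{eq:CLZsCondition} with $R \geq 1$ ensures absolute convergence of all the power series that appear on $[0,1]$.

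The main obstacle is making the generator computation rigorous when $\eta_t$ may undergo infinite branching. I would first truncate the drift to $b^{(n)}(z) = \sum_{k\leq n} b_k z^k$ (continuous, with only finite branching in the dual), where the standard Athreya--Tribe duality applies between $u^{(n)}$ and a dual $\eta_t^{(n)}$. Then I pass to the limit: on the SPDE side, $\{u^{(n)}\}$ is tight in $\mathcal C([0,\infty),\mathcal C(\mathbb R,[0,1]))$ by standard heat-kernel estimates applied to the mild form \eqref{eq:mild}; on the dual side, Theorem~\ref{prop:Key} yields convergence of $\eta_t^{(n)}$ to $\eta_t$, with the coming-down-from-infinity estimates of \cite{barnes2022coming} providing the domination needed to control $H(\eta_t^{(n)}, f)$. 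Since $H$ takes values in $[0,1]$, bounded convergence then transfers the identity to the limit.

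From the duality identity, weak uniqueness follows by a monotone class argument: the law of the $[0,1]$-valued continuous random field $u_t$ is determined by the finite-dimensional joint moments $\mathbb E\bigl[\prod_i (1-u_t(x_i))^{n_i}\bigr]$, and these are exactly $\mathbb E_f[H(\eta,u_t)]$ with $\eta$ recording the points $x_i$ with multiplicity $n_i$; uniqueness on path space then follows by the standard Markov-property argument. For weak existence, the continuous case ($b_\infty = 0$) is classical (see \cite{MR1271224, MR4259374}); for $b_\infty \neq 0$, I would approximate $b$ by continuous drifts $b^{(n)}(z) := \sum_{k < n} b_k z^k + b_\infty \varphi_n(z)$ with $\varphi_n \uparrow \mathbf 1_{\{1\}}$, extract a tight limit of $\{u^{(n)}\}$, and use the duality identity (which passes to the limit by the same argument) to identify the limit as the unique solution of \eqref{eq:SPDE} with drift $b$.
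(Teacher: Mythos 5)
Your overall strategy (a branching--coalescing dual with offspring read off from $(b_k)$, truncation, then tightness plus duality to handle $b_\infty\neq 0$) is the paper's strategy, but the duality you write down is not the one that can hold here, and this propagates into real gaps. The identity $\mathbb E_f[H(\eta,u_t)]=\mathbb E_\eta[H(\eta_t,f)]$ with $H(\eta,g)=\prod_i(1-g(x_i))$ and \emph{no weights} is a plain Markov duality of Shiga type; it is only available for the special drifts $c_1(1-z)-c_2z+c_3z(1-z)$. For a general drift satisfying \eqref{eq:CLZsCondition} the coefficients $b_k$ have arbitrary signs and the total branching rate does not match the drift, so the correct statement (Proposition~\ref{prop:Duality}) is a Feynman--Kac--weighted, signed duality
$\mathbb E_f[\prod_{i}u_T(x_i)]=\tilde{\mathbb E}[(-1)^{|\tilde J_T|}e^{K_T}\prod_{\alpha\in I_T}f(X_T^\alpha)]$ with $K_T=(\mu+b_1)\int_0^T|I_s|\,\mathrm ds$; the condition $b(0)\geq 0\geq b(1)$ does not make the mechanism ``genuinely probabilistic'' in your sense. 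Because of this, your passage to the limit by ``$H\in[0,1]$, bounded convergence'' fails: the true dual functional is only bounded by $e^{K_T}$, and the integrability $\tilde{\mathbb E}[e^{K_T}]<\infty$ is itself a nontrivial consequence of \eqref{eq:CLZsCondition} (the supermartingale argument with $R^{|I_t|}$ in Proposition~\ref{prop:ExponentialTerm}/Lemma~\ref{lem:exponential}), and the It\^o/compensator manipulations behind the duality additionally require moment bounds on intersection local times (Lemma~\ref{lem:bug}). None of this is replaced by Theorem~\ref{prop:Key} alone.

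Two further points would break the argument as written. First, truncating the drift of the SPDE to $b^{(n)}(z)=\sum_{k\leq n}b_kz^k$ can violate $b^{(n)}(1)\leq 0$, so the approximating equation need not admit a $[0,1]$-valued solution and Athreya--Tribe need not apply to it; the paper instead keeps the original solution $u$ fixed and truncates only the \emph{dual} (with the extra $-z/m$ correction in \eqref{eq:DefBmz}), which is what makes the limiting argument go through. Second, for existence with $b_\infty\neq 0$, the crux is to show that the approximate drift terms converge to $b_\infty\1_{\{1\}}(u)$ in the martingale problem despite the discontinuity; ``use the duality identity to identify the limit'' does not address this. The paper does it by computing $\mathbb E[\1_{\{1\}}(\tilde u_t(x))]$ through the dual started from \emph{infinitely many} particles at the single site $x$ (using coming down from infinity, Lemmas~\ref{lem:DT}, \ref{lem:KS}), matching it with $\lim_k\mathbb E[(\tilde u^{(m_k)}_t(x))^{m_k}]$, and upgrading to $L^p$ convergence via Lemma~\ref{lem:CP}. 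Also note your approximation $\varphi_n\uparrow\1_{\{1\}}$ by continuous functions is impossible (an increasing limit of continuous functions is lower semicontinuous, which $\1_{\{1\}}$ is not); the natural choice is the decreasing family $z^m$, as in the paper.
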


\subsection{Examples}
	As we have mentioned above, Theorem \ref{thm:Main} provides weak existence and weak uniqueness of SPDE \eqref{eq:SPDE} for a set of more singular drifts $b$ than is obtained in \cite{MR1813840} and \cite{MR4259374}.
	Let us discuss some examples.

\subsubsection{Examples with non-Lipschitz drifts} \label{rmk:example}
	From the generalized binomial theorem, we have
	\begin{align}
		(1+x)^q
		= 1 + q x + \frac{q (q - 1)}{2!} x^2 + \frac{q (q - 1)(q - 2)}{3!} x^3 + \dots 
		= \sum_{k=0}^\infty \binom{q}{k} x^k
	\end{align}
	for any $q > 0$ and $|x|\leq 1$,
		where
\[
	\quad \binom{q}{0} := 1; 
	\quad \text{and} \quad \binom{q}{n} := \frac{q(q - 1)\dots (q - n + 1)}{n!}, 
	\quad n\geq 1.
\]Therefore, for $q \in (0, 1)$, setting
\[
	b_0 
	:= 0; 
	\quad b_k 
	:= (-1)^k \binom{q}{k-1}, 
	\quad k \in \mathbb N; 
	\quad \text{and} \quad 
	b_\infty 
	:= 0,
\]
we have $b(z) = - (1-z)^q z$ for $z\in [0,1]$. 
One can also verify that \eqref{eq:CLZsCondition} holds with $R=1$.
\begin{extra}
	(We will verify this in Note \ref{sec:example}.)
\end{extra}
	Therefore,  by Theorem~\ref{thm:Main}, there exists a unique in law solution $u$ to the SPDE \eqref{eq:SPDE} with this drift and arbitrary $f\in  C(\mathbb R, [0,1])$.
	Now $w = 1-u$ will be the unique in law solution to the SPDE
\begin{equation} \label{eq:SPDE2}
\begin{cases} 
	\partial_t w_t
	= \frac{1}{2} \Delta w_{t} + w_t^q (1-w_t) + \sqrt{w_t(1-w_t)} \dot W, 
	&\quad t>0, x\in \mathbb R,
	\\ w_0(x) = 1-f(x), 
	&\quad x\in \mathbb R.
\end{cases}
\end{equation}
	The weak uniqueness of the SPDE \eqref{eq:SPDE2}, which confirms a conjecture we made in \cite{barnes2023effect}, does not follow from the result in \cite{MR1813840}, because the drift term is not Lipschitz.
	It also extends the weak well-posedness result covered in \cite{MR4259374} to allow H\"older drift exponents $q \in (0,1/2)$.

\begin{extra}
	\begin{note}\label{sec:example}
		Let us recall the generalized binomial series which says that for any $q, x \in \mathbb C$ with  $\Re(q) > 0$ and $|x|\leq 1$,
		\begin{align}
			(1+x)^q
			= 1 + q x + \frac{q (q - 1)}{2!} x^2 + \frac{q (q - 1)(q - 2)}{3!} x^3 + \dots 
			= \sum_{k=0}^\infty \binom{q}{k} x^k.
		\end{align}
		Therefore, when $q \in (0,1)$ the function $b(z) := - (1-z)^q z$ has decomposition
		\begin{align}
			b(z) 
			= -z \sum_{k=0}^\infty \binom{q}{k} (-z)^k 
			= \sum_{k=1}^\infty (-1)^k \binom{q}{k-1} z^{k} 
			= \sum_{k=0}^\infty b_k z^k, \quad z\in [0,1],
		\end{align}
		as claimed in Remark \ref{rmk:example}. 
		Let us now verify that the coefficient $(b_k)_{k=0}^\infty$ satisfies \eqref{eq:CLZsCondition}. 
		Notice that 
		\begin{align}
			b_0 = 0, 
			b_1 = -1, 
			b_2 = q, 
			b_3 = - \frac{q(q - 1)}{2!}, 
			\dots
		\end{align}
		In particular, $b_1<0$ and $b_k\geq 0$ for any $k \in \mathbb Z_+ \setminus\{1\}$. 
		Therefore
		\begin{align}
			\sum_{k\in \mathbb Z_+: k\neq 1} |b_k| = \sum_{k\in \mathbb Z_+} b_k - b_1 = b(1) - (-1) = 1.
		\end{align}
		Now, it is clear that \eqref{eq:CLZsCondition} holds with $R = 1$.
	\end{note}
\end{extra}

\subsubsection{Examples with discontinuous drifts}
	Assume that $b_k = 0$ for every finite $k\neq 1$, $b_1 = -1$, and $b_\infty\in[-1,1]$. 
	Let $f\in C(\mathbb R, [0,1])$ be arbitrary.
	Then, by Theorem~\ref{thm:Main}, there exists a unique in law solution $(u_t(x))_{t\geq 0, x\in \mathbb R}$ to the SPDE
	\begin{equation}
	\begin{cases}
		\partial_t u_t = \frac{1}{2}\Delta u_t - u_t + b_\infty \mathbf 1_{\{1\}}(u_t) + \sqrt{u_t(1-u_t)} \dot W,
		\\ u_0 =  1-f \in  C(\mathbb R, [0,1]).
	\end{cases}
	\end{equation}
	By defining $w_t=1-u_t$, we obtain a unique in law solution $(w_t(x))_{t\geq 0, x \in \mathbb R}$ to the SPDE
	\begin{equation}
\label{SPDE_3_1}
	\begin{cases}
		\partial_t w_t = \frac{1}{2}\Delta w_t + 1 -w_t -b_\infty\mathbf 1_{\{0\}}(w_t)   + \sqrt{w_t(1-w_t)} \dot W, 
		\\ w_0 =  f.
	\end{cases}
\end{equation}
We found~\eqref{SPDE_3_1} of a particular interest, since it shows the well-posedness of Wright-Fisher SPDEs with drifts that differ only by their values at the point $w=0$. Also, 
one can 
check that  solutions corresponding to different $b_\infty$ have different distributions: this result is stated in the following lemma, whose proof is delayed to Section \ref{sec:Last1}. 
In what follows we say $f\not\equiv 1$ if  there exists $x\in \mathbb R$ such that $f(x)\not=1$. 
\begin{lemma}\label{lem:discont1}
Let $f\in  C(\mathbb R, [0,1]) $ and let $f\not\equiv 1$. Fix arbitrary $b^{(1)}_\infty, b^{(2)}_\infty\in[-1,1]$ with $b^{(1)}_\infty\not=b^{(2)}_\infty$. 
For $i=1,2$, let $w^{(i)}$ be the unique in law solution to \eqref{SPDE_3_1} with $b_\infty=b^{(i)}_\infty$. 
Then $w^{(1)}$ and $w^{(2)}$ induce different laws on the path space $\mathcal C([0,\infty), \mathcal C(\mathbb R, [0,1]))$.
\end{lemma}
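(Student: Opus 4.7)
The plan is to proceed by contradiction, assuming $w^{(1)}$ and $w^{(2)}$ induce the same law on $\mathcal C([0,\infty), \mathcal C(\R,[0,1]))$. For each $i \in \{1,2\}$, the solution $w^{(i)}$ satisfies the mild form of \eqref{SPDE_3_1}:
\[
w^{(i)}_t(x) = \int p_t(x-y) f(y)\mathrm dy + \iint_0^t p_{t-s}(x-y)\bigl[1 - w^{(i)}_s(y) - b_\infty^{(i)} \1_{\{0\}}(w^{(i)}_s(y))\bigr]\mathrm ds\mathrm dy + M^{(i)}_t(x),
\]
where $M^{(i)}$ is a mean-zero Walsh stochastic integral against $\dot W$. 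Taking expectations and using the assumed equality in law yields $\mathbb E[w^{(1)}_t(x)] = \mathbb E[w^{(2)}_t(x)]$ and $\mathbb P(w^{(1)}_s(y)=0) = \mathbb P(w^{(2)}_s(y)=0) =: q(s,y)$. Subtracting the two mild equations,
\[
(b_\infty^{(2)} - b_\infty^{(1)}) \iint_0^t p_{t-s}(x-y) q(s,y)\mathrm ds \mathrm dy = 0
\]
for every $(t,x) \in (0,\infty) \times \R$. Since $b_\infty^{(1)} \neq b_\infty^{(2)}$, this forces $q(s,y) = 0$ for Lebesgue-a.e.\ $(s,y)$.

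To reach a contradiction, I would show that the set $\{(s,y) : \mathbb P(w^{(i)}_s(y) = 0) > 0\}$ has positive Lebesgue measure. Switching to the $u$-formulation via $u^{(i)} := 1 - w^{(i)}$, this amounts to $\mathbb P(u^{(i)}_s(y) = 1) > 0$ on a nontrivial set. The natural route is the moment duality developed in the paper. Using $\1_{\{1\}}(u) = \lim_{n\to\infty} u^n$ for $u \in [0,1]$ and bounded convergence,
\[
\mathbb P(u^{(i)}_s(y) = 1) = \lim_{n \to \infty} \mathbb E\bigl[(u^{(i)}_s(y))^n\bigr],
\]
and each moment on the right is represented as an expectation over the dual branching-coalescing Brownian motion starting from $n$ particles at $y$. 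Passing to the limit $n \to \infty$ via monotonicity and the coming-down-from-infinity property of Theorem \ref{prop:Key}, the limit equals the corresponding expectation for the dual system initialized with infinitely many particles at $y$, which is well-defined and finite at positive times. Using the hypothesis $f \not\equiv 1$ (so that the initial data for $u$ is nontrivial), a qualitative analysis of this dual expression should yield a strictly positive lower bound on a set of positive Lebesgue measure, contradicting $q \equiv 0$.

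The mean argument is a direct application of the mild form combined with $b_\infty^{(1)} \neq b_\infty^{(2)}$ and is essentially immediate. The main obstacle is the duality step: one must rigorously justify the $n \to \infty$ passage in the moment duality --- using the construction of the dual BCBM with infinite branching that is the main technical development of the paper --- and then extract a strictly positive lower bound on the resulting dual expectation under $f \not\equiv 1$. The coming-down-from-infinity result is what ensures the limiting dual system is a well-behaved finite particle configuration at any positive time, making the lower bound tractable.
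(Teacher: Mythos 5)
Your first half is fine and essentially matches the paper: arguing by contradiction, the paper tests the equation against $\phi\in\mathcal C_c^\infty(\mathbb R)$ and takes expectations where you use the mild form, but both routes reduce the problem to showing that $\mathbb P\big(w^{(i)}_t(x)=0\big)$ is not (a.e.) zero, so that $b^{(1)}_\infty\neq b^{(2)}_\infty$ produces the contradiction.

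The genuine gap is in how you propose to get that positivity. You apply the infinite-particle moment duality directly to $u^{(i)}=1-w^{(i)}$ and then assert that "a qualitative analysis of this dual expression should yield a strictly positive lower bound." But the dual of $u^{(i)}$ has drift coefficients $b_1=-1$, $b_\infty=b^{(i)}_\infty$, hence a branching mechanism with $p_\infty=1$ and branching rate $|b^{(i)}_\infty|$; in particular, when $b^{(i)}_\infty<0$ the duality formula (cf.\ \eqref{eq:9_2}) carries the alternating factor $(-1)^{|J_t|}$, so the dual expectation is a signed series over the number of branching events and its strict positivity is not at all evident — indeed the expectation is only known a priori to be nonnegative because it equals $\mathbb P(u^{(i)}_t(x)=1)$, which is exactly the quantity in question, so the argument is circular at this point. (Even for $b^{(i)}_\infty>0$ one must still rule out that the branching dual expectation vanishes.) The paper avoids this by a comparison step you are missing: since $2(1-z)\geq(1-z)-b^{(i)}_\infty\1_{\{0\}}(z)$ for $z\in[0,1]$ and $b^{(i)}_\infty\in[-1,1]$, both $w^{(1)}$ and $w^{(2)}$ are stochastically dominated from above by the unique solution $w$ of \eqref{eq:9_3}, whose drift has no indicator term; the dual of $1-w$ is then a \emph{pure coalescing} system with no branching and no sign factor, and \eqref{eq:9_4} gives
\begin{equation}
\mathbb E\big[\1_{\{0\}}(w_t(x))\big]
=\tilde{\mathbb E}^{x}\Big[\exp\Big\{-2\int_0^t|I_s|\,\mathrm ds\Big\}\prod_{\alpha\in I_t}(1-f)(X^\alpha_t)\Big]>0
\end{equation}
directly from $f\not\equiv1$ and the finiteness of $|I_t|$ and $\int_0^t|I_s|\,\mathrm ds$ (Theorem \ref{prop:Key}), whence $\mathbb P(w^{(i)}_t(x)=0)>0$ for all $t>0$, $x\in\mathbb R$. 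Without this domination (or some substitute argument controlling the signed branching dual), your final step does not go through.
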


Now,  let us consider the SDE analogue of~\eqref{SPDE_3_1} with  $b_\infty\in[-1,1]$:   	
\begin{equation}
\label{SDE_4_1}
	\mathrm dX_t =\RB{\RB{1 -X_t}- b_\infty \mathbf 1_{\{0\}}(X_t) }\mathrm dt+\sqrt{X_t(1-X_t)} \mathrm dB_t, \quad X_0 = x\in [0,1],
\end{equation}
where $B$ is a Brownian motion. 
In the next lemma we will show that the situation for~\eqref{SDE_4_1} differs drastically from its  SPDE counterpart. 
\begin{lemma}\label{lem:discont2}
$~$
\begin{itemize}
\item[(i)]
Let $b_\infty=1$. Then weak uniqueness does not hold for~\eqref{SDE_4_1}.   
\item[(ii)] For any $b_\infty\in [-1,1)$, there is a pathwise unique solution to~\eqref{SDE_4_1} which solves the equation
\begin{equation}
\label{SDE_4_2}
	\mathrm dX_t = (1 -X_t) \mathrm dt  +   \sqrt{X_t(1-X_t)} \mathrm dB_t, \quad  X_0 = x\in [0,1]. 
\end{equation}
\end{itemize}
\end{lemma}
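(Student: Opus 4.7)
My plan is to reduce both parts to a careful analysis of the auxiliary Wright--Fisher diffusion~\eqref{SDE_4_2}. Two classical inputs are needed. First, Yamada--Watanabe pathwise uniqueness applies to~\eqref{SDE_4_2}: the noise coefficient $\sqrt{y(1-y)}$ is H\"older-$1/2$ and the drift $1-y$ is Lipschitz on $[0,1]$, so~\eqref{SDE_4_2} admits a pathwise unique $[0,1]$-valued strong solution $Y$ for each $Y_0=x$. Second, Feller's boundary classification for the generator of~\eqref{SDE_4_2}: a direct computation gives scale function $s(y)=-1/y$ and speed density $m'(y)=2y/(1-y)$, from which one verifies $\int_0^\epsilon s'(y)\,dy=\infty$ and $\int_0^\epsilon(s(\epsilon)-s(y))m'(y)\,dy<\infty$, so $0$ is an entrance boundary for~\eqref{SDE_4_2}. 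In particular, $0$ is not attainable from $(0,1)$, and from $Y_0=0$ one has $Y_t>0$ for all $t>0$, almost surely.

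For part (ii), fix $b_\infty\in[-1,1)$. The core step is to argue that any solution $X$ of~\eqref{SDE_4_1} satisfies $X_t>0$ for every $t>0$ almost surely. Suppose first that $X$ is identically $0$ on some interval $[t_1,t_2]$; on that interval the martingale part of~\eqref{SDE_4_1} vanishes (since $\sqrt{X_s(1-X_s)}=0$) and the drift integral evaluates to $(1-b_\infty)(t_2-t_1)$, which must equal $X_{t_2}-X_{t_1}=0$. Since $b_\infty<1$, this forces $t_2=t_1$; hence the closed zero set $\{s:X_s=0\}$ contains no nondegenerate interval. Now assume for contradiction that $X_{t_0}=0$ for some $t_0>0$; by the no-interval property we can pick $t_1\in(0,t_0)$ with $X_{t_1}>0$, and set $\tau=\inf\{s>t_1:X_s=0\}$. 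On $[t_1,\tau)$ the indicator term in~\eqref{SDE_4_1} vanishes, so $X|_{[t_1,\tau)}$ satisfies~\eqref{SDE_4_2} with initial value $X_{t_1}>0$; non-attainability of $0$ for~\eqref{SDE_4_2} then forces $\tau=\infty$, contradicting $X_{t_0}=0$. Thus $X_t>0$ for $t>0$, the indicator drift contributes nothing, and $X$ solves~\eqref{SDE_4_2}. Conversely, the Wright--Fisher solution $Y$ satisfies $Y_t>0$ for $t>0$ and therefore solves~\eqref{SDE_4_1} as well. Pathwise uniqueness for~\eqref{SDE_4_2} consequently transfers to pathwise uniqueness for~\eqref{SDE_4_1}, with common solution $Y$.

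For part (i), set $b_\infty=1$ and $X_0=0$. The constant process $X^{(1)}\equiv 0$ trivially solves~\eqref{SDE_4_1} because every coefficient vanishes at $X=0$ when $b_\infty=1$. On the other hand, the pathwise unique solution $Y$ of~\eqref{SDE_4_2} started from $0$ is strictly positive for all $t>0$ by the entrance property, so $\mathbf 1_{\{0\}}(Y_s)$ integrates to $0$ in time and $Y$ also solves~\eqref{SDE_4_1}. Taking expectations in~\eqref{SDE_4_2} yields $\mathbb E[Y_t]=1-e^{-t}>0$ for $t>0$, so $X^{(1)}$ and $Y$ have distinct laws and weak uniqueness fails for~\eqref{SDE_4_1}.

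I expect the main obstacle to be the positivity argument in part (ii): ruling out that a solution $X$ equals $0$ on an interval is immediate from the positive drift $1-b_\infty>0$ at the zero level, but excluding a nowhere-dense zero set requires combining this no-interval property with Feller's classification for the auxiliary equation~\eqref{SDE_4_2} to propagate positivity forward from any chosen escape time $t_1$. The strict inequality $b_\infty<1$ is essential, which explains why the argument collapses at $b_\infty=1$ and instead produces the non-uniqueness in part (i).
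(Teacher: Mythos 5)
Your route is genuinely different from the paper's, and it is viable, but as written it has a gap at its crux. The paper never proves pathwise positivity: for an arbitrary solution $X$ of \eqref{SDE_4_1} it invokes the semimartingale local-time identity (Revuz--Yor, Theorem VI.1.7) to get $L_t^0(X)=2\int_0^t \mathbf 1_{\{0\}}(X_s)\,\mathrm dV_s=2(1-b_\infty)\int_0^t \mathbf 1_{\{0\}}(X_s)\,\mathrm ds$, shows $L^0(X)\equiv 0$ by the argument of Proposition XI.1.5 there, and concludes that the time spent at $0$ is Lebesgue-null; the indicator drift is then inert, $X$ solves \eqref{SDE_4_2}, and pathwise uniqueness for \eqref{SDE_4_2} (Theorem IX.3.5) finishes (ii); the same occupation-time fact is what shows in (i) that the \eqref{SDE_4_2} solution also solves \eqref{SDE_4_1} when $b_\infty=1$. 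You instead go through Feller's classification of \eqref{SDE_4_2} (your scale/speed computation and the entrance conclusion at $0$ are correct) and the stronger claim that any solution of \eqref{SDE_4_1} is strictly positive for $t>0$.

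The gap is the step ``non-attainability of $0$ for \eqref{SDE_4_2} then forces $\tau=\infty$''. Your $t_1$ is chosen from the path, so it is not a stopping time, and inaccessibility of $0$ is a statement about the law of the \eqref{SDE_4_2} diffusion started from a deterministic or stopping-time initial condition; moreover, on $[t_1,\tau)$ the process $X$ is only known to follow the \eqref{SDE_4_2} dynamics locally, so transferring the non-attainability statement to $X$ requires identifying $X$ on this random interval with an actual \eqref{SDE_4_2} solution. This can be repaired: for each rational $q$ let $Y^q$ solve \eqref{SDE_4_2} from time $q$ with $Y^q_q=X_q$ and the same Brownian motion; on $\{X_q>0\}$, local pathwise uniqueness identifies $X$ with $Y^q$ up to the stopping time $\inf\{s>q: X_s=0\}$, and since $Y^q$ never reaches $0$, almost surely $X_q>0$ implies $X_s>0$ for all $s\geq q$; combined with your no-interval step this yields positivity. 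Alternatively, and closer in spirit to the paper, observe that $s(x)=-1/x$ removes the drift, so $-1/X$ is a continuous local martingale on $[t_1,\tau)$, and a continuous local martingale cannot converge to $-\infty$ at a finite time; this argument applies pathwise to $X$ itself and needs no conditioning. The same (more minor) gloss occurs in (i): when $Y_0=0$, ``$Y_t>0$ for $t>0$ by the entrance property'' deserves a line --- $Y$ cannot sit at $0$ on an interval because the drift there equals $1$, and after the stopping times $\inf\{t: Y_t\geq 1/k\}$ it never returns to $0$ by inaccessibility. With these repairs your proof is complete, and it even gives more than needed (strict positivity rather than a Lebesgue-null zero set); the trade-off is that the paper's local-time identity applies verbatim to an arbitrary solution of \eqref{SDE_4_1} without any boundary classification, Markov property, or uniqueness input at that stage.
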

The proof of the above lemma is simple and is delayed to Section \ref{sec:Last1}. 
As we see from the above lemma, the Wright-Fisher noise has a very different regularizing effect in the SPDE setting compare to the SDE setting. In the case of $b_\infty=1$, the well-posedness holds for the
SPDE but not for the corresponding SDE. As for the case of $b_\infty\in[-1,1)$, in the SPDE setting there is a whole family of unique in law solutions corresponding to different $b_\infty$, while in the SDE setting all the solutions are the same and the value of $b_\infty$ does not play any role.  

\subsection{The dual particle system} \label{sec:DD}
To prove Theorem \ref{thm:Main}, we establish the moment duality between the SPDE \eqref{eq:SPDE} and a branching-coalescing Brownian particle system complimenting the previous results \cite{MR948717}*{Theorem 5.2} and \cite{MR1813840}*{Theorem 1}. 
This particle system has three parameters: 
\begin{itemize}
	\item 
	the branching rate $\mu > 0$;
	\item 
	the offspring distribution $(p_k)_{k\in \bar{\mathbb N}}$, which is a probability measure on $\bar {\mathbb N}$; and
	\item 
	the initial configuration $\mathbf{x}_0 = (x_i)_{i = 1}^n$, which is a (possibly infinite) list of real numbers. Here $n\in \mathbb N \cup \{\infty\}$.
	If $n<\infty$, then $(x_i)_{i = 1}^n$ is a finite list; and if $n=\infty$, then $(x_i)_{i = 1}^\infty$ is an infinite sequence. 
	By our convention we denote $\mathrm{supp}(\mathbf{x}_0) = \{x_i\}_{i = 1}^n \subset \R$ as the set of unique values in $\mathbf{x}_0$, which is the set of different initial locations.
\end{itemize}

Let us give an informal description of the branching-coalescing Brownian particle system, with the above parameters, through \eqref{eq:InitialParticles}--\eqref{eq:Coalescing} below.
\begin{itemize}
	\item[\eq\label{eq:InitialParticles}]
	At time $0$, there are $n$ many initial particles. 
	For each finite integer $i \leq n$, the $i$-th initial particle is located at position $x_i$. 
	\item[\eq\label{eq:Movement}]
	The particles in the system move as independent one-dimensional Brownian motions unless one of the events in the following steps occur. 
	\item[\eq\label{eq:Branching}] 
	 Each particle in the system induces a branching event according to an independent rate $\mu$ exponential clock. 
	At each branching event, the corresponding particle (referred to as the parent) will be killed and replaced by a random number of new particles (referred to as the children) at the location where the parent is killed. 
	The number of the children is independently sampled according to the offspring distribution  $(p_k)_{k\in \bar {\mathbb N}}$.
	\item[\eq\label{eq:Coalescing}]
	Given the pairwise intersection local times of the particles in the system, 
	 each (unordered) pair of particles induces a coalescing event according to an independent rate $1/2$ exponential clock with respect to their intersection local time;  
	and at this coalescing event, one of the particle in that pair will be killed. 
\end{itemize}

We want to mention that \eqref{eq:InitialParticles}--\eqref{eq:Coalescing} does not give a rigorous definition of a particle system yet, due to a problem that, if the total population reaches $\infty$ at some finite time, then it is not clear how the pairwise dynamic \eqref{eq:Coalescing} will work afterwards. 
Notice that this explosion of the total population would occur in finite time if either there are infinitely many initial particles already, or if one assumes that $p_\infty > 0$.  
And as it will be made clear later, to handle the discontinuous drifts $b(\cdot)$ with $b_\infty \neq 0$, we must handle the case of infinite branching. 

This is why we will give a more rigorous construction of the branching-coalescing Brownian particle system in Section \ref{eq:DualParticle}.
In that detailed definition, the trajectory of each particle is constructed using an inductive procedure which allows us to be precise about the meaning of the pairwise dynamic \eqref{eq:Coalescing} even after the total population explodes. 
A similar construction for the coalescing Brownian particle system (without branching) appeared  in \cite{MR1339735}, and was employed already in our recent work \cite{barnes2022coming} where the number of the initial particles is allowed to be infinity. 

For the sake of discussing some of our main results for the dual particle system, let us introduce some notation.
We will use $X^\alpha_t$, an $\mathbb R \cup \{\dagger\}$-valued random variable, to represent the location of a particle labeled by $\alpha \in \mathcal U$ at time $t\geq 0$.
Here the cemetery state $\dagger$ is an element not contained in $\mathbb R$, and 
\[
\mathcal U 
:= \bigcup_{k=1}^\infty \mathbb N^k
\]
is the space of the Ulam-Harris labels.
We will label the particles in the system using the Ulam-Harris labels in a way that suggests their lineages: 
the initial particles are labeled with integers $\{i\in \mathbb N : i \leq n\}$, and if a particle has the label $\alpha = (\alpha_1, \cdots, \alpha_{m-1}, \alpha_m)$, then it is the $\alpha_m$-th child created in the branching event induced by the particle $\overleftarrow{\alpha} := (\alpha_1, \cdots, \alpha_{m-1})$. 
  
For every $t\geq 0$, let us also denote by
$
	I_t 
	:= \{\alpha \in \mathcal U: X^\alpha_t \in \mathbb R\}
$
the collection of labels of the particles alive at time $t$; and by $J_t$ the collection of labels of the particles who induced a branching event up to time $t$.
The cardinality of a given set $I$ will be denoted by $|I|$. 
For some technical reason, we always assume that the set of the initial
	locations $\mathrm{supp}(\mathbf{x}_0)$ has finite cardinality. 
That is, the number of initial particles may be infinite but the set of their locations is finite.
The probability space for the dual particle system will be denoted by $(\tilde \Omega, \tilde {\mathcal F}, \tilde{\mathbb P})$, which is not necessarily the same probability space corresponding to the SPDE \eqref{eq:SPDE}.  

Our main result on the branching-coalescing Brownian particle system, which is of the  independent interest,  
is given in the next theorem.  Note that  $\{(X_t^\alpha)_{t\geq 0} : \alpha \in \mathcal U\}$ in this theorem denotes 
a branching-coalescing Brownian particle system which will be formally constructed in Section~\ref{eq:DualParticle}.

	\begin{theorem} \label{prop:Key} 
	Suppose that $\{(X_t^\alpha)_{t\geq 0} : \alpha \in \mathcal U\}$ is a branching-coalescing Brownian particle system with arbitrary branching rate $\mu>0$, offspring distribution $(p_k)_{k \in \bar{\mathbb N}}$, and initial configuration $\mathbf{x}_0$.
	Suppose that the number of initial locations is finite, that is $|\mathrm{supp}(\mathbf{x}_0)|<\infty$.  
	Then, the following statements hold.
	\begin{enumerate}
		\item For every $t > 0$, 
		$\tilde{\mathbb E}\SB{\ABS{I_t}} < \infty$. 
		\item
		For every $t\geq 0$, 	
		\[
		\tilde{\mathbb E}\SB{|J_t| }
		= \mu \tilde {\mathbb E} \SB{\int_0^t |I_s| \mathrm ds }
		< \infty.
		\] 
	\end{enumerate}
\end{theorem}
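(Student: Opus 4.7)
I would prove part (1) first and then deduce part (2) from it. Part (2) is essentially a compensator identity: each alive particle carries an independent rate-$\mu$ branching clock, so the counting process $(|J_t|)_{t\geq 0}$ has predictable intensity $\mu |I_s|$. Conditioning on the filtration and applying Fubini gives $\tilde{\mathbb E}[|J_t|] = \mu \tilde{\mathbb E}\bigl[\int_0^t |I_s|\,\mathrm{d}s\bigr]$, and the finiteness on the right is immediate once (1) has been proved in a form that also yields local integrability of $s \mapsto \tilde{\mathbb E}[|I_s|]$ on $(0,\infty)$.

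For (1), my plan is to combine three ingredients. First, in the regime of a finitely-supported offspring distribution starting from a finite initial configuration, the generator computation for $|I_t|$ is straightforward: branching contributes at most $\mu(\bar m - 1)|I_s|$ to the drift (with $\bar m = \sum_k k p_k$), while coalescence only decreases the population, giving the Gronwall bound $\tilde{\mathbb E}[|I_t|] \leq |I_0|\,e^{\mu(\bar m - 1)t}$. Second, to accommodate an initial configuration that has infinitely many particles but is still concentrated on $|\mathrm{supp}(\mathbf{x}_0)|<\infty$ locations, I would apply the coming-down-from-infinity result of \cite{barnes2022coming} on a short initial interval $[0,\epsilon]$, concluding $\tilde{\mathbb E}[|I_\epsilon|] < \infty$, and then applying the Gronwall estimate from time $\epsilon$ onward via the strong Markov property. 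Third, to pass from finitely-supported offspring distributions to the general case (which includes infinite mean and the atom $p_\infty$), I would introduce truncated distributions $p^{(M)}$ supported on $\{0,\ldots,M+1\}$, folding the tail mass $\sum_{k > M} p_k + p_\infty$ into the atom at $M+1$, and take $M \to \infty$.

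The main obstacle, I expect, will be in the third step: the truncation is not pathwise monotone in $M$, since producing more children also triggers more coalescences, so one cannot simply sandwich $|I_t|$ by $|I^{(M)}_t|$ and take a limit. The escape is to exploit the crucial uniformity in the coming-down-from-infinity result of \cite{barnes2022coming}: at any infinite-offspring event the children all appear at a single point, and for any $\delta>0$ the expected number of their descendants that survive by time $\delta$ is bounded by some $\phi(\delta)$ independent of the initial cardinality. I would therefore build the full process pathwise using the Ulam--Harris labelling as a limit of truncated ones, invoke this uniform bound together with Fatou's lemma, and run an induction over the successive ``crises'' (infinite-offspring events) in $[0,t]$, each followed by an epoch of finite-population finite-mean branching-coalescing dynamics which falls under the first ingredient. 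Carefully interleaving the coalescent decay after each crisis with the Gronwall growth between crises---and justifying that the number of such crises in $[0,t]$ is almost surely finite, which itself may require a bootstrap with (2)---will be the most delicate aspect.
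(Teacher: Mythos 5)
Your part (2) and your use of the coming-down-from-infinity bound from \cite{barnes2022coming} are in the right spirit, but the core of your part (1) has a genuine gap: the Gronwall ingredient $\tilde{\mathbb E}[|I_t|]\leq |I_0|e^{\mu(\bar m-1)t}$ uses the offspring mean, and the theorem allows offspring distributions whose mean is infinite even without any atom at $\infty$ (e.g.\ $p_k\sim c/k^{2}$). In that case there are no ``crises'' in your sense at all, yet your ``epochs of finite-mean dynamics'' do not exist, and the truncated means $\bar m_M\to\infty$, so the Gronwall bounds are not uniform in $M$ and cannot survive the limit. Your proposed escape --- an induction over infinite-offspring events, with Fatou and a bootstrap of (2) to control the number of such events --- is also circular as sketched: the finiteness in (2) is exactly the integrability of $s\mapsto|I_s|$ that you are trying to establish, so you cannot invoke it to bound the number of crises on $[0,t]$.

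The paper's proof removes the offspring mean from the argument entirely. For a bounded offspring law and finitely many initial particles it chooses a deterministic short horizon $T(\mu)$ with $\mu\int_0^{T}C/\sqrt{s}\,\mathrm ds\leq 1/2$ and runs a generation-by-generation induction (Lemma \ref{lem:Branching}): the children created at any branching event all sit at one point, so by the embedded killing-coalescing system and Lemma \ref{lem:ComingDown} the expected number of branching events they trigger before time $T$ is at most $1/2$, whence the expected number of branching events in generation $j$ is at most $N_0 2^{-j}$, giving $\tilde{\mathbb E}[|J_T|]\leq N_0$ and then, via a convolution inequality with the $1/\sqrt{T-s}$ kernel, $\tilde{\mathbb E}[|I_T|]\lesssim N_0/\sqrt{T}+\mu N_0$ --- bounds depending only on the number of initial \emph{locations} $N_0$, uniformly in the (bounded) offspring law and in the number of initial particles. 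Moreover, the obstacle you anticipated (non-monotonicity of the truncation) is dissolved by the construction itself: because coalescence always kills the particle with the larger Ulam--Harris label and the order $\prec$ is adapted to the truncation, the $m$-truncated system is exactly the restriction of the full system to labels with $\|\alpha\|_\infty\leq m$ (Lemma \ref{lem:TC}), so $|I^{(m)}_t|\uparrow|I_t|$ pathwise and the general case follows by monotone convergence from the uniform bounds --- no crisis-by-crisis induction, Fatou argument, or bootstrap with (2) is needed.
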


\begin{remark}\label{ren:NE}
	Recall that we allow infinite offspring with positive probability, i.e.\ it is possible that $p_\infty > 0.$
	We want to mention some immediate corollaries of Theorem \ref{prop:Key}. 
	Note that by this theorem, there are $|J_t|< \infty$ many branching events up to the finite time $t$. 
	Since they are induced by independent exponential clocks, those branching events happens at different times.
	Let us denote the times of those branching events by 
	\[0<\tau_1 < \tau_2 < \dots < \tau_{|J_t|} < t\]
	and define $\tau_0 = 0$ for convention.
	For each positive integer $k \leq |J_t|$, note that $t\mapsto |I_t|$ is non-increasing in the interval $(\tau_{k-1},\tau_k)$ due to the coalescing of the particles; also note that Theorem \ref{prop:Key} (2) implies that almost surely $\int_{\tau_k}^{\tau_{k+1}} |I_s| \mathrm ds< \infty$. 
	So it must be the case that, almost surely, $|I_s|<\infty$ for every $s \in (\tau_{k}, \tau_{k+1})$. 
	In other word, the total population is reflecting from infinity, and can only reach infinity  at the times $\{\tau_k: k = 0, 1,2,\dots\}$. 
\end{remark}

	Our duality formula between the SPDE \eqref{eq:SPDE} and its dual branching-coalescing Brownian particle system is a natural generalization of \cite{MR1813840}*{Theorem 1}, and will be presented later in Section \ref{eq:DualParticle}. 
	The weak uniqueness of \eqref{eq:SPDE} is a standard corollary of this duality formula. 
	In the proof of the weak existence of \eqref{eq:SPDE}, the duality formula will also play a crucial role when the drift is discontinuous.

\subsection{Organization of the paper}
	In Section \ref{eq:DualParticle}, we construct the dual particle system, state the crucial duality formula in Proposition \ref{prop:Duality}, and give the proof of the weak uniqueness part of Theorem \ref{thm:Main} using the duality.
	In Section \ref{sec:key}, we prove several key properties for the dual particle system, in particular, Theorem \ref{prop:Key} .
	In Section \ref{sec:Duality}, we give the proof of the duality formula Proposition \ref{prop:Duality} using the results we proved in Section \ref{sec:key}. 
	In Section \ref{sec:Last}, we give the proof of the weak existence part of Theorem \ref{thm:Main}. 
	In the Appendix, we collect the proofs of several technical lemmas. 
	
\subsection*{Acknowledgement}
	We want to thank Julien Berestycki, Pascal Maillard,
	Michel Pain, and Xicheng Zhang for helpful conversations. 
		Clayton Barnes was supported as a Zuckerman Postdoctoral Fellow at the Technion during the time the research was completed.
		Leonid Mytnik is supported by ISF grant 1985/22.
		Zhenyao Sun is supported by National Key R\&D Program of China no.~2023YFA1010100 and National Natural Science Foundation of China no.~12301173. 
		Zhenyao Sun is the corresponding author. 

\section{Duality} \label{eq:DualParticle}
\subsection{The construction of the branching-coalescing Brownian particle system}
		In this subsection, we give the formal construction of the branching-coalescing Brownian particle system. 
		Recall from Subsection \ref{sec:DD} that this model has three parameters: the branching rate $\mu$, the offspring distribution $(p_k)_{k\in \bar{\mathbb N}}$ and the initial configuration $(x_i)_{i=1}^n$. 
		Again, the initial number of the particles $n \in \mathbb N \cup\{\infty\}$ is allowed to be infinite; and for every finite integer $i \leq n$, $x_i$ is the location of the $i$-th initial particles. 
		Also recall that $\mathcal U$ is the collection of the Ulam-Harris labels. 
	
	The Ulam-Harris labeling system is commonly used in the study of the branching particle systems, see \cite{MR0030716} for one of its early appearances. 
	A different labeling system, using the prime factorization, is proposed in \cite{athreya1998probability} for self-catalytic branching Brownian motions. It was mentioned in both \cite{athreya1998probability} and \cite{MR1813840}  that the particular labeling convention is not crucial to the duality method.
	However, in this paper, the Ulam-Harris labeling will help us be precise about the pairwise dynamic given by \eqref{eq:Coalescing}. 
	In fact, as it will be made more clear later, when two particles coalesce, we will always remove the one with the larger label according to a total order $\prec$ of the space $\mathcal{U}$. 
	This order is defined such that for any $\alpha \in \mathcal{U}$ and $\beta \in \mathcal{U}$, $\alpha \prec \beta$ if and only if one of the following three statements holds:
	\begin{itemize}
		\item[(i)]
		$\|\alpha\|_\infty < \|\beta\|_\infty$;
		\item[(ii)] 
		$\|\alpha\|_\infty= \|\beta\|_\infty$ and $|\alpha| < |\beta|$;
		\item[(iii)] 
		$\|\alpha\|_\infty = \|\beta\|_\infty$, $|\alpha| = |\beta|$, and there exists an integer $m>1$ such that $\alpha_m < \beta_m$ and $\alpha_k = \beta_k$ for every $k < m$.
	\end{itemize}
	Here, $|\alpha| := k$ and $\|\alpha\|_\infty:= \max\{\alpha_1,\alpha_2, \dots, \alpha_k\}$  for every $\alpha = (\alpha_1, \cdots, \alpha_k) \in \mathcal{U}$. 
	We want to mention that different orders for the labeling space $\mathcal{U}$ are possible for the construction of the branching-coalescing Brownian motions. 
	However, this order $\prec$ is particularly designed so that some technical lemmas (Lemmas \ref{lem:TC}, \ref{lem:DT} and \ref{lem:KS} below) hold.

	Let us be precise about some building blocks for the construction. 
	Let $\{(B^\alpha_t)_{t\geq 0}: \alpha \in \mathcal U\}$ be a family of one-dimensional independent standard Brownian motions initiated at position $0$.
	\begin{itemize}
	\item[\eq\label{eq:hatN}]Let $\mathfrak N$ be a Poisson random measure on the space $(0,\infty) \times \mathcal U \times \bar {\mathbb N}$  with intensity $\hat {\mathfrak N}$ given so that
	$\hat {\mathfrak N} ((0,t] \times \{\alpha\} \times\{k\}) = \mu t p_k$ for $t>0, \alpha \in \mathcal U, k \in \bar{\mathbb N}.$
\end{itemize}
	We assume that both $\{(B^\alpha_t)_{t\geq 0}: \alpha \in \mathcal U\}$  and $\mathfrak N$ are defined on the same complete probability space, and are independent from each other.  
	As mentioned in Subsection \ref{sec:DD}, this probability space will be denoted by $(\tilde \Omega, \tilde {\mathscr F}, \tilde{\mathbb P})$. 

	Inductively for each $\beta \in \mathcal U$, we construct random elements 
\begin{equation} \label{eq:TheRandomElement}
	\mathcal X_\beta:= \RB{\xi_\beta, (\tilde X^\beta_t)_{t\geq 0}, (L^{\alpha,\beta}_t)_{\alpha \prec \beta, t\geq 0}, (\mathfrak M(\cdot \times \CB{(\alpha,\beta)}))_{\alpha \prec \beta} ,  (\zeta_{\alpha,\beta})_{\alpha \preceq \beta}, \zeta_\beta, (X^\beta_t)_{t\geq 0}, Z_\beta}
\end{equation}
	according to the following rules \eqref{eq:InitialParticle}--\eqref{eq:TheLastRule} assuming that the random elements
\begin{equation} \label{eq:ConditionalConstruction} 
	\CB{\mathcal X_\alpha : \alpha \in \mathcal U, \alpha \prec \beta }
\end{equation}
	are already constructed.
\begin{itemize}
	\item[\eq\label{eq:InitialParticle}]
 	Define a $\mathbb R_+$-valued random variable $\xi_\beta$ and Brownian motion $(\tilde X^\beta_t)_{t\geq 0}$ so that
\begin{itemize}
\item[(i)]
	if $|\beta| = 1$ and $\beta \leq n$, then $\xi_\beta := 0$ and $\tilde X^\beta_t := x_\beta + B^\beta_{t}$ for every $t\geq 0$;
\item[(ii)]
if $|\beta| = 1$ and $\beta > n$, then $\xi_\beta := 0$ and $\tilde X^\beta_t :=B^\beta_{t}$ for every $t\geq 0$;
\item[(iii)]
	if $|\beta| > 1$, then $\xi_\beta := \zeta_{\overleftarrow{\beta}, \overleftarrow{\beta}}$ and
\[
	\tilde X^\beta_t  := 
\begin{cases}
	\tilde X^{\overleftarrow{\beta}}_{t}, 
	&\quad t\in [0, \xi_\beta),
	\\ \tilde X^{\overleftarrow{\beta}}_{\xi_\beta}+ B^\beta_{t} - B^\beta_{\xi_\beta} , 
	&\quad t \in [\xi_\beta, \infty).
\end{cases}
\]
\end{itemize}
\item[\eq\label{eq:LocalTime}]
	For each $\alpha \in \mathcal U$ with $\alpha \prec \beta$, $t\geq 0$ and $z\in \mathbb R$, define $L_{t,z}^{\alpha, \beta}$ to be the local time of the process $\tilde X^\alpha_\cdot - \tilde X^\beta_\cdot$ at position $z$ up to time $t$, i.e. 
\[
	L_{t,z}^{\alpha, \beta} :=  \lim_{h\downarrow 0}\frac{1}{h} \int_0^t \mathbf 1_{\{\tilde X^\alpha_s- \tilde X^\beta_s \in [z,z+h)\}} \mathrm d\AB{ \tilde X^\alpha_\cdot - \tilde X^\beta_\cdot }_s;
\]
	and write $L_{t}^{\alpha, \beta} := L_{t,0}^{\alpha, \beta}$.
	Here, $\langle \tilde X^\alpha_\cdot - \tilde X^\beta_\cdot \rangle$ is the quadratic variation of the process  $\tilde X^\alpha_\cdot - \tilde X^\beta_\cdot$.
	Without loss of generality, we assume that almost surely, $(t,z)\mapsto L_{t,z}^{\alpha, \beta} $ is continuous, c.f. \cite[Corollary 1.8 Chapter VI]{MR1725357}.
\item[\eq\label{eq:ComM}]
	Conditioned on $\{(B^\alpha_t)_{t\geq 0}: \alpha \in \mathcal U\}$, $\mathfrak N$, and the random elements in \eqref{eq:ConditionalConstruction}, for every $\alpha \in \mathcal U$ with $\alpha \prec \beta$, let $\mathfrak M(\cdot \times \{(\alpha,\beta)\})$ be a Poisson random measure on $(0,\infty)$ with intensity $\hat {\mathfrak M}(\cdot \times \{(\alpha,\beta)\})$ given such that
	\begin{equation} 
		\hat {\mathfrak M} ((0,t] \times \{(\alpha,\beta)\}) 
		= \frac{1}{2}L_t^{\alpha,\beta},  
		\quad t>0.
	\end{equation}
\item[\eq\label{eq:xiab}]
	Define
	\[
	\zeta_{\beta, \beta} := \inf\{t>\xi_\beta: \mathfrak N(\{t\}\times \{\beta\} \times \overline {\mathbb N}) = 1\},
	\]
	and, for each $\alpha \in \mathcal U$ with $\alpha \prec \beta$, 
	\[
	\zeta_{\alpha,\beta} := \inf \{t> \xi_\beta \vee \xi_\alpha: \mathfrak M (\{t\} \times \{(\alpha,\beta)\}) = 1\}.
	\]
\item[\eq\label{eq:Lifetime}]
	Define a $\mathbb R_+$-valued random variable $\zeta_\beta$ so that
	\begin{itemize}
		\item[(i)] 
		if $|\beta|=1$ and $\beta \leq n$, then \[\zeta_\beta := \inf \RB{ \CB{\zeta_{\beta,\beta}} \cup \CB{\zeta_{\alpha,\beta}: \alpha \in \mathcal U, \alpha \prec \beta, \zeta_{\alpha,\beta} \leq \zeta_\alpha}};\]
		\item[(ii)] 
		if $|\beta| > 1$, $\zeta_{\overleftarrow{\beta}} = \zeta_{\overleftarrow{\beta},\overleftarrow{\beta}}$ and $\beta_{|\beta|} \leq Z_{\overleftarrow{\beta}}$, then  \[\zeta_\beta := \inf \RB{ \CB{\zeta_{\beta,\beta}} \cup \CB{\zeta_{\alpha,\beta}: \alpha \in \mathcal U, \alpha \prec \beta, \zeta_{\alpha,\beta} \leq \zeta_\alpha}};\]
		\item[(iii)] if neither of the conditions in (i) nor (ii) hold, then $\zeta_\beta := \xi_\beta$.
	\end{itemize}
\item[\eq]
	Define the $\mathbb R\cup \{\dagger\}$-valued process 
\begin{equation}
	X^\beta_t:= 
	\begin{cases}
	\dagger, &\quad t\in [0, \xi_\beta),
	\\ \tilde X^\beta_t, &\quad t\in [\xi_\beta, \zeta_\beta),
	\\ \dagger, & \quad t\in [\zeta_\beta, \infty).
	\end{cases}
\end{equation}
\item[\eq\label{eq:TheLastRule}]
	Define a $\bar{\mathbb N}$-valued random variable $Z_\beta$ to be the unique $z \in \bar{\mathbb N}$ such that $\mathfrak N$ has an atom at $(\zeta_{\beta,\beta}, \beta, z)$.
\end{itemize}
	We will refer to the family of processes $\{(X_t^\alpha)_{t\geq 0}: \alpha \in \mathcal U\}$, constructed through  \eqref{eq:InitialParticle}--\eqref{eq:TheLastRule}, as a branching-coalescing Brownian particle system with branching rate $\mu$, offspring distribution $(p_k)_{k\in \bar{\mathbb N}}$, and initial configuration $(x_i)_{i=1}^n$. 
	
	Let us give some further comments on the above construction.	
	For each $\beta \in \mathcal U$, we call the random variables $\xi_\beta$, and $\zeta_\beta$, the birth-time, and the death-time, of the particle $\beta \in \mathcal U$, respectively.
	For each $\beta \in \mathcal U$, if $\zeta_\beta = \zeta_{\beta, \beta}$ holds, then we say particle $\beta$ induced a branching event at the time $\zeta_{\beta,\beta}$;
	and if there exists an $\alpha \in \mathcal U$ with $\alpha \prec \beta$ such that $\zeta_\beta = \zeta_{\alpha, \beta}$, then we say the particle pair $(\alpha,\beta)$ induced a coalescing event at time $\zeta_{\alpha,\beta}$.
	Note that $\mathfrak M$ is a  random measure on $(0,\infty)\times \mathcal R$ where 
\[
	\mathcal R
	:= \{(\alpha, \beta) \in \mathcal U^2: \alpha \prec \beta\}.
\] 
	Intuitively speaking, the branching, and the coalescing, events are governed by the random measure $\mathfrak{N}$, and $\mathfrak M$, respectively. 
	As have already been mentioned in Subsection \ref{sec:DD}, for every $t\geq 0$, we denote by
\begin{equation} \label{eq:It}
	I_t 
	:= \{\alpha \in \mathcal U: X^\alpha_t \in \mathbb R\} = \{\alpha \in \mathcal U: \xi_\alpha \leq t< \zeta_\alpha \}
\end{equation}
	the collection of labels of the particles alive at time $t$; and by 
\begin{equation} \label{eq:Jt}
	J_t 
	:= \{\alpha \in \mathcal U: \zeta_{\alpha,\alpha} = \zeta_\alpha  \leq t \}
\end{equation}
	the collection of labels of the particles who induced a branching event up to time $t$.
	
	We say a branching-coalescing Brownian particle system is a \emph{coalescing Brownian particle system} if its offspring distribution satisfies $p_1 = 1$; and say it is a \emph{killing-coalescing Brownian particle system} if its offspring distribution satisfies $p_0=1$.
	In \cite{barnes2022coming}, we give a necessary and sufficient condition for the total population of a coalescing Brownian particle system to 
come down from infinity.
	We also identified all the coming down rates for different initial configurations. 
	The proof of Theorem \ref{prop:Key} heavily relies on those results in  \cite{barnes2022coming}. 

\subsection{The duality formula}	
	For the rest of this paper, let us assume without loss of generality that 
\begin{equation} \label{eq:NonDegenerate}
	\sum_{k=2}^\infty |b_k| + |b_\infty|  
	> 0.
\end{equation}
	(Otherwise, $b(z)= b_0 + b_1 z$ for every $z\in [0,1]$; and the weakly well-posedness of \eqref{eq:SPDE} in this case is already given by \cite{MR948717} and \cite{MR1813840}.)
	\begin{extra}
		(We will explain this a little bit in Note \ref{note:021}.)
	\end{extra}
	To build a connection between the branching-coalescing Brownian particle system and the SPDE \eqref{eq:SPDE}, we will be working with a specific branching rate $\mu$ and a specific offspring distribution $(p_k)_{k \in \bar {\mathbb N}}$ given by 
\begin{equation} \label{eq:TheBranchingRate}
	\mu 
	= |b_0| + \sum_{k=2}^\infty |b_k| + |b_\infty| > 0
\end{equation}
	and
\begin{equation} \label{eq:offspring}
	p_k 
	= \mu^{-1} |b_k| \mathbf 1_{\{k\neq 1\}}, 
	\quad k\in \bar{\mathbb N}.
\end{equation}

\begin{extra}
	\begin{note} \label{note:021}
		If $b(z)= b_0 + b_1 z$, then from the condition $b(0)\geq 0 \geq b(1)$, we have  $b_0 \geq 0$ and $b_0+b_1 \leq 0$. 
		This allows us to write $b(z) = c_1 (1-z)- c_2 z$ where $c_1 = b_0 \geq 0$ and $c_1=-b_1-b_0\geq 0$. 
		Now it is clear that the weak-uniqueness of \eqref{eq:SPDE} in this case is already given by \cite{MR948717}.
	\end{note}
\end{extra}

	We first give the finiteness of the expectation of a certain functional of the particle system that will be used in the presentation of the duality formula. 

\begin{proposition} \label{prop:ExponentialTerm}
	Let $(b_k)_{k\in \bar{\mathbb N}}$ be a family of real numbers satisfying \eqref{eq:NonDegenerate} and \eqref{eq:CLZsCondition} for some $R\geq 1$.
	Let $\{(X_t^\alpha)_{t\geq 0} : \alpha \in \mathcal U\}$ be a branching-coalescing Brownian particle system with branching rate $\mu$ given as in \eqref{eq:TheBranchingRate}, offspring distribution $(p_k)_{k \in \bar{\mathbb N}}$ given as in \eqref{eq:offspring}, and an initial configuration $(x_i)_{i=1}^n$ with finite $n$.
	Then, it holds that
\begin{equation} \label{eq:exp}
	\tilde{\mathbb E}\SB{e^{K_t}} < \infty,
	\quad t\geq 0
\end{equation}
	where
\begin{equation} \label{eq:Kt}
	K_t
	:= (\mu+b_1) \int_0^t |I_s| \mathrm ds, 
	\quad t\geq 0.
\end{equation}
\end{proposition}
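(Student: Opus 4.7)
The plan is to use the Lyapunov function $n\mapsto R^n$: I would build a non-negative local supermartingale out of $V_t := R^{|I_t|}\exp(K_t)$, show $\tilde{\mathbb E}[V_t]\leq R^{|I_0|}$ via optional stopping and Fatou, and conclude using $R^{|I_t|}\geq 1$.

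First I would dispose of the trivial subcase $\mu+b_1\leq 0$, where $K_t\leq 0$ and the claim is immediate. This subsumes $R=1$, for which \eqref{eq:CLZsCondition} reads $b_1\leq -\sum_{k\in\bar{\mathbb N}\setminus\{1\}}|b_k|=-\mu$, and in particular any instance with $b_\infty\neq 0$, since \eqref{eq:CLZsCondition} is then compatible only with $R=1$. Assume henceforth $\mu+b_1>0$; necessarily $R>1$ and $b_\infty=0$, so each branching event produces finitely many offspring and $|I_t|<\infty$ for all $t$ almost surely.

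For $V_t$ I compute the drift of $R^{|I_t|}$ via Dynkin's formula. Conditional on $|I_s|=n$, a branching with $k\neq 1$ offspring occurs at total rate $n\mu p_k=n|b_k|$ and replaces $R^n$ by $R^{n-1+k}$; each coalescing event reduces $|I|$ by one and contributes a non-positive change to $R^{|I|}$ since $R\geq 1$. Summing the branching terms,
\[
\text{drift of }R^{|I_s|}\leq nR^n\sum_{k\in\bar{\mathbb N}\setminus\{1\}}|b_k|(R^{k-1}-1) = nR^n\Bigl(\sum_{k\neq 1}|b_k|R^{k-1}-\mu\Bigr) \leq -(\mu+b_1)\,nR^n,
\]
by \eqref{eq:CLZsCondition}. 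Since $\exp(K_t)$ is continuous with derivative $(\mu+b_1)|I_t|\exp(K_t)$, the integration-by-parts identity $d(R^{|I_t|}\exp(K_t)) = \exp(K_t)\,dR^{|I_t|}+R^{|I_t|}\exp(K_t)(\mu+b_1)|I_t|\,dt$ shows that the extra drift from $\exp(K_t)$ cancels the upper bound above, making $V$ a local supermartingale.

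To localize, set $T_k:=\inf\{t\geq 0 : |I_t|\geq k\}$. By Theorem \ref{prop:Key}(2), $|J_t|<\infty$ almost surely, so only finitely many branching events occur in $[0,t]$; since $b_\infty=0$ each produces finitely many offspring, and between branchings $|I|$ is non-increasing (coalescing only), we get $\sup_{s\in[0,t]}|I_s|<\infty$ almost surely, hence $T_k\uparrow\infty$. Optional stopping at $t\wedge T_k$ yields $\tilde{\mathbb E}[V_{t\wedge T_k}]\leq V_0=R^{|I_0|}$, and Fatou then gives $\tilde{\mathbb E}[V_t]\leq R^{|I_0|}$. Since $R\geq 1$ implies $R^{|I_t|}\geq 1$, we conclude $\tilde{\mathbb E}[\exp(K_t)]\leq R^{|I_0|}<\infty$. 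The principal technical obstacle is the clean identification of the compensator for the coalescing jumps, whose rates are driven by the pairwise intersection local times from \eqref{eq:ComM}, together with the dominated-convergence manipulations needed to justify the exchange of summation and expectation in the branching-drift identity; both issues are resolved on the localized interval $[0,t\wedge T_k]$ where the population is bounded, with Theorem \ref{prop:Key} powering the passage to the limit $k\to\infty$.
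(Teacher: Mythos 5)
Your proposal is correct and follows essentially the same route as the paper: reduce to the case $\mu+b_1>0$ (hence $R>1$, $b_\infty=0$), show that $Z_t=e^{K_t}R^{|I_t|}$ is a non-negative local supermartingale by compensating the branching/coalescing jumps and cancelling the drift of $e^{K_t}$ via \eqref{eq:CLZsCondition}, then conclude with a localizing sequence and Fatou, using $R^{|I_t|}\geq 1$. The only cosmetic difference is your specific choice of stopping times $T_k$ (where one should really invoke that a non-negative local supermartingale is a supermartingale, since $|I_{T_k}|$ may overshoot $k$ at a branching time), whereas the paper simply uses the generic localizing sequence together with Fatou's lemma.
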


	The proof of Proposition \ref{prop:ExponentialTerm} is postponed to Subsection \ref{sec:ET}. 
	
	Now we are ready to present the duality formula between the SPDE \eqref{eq:SPDE} and our branching-coalescing Brownian particle system. 

\begin{proposition} \label{prop:Duality}
	Let $f\in \mathcal C(\mathbb R, [0,1])$, $n\in \mathbb N$, and $(x_i)_{i=1}^n$ be a finite list of real numbers. 
	Let $(b_k)_{k\in \bar{\mathbb N}}$ be a family of real numbers satisfying \eqref{eq:NonDegenerate} and \eqref{eq:CLZsCondition} for some $R\geq 1$.
	Suppose that the real-valued function $(b(z))_{z\in [0,1]}$, given as in \eqref{eq:drift}, satisfies $b(0)\geq 0 \geq b(1)>-\infty$. 
	Suppose that the $\mathcal C(\mathbb R, [0,1])$-valued process $(u_t)_{t\geq 0}$, on a filtered probability space $(\Omega, \mathscr F, (\mathscr F_t)_{t\geq 0},\mathbb P_f)$, is a solution to the SPDE \eqref{eq:SPDE} with initial value $u_0 = f$.
	Also, suppose that $\{(X_{t}^\alpha)_{t\geq 0}: \alpha \in \mathcal U\}$ is a branching-coalescing Brownian particle system, on a probability space $(\tilde \Omega, \tilde {\mathscr F}, \tilde{\mathbb P})$, with initial configuration $(x_i)_{i=1}^n$, branching rate $\mu>0$ given as in \eqref{eq:Branching}, and offspring distribution $(p_k)_{k\in \bar{\mathbb N}}$ given as in \eqref{eq:offspring}.
	Then it holds for every $T \geq 0$ that
\begin{equation} \label{eq:Duality}
	\mathbb E_f\SB{\prod_{i=1}^n u_T(x_i)} 
	= \tilde{\mathbb E}\SB{  (-1)^{\ABS{\tilde J_T}} e^{K_T} \prod_{\alpha \in I_T} f(X_T^\alpha) }.
\end{equation}
	Here, $(K_t)_{t\geq 0}$ is given as in \eqref{eq:Kt} and
\begin{equation} \label{eq:DefTJT}
	\tilde J_t
	:= \{\alpha \in \mathcal U: \zeta_{\alpha,\alpha}=\zeta_\alpha \leq t, b_{Z_\alpha} < 0\}, \quad t\geq 0.
\end{equation}
\end{proposition}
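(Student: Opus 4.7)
The strategy is to extend the moment-duality paradigm of Shiga--Watanabe and Athreya--Tribe \cite{MR1813840} to the infinite-variance (and possibly infinite-branching) setting considered here. On the product probability space of the given weak solution $u$ of \eqref{eq:SPDE} and the independent particle system $\{X^\alpha\}$, introduce
\[
F(t) := \RB{\mathbb E_f \otimes \tilde{\mathbb E}}\SB{\Psi_t \prod_{\alpha \in I_t} u_{T-t}(X^\alpha_t)}, \qquad t\in [0,T],
\]
where $\Psi_t := (-1)^{|\tilde J_t|} e^{K_t}$. Then $F(0) = \mathbb E_f[\prod_i u_T(x_i)]$ (since $\Psi_0 = 1$ and the initial particles are at $x_1,\dots,x_n$) while $F(T)$ is exactly the right-hand side of \eqref{eq:Duality} (since $u_0 = f$), so the task reduces to proving $F$ is constant.

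Constancy is established by computing $\mathrm dF/\mathrm dt$ through a joint semi-martingale expansion of $\Psi_t \prod_\alpha u_{T-t}(X^\alpha_t)$ and then taking expectation. Three contributions must match: (i) the backward Laplacian $-\tfrac12\Delta u_{T-t}$ cancels the spatial Brownian diffusion of the particles $X^\alpha$; (ii) the noise quadratic covariation $\sigma(u)^2 = u(1-u)$ pairs, via the intersection local time $\mathrm dL^{\alpha,\beta}_t$, with the coalescing rate $\tfrac12\mathrm dL^{\alpha,\beta}_t$, since a coalescing event removes one factor $u_{T-t}(X^\alpha_t)$ from the product (exactly as in the binary branching-coalescing duality of \cite{MR948717}); (iii) a branching event producing $k$ offspring changes the product by a factor $u_{T-t}(X^\beta_t)^{k-1}$, and the sign flip from $(-1)^{|\tilde J_t|}$ (recall \eqref{eq:DefTJT}) converts the rate $\mu p_k = |b_k|$ into the signed rate $b_k$. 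The per-particle compensator becomes $\sum_{k\neq 1} b_k u_{T-t}(X^\beta_t)^{k-1}-\mu$, which together with the $(\mu+b_1)|I_t|$ coming from the derivative of $e^{K_t}$ collapses to $\sum_\beta b(u_{T-t}(X^\beta_t))/u_{T-t}(X^\beta_t)$ --- precisely the drift contribution of the SPDE to $\mathrm d\prod_\alpha u_{T-t}(X^\alpha_t)$ obtained by Itô applied to the mild form \eqref{eq:mild}.

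To make the above computation rigorous when $b$ is not a polynomial or has the discontinuous component $b_\infty\mathbf 1_{\{1\}}$, I would approximate the drift by $b^{(N,\varepsilon)}(z) := \sum_{k\le N} b_k z^k + b_\infty \psi_\varepsilon(z)$, with $\psi_\varepsilon$ a continuous approximation of $\mathbf 1_{\{1\}}$; each $b^{(N,\varepsilon)}$ is Lipschitz with finitely-supported offspring law, so the Itô argument above applies directly. Passing to the limit, the right-hand side converges by dominated convergence: Proposition \ref{prop:ExponentialTerm} gives uniform integrability of $e^{K_T}$, and Theorem \ref{prop:Key} together with Remark \ref{ren:NE} ensures that $|I_T|$ and $|J_T|$ are almost surely finite; the truncated particle systems couple naturally through the Poisson random measure $\mathfrak N$ of \eqref{eq:hatN} by retaining only atoms with offspring count $\le N$ and resampling the $\infty$-atoms through $\psi_\varepsilon$. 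The main obstacle is the case $b_\infty\ne 0$, where infinite-branching events occur with positive probability. At such an instant, the product $\prod_{\alpha \in I_t}u_{T-t}(X^\alpha_t)$ instantaneously contains infinitely many copies of $u_{T-t}(X^\beta_t)\in[0,1]$, evaluating to $0$ unless that value equals $1$; this exactly matches the intended contribution $b_\infty u^\infty/u = b_\infty\mathbf 1_{\{1\}}(u)$ to the drift, provided the product is interpreted as its right-continuous modification across the explosion time. The coming-down-from-infinity statement in Theorem \ref{prop:Key}(2) and Remark \ref{ren:NE} guarantees that immediately after each infinite-branching event the configuration $I_t$ collapses back to a finite set, so the right-continuous modification is well-defined off the null set of explosion instants; verifying that the semi-martingale balance survives the $\varepsilon\downarrow 0$ limit uniformly across these instants is the delicate analytical heart of the proof.
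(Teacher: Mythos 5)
Your heuristic identification of the three cancellations (Laplacian versus Brownian motion, noise covariation versus coalescing local time, signed branching versus drift) is exactly right and matches the mechanism in the paper. However, the proposal has two genuine gaps. First, the ``joint semi-martingale expansion'' of $\Psi_t\prod_{\alpha\in I_t}u_{T-t}(X^\alpha_t)$ is not available as stated: $x\mapsto u_r(x)$ is only continuous, so It\^o's formula cannot be applied along the particle trajectories, and $u$ is only a mild solution, so there is no pointwise semimartingale decomposition in $t$ either. The paper circumvents this by working with the two-time, mollified and truncated functional $\Xi^{\epsilon,m}_{t,s}$ of \eqref{eq:gts} (the test function on the particle side is $P_\epsilon u_t\in\mathcal C^2_{\mathrm b}$), proving the two one-sided decompositions (Lemmas \ref{lem:DecXiBySPDE} and \ref{lem:DecXiByCBBM}) and then an integrated interchange identity rather than differentiating a single function $F(t)$; some substitute for this smoothing device is unavoidable and is missing from your argument.

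Second, and more seriously, your treatment of the discontinuous part $b_\infty\mathbf 1_{\{1\}}$ is misdirected. You cannot approximate the drift of the given equation: $u$ solves \eqref{eq:SPDE} with the genuine $b$, so the term $b(u_r(y))$ enters its decomposition no matter what; replacing $b$ by $b^{(N,\varepsilon)}$ either changes the equation (circular, since stability of the law in the drift is precisely what is unknown here --- Girsanov is unavailable) or leaves an error term involving $\mathbf 1_{\{1\}}(u)-\psi_\varepsilon(u)$ that you do not control. Moreover a generic continuous $\psi_\varepsilon\approx\mathbf 1_{\{1\}}$ does not correspond to any offspring distribution, so the claim that each $b^{(N,\varepsilon)}$ comes with a ``finitely-supported offspring law'' is unfounded; within the duality framework the only admissible approximation is $z^m$, i.e.\ truncating the infinite-offspring events to $m$ children, which is exactly the paper's $b^{(m)}$ in \eqref{eq:DefBmz}. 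Even granting that, the true difficulty is not the behaviour ``across explosion instants'' (explosions are tamed by the $m$-truncation together with Theorem \ref{prop:Key} and Proposition \ref{prop:ExponentialTerm}); it is the limit $\epsilon\downarrow 0$ of the drift term $\mathbf E\bigl[\cdots P_\epsilon(b\circ u_r)(X^\alpha_s)\cdots\bigr]$, because $y\mapsto b(u_r(y))$ is merely bounded measurable when $b_\infty\neq 0$. The paper resolves this in Lemma \ref{lem:Cancellation}~(5) by conditioning at the jump times of the truncated system and invoking Proposition \ref{lem:Last}, a Girsanov-plus-heat-kernel argument showing that the relevant weighted law of the particle positions at positive times has a continuous density, so that mollification converges even against discontinuous integrands. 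No counterpart of this ingredient appears in your proposal, so the case $b_\infty\neq 0$ --- the main novelty of the statement --- remains unproved.
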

	The proof of Proposition \ref{prop:Duality} is postponed to Section \ref{sec:Duality}.
As we have mentioned, Proposition \ref{prop:Duality} is a generalization of Theorem 1 of \cite{MR1813840}. 
	One of the main assumptions in \cite{MR1813840} is the finiteness of the expected number of offspring in the branching mechanism that guarantees the non-explosion of the system.
	We do not make such an assumption, but still are capable (due to the coalescent mechanism) of showing the finiteness of the total population at almost every time, as well as the above duality formula.
	
	The weak uniqueness part of Theorem \ref{thm:Main} is a direct corollary of the duality formula Proposition \ref{prop:Duality}. 

\begin{proof}[Proof of the weak uniqueness part of Theorem \ref{thm:Main}]
	From Theorem~\ref{prop:Key} and \ref{prop:ExponentialTerm} we know that the right hand side of \eqref{eq:Duality} is well-defined and finite. 
	The desired result now follows from Proposition \ref{prop:Duality} and \cite[Lemma 1]{MR1813840}.
\end{proof}

\section{Analysis for the dual particle system} \label{sec:key} 
	In this section, we  prove several properties for the branching-coalescing Brownian particle system. 
	In particular, we will prove Theorem~\ref{prop:Key}, which implies that the total population is finite at almost every time. 
	We will also give the proof of Proposition \ref{prop:ExponentialTerm}, and several other integrability results, which will be used in the proof of Proposition \ref{prop:Duality} in Section \ref{sec:Duality}. 
\subsection{The truncated particle system} \label{sec:Truncated}
	In this subsection, let us take a branching-coalescing Brownian particle system $\{(X^\alpha_t)_{t\geq 0}: \alpha \in \mathcal U\}$ with an arbitrary branching rate $\mu >0$, initial configuration $(x_i)_{i=1}^n$, and offspring distribution $(p_k)_{k\in \bar{\mathbb N}}$, constructed through  \eqref{eq:InitialParticle}--\eqref{eq:TheLastRule}.
	There is a natural coupling between this branching-coalescing Brownian particle system $\{(X^\alpha_t)_{t\geq 0}: \alpha \in \mathcal U\}$ and a branching Brownian  particle system $\{(\bar X^\alpha_t)_{t\geq 0}: \alpha \in \mathcal U\}$, sharing the same initial configuration, spatial movement of the particles, and the number of offspring in each of their shared branching events.
	The difference is that in the latter system we remove the coalescing mechanism, so the particles will branch but no longer coalesce with each other. 
	As a consequence, this new system dominates the original one, in the sense that, almost surely, for every $\alpha \in \mathcal U$ and $t\geq 0$, $X^\alpha_t\in \mathbb R$ (that is, $X^\alpha_t$ is not in the cemetary state) implies that $\bar X^\alpha_t = X^\alpha_t$. 
	More precisely, this coupling is realized through \eqref{eq:BBM1} and \eqref{eq:BBM2} below.
\begin{itemize}
\item[\eq\label{eq:BBM1}]
	For each $\beta \in \mathcal U$, define a $\mathbb R_+$-valued random variable $\bar \zeta_\beta$ inductively so that
\begin{itemize}
\item[(i)] 
	if $|\beta|=1$ and $\beta \leq n$, then $\bar \zeta_\beta := \zeta_{\beta,\beta}$. 
\item[(ii)] 
	if $|\beta| > 1$, $\bar \zeta_{\overleftarrow{\beta}} = \zeta_{\overleftarrow{\beta},\overleftarrow{\beta}}$ and $\beta_{|\beta|} \leq Z_{\overleftarrow{\beta}}$, then $\bar \zeta_\beta := \zeta_{\beta,\beta};$
\item[(iii)] 
	if neither of the conditions in (i) nor (ii) hold, then $\bar \zeta_\beta := \xi_\beta$.
\end{itemize}
\item[\eq\label{eq:BBM2}]
	For each $\beta \in \mathcal U$, define a $\mathbb R\cup \{\dagger\}$-valued process 
\begin{equation}
	\bar X^\beta_t:= 
\begin{cases}
	\dagger, &\quad t\in [0, \xi_\beta),
	\\ \tilde X^\beta_t, &\quad t\in [\xi_\beta, \bar \zeta_\beta),
	\\ \dagger, & \quad t\in [\bar \zeta_\beta, \infty).
\end{cases}
\end{equation}
\end{itemize}

	Define 
\begin{equation} \label{eq:barI}
	\bar I_t 
	= \{\alpha \in \mathcal U: \bar X^\alpha_t \in \mathbb R\},
\end{equation}
	and
\[
	\bar J_t 
	= \{\alpha \in \mathcal U: \zeta_{\alpha,\alpha}=\bar \zeta_\alpha \leq t\}
\]
	to be the labels of all living particles at time $t\geq 0$, and the labels of all particles who induced a branching event before time $t\geq 0$, respectively, for this branching Brownian particle system.
	The following lemma allows us to control the branching-coalescing Brownian particle systems using this coupling. 
	It says that the set of labels of living particles in the system with coalescing is contained in the set of labels in the system without coalescing, and similarly for the set of labels of branching events. 

	We omit its proof, since it is elementary.

\begin{lemma}\label{lem:Coupling}
	$I_t \subset \bar I_t$ and $J_t \subset \bar J_t$ for every $t\geq 0$ almost surely.
\end{lemma}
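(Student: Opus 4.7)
The plan is to reduce both inclusions to a single pointwise comparison: I would prove by strong induction on $\alpha \in \mathcal{U}$ with respect to the well-order $\prec$ the two claims (a) $\zeta_\alpha \leq \bar{\zeta}_\alpha$, and (b) if $\zeta_\alpha = \zeta_{\alpha,\alpha}$ then $\bar{\zeta}_\alpha = \zeta_{\alpha,\alpha}$. Once (a) and (b) are in hand, the lemma follows immediately: the birth times $\xi_\alpha$ are shared by the two coupled systems by construction (they are defined via the Poisson random measure $\mathfrak{N}$ and the Brownian noises alone), so $\alpha \in I_t$ gives $\xi_\alpha \leq t < \zeta_\alpha \leq \bar{\zeta}_\alpha$ and hence $\alpha \in \bar{I}_t$, while $\alpha \in J_t$ gives $\zeta_{\alpha,\alpha} = \zeta_\alpha \leq t$ and then via (b) $\bar{\zeta}_\alpha = \zeta_{\alpha,\alpha} \leq t$, so $\alpha \in \bar{J}_t$.

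For the induction itself, $\prec$ is well-founded since for each pair of values of $\|\cdot\|_\infty$ and $|\cdot|$ there are only finitely many labels, so strong induction is justified. In the base case $|\alpha| = 1$ the two systems place $\alpha$ in the same case (case (i) if $\alpha \leq n$, giving $\bar{\zeta}_\alpha = \zeta_{\alpha,\alpha}$ and $\zeta_\alpha$ equal to an infimum that includes $\zeta_{\alpha,\alpha}$; case (iii) otherwise, giving $\zeta_\alpha = \bar{\zeta}_\alpha = \xi_\alpha$), and both (a) and (b) follow by inspection. For the inductive step $|\alpha| > 1$, I would first note $\overleftarrow{\alpha} \prec \alpha$ (because $|\overleftarrow{\alpha}| < |\alpha|$ and $\|\overleftarrow{\alpha}\|_\infty \leq \|\alpha\|_\infty$, so one of the first two clauses of the definition of $\prec$ applies) so the inductive hypothesis is available at the parent. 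The key point is the comparison at the case (ii) branch: the coalescing system demands $\zeta_{\overleftarrow{\alpha}} = \zeta_{\overleftarrow{\alpha},\overleftarrow{\alpha}}$, while the branching-only system only demands $\bar{\zeta}_{\overleftarrow{\alpha}} = \zeta_{\overleftarrow{\alpha},\overleftarrow{\alpha}}$. Part (b) of the inductive hypothesis at $\overleftarrow{\alpha}$ says the former implies the latter. Hence whenever the coalescing system places $\alpha$ in case (ii), so does the branching-only system, and then $\bar{\zeta}_\alpha = \zeta_{\alpha,\alpha}$ while $\zeta_\alpha$ is an infimum that includes $\zeta_{\alpha,\alpha}$, giving (a) and the implication in (b). In every remaining situation the coalescing system lies in case (iii), so $\zeta_\alpha = \xi_\alpha \leq \bar{\zeta}_\alpha$, and the premise of (b) is vacuous since almost surely the branching clock $\zeta_{\alpha,\alpha}$, being read off from independent exponential firings of $\mathfrak{N}$, is strictly larger than $\xi_\alpha$.

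The only obstacle worth naming is bookkeeping: pooling the countably many $\tilde{\mathbb{P}}$-null events on which some $\zeta_{\alpha,\alpha}$ could coincide with $\xi_\alpha$ into a single exceptional null set, which is routine since $\mathcal{U}$ is countable and each individual coincidence has probability zero. Everything else is a straightforward case analysis driven by the comparison between the cases (i)--(iii) in \eqref{eq:Lifetime} and the cases (i)--(iii) in \eqref{eq:BBM1}, which is precisely why the authors call the proof elementary.
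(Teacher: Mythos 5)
Your proposal is correct and takes essentially the same route as the paper's proof: an induction over the Ulam--Harris labels in the order $\prec$, comparing the cases (i)--(iii) of \eqref{eq:Lifetime} with those of \eqref{eq:BBM1} (with the case-(ii) comparison at the parent as the key step), and then reading off $I_t\subset\bar I_t$ and $J_t\subset\bar J_t$ from the comparison of death-times. The only cosmetic difference is that the paper carries just the claim $\zeta_\alpha\le\bar\zeta_\alpha$ through the induction and obtains your claim (b) from the deterministic squeeze $\zeta_\alpha\le\bar\zeta_\alpha\le\zeta_{\alpha,\alpha}$ (valid because $\bar\zeta_\alpha\in\{\xi_\alpha,\zeta_{\alpha,\alpha}\}$ and $\xi_\alpha\le\zeta_{\alpha,\alpha}$), so no null-set bookkeeping for $\zeta_{\alpha,\alpha}>\xi_\alpha$ is needed.
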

\begin{extra}
	(We add a proof here for the sake of completeness.)
\begin{proof}
	Let us first claim that $\zeta_\alpha \leq \bar \zeta_\alpha$ for every $\alpha \in \mathcal U$ by induction.
	To show this claim, let us fix an arbitrary $\beta \in \mathcal U$, and assume that the above claim holds for every $\alpha \prec \beta$ already. 
	Then we argue in the following three cases:
\begin{itemize}
\item[(i)] 
	$|\beta|=1$ and $\beta \leq n$. In this case, $\bar \zeta_\beta = \zeta_{\beta,\beta} \geq \zeta_\beta$. 
\item[(ii)] 
	$|\beta| > 1$, $\zeta_{\overleftarrow{\beta}} = \zeta_{\overleftarrow{\beta},\overleftarrow{\beta}}$ and $\beta_{|\beta|} \leq Z_{\overleftarrow{\beta}}$. 
	In this case, from the fact that $ \zeta_{\overleftarrow{\beta},\overleftarrow{\beta}} = \zeta_{\overleftarrow{\beta}} \leq \bar{\zeta}_{\overleftarrow{\beta}} \leq \zeta_{\overleftarrow{\beta},\overleftarrow{\beta}}$, we know that $\bar{\zeta}_{\overleftarrow{\beta}} = \zeta_{\overleftarrow{\beta},\overleftarrow{\beta}}$. 
	Now we have $\bar \zeta_\beta = \zeta_{\beta,\beta} \geq \zeta_\beta$.
\item[(iii)] 
	If neither of the conditions in (i) nor (ii) holds, then $ \zeta_\beta = \xi_\beta \leq \bar \zeta_\beta$.
\end{itemize}
	Now the desired claim holds. 
	
	To verify $I_t \subset \bar I_t$, we observe that $\alpha \in I_t \iff t \in [\xi_\alpha, \zeta_\alpha) \implies t \in [\xi_\alpha, \bar \zeta_\alpha) \iff \alpha \in \bar I_t$.
	To verify $J_t \subset \bar J_t$, we observe that $\alpha \in J_t \iff \zeta_{\alpha,\alpha} = \zeta_\alpha \leq t \implies \zeta_{\alpha,\alpha} = \bar \zeta_\alpha \leq t \iff \alpha \in \bar J_t$. 
\end{proof}
\end{extra}

	We say the offspring distribution $(p_k)_{k\in \bar{\mathbb N}}$ is bounded, if there exists an $m  \in \mathbb N$ such that $p_k = 0$ for every $k>m$ including $k = \infty$. 
	Some elementary results for the branching-coalescing Brownian particle system with bounded offspring distribution  and finite many initial particles  are easy to obtain from the above lemma.
	For example, if the offspring distribution is bounded and the initial number of particles $n$ is finite, we know that almost surely for every $t\geq 0$, $| I_t| \leq |\bar I_t|< \infty$ and $|J_t|\leq |\bar J_t| < \infty$;
	in this case, the $\mathcal N$-valued c\`adl\`ag Markov process
	\begin{equation} 
		\bar {\mathbb X}_t 
		:= \sum_{\alpha \in \bar I_t} \delta_{\bar X^\alpha_t}, \quad t\geq 0
	\end{equation}
	is called the branching Brownian motion; and we can verify that the process of the counting measures
\begin{equation} \label{eq:MeasureValuedProcess}
	\mathbb X_t 
	:= \sum_{\alpha \in I_t} \delta_{X^\alpha_t}, \quad t\geq 0
\end{equation}
	is also an $\mathcal N$-valued c\`adl\`ag Markov process, where $\mathcal N$ is the space of finite $\mathbb Z_+$-valued measures on $\mathbb R$ equipped with the weak topology. 

	However, if there is no assumption made about $(p_k)_{k\in \bar{\mathbb N}}$ and $n$, then it is not a priori clear whether $|I_t|$ and $|J_t|$ are finite.
	To proof that this is indeed the case for almost every $t$, our strategy is to approximate them from below using a family of truncated branching-coalescing Brownian particle systems
\[
	\CB{(X^{(l,m),\alpha}_t)_{t\geq 0}: \alpha \in \mathcal U}, \quad l,m\in \mathbb N \cup \{\infty\}.
\]
	Here, $l$ is the truncation number for the initial particles, $m$ is the truncation number for the branching mechanism, and $\{(X^{(l,m),\alpha}_t)_{t\geq 0}: \alpha \in \mathcal U\}$ will be constructed as a branching-coalescing Brownian particle system with initial configuration $(x_i)_{i=1}^{n\wedge l}$ and some offspring distribution bounded by $m$.
	In fact, $\{(X^{(l,m),\alpha}_t)_{t\geq 0}: \alpha \in \mathcal U\}$ will be constructed in the same 
	probability space as the non-truncated particle system $\{(X^\alpha_t)_{t\geq 0}: \alpha \in \mathcal U\}$ 
	in a way that the truncated particle system is dominated by the original particle system. 
	That is, $X^{(l,m),\alpha}_t \in \mathbb R$ (i.e. $X^{(l,m), \alpha}_t \neq \dagger$) implies that $X^{\alpha}_t = X^{(l,m),\alpha}_t$, for every $\alpha \in \mathcal U$ and $t\geq 0$ almost surely.
	More precisely, this truncated particle system is defined through \eqref{eq:T}--\eqref{eq:TTT} below.
\begin{itemize}
\item[\eq\label{eq:T}]
	For each $\alpha \in \mathcal U$ and $m\in \mathbb N\cup \{\infty\}$, define $Z^{(m)}_{\alpha} :=Z_\alpha \wedge m$. 
	(Recall \eqref{eq:TheLastRule}.)
\item[\eq\label{eq:TT}]
	For each $l,m\in \mathbb N\cup\{\infty\}$, define a family of $\mathbb R_+$-valued random variables $\{\zeta^{( l, m)}_\beta: \beta \in \mathcal U\}$ inductively so that for each $\beta \in \mathcal U$, 
\begin{itemize}
\item[(i)] 
	if $|\beta|=1$ and $\beta \leq n\wedge l$, then
\[
	\zeta^{(l, m)}_\beta 
	:= \inf \RB{ \CB{\zeta_{\beta,\beta}} \cup \CB{\zeta_{\alpha,\beta}: \alpha \in \mathcal U, \alpha \prec \beta, \zeta_{\alpha,\beta} \leq \zeta^{(l,m)}_\alpha}};
\] 
\item[(ii)] 
	if $|\beta| > 1$, $\zeta^{(l,m)}_{\overleftarrow{\beta}} = \zeta_{\overleftarrow{\beta},\overleftarrow{\beta}}$ and $\beta_{|\beta|} \leq Z^{(m)}_{\overleftarrow{\beta}}$, then 
\[
	\zeta^{(l,m)}_\beta 
	:= \inf \RB{ \CB{\zeta_{\beta,\beta}} \cup \CB{\zeta_{\alpha,\beta}: \alpha \in \mathcal U, \alpha \prec \beta, \zeta_{\alpha,\beta} \leq \zeta^{(l,m)}_\alpha}};
\] 
\item[(iii)] 
	if neither of the conditions in (i) nor (ii) hold, then $\zeta^{(l,m)}_\beta := \xi_\beta$.
\end{itemize}
\item[\eq\label{eq:TTT}]
	For each $l,m\in \mathbb N\cup\{\infty\}$ and $\alpha \in \mathcal U$, define $\mathbb R\cup \{\dagger\}$-valued process 
\begin{equation}
	X^{(l,m),\alpha}_t:= 
\begin{cases}
	\dagger, &\quad t\in [0, \xi_\alpha),
	\\ \tilde X^\alpha_t, &\quad t\in [\xi_\alpha, \zeta^{(l,m)}_\alpha),
	\\ \dagger, & \quad t\in [ \zeta^{(l,m)}_\alpha, \infty).
\end{cases}
\end{equation}
\end{itemize}

It is not hard to verify that $\{(X^{(l,m),\alpha}_t)_{t\geq 0}: \alpha \in \mathcal U\}$ is a branching-coalescing Brownian particle system with branching rate $\mu$, initial configuration $(x_i)_{i=1}^{n\wedge l}$, and offspring distribution $(p^{(m)}_k)_{k\in \bar{\mathbb N}}$ such that for every $k\in \bar {\mathbb N}$,
\begin{equation} \label{eq:BOD}
p^{(m)}_k := 
\begin{cases}
	p_k, &\quad k < m,
	\\ \sum_{j \in \bar{\mathbb N}, j\geq m} p_j, &\quad k=m,
	\\ 0, &\quad k > m.
\end{cases}
\end{equation}
We call $\{(X^{(l,m),\alpha}_t)_{t\geq 0}: \alpha \in \mathcal U\}$ the $(l,m)$-truncated version of $\{(X^\alpha_t)_{t\geq 0}: \alpha \in \mathcal U\}$.
	Also note that $\{(X^\alpha_t)_{t\geq 0}: \alpha \in \mathcal U\}$ is the $(\infty,\infty)$-truncated version of itself.

	For each $l,m\in \mathbb N\cup\{\infty\}$ and $t\geq 0$, define 
		\begin{equation} \label{eq:Imt}
		I_t^{(l,m)}
		:=\{\alpha\in \mathcal U: X_t^{(l,m),\alpha} \in \mathbb R\} = \{\alpha\in \mathcal U: \xi_\alpha \leq t<\zeta^{(l,m)}_\alpha\} 
	\end{equation}
	as the collection of labels of alive particles at time $t$ in the $(l,m)$-truncated particle system;
	and 
	\begin{equation}\label{eq:Jmt}
		J_t^{(l,m)} 
		:=\{\alpha \in \mathcal U: \zeta_{\alpha,\alpha}=\zeta^{(l,m)}_\alpha\leq t\}
	\end{equation}
	as the collection of labels of particles who induced a branching event up to time $t$ in the $(l,m)$-truncated particle system. 
	Note that, if $l< \infty$ and $m< \infty$, then the explosion won't happen for the $(l,m)$-truncated version of the branching-coalescing Brownian particle system, since its initial number of particles is bounded by $l$ and its offspring distribution is bounded by $m$; in other words, almost surely for every $t\geq 0$, $| I_t^{(l,m)}|$ and $|J_t^{(l,m)}|$ are finite. 
	
	We often truncate the initial number of the particles and the offspring distribution using the same number, that is, $l = m$. 
	To simplify notations in this case, we write $\zeta^{(m)}_\alpha := \zeta^{(m,m)}_\alpha$, $X^{(m), \alpha}_t:= X^{(m,m),\alpha}_t$, $I^{(m)}_t := I^{(m,m)}_t$ and $J^{(m)}_t := J^{(m,m)}_t$ for every $m\in \mathbb N\cup\{\infty\}$, $\alpha \in \mathcal U$ and $t\geq 0$. 
	The particle systems $\{(X^{(m),\alpha}_t)_{t\geq 0}: \alpha \in \mathcal U\}$ will be referred to as the $m$-truncated version of the original branching-coalescing Brownian particle system.

	For each $m\in \mathbb N$, from how the $m$-truncated particle system is constructed, one can easily verify that it is dominated by the original particle system in the sense that $X_t^{(m),\alpha }\in \mathbb R$ implies that $X_t^{\alpha} = X_t^{(m),\alpha }$ for every $\alpha \in \mathcal U$ and $t\geq 0$ almost surely. 
	In particular, the set of labels $I_t^{(m)}$ is a subset of $I_t$ for every $t\geq 0$ almost surely. 
	It is also not hard to verify that $J_t^{(m)}$ is a subset of $J_t$ for every $t\geq 0$ almost surely. 
	This relationship is made more precise in the following lemma, which also serves as an alternative way of interpreting the truncation.
\begin{lemma} \label{lem:TC}
	Almost surely, for each $m\in \mathbb N$ and $t\geq 0$, we have
\begin{equation}
	I_t^{(m)} = \{\alpha \in \mathcal U: \|\alpha\|_{\infty}\leq m, \alpha \in I_t\}
\end{equation}
and
\begin{equation}
	J_t^{(m)} = \{\alpha \in \mathcal U: \|\alpha\|_{\infty} \leq m, \alpha \in J_t\}.  
\end{equation}
In particular, for any $t\geq 0$, $|I_t^{(m)}|$ and $|J_t^{(m)}|$ increasingly converges to $|I_t|$ and $|J_t|$, respectively, as $m\uparrow \infty$.
\end{lemma}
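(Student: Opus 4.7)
The plan is to establish by strong induction on the total order $\prec$ that for every $m\in \mathbb N$ and every $\alpha \in \mathcal U$, almost surely
\[
	\zeta^{(m)}_\alpha
	= \begin{cases}
		\zeta_\alpha, & \text{if } \|\alpha\|_\infty \leq m, \\
		\xi_\alpha, & \text{if } \|\alpha\|_\infty > m.
	\end{cases}
\]
Once this is proved, the two set identities in the lemma follow immediately from \eqref{eq:Imt} and \eqref{eq:Jmt}: a label $\alpha$ with $\|\alpha\|_\infty > m$ has $\zeta^{(m)}_\alpha = \xi_\alpha$, which forces it out of $I_t^{(m)}$ (since $\xi_\alpha \le t < \zeta^{(m)}_\alpha$ becomes impossible) and out of $J_t^{(m)}$ (using the almost sure fact $\xi_\alpha < \zeta_{\alpha,\alpha}$, which holds because $\mathfrak N$ is locally finite), while a label with $\|\alpha\|_\infty \le m$ satisfies $\zeta^{(m)}_\alpha = \zeta_\alpha$ and so belongs to $I_t^{(m)}$ (resp.\ $J_t^{(m)}$) iff it belongs to $I_t$ (resp.\ $J_t$). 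The monotone convergence $|I_t^{(m)}| \uparrow |I_t|$ and $|J_t^{(m)}|\uparrow |J_t|$ as $m\uparrow \infty$ is then immediate from $\{\alpha \in \mathcal U : \|\alpha\|_\infty \le m\} \uparrow \mathcal U$.

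The induction on $\prec$ is well-founded because, for any $\alpha \in \mathcal U$, the predecessor set $\{\gamma \in \mathcal U : \gamma \prec \alpha\}$ is finite: every such $\gamma$ satisfies $\|\gamma\|_\infty \le \|\alpha\|_\infty$ and $|\gamma| \le |\alpha|$. The key structural property we exploit is that clause (i) in the definition of $\prec$ forces $\gamma \prec \alpha \Rightarrow \|\gamma\|_\infty \le \|\alpha\|_\infty$, so when $\|\alpha\|_\infty \le m$ the inductive hypothesis gives $\zeta^{(m)}_\gamma = \zeta_\gamma$ for every $\gamma \prec \alpha$. The inductive step is then a direct comparison of the case distinction in \eqref{eq:Lifetime} with that in \eqref{eq:TT}: the conditions of case (i) match (since $\alpha \le n\wedge m \iff \alpha \le n$ when $\alpha \le m$); the conditions of case (ii) match (since $\zeta^{(m)}_{\overleftarrow{\alpha}} = \zeta_{\overleftarrow{\alpha}}$ by IH, and $\alpha_{|\alpha|} \le m$ makes $\alpha_{|\alpha|} \le Z^{(m)}_{\overleftarrow{\alpha}}$ equivalent to $\alpha_{|\alpha|} \le Z_{\overleftarrow{\alpha}}$); and the two infima over $\gamma \prec \alpha$ agree by IH. When $\|\alpha\|_\infty > m$, either $\|\overleftarrow{\alpha}\|_\infty > m$, in which case IH gives $\zeta^{(m)}_{\overleftarrow{\alpha}} = \xi_{\overleftarrow{\alpha}} < \zeta_{\overleftarrow{\alpha},\overleftarrow{\alpha}}$ almost surely, or else $\alpha_{|\alpha|} > m \ge Z^{(m)}_{\overleftarrow{\alpha}}$; in either case condition (ii) of \eqref{eq:TT} fails, condition (i) does not apply, and case (iii) of \eqref{eq:TT} delivers $\zeta^{(m)}_\alpha = \xi_\alpha$.

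The main conceptual point, which is the reason the specific order $\prec$ was introduced in the first place, is that the truncation ``keep only labels with $\|\alpha\|_\infty \le m$'' must be consistent with the coalescing rule ``the larger label dies''. If the order $\prec$ did not respect $\|\cdot\|_\infty$, then a particle $\alpha$ in the truncated system could have its death time shifted because a would-be killer $\gamma$ (necessarily satisfying $\gamma \prec \alpha$) has $\|\gamma\|_\infty > m$ and is therefore absent from the truncated system; clause (i) of $\prec$ rules this out, ensuring that every particle with $\|\alpha\|_\infty \le m$ encounters exactly the same set of potential killers in both systems. Once this observation is in place, the induction reduces to routine bookkeeping of the three cases in \eqref{eq:Lifetime} and \eqref{eq:TT}, which I expect to be the only real work in writing out a full proof.
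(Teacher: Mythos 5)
Your proposal follows essentially the same route as the paper's own proof in Appendix A.1: fix $m$, prove by induction over the order $\prec$ that almost surely $\zeta^{(m)}_\alpha=\zeta_\alpha$ when $\|\alpha\|_\infty\leq m$ and $\zeta^{(m)}_\alpha=\xi_\alpha$ when $\|\alpha\|_\infty>m$, using exactly the key structural fact that $\gamma\prec\alpha$ forces $\|\gamma\|_\infty\leq\|\alpha\|_\infty$ so that the inductive hypothesis makes the two infima in \eqref{eq:Lifetime} and \eqref{eq:TT} coincide; the deduction of the set identities and the monotone convergence is then the same routine step the paper leaves implicit. One slip worth correcting: your justification of the induction via ``the predecessor set $\{\gamma:\gamma\prec\alpha\}$ is finite'' is false, since $\gamma\prec\alpha$ does not imply $|\gamma|\leq|\alpha|$ (for instance every label consisting only of $1$'s, of arbitrary length, precedes $(2)$). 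The induction is nevertheless legitimate because $\prec$ is a well-order: any nonempty subset of $\mathcal U$ has a $\prec$-least element, obtained by minimizing first $\|\cdot\|_\infty$, then the length, then lexicographically within the resulting finite block. With that repair (and a countable intersection over $(m,\alpha)$ of the almost-sure events, as in the paper), your argument is complete.
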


The proof of Lemma \ref{lem:TC} is technical, and is given in Appendix \ref{sec:TC}. 

\subsection{The point processes and their compensator}

In this subsection, let us recall some preliminary results on the stochastic integral of the point processes from \cite{MR1011252}. 
Suppose that $E$ is a Polish space, and $(\Omega, \mathscr F, (\mathscr F_t)_{t\geq 0}, \mathbb P)$ is a filtered probability space satisfying the usual condition. 
We say $\mathfrak G$ is a ($E$-valued) point process if it is a random measure on $(0,\infty) \times E$ and, almost surely, there exists a countable $S\subset (0,\infty)$ and a map $g: S\to E$ such that
\[
\mathfrak G((0,t]\times U) = \ABS{\CB{s\in S: s\leq t, g(s) \in U}}, \quad t>0, U \in \mathscr B(E). 
\]
We say a point process $\mathfrak G$ is adapted, if the process $\mathfrak G((0,\cdot ]\times U)$ is adapted for each $U \in \mathscr B(E)$. 
With
$
\Gamma_{\mathfrak G}:= \{U \in \mathscr B(E): \mathbb E[\mathfrak G((0,t]\times U)] < \infty \text{ for all } t>0\},
$
we say a point process $\mathfrak G$ is $\sigma$-finite, if there exists a sequence of $\{ E_n \in \Gamma_{\mathfrak G}: n \in \mathbb N \}$ such that $E_n \uparrow E$. 
We say a random measure $\mathfrak G$ is of the class QL, if it is an adapted, $\sigma$-finite point process, and there exists a non-negative random measure $\hat {\mathfrak G}$ on $(0,\infty)\times \mathbb E$ such that for every  $U \in \Gamma_{\mathfrak G}$, 
\begin{itemize}
	\item  $\hat {\mathfrak G}((0,\cdot] \times U )$ is a continuous adapted process; and
	\item $\mathfrak G((0,\cdot] \times U) - \hat {\mathfrak G}((0,\cdot] \times U )$ is a martingale. 
\end{itemize}
We call $\hat{\mathfrak G}$ the compensator of $\mathfrak G$.
Denote by $\mathscr L$ the space of predictable random fields on $\mathbb R_+ \times E$. 
For any non-negative random measure $\mathfrak G$ on $(0,\infty)\times E$ and $k \in \{1,2\}$, define 
\[
\mathscr L^{k}_{\mathfrak G} := \CB{ f \in \mathscr L: \mathbb E\SB{\iint_0^t |f(s,y)|^k \mathfrak G(\mathrm ds, \mathrm dy)} < \infty, \forall t\geq 0},
\]
and
\[
\mathscr L^{k,\mathrm{loc}}_{\mathfrak G} := \CB{ f \in \mathscr L: \iint_0^t |f(s,y)|^k \mathfrak G(\mathrm ds, \mathrm dy) < \infty, \forall t\geq 0, \text{a.s.}}.
\]
For each $k \in \{1,2\}$, denote by $\mathscr M^k$ the space of martingales $(m_t)_{t\geq 0}$ such that $\mathbb E[|m_t|^k] < \infty$ for every $t\geq 0$; and by $\mathscr M^{k, \mathrm{loc}}$ the space of processes $(m_t)_{t\geq 0}$ such that $(m_{t\wedge \sigma_n})_{t\geq 0}$ belongs to $\mathscr M^k$ for some sequence of stopping times $\sigma_n \uparrow \infty$. 

For a QL point process $\mathfrak G$ with compensator $\hat{\mathfrak G}$, its compensated stochastic integral, denoted by
\[
\mathcal I_{\tilde {\mathfrak G}}: f \mapsto \iint_0^{\cdot} f(s,y) \tilde {\mathfrak G}(\mathrm ds, \mathrm dy),
\]
is constructed, for example, in \cite{MR1011252}.
We collect some basic facts about this integration in following lemma.
\begin{lemma} \label{eq:CompensatorDecom}
	Let $\mathfrak G$ be a QL point process with compensator $\hat{\mathfrak G}$.
	\begin{enumerate}
		\item 	If $f \in 	\mathscr L^{2}_{\hat{\mathfrak G}}$ then $\mathcal I_{\tilde {\mathfrak G}} f \in \mathscr M^{2}$.
		\item 	If $f \in 	\mathscr L^{2,\mathrm{loc}}_{\hat{\mathfrak G}}$ then $\mathcal I_{\tilde {\mathfrak G}} f \in \mathscr M^{2, \mathrm{loc}}$.
		\item 	$\mathscr L^{1}_{\mathfrak G}=\mathscr L^{1}_{\hat{\mathfrak G}}$. Moreover, if $f \in 	\mathscr L^{1}_{\mathfrak G}$ then $\mathcal I_{\tilde {\mathfrak G}} f \in \mathscr M^{1}$.
		\item   If $f\in \mathscr L^{1,\mathrm{loc}}_{\hat{\mathfrak G}}$ then $f\in \mathscr L^{1,\mathrm{loc}}_{\mathfrak G}$, $\mathcal I_{\tilde {\mathfrak G}} f \in \mathscr M^{1, \mathrm{loc}}$, and
		\[
		\mathcal I_{\tilde {\mathfrak G}} f = \iint_0^{\cdot} f(s,y) \mathfrak G(\mathrm ds, \mathrm dy) - \iint_0^{\cdot} f(s,y) \hat{\mathfrak G}(\mathrm ds, \mathrm dy).
		\]
	\end{enumerate}
\end{lemma}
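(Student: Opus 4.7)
The plan is to reduce each claim to standard results on stochastic integration with respect to compensated random measures, as developed in \cite{MR1011252}*{Chapter II, Section 3}; only the bookkeeping between the $\mathfrak G$- and $\hat{\mathfrak G}$-versions of the integrability classes needs independent verification. For (1), I would start from the $L^2$ isometry
\begin{equation*}
\mathbb E\SB{\RB{\iint_0^t f(s,y)\tilde{\mathfrak G}(\mathrm ds,\mathrm dy)}^2}
= \mathbb E\SB{\iint_0^t f(s,y)^2\hat{\mathfrak G}(\mathrm ds,\mathrm dy)},
\end{equation*}
valid by construction for bounded predictable $f$ supported in a set from $\Gamma_{\mathfrak G}$, and extend by density to all of $\mathscr L^2_{\hat{\mathfrak G}}$. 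This simultaneously yields square-integrability and the $\mathscr M^2$-martingale property.

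For (3), the crucial identity
\begin{equation*}
\mathbb E\SB{\iint_0^t g(s,y)\mathfrak G(\mathrm ds,\mathrm dy)} = \mathbb E\SB{\iint_0^t g(s,y)\hat{\mathfrak G}(\mathrm ds,\mathrm dy)},\quad t\geq 0,
\end{equation*}
valid for every non-negative predictable $g$, follows from the defining martingale property of $\mathfrak G((0,\cdot]\times U)-\hat{\mathfrak G}((0,\cdot]\times U)$ for $U\in\Gamma_{\mathfrak G}$ via a monotone class argument, using $\sigma$-finiteness to extend from indicators $\mathbf 1_{(a,b]\times U}$ with $U\in \Gamma_{\mathfrak G}$ to arbitrary non-negative predictable integrands. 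Taking $g=|f|$ gives $\mathscr L^1_{\mathfrak G}=\mathscr L^1_{\hat{\mathfrak G}}$. Once both raw integrals against $\mathfrak G$ and $\hat{\mathfrak G}$ are integrable, the decomposition
\begin{equation*}
\mathcal I_{\tilde{\mathfrak G}}f = \iint_0^{\cdot} f(s,y)\mathfrak G(\mathrm ds,\mathrm dy) - \iint_0^{\cdot} f(s,y)\hat{\mathfrak G}(\mathrm ds,\mathrm dy)
\end{equation*}
holds on the simple integrands used to build $\mathcal I_{\tilde{\mathfrak G}}$, and extends to $\mathscr L^1_{\mathfrak G}$ by $L^1$-approximation, delivering the $\mathscr M^1$-property.

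Parts (2) and (4) follow by standard localization. For (2), I would take
\begin{equation*}
\sigma_n := \inf\CB{t\geq 0 : \iint_0^t f(s,y)^2\hat{\mathfrak G}(\mathrm ds,\mathrm dy) \geq n},
\end{equation*}
so that $\sigma_n\uparrow\infty$ by hypothesis and $f\mathbf 1_{(0,\sigma_n]}\in\mathscr L^2_{\hat{\mathfrak G}}$; part (1) then puts $(\mathcal I_{\tilde{\mathfrak G}}f)_{\cdot\wedge\sigma_n}$ into $\mathscr M^2$. Part (4) is analogous, with $\sigma_n:=\inf\CB{t\geq 0 : \iint_0^t |f(s,y)|\,\hat{\mathfrak G}(\mathrm ds,\mathrm dy) \geq n}$; applying part (3) to $f\mathbf 1_{(0,\sigma_n]}$ transfers the localized integrability from $\hat{\mathfrak G}$ to $\mathfrak G$ (so in particular $f\in \mathscr L^{1,\mathrm{loc}}_{\mathfrak G}$), and the displayed decomposition holds on each stochastic interval $[0,\sigma_n]$ and passes to the limit as $n\to\infty$.

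The proof is essentially a compilation, so I do not expect a genuine obstacle. The single point that requires real care rather than pure citation is part (3): the equivalence $\mathscr L^1_{\mathfrak G}=\mathscr L^1_{\hat{\mathfrak G}}$ must be established \emph{before} the decomposition in (4) is written down, since otherwise neither of the two raw integrals on the right-hand side is separately guaranteed to be finite, and the difference has no a priori meaning outside the compensated integral $\mathcal I_{\tilde{\mathfrak G}}f$ itself.
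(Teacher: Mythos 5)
Your proposal is correct and follows essentially the same route as the paper's argument: both reduce parts (1)--(2) to the Ikeda--Watanabe construction of the compensated integral, establish $\mathscr L^1_{\mathfrak G}=\mathscr L^1_{\hat{\mathfrak G}}$ by extending the defining martingale identity to nonnegative predictable integrands via monotone convergence (the paper truncates $f$ in value and space, you run a monotone class argument from indicators, which is the same idea), and obtain (4) by localizing with stopping times built from the continuous $\hat{\mathfrak G}$-integral and passing to the limit in the decomposition. Your closing remark that the identity in (3) must precede the decomposition in (4) matches the paper's ordering of the argument.
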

\begin{extra}
\begin{note}
	We give a proof for the above lemma for the sake of completeness.
	\begin{proof}
		(1) When $f \in	\mathscr L^{2}_{\hat{\mathfrak G}}$, the definition of $\mathcal I_{\tilde {\mathfrak G}} f$ is given as the limit of a Cauchy sequence in $\mathscr M^{2}$ \cite[p. 63]{MR1011252}. 
		
		(2) When $f \in	\mathscr L^{2, \mathrm{loc}}_{\hat{\mathfrak G}}$, $\mathcal I_{\tilde {\mathfrak G}} f$ is defined as an element in $\mathscr M^{2, \mathrm{loc}}$ \cite[p. 63]{MR1011252}. 
		
		(3) When $f \in \mathscr L^{1}_{\hat{\mathfrak G}}$, then it is shown in  \cite[p. 62]{MR1011252} that 
		\[
			\mathbb E\SB{\iint_0^t |f(s,y)| \mathfrak G(\mathrm ds,\mathrm dy) } = \mathbb E\SB{\iint_0^t |f(s,y)| \hat{\mathfrak G}(\mathrm ds,\mathrm dy) }< \infty
		\]
		which implies that $f \in \mathscr L^{1}_{\mathfrak G}$. On the contrary, for any $f \in \mathscr L^{1}_{\mathfrak G}$, noticing that
		\[
			f_n(s,y) := \mathbf 1_{E_n}(y) \mathbf 1_{[-n,n]}( f(s,y)) f(s,y) \in  \mathscr L^{1}_{\hat{\mathfrak G}}, \quad n\in \mathbb N
		\] 
		where $(E_n)_{n\in \mathbb N}$ is a sequence of element in $\Gamma_{\mathfrak G}$ such that $E_n \uparrow E$, we have, by the monotone convergence theorem, 
		\begin{align}
			 &\mathbb E\SB{\iint_0^t |f(s,y)| \hat{\mathfrak G}(\mathrm ds,\mathrm dy) }= \lim_{n\uparrow \infty}  \mathbb E\SB{\iint_0^t |f_n(s,y)| \hat{\mathfrak G}(\mathrm ds,\mathrm dy) }
			 \\&= \lim_{n\uparrow \infty}  \mathbb E\SB{\iint_0^t |f_n(s,y)| \mathfrak G(\mathrm ds,\mathrm dy) }= \mathbb E\SB{\iint_0^t |f(s,y)| \mathfrak G(\mathrm ds,\mathrm dy) } < \infty. 
		\end{align}
		Therefore we have $\mathscr L^{1}_{\mathfrak G}= \mathscr L^{1}_{\hat{\mathfrak G}}$. 
		 Moreover, if $f \in 	\mathscr L^{1}_{\mathfrak G}$ then $\mathcal I_{\tilde {\mathfrak G}} f$ is already defined as an element in $\mathscr M^{1}$ \cite[p. 62]{MR1011252}.
	
		(4) When $f\in \mathscr L^{1,\mathrm{loc}}_{\hat{\mathfrak G}}$, we know that
	\[
	\iint_0^t |f(s,y)| \hat{\mathfrak G}(\mathrm ds, \mathrm dy) < \infty, \quad \forall t\geq 0, \text{a.s.}
	\]
		It is also clear that the integral on the left hand side in the above display is continuous in $t$. 
		In particular, we can find a sequence of predictable stopping times $(\tau_n)_{n\in \mathbb N}$ such that, almost surely, $\tau_n\uparrow \infty$ when $n\uparrow \infty$ and that
		\[
	\iint_0^{\tau_n} |f(s,y)| \hat{\mathfrak G}(\mathrm ds, \mathrm dy) = n, \quad \text{a.s.}
	\]
	From the result in (3), it is clear that $g_n \in \mathscr L^{1}_{\hat{\mathfrak G}} = \mathscr L^{1}_{\mathfrak G}$ where $g_n(s,y) := \mathbf 1_{[0,\tau_n]}(s)f(s,y)$ for each $s\geq 0$ and $y\in \mathbb R$. 
	In particular, almost surely for every $n\in \mathbb N$, 
		\[
\iint_0^{\tau_n} |f(s,y)| \mathfrak G(\mathrm ds, \mathrm dy) < \infty, \quad \text{a.s.}
\]	
	This implies that $f\in \mathscr L^{1,\mathrm{loc}}_{\mathfrak G}$.
	
	From $g_n \in \mathscr L^{1}_{\hat{\mathfrak G}}$ and \cite[p. 62]{MR1011252}, we know that $\mathcal I_{\tilde {\mathfrak G}} g_n \in \mathscr M^1$ and is given by 
\begin{equation}
	\mathcal I_{\tilde {\mathfrak G}} g_n = \iint_0^\cdot g_n(s,y) \mathfrak G(\mathrm ds,\mathrm dy) - \iint_0^\cdot g_n(s,y) \hat{\mathfrak G}(\mathrm ds,\mathrm dy)
\end{equation}
	It is also clear that $f \in \mathscr L^{2,\mathrm{loc}}_{\hat{\mathfrak G}}$. 
	Therefore from \cite[p. 63]{MR1011252}, we know that  almost surely for any $t\geq 0$ and $n\in \mathbb N$,
\begin{equation} \label{eq:AM}
	\mathcal I_{\tilde {\mathfrak G}}  f(t\wedge \tau_n) =  	\mathcal I_{\tilde {\mathfrak G}} g_n(t) = \iint_0^t g_n(s,y) \mathfrak G(\mathrm ds,\mathrm dy) - \iint_0^t g_n(s,y) \hat{\mathfrak G}(\mathrm ds,\mathrm dy).
\end{equation}
	This implies that $\mathcal I_{\tilde {\mathfrak G}}  f \in \mathscr M^{1,\mathrm{loc}}$.

	Since $\tau_n\uparrow \infty$, so for every $t\geq 0$, there exists a random $N=N(t)$ such that $t\leq \tau_{N}$ almost surely. 
	Now for every $t\geq 0$, 
\begin{equation}
	\iint_0^{t} |f(s,y)| \hat{\mathfrak G}(\mathrm ds, \mathrm dy) \leq N(t)<\infty, \quad \text{a.s.}
\end{equation}
	Recall we have shown $f\in \mathscr L^{1,\mathrm{loc}}_{\mathfrak G}$, which says that for every $t\geq 0$, almost surely
\begin{equation}
	\iint_0^{t} |f(s,y)| \mathfrak G(\mathrm ds, \mathrm dy) \leq N(t)<\infty, \quad \text{a.s.}
\end{equation}
	Now, taking $n\uparrow \infty$ in \eqref{eq:AM}, by dominated convergence, we have for every $t\geq 0$ almost surely
\begin{equation}
		\mathcal I_{\tilde {\mathfrak G}}  f(t) =  \iint_0^{\cdot} f(s,y) \mathfrak G(\mathrm ds, \mathrm dy) - \iint_0^{\cdot} f(s,y) \hat{\mathfrak G}(\mathrm ds, \mathrm dy).
\end{equation}
	\end{proof}
\end{note}
\end{extra}

Let us now consider an arbitrary branching-coalescing Brownian particle system \[\{(X^\alpha_t)_{t\geq 0}: \alpha \in \mathcal U\},\] constructed in Section \ref{eq:DualParticle}, with arbitrary branching rate $\mu>0$, initial configuration $(x_i)_{i=1}^n$, and offspring distribution $(p_k)_{k\in \bar{\mathbb N}}$. Recall the definitions of ${\mathfrak N}, \hat {\mathfrak N}$ and $ {\mathfrak M}, \hat {\mathfrak M}$ in \eqref{eq:hatN} and \eqref{eq:ComM}, respectively.
Denote by $(\tilde{\mathscr F}_{t})_{t\geq 0}$ the smallest filtration of the probability space $(\tilde \Omega, \tilde {\mathscr F}, \tilde {\mathbb P})$, satisfying the usual hypothesis, such that the following processes are $(\tilde{\mathscr F}_{t})_{t\geq 0}$-adapted:
\begin{itemize}
	\item 
	$B_\cdot^\alpha$ for each $\alpha \in \mathcal U$; 
	\item 
	$\mathfrak{N}((0,\cdot]\times \{\alpha\} \times \{k\})$ for each $\alpha \in \mathcal U$, and $k \in \bar{\mathbb N}$; and
	\item
	$\mathfrak M\RB{(0,\cdot]\times \{(\alpha,\beta)\}}$ for each $(\alpha, \beta) \in \mathcal R$. 
\end{itemize}
With respect to this filtration $(\tilde{\mathscr F}_{t})_{t\geq 0}$, we can verify that 
\begin{itemize}
	\item 
	the processes $X^\alpha_\cdot$ and $\tilde X^\alpha_\cdot$ are adapted for each $\alpha\in \mathcal U$;
	\item 
	the process $L^{\alpha,\beta}_{\cdot, z}$ are adapted for each $\alpha, \beta \in \mathcal R$ and $z\in \mathbb R$; 
	\item 
	the random variables $\xi_\alpha$ and $\zeta_\alpha$ are stopping times for each $\alpha \in \mathcal U$; and
	\item 
	the random variable $Z_\alpha$ is $\tilde {\mathscr F}_{\zeta_{\alpha,\alpha}}$-measurable for each $\alpha \in \mathcal U$. 
\end{itemize}

The main message of this subsection is the following lemma, whose proof is straightforward, and therefore, omitted.
\begin{lemma} \label{lem:ThePointMeasures}
	The random measures $\mathfrak M$ and $\mathfrak N$ are QL point processes with compensators $\hat {\mathfrak M}$ and $\hat {\mathfrak N}$ respectively.
\end{lemma}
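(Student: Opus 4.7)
The plan is to verify, for each of $\mathfrak{N}$ and $\mathfrak{M}$, the three defining properties of a QL point process: adaptedness, $\sigma$-finiteness, and existence of the asserted continuous adapted compensator making the difference a martingale.

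For $\mathfrak{N}$, adaptedness is immediate from the definition of $(\tilde{\mathscr{F}}_t)_{t\geq 0}$. Taking the exhausting sequence $E_n := \{\alpha \in \mathcal{U}: \|\alpha\|_\infty \leq n\} \times \bar{\mathbb{N}}$, we have $\hat{\mathfrak{N}}((0,t] \times E_n) = \mu t \cdot |\{\alpha : \|\alpha\|_\infty \leq n\}| < \infty$, so $\mathfrak{N}$ is $\sigma$-finite. Since $\mathfrak{N}$ is a Poisson random measure with deterministic intensity $\hat{\mathfrak{N}}$, for every $U \in \Gamma_{\mathfrak{N}}$ the compensated process $\mathfrak{N}((0,\cdot]\times U) - \hat{\mathfrak{N}}((0,\cdot]\times U)$ has independent centered increments with finite second moments and so is a martingale, and continuity of $\hat{\mathfrak{N}}((0,\cdot]\times U)$ in $t$ is automatic as it is a linear function of $t$.

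For $\mathfrak{M}$, adaptedness is again by construction, and for $\sigma$-finiteness we take $F_n := \{(\alpha,\beta)\in \mathcal{R}: \|\alpha\|_\infty \vee \|\beta\|_\infty \leq n\}$. For each pair $(\alpha,\beta)$, a Tanaka-type computation applied to the continuous semimartingale $\tilde{X}^\alpha_\cdot - \tilde{X}^\beta_\cdot$ gives $\tilde{\mathbb{E}}[L^{\alpha,\beta}_t] < \infty$, and summing over the finitely many pairs in $F_n$ yields $\tilde{\mathbb{E}}[\hat{\mathfrak{M}}((0,t]\times F_n)] < \infty$. The process $\hat{\mathfrak{M}}((0,\cdot]\times V)$ is continuous and adapted because each $L^{\alpha,\beta}_\cdot$ is so by \eqref{eq:LocalTime}.

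The remaining and, I expect, only nontrivial step is the martingale identity for $\mathfrak{M}-\hat{\mathfrak{M}}$. By linearity it suffices to fix $(\alpha,\beta)\in\mathcal{R}$ and show that
\[
M_t := \mathfrak{M}((0,t]\times\{(\alpha,\beta)\}) - \tfrac{1}{2}\, L^{\alpha,\beta}_t
\]
is an $(\tilde{\mathscr{F}}_t)$-martingale. Set $\mathcal{G} := \sigma(\{B^\gamma\}_{\gamma \in \mathcal{U}}, \mathfrak{N})$; the entire process $L^{\alpha,\beta}_\cdot$ is $\mathcal{G}$-measurable. By the conditional Poisson structure \eqref{eq:ComM}, given $\mathcal{G}$ the measure $\mathfrak{M}(\cdot\times\{(\alpha,\beta)\})$ is an inhomogeneous Poisson random measure with the $\mathcal{G}$-measurable intensity $\tfrac{1}{2}\,dL^{\alpha,\beta}_\cdot$, and is independent of the other components of $\mathfrak{M}$. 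For $s<t$, conditioning on $\tilde{\mathscr{F}}_s \vee \mathcal{G}$ and using the independent increments of a Poisson random measure, one obtains
\[
\tilde{\mathbb{E}}[M_t \mid \tilde{\mathscr{F}}_s \vee \mathcal{G}] = \mathfrak{M}((0,s]\times\{(\alpha,\beta)\}) - \tfrac{1}{2}\, L^{\alpha,\beta}_s = M_s,
\]
which is $\tilde{\mathscr{F}}_s$-measurable, so towering gives $\tilde{\mathbb{E}}[M_t \mid \tilde{\mathscr{F}}_s] = M_s$. The main obstacle is to justify rigorously that, conditionally on $\tilde{\mathscr{F}}_s \vee \mathcal{G}$, the post-$s$ increments of $\mathfrak{M}(\cdot\times\{(\alpha,\beta)\})$ retain their Poisson law; this will follow because $\tilde{\mathscr{F}}_s$ only adds information about $\mathfrak{M}$ restricted to $(0,s]$, and conditionally on $\mathcal{G}$ the Poisson random measure has independent increments.
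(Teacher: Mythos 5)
Your proof is correct and follows essentially the same route as the paper's (omitted-as-straightforward) argument: $\mathfrak N$ is handled as a Poisson random measure with deterministic $\sigma$-finite intensity, and for $\mathfrak M$ one conditions on the driving randomness (Brownian motions and $\mathfrak N$, with respect to which $L^{\alpha,\beta}$ is measurable) so that the conditional Poisson structure of \eqref{eq:ComM} and its independent increments give the martingale identity, then applies the tower property; the paper merely conditions on the slightly larger construction $\sigma$-field $\mathscr G_{\beta}$ and sums over the pairs of a general $U\in\Gamma_{\mathfrak M}$ directly, which is a cosmetic rather than substantive difference.
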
  

Using the above result, we can verify the following lemma. 

\begin{lemma} \label{lem:Compensator}
	For every $t\geq 0$ it holds that 
	\[
	\tilde{\mathbb E}\SB{\ABS{J_t}} 
	= \mu \tilde{\mathbb E}\SB{\int_0^t |I_s| \mathrm ds}.
	\]
\end{lemma}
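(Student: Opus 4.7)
The plan is to reduce to the truncated particle systems, apply the compensator identity of Lemma \ref{eq:CompensatorDecom} there, and then pass to the limit using Lemma \ref{lem:TC}. The advantage of this route is that for finite $m$ the $m$-truncated system has bounded offspring and finitely many initial particles, so the compensator formula applies without hidden integrability issues; moreover, Lemma \ref{lem:TC} converts the two sides of the resulting identity into the two sides of the desired formula.

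First I would rewrite $|J_t^{(m)}|$ as a stochastic integral against $\mathfrak N$. The key observation is that, by the definition \eqref{eq:xiab} of $\zeta_{\alpha,\alpha}$ as the first atom of $\mathfrak N(\cdot\times\{\alpha\}\times\bar{\mathbb N})$ strictly after $\xi_\alpha$, together with the bound $\zeta^{(m)}_\alpha \leq \zeta_{\alpha,\alpha}$ from the construction \eqref{eq:TT}, the measure $\mathfrak N$ places at most one atom at label $\alpha$ inside the stochastic interval $(\xi_\alpha, \zeta^{(m)}_\alpha]$, and such an atom exists precisely when $\alpha \in J_\infty^{(m)}$. This yields the pathwise identity
\[
|J_t^{(m)}| \;=\; \iint_{(0,t]\times\mathcal U\times\bar{\mathbb N}} H^{(m)}_\alpha(s)\,\mathfrak N(\mathrm ds,\mathrm d\alpha,\mathrm dk), \qquad H^{(m)}_\alpha(s) := \mathbf 1_{\{\xi_\alpha < s \leq \zeta^{(m)}_\alpha\}}.
\]
Because $\xi_\alpha$ and $\zeta^{(m)}_\alpha$ are $(\tilde{\mathscr F}_t)$-stopping times, the integrand $H^{(m)}$ is adapted and left-continuous in $s$, hence predictable.

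Next I would verify integrability. The coupling of Lemma \ref{lem:Coupling} dominates $|I_s^{(m)}|$ by the size of a branching Brownian motion with at most $m$ initial particles, branching rate $\mu$, and offspring bounded by $m$, whose mean grows at most like $m\,e^{\mu m s}$. Since $\hat{\mathfrak N}(\mathrm ds,\mathrm d\alpha,\mathrm dk)$ factors as $\mu\,\mathrm ds$ times counting measure on $\mathcal U$ times the offspring distribution $(p_k)_{k\in\bar{\mathbb N}}$, a direct computation gives
\[
\iint_{(0,t]\times\mathcal U\times\bar{\mathbb N}} H^{(m)}_\alpha(s)\,\hat{\mathfrak N}(\mathrm ds,\mathrm d\alpha,\mathrm dk) \;=\; \mu\int_0^t |I_s^{(m)}|\,\mathrm ds,
\]
whose expectation is therefore finite. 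Hence $H^{(m)} \in \mathscr L^1_{\hat{\mathfrak N}} = \mathscr L^1_{\mathfrak N}$ by Lemma \ref{eq:CompensatorDecom}(3), and together with Lemma \ref{lem:ThePointMeasures} this makes $\iint H^{(m)}\,\mathrm d(\mathfrak N - \hat{\mathfrak N})$ a true martingale starting at zero. Taking expectations in the identity for $|J_t^{(m)}|$ yields
\[
\tilde{\mathbb E}\!\left[|J_t^{(m)}|\right] \;=\; \mu\,\tilde{\mathbb E}\!\left[\int_0^t |I_s^{(m)}|\,\mathrm ds\right].
\]

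Finally I would let $m\uparrow\infty$. By Lemma \ref{lem:TC}, $|J_t^{(m)}|\uparrow|J_t|$ and $|I_s^{(m)}|\uparrow|I_s|$ monotonically for every $t,s\geq 0$; two applications of monotone convergence (for the $\mathrm ds$-integral and for the expectations) deliver the identity, with both sides permitted to read $+\infty$. The main technical obstacle is securing integrability before passing to the limit, since $|J_t|$ itself may fail to be integrable for the untruncated system; working inside the $m$-truncated system, where the branching Brownian motion domination provides the necessary $L^1$ control, is precisely what turns the compensator formula into an honest expectation identity.
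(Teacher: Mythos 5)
Your proposal is correct and follows essentially the same route as the paper: both express the number of branching events as a point-process integral of a predictable indicator against $\mathfrak N$, secure $L^1$ control via the coupling with a branching Brownian system (Lemma \ref{lem:Coupling}), invoke Lemmas \ref{eq:CompensatorDecom} and \ref{lem:ThePointMeasures} to replace $\mathfrak N$ by its compensator, and then pass to the general case by letting $m\uparrow\infty$ with Lemma \ref{lem:TC} and monotone convergence. The only cosmetic differences are that you work in the $m$-truncated system throughout (the paper first treats the bounded case in untruncated notation) and you check integrability on the compensator side rather than on the $\mathfrak N$ side, which is an equivalent use of Lemma \ref{eq:CompensatorDecom}(3).
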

\begin{proof}
	Fix an arbitrary $t\geq 0$.
	Let us first assume that the number of the initial particles is finite, and the offspring distribution is bounded. 
	Notice that for each $\alpha \in \mathcal U$, $\alpha \in J_t$ if and only if there exists a (unique) $s\in (0,t]$ such that $X_{s-}^{\alpha} \in \mathbb R$ and $\mathfrak N(\{s\}\times \{\alpha\} \times \bar {\mathbb N}) = 1$. 
	Therefore, almost surely,
	\begin{align} \label{eq:DecOfBranchingNumbers}
		\ABS{J_{t}} =  \int_{\mathcal U} \int_0^t \mathbf 1_{\{X^{\alpha}_{s-} \in \mathbb R \}}  \mathfrak N (\mathrm ds, \mathrm d\alpha, \bar {\mathbb N}).
	\end{align}
	From Lemma \ref{lem:Coupling}, $|J_t|$ is dominated by $|\bar J_t|$, the number of branching events of the coupling branching Brownian particle system.
	Therefore $\tilde{\mathbb E}[|J_t|] \leq \tilde{\mathbb E}[|\bar J_t|] <\infty$ since we assumed that the number of initial particles is finite and the offspring distribution is bounded.
	Now from Lemmas \ref{eq:CompensatorDecom}, \ref{lem:ThePointMeasures}, and Fubini's theorem we have
	\[
	\tilde{\mathbb E}\SB{\ABS{J_t}}  = \mu \tilde{\mathbb E}\SB{ \sum_{\alpha \in \mathcal U}\int_0^t \mathbf 1_{\{X^{\alpha}_{s-} \in \mathbb R \}} \mathrm ds } = \mu \tilde{\mathbb E}\SB{\int_0^t |I_s| \mathrm ds}
	\]
	as desired.
	
	In the case the initial configuration is not finite or the offspring distribution is unbounded, we can first consider the $m$-truncated particle system where $m\in \mathbb N$. 
	From what we have proved, 
\[
	\tilde{\mathbb E}\SB{\ABS{J^{(m)}_t}} 
	= \mu \tilde{\mathbb E}\SB{\int_0^t |I^{(m)}_s| \mathrm ds}, \quad m\in \mathbb N.
\]
	Taking $m\uparrow \infty$, from Lemma \ref{lem:TC} and the monotone convergence theorem, we obtain the desired result. 
\end{proof}

\begin{extra}
	\begin{proof}
Note that $\mathfrak N$ is a Poisson random measure on the space $(0,\infty) \times \mathcal U \times \bar {\mathbb N}$  with intensity $\hat{\mathfrak N}$ given so that
\begin{equation}
	\hat {\mathfrak N} ((0,t] \times \{\alpha\} \times\{k\}) 
	= \mu t p_k, 
	\quad t>0, \alpha \in \mathcal U, k \in \bar{\mathbb N}. 
\end{equation}
Now it is clear from \cite[p. 60]{MR1011252} that $\mathfrak N$ is a QL point process.

For any $\beta \in \mathcal U$, denote by $\mathscr G_{\beta}$ the $\sigma$-field generated by $\{(B^\alpha_t)_{t\geq 0}: \alpha \in \mathcal U\}$, $\mathfrak N$, and the random elements in \eqref{eq:ConditionalConstruction}.
To show $\mathfrak M$ is a QL point process, we first note that for any $(\alpha, \beta ) \in \mathcal R$
\[
  \mathbb E\SB{\mathfrak M((0,t] \times \{(\alpha,\beta)\}) \middle|  \mathscr G_\beta }
  = \hat{\mathfrak M}((0,t] \times \{(\alpha,\beta)\}) = \frac{1}{2} L^{\alpha,\beta}_t,
\]
which says that 
\begin{align}
 &\mathbb E\SB{\mathfrak M((0,t] \times \{(\alpha,\beta)\}) }= \frac{1}{2} \mathbb E[L^{\alpha,\beta}_t]
 <\infty. 
\end{align}

We still need to verify that $\hat{\mathfrak M}$ is the compensator of $\mathfrak M$. 
Let $U\in \Gamma_{\mathfrak M}$ be arbitrary. We need to verify that 
\begin{itemize}
	\item  $\hat {\mathfrak M}((0,\cdot] \times U )$ is a continuous adapted process; and
	\item $\mathfrak M((0,\cdot] \times U) - \hat {\mathfrak M}((0,\cdot] \times U )$ is a martingale. 
\end{itemize}
Say $U = \{(\alpha_k,\beta_k) \in \mathcal R: k\in K\} \in \Gamma_{\mathfrak M}$ for some countable index set $K$. 
Then 
\begin{align}
	&\mathbb E\SB{\hat {\mathfrak M}((0,\cdot] \times U )}= \mathbb E\SB{\frac{1}{2}\sum_{k\in K} L^{\alpha_k,\beta_k}_t}
	 =\sum_{k\in K} \mathbb E[ \mathbb E\SB{\mathfrak M((0,t] \times \{(\alpha_k,\beta_k)\}) \middle|  \mathscr G_{\beta_k} }]
	 \\& = \mathbb E\SB{\mathfrak M((0,t] \times U)} < \infty. 
\end{align}
In particular we know that almost surely for all $t\geq 0$, 
\[
	\hat {\mathfrak M}((0,\cdot] \times U )=\frac{1}{2}\sum_{k\in K} L^{\alpha_k,\beta_k}_t < \infty.
\]
If $K$ is a finite index set, then $\hat {\mathfrak M}((0,\cdot] \times U )$ is obviously a continuous adapted process. 
Otherwise, we can assume without loss of generality that $K = \mathbb N$, and $\hat {\mathfrak M}((0,\cdot] \times U )$ is the increasing limit of the processes
\[
	\RB{\frac{1}{2}\sum_{k=1}^n L^{\alpha_k,\beta_k}_t}_{t\geq 0}
\]
as $n\uparrow \infty$. 
Observe that, for any fixed $T>0$, 
\begin{align}
	&\sup_{t\leq T}\ABS{\hat {\mathfrak M}((0,t] \times U )  - \frac{1}{2}\sum_{k=1}^n L^{\alpha_k,\beta_k}_t}
	= \sup_{t\leq T}\ABS{\frac{1}{2}\sum_{k=n+1}^\infty L^{\alpha_k,\beta_k}_t }
	\\& = \frac{1}{2}\sum_{k=n+1}^\infty L^{\alpha_k,\beta_k}_T \xrightarrow[n\to \infty]{} 0.
\end{align}
Therefore, from the uniform limit theorem, we know 
\begin{align}
	t\mapsto \hat {\mathfrak M}((0,t] \times U )
\end{align}
is indeed continuous. 

Let us further verify that $t\mapsto \mathfrak M((0,t] \times U) - \hat {\mathfrak M}((0,t] \times U )$ is a martingale. 
To do this, we write
\begin{align}
	&\mathbb E\SB{\mathfrak M((0,t] \times U) - \hat {\mathfrak M}((0,t] \times U )\middle| \mathscr F_s} 
	\\&=  \mathfrak M((0,s] \times U) - \hat {\mathfrak M}((0,s] \times U ) 
	\\& \quad + \mathbb E\SB{\mathfrak M((s,t] \times U) \middle| \mathscr F_s}  - \mathbb E\SB{ \hat {\mathfrak M}((s,t] \times U ) \middle| \mathscr F_s} 
	\\&=  \mathfrak M((0,s] \times U) - \hat {\mathfrak M}((0,s] \times U ) 
	\\& \quad + \sum_{k\in K}\mathbb E\SB{\mathfrak M((s,t] \times \{(\alpha_k,\beta_k)\}) \middle| \mathscr F_s}  - \mathbb E\SB{ \hat {\mathfrak M}((s,t] \times U ) \middle| \mathscr F_s} 
		\\&=  \mathfrak M((0,s] \times U) - \hat {\mathfrak M}((0,s] \times U ) 
	\\& \quad + \sum_{k\in K}\mathbb E\SB{\mathbb E\SB{\mathfrak M((s,t] \times \{(\alpha_k,\beta_k)\}) \middle| \mathscr F_s, \mathscr G_{\beta_k}}  \middle| \mathscr F_s}- \mathbb E\SB{ \hat {\mathfrak M}((s,t] \times U ) \middle| \mathscr F_s} 
			\\&=  \mathfrak M((0,s] \times U) - \hat {\mathfrak M}((0,s] \times U ) 
	\\& \quad + \sum_{k\in K}\mathbb E\SB{\hat {\mathfrak M}((s,t] \times \{(\alpha_k,\beta_k)\}) \middle| \mathscr F_s}- \mathbb E\SB{ \hat {\mathfrak M}((s,t] \times U ) \middle| \mathscr F_s} 
	\\& =\mathfrak M((0,s] \times U) - \hat {\mathfrak M}((0,s] \times U ).
\end{align}
\end{proof}
\end{extra}

\subsection{The embedded killing-coalescing Brownian motions} \label{sec:ECBM}
	In this subsection, we introduce a marking procedure for an arbitrarily given branching-coalescing Brownian particle system $\{(X^\alpha_t)_{t\geq 0}: \alpha \in \mathcal U\}$ with a bounded offspring distribution  and finite many initial particles. 
	This marking procedure marks out an embedded killing-coalescing Brownian particle system.
	(Recall that a killing-coalescing Brownian particle system is a branching coalescing Brownian particle system with $p_0 = 1$.)
	This embedded killing-coalescing Brownian particle system helps us to control the original particle system locally. 
	It will be the main ingredient for the proof of Theorem \ref{prop:Key}. 

For a given stopping time $\tau < \infty$ and a finite subset $\mathcal A$ of $\mathcal U$, by a $(\tau,\mathcal A)$-marking procedure, we mean the following:
\begin{itemize}
\item[\eq]
	At time $\tau$, if $\alpha$ is the $k$-th smallest label in the set $I_\tau \cap \mathcal A$ according to the order $\prec$, then we mark the particle $\alpha$ with number $k$; if $\alpha \in I_\tau \cap \mathcal A^{\mathrm c}$, then we mark that particle with number $\infty$.  
\item[\eq]
	After time $\tau$, each particle carries its mark unless a branching or a coalescing event happens.
\item[\eq]
	For each branching event after time $\tau$, the children will be marked by the number $\infty$ no matter of the mark of the parent. 
\item[\eq]
	For each coalescing event after time $\tau$, if the two particles inducing the coalescing event are marked by the numbers $a$ and $b$, then the survivor (i.e. the particle with the smaller Ulam-Harris label) will be marked by the number $\min\{a,b\}$.
\end{itemize}
Since we assumed that the offspring distribution is bounded  and the number of the initial particles is finite,  from Subsection \ref{sec:Truncated}, there are almost surely only finitely many branching/coalescing events up to any finite time; and thus, the above marking procedure is well-defined.  

For any number $k\in \mathbb N$ and time $t\geq 0$, there exists at most one particle alive at time $\tau + t$ that is marked by the number $k$. 
Denote by $\psi(\tau, \mathcal A, t,  k)$ the Ulam-Harris label of the particle carrying the mark $k$ at time $\tau+t$, provided such particle exists; and set $\psi(\tau, \mathcal A, t, k) = \varnothing$ if such particle does not exist. 
Also define a process $X^{\varnothing}_t = \dagger$ for every $t\geq 0$.
We will refer to the family of processes 
\begin{equation} \label{eq:embedded}
	\CB{\RB{X^{\psi(\tau, \mathcal A,t, k )}_{\tau+t}}_{t\geq 0}:k \in \mathbb N}
\end{equation}
the $(\tau, \mathcal A)$-embedded killing-coalescing Brownian particle system.
Using the strong Markov property of the Brownian motions, it is straightforward to verify the following lemma.

\begin{lemma} \label{lem:Embedded}
	Conditioned on $\tilde{\mathscr F_\tau}$, the $(\tau, \mathcal A)$-embedded killing-coalescing Brownian particle system \eqref{eq:embedded}  is a killing-coalescing Brownian particle system whose initial configuration is $(X^{\alpha^{(k)}}_{\tau})_{k=1}^N$. 
	Here $N := |I_\tau \cap \mathcal A|$, $\{\alpha^{(1)}, \alpha^{(2)}, \dots, \alpha^{(N)}\} =  I_\tau \cap \mathcal A$, and $\alpha^{(1)} \prec \alpha^{(2)} \prec \dots \prec \alpha^{(N)}$.
\end{lemma}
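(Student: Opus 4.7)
The plan is to reduce the statement to the rigorous construction in \eqref{eq:InitialParticle}--\eqref{eq:TheLastRule} via the strong Markov property of the driving data at time $\tau$, and then to match, event by event, the marking procedure with the definition of a killing-coalescing Brownian particle system.

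First I would use the strong Markov property of $(\tilde{\mathscr F}_t)_{t\geq 0}$ at the finite stopping time $\tau$. Because we have assumed the offspring distribution is bounded and the initial configuration is finite, Lemma \ref{lem:Coupling} together with the domination by the branching Brownian particle system gives $|I_t|<\infty$ and $|J_t|<\infty$ for every $t\geq 0$, so the $(\tau,\mathcal A)$-marking procedure is well-defined and only finitely many labels are involved over any bounded time horizon. Conditioned on $\tilde{\mathscr F}_\tau$, the shifted Brownian increments $(B^\alpha_{\tau+\cdot}-B^\alpha_\tau)_{\alpha\in\mathcal U}$, the shifted branching Poisson random measures $\mathfrak N(\tau+\cdot,\cdot,\cdot)$, and the post-$\tau$ portions of the pairwise coalescing clocks $\mathfrak M(\tau+\cdot, \cdot)$ form an independent system with the same joint distribution as that of the original primitives; the new initial data is precisely the alive configuration $(X^{\alpha^{(k)}}_\tau)_{k=1}^N$ listed in the order~$\prec$.

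Next I would track each mark $k\in\{1,\dots,N\}$ and check that the three possible events are consistent with the killing-coalescing dynamics \eqref{eq:InitialParticles}--\eqref{eq:Coalescing} with $p_0=1$: (i) if the particle $\psi(\tau,\mathcal A,t,k)$ undergoes a branching event of the underlying system, by the marking rule all children receive mark $\infty$, so mark $k$ disappears---this matches a killing event ($p_0=1$) at exponential rate $\mu$, since the branching clock of the current carrier is an $(\tilde{\mathscr F}_{\tau+t})$-rate-$\mu$ exponential by \eqref{eq:hatN}; (ii) if two particles with finite marks $k_1<k_2$ coalesce, the survivor keeps mark $k_1$ and mark $k_2$ vanishes, giving exactly a coalescing event between the corresponding killing-coalescing particles, occurring at rate $1/2$ with respect to the pairwise intersection local time by \eqref{eq:ComM}; (iii) if the particle carrying a finite mark $k$ coalesces with one carrying mark $\infty$, both are at the same location and the survivor inherits $k$, so the trajectory of mark $k$ is spatially continuous and still governed by a Brownian motion (by concatenation of the strong Markov Brownian pieces in \eqref{eq:InitialParticle}(iii)).

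The subtle point---and the main obstacle---is to verify that after relabeling under (iii), the resulting ``effective'' clocks driving the marked system remain correctly distributed: the branching clock for mark $k$ must still be a rate-$\mu$ exponential with respect to the time elapsed, and the pairwise coalescing clock for marks $k_1,k_2$ must still be a rate-$1/2$ exponential with respect to the intersection local time of their positional processes. For the branching clocks this follows from the memoryless property of the exponential and the fact that at each coalescing of type (iii) one simply passes to the independent branching clock attached to the surviving Ulam-Harris label. For the coalescing clocks the key observation is additivity of intersection local time along concatenated Brownian pieces: the total intersection local time between two marked positional processes (across possible carrier switches) equals the sum of the intersection local times of the relevant pairs $(\alpha,\beta)\in\mathcal R$, and the associated independent rate-$1/2$ exponential clocks from \eqref{eq:ComM} combine into a single rate-$1/2$ exponential with respect to this aggregated local time, again by the memoryless property. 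This identification, together with the independence among distinct marks inherited from the disjointness of the Ulam-Harris labels involved, exhibits the embedded process as a killing-coalescing Brownian particle system with initial configuration $(X^{\alpha^{(k)}}_\tau)_{k=1}^N$, as required.
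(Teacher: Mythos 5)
Your proposal is correct and is essentially the argument the paper has in mind: the paper omits the proof, remarking only that the lemma is ``straightforward to verify'' via the strong Markov property, and your write-up fleshes out exactly that route (restarting the driving Brownian increments, the Poisson measure $\mathfrak N$, and the conditionally Poisson clocks $\mathfrak M$ at the stopping time $\tau$, then identifying killings with branching events of carriers and using memorylessness plus additivity of intersection local time across carrier switches). The only places to tighten would be phrasing the clock arguments via compensators of the QL point processes rather than informal memorylessness, but this is a matter of rigor, not a gap.
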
 

Let us introduce some more notation related to this embedded killing-coalescing Brownian particle system that will be used later. 
For each $k\in \mathbb N$, define 
\begin{equation} 
	\zeta_k^{(\tau, \mathcal A)} 
	:= \tau +  \sup\CB{ t\geq 0: X^{\psi(\tau, \mathcal A,t, k )}_{\tau+t} \in \mathbb R}, 
	\quad k \in \mathbb N,
\end{equation}
to be the death-time of the mark $k$; and if the particle with mark $k$ induces a branching event at the time $\zeta_k^{(\tau, \mathcal A)}$, then we say $\Theta^{(\tau, \mathcal A)}(k) := 1$; otherwise we set $\Theta^{(\tau, \mathcal A)}(k) := 0$. 
For every $t\geq 0$, define
\begin{equation} \label{eq:IT}
I^{(\tau, \mathcal A)}_{\tau+t} := \{k\in \mathbb N: X^{\psi(\tau, \mathcal A,t, k )}_{\tau+t} \in \mathbb R\}
\end{equation}
to be the collection of the marks that are carried by some alive particles at time $\tau+t$;
and 
\begin{equation} \label{eq:JT}
J^{(\tau, \mathcal A)}_{\tau+t} := \{ k\in \mathbb N: \zeta_k^{(\tau, \mathcal A)} \leq \tau + t,  \Theta^{(\tau, \mathcal A)}(k) = 1\}
\end{equation}
to be the collection of the marks who deceased in a branching event up to time $\tau+t$. 

For example, if we assume that $n<\infty$ and $\{(X^\alpha_t)_{t\geq 0}: \alpha \in \mathcal U\}$ is a coalescing Brownian particle system with initial configuration $(x_i)_{i=1}^n$, then its $(0, \CB{0,\dots,n})$-embedded coalescing Brownian particle system is a killing-coalescing Brownian particle system who shares the same initial configuration $(x_i)_{i=1}^n$. 
This implies that the total population of a killing-coalescing Brownian particle system is stochastically dominated by that of a coalescing Brownian particle system. 
In our earlier paper \cite{barnes2022coming}, we established an upper bound for the the expectation of the total population of the coalescing Brownian particle system. 
Now it is clear that this upper bound also holds for the killing-coalescing Brownian particle system.
In particular, from \cite{barnes2022coming}*{Theorem 1.4 \& Proposition 1.5} we have the following result.

\begin{lemma}[\cite{barnes2022coming}*{Theorem 1.4 \& Proposition 1.5}] \label{lem:ComingDown}
		Consider a killing-coalescing Brownian particle system with initial configuration $(x_i)_{i=1}^n$.
		Suppose that $n < \infty$ and define $N_0 := |\{x_i:i=1,\dots,n\}|$.
		Denote by $\mu$ its branching rate and $|\hat I_t|$ the total population at time $t\geq 0$.
		Then there exists a time $\T\label{t:Coalescing} >0$ and a constant $\C\label{c:Coalescing} >0$ such that
		\[
		\tilde{\mathbb E}\SB{\ABS{\hat I_t}}
		  \leq \frac{\Cr{c:Coalescing} N_0}{\sqrt{t}} \wedge n, \quad t\in [0,\Tr{t:Coalescing}]. 
		\]
	Here $\Tr{t:Coalescing}$ and $\Cr{c:Coalescing}$ are independent of $\mu$ and $n$.
\end{lemma}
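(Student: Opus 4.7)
My proof proposal: The entire statement reduces, after one coupling observation, to the coalescing Brownian results in \cite{barnes2022coming}. The plan is to realize the killing-coalescing system and a pure coalescing system with the same initial configuration $(x_i)_{i=1}^n$ on a common probability space, using the same driving Brownian motions $\{(B^\alpha_t)_{t\geq 0}:\alpha\in\mathcal U\}$ and the same pairwise coalescing clocks $\mathfrak M$; the only difference is that in the pure coalescing system the Poisson process $\mathfrak N$ governing the branching/killing events is simply ignored. Because killing events only remove labels and never add them, every particle alive at time $t$ in the killing-coalescing system is also alive at time $t$ in the coalescing system, so
\[
|\hat I_t| \leq |\hat I^{\mathrm{coal}}_t|\quad\text{a.s.}
\]
In particular, $\mu$ can only decrease the population, which is the reason the final bound is independent of $\mu$.

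Next, I would apply Theorem 1.4 and Proposition 1.5 of \cite{barnes2022coming}, which establish the coming-down-from-infinity estimate for a coalescing Brownian particle system started from a configuration with $N_0$ distinct locations: there exist a universal time $\Tr{t:Coalescing}>0$ and constant $\Cr{c:Coalescing}>0$ such that
\[
\tilde{\mathbb E}\bigl[|\hat I^{\mathrm{coal}}_t|\bigr] \leq \frac{\Cr{c:Coalescing} N_0}{\sqrt{t}}, \qquad t\in(0,\Tr{t:Coalescing}].
\]
Crucially, as those results are purely about coalescing Brownian motions, the constants depend neither on the total number of initial particles $n$ sitting at those $N_0$ locations nor on the (irrelevant) killing rate $\mu$.

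The bound $|\hat I_t|\leq n$ is trivial: no particle is ever created, so the population at any time is at most the initial population. Combining the coupling estimate with the coalescing bound and this trivial bound gives
\[
\tilde{\mathbb E}\bigl[|\hat I_t|\bigr]\leq \frac{\Cr{c:Coalescing} N_0}{\sqrt{t}}\wedge n,\qquad t\in[0,\Tr{t:Coalescing}],
\]
which is exactly the claim.

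There is no real obstacle beyond carefully setting up the coupling so that the pure coalescing system genuinely dominates the killing-coalescing one on the same path space; once that is in place, the hard analytic work (the short-time estimate for coalescing Brownian motions) has already been done in \cite{barnes2022coming}. The only subtlety worth checking is that $N_0$ in the statement is counted from the \emph{distinct} initial locations—this matches the hypothesis used in \cite{barnes2022coming}, where multiple particles started at a common point merge instantaneously and do not inflate the effective initial size.
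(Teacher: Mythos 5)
There is a genuine gap in your coupling step. Under the coupling you propose (same Brownian motions, same coalescing clocks $\mathfrak M$, with the pure coalescing system simply ignoring the killing measure $\mathfrak N$), it is \emph{not} true that every particle alive in the killing-coalescing system is alive in the coalescing system, and in fact even the pathwise cardinality bound $|\hat I_t|\leq |\hat I^{\mathrm{coal}}_t|$ can fail. The reason is that a killing event does not merely ``remove a label'': by the rule \eqref{eq:Lifetime}, a coalescing event for a pair $(\alpha,\beta)$ kills $\beta$ only when $\zeta_{\alpha,\beta}\leq\zeta_\alpha$, so removing $\alpha$ early suppresses later coalescences and lets larger labels survive in the killing-coalescing system while they die in the coalescing one. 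Concretely, take three particles $1\prec 2\prec 3$; suppose the killing clock removes particle $1$ at time $t_1$, the pair $(1,2)$ coalesces at $t_2>t_1$ and the pair $(1,3)$ coalesces at $t_3>t_2$ (with the pair $(2,3)$ never ringing). In the pure coalescing system, particles $2$ and $3$ are both absorbed into particle $1$, leaving one particle; in the killing-coalescing system neither coalescence is effective because $1$ is already dead, leaving two particles. So your coupling gives $|\hat I_{t_3}|=2>1=|\hat I^{\mathrm{coal}}_{t_3}|$, and the asserted domination is not justified as stated.

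The paper circumvents exactly this issue with the marking procedure of Subsection \ref{sec:ECBM}: one starts from a coalescing Brownian particle system (branching rate $\mu$, $p_1=1$) and performs the $(0,I_0)$-marking, under which a mark is transferred to the \emph{survivor} of each coalescence (taking the minimum of the two marks) and is destroyed, at rate $\mu$, when its carrier induces a branching event. By Lemma \ref{lem:Embedded} the marked subsystem is, in law, a killing-coalescing Brownian particle system with the given initial configuration, and since at any time each surviving mark is carried by a distinct living particle of the coalescing system, the number of marks is bounded by the coalescing population. This yields the stochastic domination $\tilde{\mathbb E}[|\hat I_t|]\leq\tilde{\mathbb E}[|I^{\mathrm{coal}}_t|]$, after which the application of \cite{barnes2022coming}*{Theorem 1.4 \& Proposition 1.5} (with constants depending only on $N_0$, hence independent of $\mu$ and $n$) and the trivial bound $|\hat I_t|\leq n$ proceed exactly as you describe. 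In short, your reduction to the coalescing estimate is the right idea, but the domination must be established through the mark-transfer (embedding) construction rather than the naive label-wise coupling.
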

We will use both Lemmas \ref{lem:Embedded} and \ref{lem:ComingDown} in the proof of Theorem~\ref{prop:Key}.

\subsection{Some upper bounds provided the offspring distribution and the number of initial particles are bounded}
In this subsection, let us consider a branching-coalescing Brownian particle system $\{(X^\alpha_t)_{t\geq 0}: \alpha \in \mathcal U\}$ with a bounded offspring distribution and an initial configuration $(x_i)_{i=1}^n$ such that $n<\infty$.
	Define random variables \[N_t := |\{x \in \mathbb R: \exists \alpha \in I_t \text{~s.t.~} X_t^{\alpha} = x\}| < \infty, \quad t\geq 0.\]
In this subsection, we aim to give upper bounds for the expectation of the random variables
\begin{equation} \label{eq:BigThree}
	\ABS{I_t}, \quad \ABS{J_t}, \quad \text{and} \quad \int_0^t \ABS{I_s} \mathrm ds.
\end{equation}

\begin{lemma} \label{lem:Branching}
	There exists a (deterministic) time $\T\label{t:key}(\mu)\geq 0$ such that for every $t\in [0, \Tr{t:key}(\mu)]$ it holds that $\tilde{\mathbb E}[| J_t|] \leq  N_0$. 
	Here, $\Tr{t:key}(\mu)$ is independent of the initial configuration and the offspring distribution. 
\end{lemma}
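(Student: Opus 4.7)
By Lemma \ref{lem:Compensator} it suffices to show that $\mu \int_0^t \tilde{\mathbb E}[|I_s|]\, \mathrm ds \leq N_0$ for every $t$ in some interval $[0, \Tr{t:key}(\mu)]$ depending only on $\mu$. My strategy is to decompose $|I_s|$ according to the most recent branching ancestor of each live particle, dominate the resulting pieces using Lemma \ref{lem:ComingDown}, and then resolve the resulting fractional-integral inequality for $f(s) := \tilde{\mathbb E}[|I_s|]$.

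Every $\alpha \in I_s$ belongs to exactly one \emph{cohort}: either the initial cohort, if no branching has occurred in its lineage, or the cohort of the unique most recent branching event $b$ in its lineage, which occurs at some time $t_b < s$. Between the birth of a cohort and its members' next branchings, the cohort evolves as particles diffusing independently and coalescing internally, and any coalescing with a particle from another cohort can only shrink the cohort (a monotonicity statement in the spirit of Lemma \ref{lem:Coupling}). Applying the $(t_b, \{\text{children of } b\})$-marking procedure of Subsection \ref{sec:ECBM} at each branching event, invoking Lemma \ref{lem:Embedded}, and using the strong Markov property at $t_b$, we obtain the pointwise domination
\[
	|I_s| \leq |\hat I^{(0)}_s| + \sum_{b:\,t_b \leq s} |\hat K^{(b)}_{s-t_b}|,
\]
where $\hat I^{(0)}$ is a killing-coalescing particle system with the original initial configuration (hence $N_0$ distinct initial locations), and, conditionally on $\tilde{\mathscr F}_{t_b}$, each $\hat K^{(b)}$ is an independent killing-coalescing particle system starting from $Z_b$ particles concentrated at the single location of $b$. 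Taking expectations, applying Lemma \ref{lem:ComingDown} term-by-term (with $N_0 = 1$ inside the sum), and invoking Campbell's formula against the branching compensator---whose intensity at time $r$ equals $\mu f(r)$ by Lemma \ref{lem:Compensator}---we arrive at the renewal-type inequality
\[
	f(s) \leq \frac{\Cr{c:Coalescing} N_0}{\sqrt{s}} + \mu \Cr{c:Coalescing} \int_0^s \frac{f(r)}{\sqrt{s-r}}\, \mathrm dr, \quad s \leq \Tr{t:Coalescing}.
\]

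To close the argument, let $K$ denote the operator $Kg(s) := \mu \Cr{c:Coalescing} \int_0^s g(r)/\sqrt{s-r}\, \mathrm dr$ and iterate the inequality (legitimate because $f$ is locally bounded by the branching Brownian coupling of Lemma \ref{lem:Coupling}). A telescoping Beta-function identity gives
\[
	K^k\!\left(\frac{\Cr{c:Coalescing} N_0}{\sqrt{\,\cdot\,}}\right)\!(s) = \frac{\Cr{c:Coalescing} N_0 \sqrt{\pi}}{\sqrt{s}\,\Gamma((k+1)/2)}\bigl(\mu \Cr{c:Coalescing} \sqrt{\pi s}\,\bigr)^{k}, \quad k\geq 0,
\]
and summing yields $f(s) \leq (\Cr{c:Coalescing} N_0/\sqrt{s})\,\Phi(\mu \Cr{c:Coalescing} \sqrt{\pi s})$ for the entire function $\Phi(x) = \sqrt{\pi}\sum_{k\geq 0} x^k/\Gamma((k+1)/2)$, which satisfies $\Phi(0) = 1$. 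Choosing $\Tr{t:key}(\mu)$ small enough that $\mu \Cr{c:Coalescing} \sqrt{\pi \Tr{t:key}(\mu)}$ lies below a universal threshold on which $\Phi \leq 2$, and also below $\Tr{t:Coalescing}$ and $(4\mu \Cr{c:Coalescing})^{-2}$, gives $f(s) \leq 2\Cr{c:Coalescing} N_0/\sqrt{s}$ on $[0, \Tr{t:key}(\mu)]$ and hence $\mu \int_0^t f(s)\, \mathrm ds \leq 4\mu \Cr{c:Coalescing} N_0 \sqrt{t} \leq N_0$, as desired; the threshold depends only on $\mu$.

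The main obstacle I anticipate is making the cohort decomposition fully rigorous inside the Ulam-Harris/Poisson-measure construction of Section \ref{eq:DualParticle}. In particular, one has to verify that, conditionally on $\tilde{\mathscr F}_{t_b}$, the descendants of the $b$-th branching event until their own next branchings can be coupled to an independent killing-coalescing system with initial configuration consisting of the $Z_b$ children at the single location of $b$, and that inter-cohort coalescings only reduce the cohort counts---both of which are direct analogues for the marking procedure of the coupling and monotonicity in Lemma \ref{lem:Coupling}. The assumed boundedness of the offspring distribution and finiteness of $n$ ensure that the hierarchy of cohort-subsystems is well defined almost surely, and the $L^1$-integrability provided by the branching Brownian coupling is what legitimizes the Fubini/Campbell interchange and the fractional Gronwall iteration above.
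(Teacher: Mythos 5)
Your proposal is correct, but it reorganizes the argument rather than reproducing the paper's. The paper bounds $\tilde{\mathbb E}[|J_{\Tr{t:key}}|]$ directly by a generation-wise induction: writing $J_t=\cup_j J_t(j)$, it shows via the $(0,I_0)$- and $(\rho_\alpha,\mathcal U_\alpha)$-embedded killing-coalescing systems (Lemmas \ref{lem:Embedded}, \ref{lem:Compensator}, \ref{lem:ComingDown}) that each branching event produces at most $1/2$ expected further branching events before $\Tr{t:key}$, so $\tilde{\mathbb E}[|J_{\Tr{t:key}}(j)|]\leq N_0 2^{-j}$ and the geometric series sums to $N_0$; the population bound of Lemma \ref{prop:ThePopulation} is derived only afterwards, using Corollary \ref{cor:BranchingEvent} to close a single iteration of the convolution inequality. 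You invert this order: you first bound $f(s)=\tilde{\mathbb E}[|I_s|]$ through the parent-cohort decomposition (which is exactly the paper's \eqref{eq:ITT}, with the same marking/embedding machinery controlling inter-cohort coalescences), obtain the Volterra inequality $f(s)\leq \Cr{c:Coalescing}N_0 s^{-1/2}+\mu\Cr{c:Coalescing}\int_0^s f(r)(s-r)^{-1/2}\mathrm dr$, resolve it by a full Neumann-series/fractional Gronwall resummation (your Beta-function identities check out, and local boundedness of $f$ from Lemma \ref{lem:Coupling} kills the remainder term), and only then invoke Lemma \ref{lem:Compensator} to pass to $|J_t|$. Both routes rest on the same three pillars (embedded killing-coalescing systems, the coming-down bound of Lemma \ref{lem:ComingDown}, and the compensator identity), and there is no circularity in yours since Lemma \ref{lem:Compensator} is independent of this lemma. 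What your route buys is a self-contained small-time bound $\tilde{\mathbb E}[|I_s|]\leq 2\Cr{c:Coalescing}N_0/\sqrt{s}$ that makes Lemma \ref{prop:ThePopulation} essentially a by-product; what the paper's route buys is more elementary bookkeeping, using the coming-down estimate only once per branching event and never needing to solve a singular-kernel inequality.

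Two small technical caveats, neither a genuine gap: the embedded cohort systems $\hat K^{(b)}$ are not independent of each other as processes, only conditionally distributed as killing-coalescing systems given $\tilde{\mathscr F}_{t_b}$, which is all your argument uses; and in the Campbell step your kernel $\Cr{c:Coalescing}/\sqrt{s-r}$ is unbounded, so to apply the compensator identity of Lemma \ref{eq:CompensatorDecom} you should either truncate the kernel at $m$ as the paper does in the proof of Lemma \ref{prop:ThePopulation}, or note explicitly that $\mu\int_0^s (s-r)^{-1/2}\tilde{\mathbb E}[|\bar I_r|]\mathrm dr<\infty$ by the branching Brownian coupling, which is the integrability you allude to.
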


\begin{proof}
	If a particle is labeled by an Ullam-Harris notation $\alpha$ with length $|\alpha| = j$, then we say it is in the $j$-th generation.
	For each $ j \in \mathbb N$ and $t\geq 0$, denote by
\[	
	J_t(j)
	:=\{\alpha \in \mathcal U: \zeta_{\alpha,\alpha}=\zeta_\alpha\leq t, |\alpha|=j\}
\]
	the collection of the labels of particles in the $j$-th generation who induced a branching event before time $t$.
	Then almost surely we have the decomposition
\[
	|J_t| 
	= \sum_{j=1}^\infty |J_t(j)|, \quad t\geq 0.
\]
	Let us take a deterministic time $\Tr{t:key}:=\Tr{t:key}(\mu)>0$ small enough so that $\Tr{t:key} \leq \Tr{t:Coalescing}$ and that
	\begin{equation} \label{eq:TheKeyTime}
		\mu \int_0^{\Tr{t:key}} \frac{\Cr{c:Coalescing}}{\sqrt{s}} \mathrm ds 
		= 2 \mu \Cr{c:Coalescing} \sqrt{\Tr{t:key}}
		\leq \frac{1}{2}.
	\end{equation}
	Here $\Cr{c:Coalescing}$ and $\Tr{t:Coalescing}$ are the constants introduced in Lemma \ref{lem:ComingDown}. 
	Notice that the choice of $\Tr{t:key}$ is independent of the initial configuration and the offspring distribution.
	We claim that 
	\begin{equation} \label{eq:FirstClaim}
		\tilde{\mathbb E} \SB{\ABS{J_{\Tr{t:key}}(j)}} \leq \frac{N_0}{2^j}, \quad j\in \mathbb N.
	\end{equation}
	From this claim we have 
	\[
	\tilde{\mathbb E}\SB{ \ABS{J_{\Tr{t:key}}}} 
	=  \sum_{j=1}^\infty \tilde{\mathbb E}\SB{ \ABS{J_{\Tr{t:key}}(j)}} \leq N_0,
	\]
	as desired for this lemma. 
	
	Let us prove the claim \eqref{eq:FirstClaim} for $j=1$ by using the $(\tau, \mathcal A)$-marking procedure, given as in Subsection \ref{sec:ECBM}, with $\tau = 0$ and $\mathcal A = I_0$.
	From Lemma \ref{lem:Embedded}, the $(0, I_0)$-embedded killing-coalescing Brownian particle system 
	\[
	\CB{ \RB{X^{\psi(0,I_0,t, k)}_{t}}_{t\geq 0} :k \in \mathbb N  }
	\] 
	is a killing-coalescing Brownian particle system with killing rate $\mu$ and initial configuration $(x_i)_{i=1}^{n}$. 
	Recall that the sets of labels $I_t^{(0,I_0)}$ and $J_{t}^{(0, I_0)}$ are given by \eqref{eq:IT} and \eqref{eq:JT} for every $t\geq 0$.
	Observe that almost surely 
	\[
	\ABS{J_{\Tr{t:key}}(1)} \leq  \ABS{ J_{\Tr{t:key}}^{(0, I_0)} }
	\] 
	since any branching event induced by an initial particle is also a branching event of the  $(0, I_0)$-embedded killing-coalescing Brownian particle system. 
	Now, from Lemmas \ref{lem:Compensator}--\ref{lem:ComingDown} and \eqref{eq:TheKeyTime}, we have
	\begin{equation}
		\tilde{\mathbb E}\SB{\ABS{J_{\Tr{t:key}}(1)}} 
		\leq  \tilde{\mathbb E}\SB{   \ABS{ J_{\Tr{t:key}}^{(0, I_0)} }}
		= \mu \tilde{\mathbb E}\SB{ \int_{0}^{\Tr{t:key}} \ABS{ I_{s}^{(0, I_0)} }  \mathrm ds}
		\leq \mu  \int_0^{\Tr{t:key}} \frac{N_0 \Cr{c:Coalescing}}{\sqrt{s}} \mathrm ds 
		\leq N_0/2
	\end{equation}
	as desired.
	
		We now prove the claim \eqref{eq:FirstClaim} by induction over $j\in \mathbb N$. 
	For the sake of induction, let us assume that $\tilde{\mathbb E}[|J_{\Tr{t:key}}(j)|] \leq N_0/2^j$ for some $j\in \mathbb N$. 
	For any Ullam-Harris label $\alpha \in \mathcal U$, let us denote by 
\[
	J_{\Tr{t:key}}^{\alpha} 
	:= \CB{\beta \in \mathcal U: \overleftarrow{\beta} = \alpha, \zeta_\beta = \zeta_{\beta,\beta} \leq T_2 }
\]
	the collection of the labels of the children of the particle $\alpha$ that induced a branching event before time $T_2$. 
	Then we have a decomposition
\[
	\ABS{J_{\Tr{t:key}}(j+1)} 
	= \sum_{\alpha \in \mathcal U:|\alpha| = j} \ABS{J_{\Tr{t:key}}^{\alpha}}. 
\]
	We  claim that 
\begin{equation}\label{eq:AnotherClaim}
	\tilde {\mathbb E} \SB{\ABS{ J_{\Tr{t:key}}^{\alpha} }\middle | \tilde{\mathscr F}_{\rho_\alpha}}
	\leq \mathbf 1_{\{\alpha \in J_{\Tr{t:key}}\}} /2
\end{equation}
	for each $\alpha \in \mathcal U$, where the stopping time $\rho_\alpha$ is given by
\[
	\rho_\alpha := 
\begin{cases}
	\zeta_\alpha, & \quad \text{~if~} \alpha \in J_{\Tr{t:key}},
	\\ \Tr{t:key},  &\quad \text{~otherwise.~}
\end{cases}
\]
	Admitting the claim \eqref{eq:AnotherClaim}, we have
\begin{align}
	&\tilde{\mathbb E}\SB{ \ABS{J_{\Tr{t:key}}(j+1)} } 
	= \tilde{\mathbb E}\SB{\sum_{\alpha \in \mathcal U:|\alpha| = j} \mathbb E\SB{ \ABS{J_{\Tr{t:key}}^{\alpha} } \middle | \tilde{\mathscr F}_{\rho_\alpha}}}
	\\& \leq \frac{1}{2} \tilde{\mathbb E} \SB{\sum_{\alpha \in \mathcal U:|\alpha| = j}\mathbf 1_{\{\alpha \in J_{\Tr{t:key}}\}} }
	= \frac{1}{2}\tilde{\mathbb E}\SB{\ABS{J_{\Tr{t:key}}(j)}}
	\leq N_0/2^{j+1}.
\end{align}
	Now the desired \eqref{eq:FirstClaim} follows by induction.
	
	We still needs to verify the claim \eqref{eq:AnotherClaim} for an arbitrarily fixed $\alpha \in \mathcal U$. 
	Since the offspring distribution are bounded, there exists an $m \in \mathbb N$ such that $p_k = 0$ for every $k\in \bar {\mathbb N}$ with $k>m$. 
	Let us consider the $(\rho_\alpha, \mathcal U_\alpha)$-marking procedure given as in Subsection \ref{sec:ECBM}  for the particle system where
	\begin{equation} \label{eq:Ualpha}
		\mathcal U_\alpha
		:= \{(\alpha,k): k \in \mathbb N\}
	\end{equation}
	is the collection of all the possible labels of the children of the particle $\alpha$.  
	From Lemma \ref{lem:Embedded} we know that, conditioned on $\tilde{\mathscr F}_{\rho_\alpha}$, the $(\rho_\alpha,\mathcal U_\alpha)$-embedded killing-coalescing Brownian particle system 
\[
	\CB{ \RB{X^{\psi(\rho_\alpha,\mathcal U_\alpha,t, k)}_{\rho_\alpha+t}}_{t\geq 0} :k \in \mathbb N  }
\] 
	is a killing-coalescing Brownian particle system with killing rate $\mu$ and initial configuration $(X^{(\alpha,k)}_{\rho_\alpha})_{k=1}^{N_\alpha}$. 
	Here, on the event $\{\alpha \in J_{\Tr{t:key}}\}$, $N_\alpha := Z_\alpha \leq m$ is the number of children of the particle $\alpha$; and on the event $\{\alpha \notin J_{\Tr{t:key}}\} = \{\rho_\alpha = T_2\}$, $N_\alpha := 0$, i.e., there is no initial particle for the $(\rho_\alpha,\mathcal U_\alpha)$-embedded killing-coalescing Brownian particle system.
	Recall that
\[
	\ABS{J_{\Tr{t:key}}^{(\rho_\alpha, \mathcal U_\alpha)}} 
	= \ABS{\CB{k \in \mathbb N:  \zeta^{(\rho_\alpha, \mathcal U_\alpha)}_k \leq \Tr{t:key}, \Theta^{(\rho_\alpha, \mathcal U_\alpha)}(k) = 1}}
\]
	is the number of the branching events of the $(\rho_\alpha,\mathcal U_\alpha)$-embedded killing-coalescing Brownian particle system up to time $\Tr{t:key}$.
	Observe that almost surely 
	\[
	\ABS{J_{\Tr{t:key}}^{\alpha}} \leq  \ABS{ J_{\Tr{t:key}}^{(\rho_\alpha, \mathcal U_\alpha)} }
	\] 
	since any branching event induced by a child of the particle $\alpha$ is also a branching event of the  $(\rho_\alpha,\mathcal U_\alpha)$-embedded killing-coalescing Brownian particle system. 
	Now, from Lemmas \ref{lem:Compensator} and \ref{lem:ComingDown}, we have
	\begin{align}
		&\tilde{\mathbb E}\SB{\ABS{J_{\Tr{t:key}}^{\alpha}} \middle | \tilde{\mathscr F}_{\rho_\alpha}} 
		\leq  \tilde{\mathbb E}\SB{  \ABS{J_{\Tr{t:key}}^{(\rho_\alpha, \mathcal U_\alpha)}} \middle | \tilde{\mathscr F}_{\rho_\alpha}}
		= \mu \tilde{\mathbb E}\SB{ \int_{0}^{\Tr{t:key}-\rho_\alpha} \ABS{ I^{(\rho_\alpha, \mathcal U_\alpha)}_{\rho_\alpha+s}} \mathrm ds \middle | \tilde{\mathscr F}_{\rho_\alpha}}
		\\ & \leq\mathbf 1_{\{\alpha \in J_{\Tr{t:key}}\}}  \mu  \int_0^{\Tr{t:key}} \frac{\Cr{c:Coalescing}}{\sqrt{s}} \mathrm ds 
		\leq \mathbf 1_{\{\alpha \in J_{\Tr{t:key}}\}}/2
	\end{align}
	as claimed.
\end{proof}

As a corollary of Lemmas \ref{lem:Compensator} and \ref{lem:Branching}, we have the following.
\begin{corollary} \label{cor:BranchingEvent}
	Let $\Tr{t:key}(\mu)\geq 0$ be given as in Lemma \ref{lem:Branching}, then 
	\begin{equation} \label{eq:Local1}
		\mu \tilde{\mathbb E}\SB{\int_0^t \ABS{I_s} \mathrm ds}
		= \tilde{\mathbb E}[ \ABS{J_t} ] 
		\leq N_0, \quad \forall t\in  [0, \Tr{t:key}(\mu)].
	\end{equation}
\end{corollary}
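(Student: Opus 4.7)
The plan is to observe that this corollary essentially just combines the two immediately preceding results. The equality in \eqref{eq:Local1} is nothing more than Lemma~\ref{lem:Compensator} applied at the given time $t$, which holds for every $t\geq 0$ without any restriction. The upper bound $\tilde{\mathbb E}[|J_t|]\leq N_0$ for $t\in [0,\Tr{t:key}(\mu)]$ is the content of Lemma~\ref{lem:Branching}, where we use the standing assumptions of this subsection, namely that the offspring distribution is bounded and $n<\infty$, so that Lemma~\ref{lem:Branching} is applicable and all expectations are finite.

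Concretely, I would first invoke Lemma~\ref{lem:Compensator} to write
\[
\mu \tilde{\mathbb E}\SB{\int_0^t |I_s|\mathrm ds} = \tilde{\mathbb E}[|J_t|], \quad t\geq 0.
\]
Then, for $t\in [0,\Tr{t:key}(\mu)]$, I would apply Lemma~\ref{lem:Branching}, which gives $\tilde{\mathbb E}[|J_t|]\leq N_0$. Chaining the identity and the inequality yields the desired conclusion.

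There is no real obstacle here, as both facts being combined have already been established. The only thing worth being careful about is that Lemma~\ref{lem:Compensator} holds in full generality while Lemma~\ref{lem:Branching} requires the time window $[0,\Tr{t:key}(\mu)]$; the corollary inherits this restriction, which is made explicit in its statement.
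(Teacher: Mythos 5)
Your proposal is correct and matches the paper exactly: the corollary is stated there without proof precisely because it is the concatenation of Lemma \ref{lem:Compensator} (the identity, valid for all $t\geq 0$) with Lemma \ref{lem:Branching} (the bound $\tilde{\mathbb E}[|J_t|]\leq N_0$ on $[0,\Tr{t:key}(\mu)]$), under the standing assumptions of the subsection. Nothing further is needed.
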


Similarly, let us give an upper bound for $\tilde{\mathbb E} \SB{\ABS{I_t}}$.

\begin{lemma} \label{prop:ThePopulation}
	Let $\Tr{t:key}(\mu)\geq 0$ be given as in Lemma \ref{lem:Branching}, then for any $t \in [0, \Tr{t:key}(\mu)]$ it holds that  
	\begin{equation} \label{eq:Local2}
		\tilde{\mathbb E} \SB{\ABS{I_t}} \leq  
			\frac{\Cr{c:Coalescing}N_0}{\sqrt{t}} \wedge n  + 2 \mu \Cr{c:Coalescing}^2 \pi N_0		 
	\end{equation}
		where $\Cr{c:Coalescing}$ is the constant given as in Lemma \ref{lem:ComingDown}.
\end{lemma}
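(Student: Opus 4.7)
The plan is to decompose $|I_t|$ according to the most recent ancestral branching. Writing $\mathcal U_1 := \{\beta \in \mathcal U : |\beta|=1\}$, every $\beta \in I_t$ either lies in $\mathcal U_1$ (its lineage has not yet branched) or satisfies $\overleftarrow\beta \in J_t$ and $\beta \in \mathcal U_{\overleftarrow\beta}$ (recall \eqref{eq:Ualpha}). This yields the exact decomposition $|I_t| = |I_t \cap \mathcal U_1| + \sum_{\alpha \in J_t} |I_t \cap \mathcal U_\alpha|$, and my task reduces to bounding each summand and assembling them into an integral inequality for $\phi(t) := \tilde{\mathbb E}[|I_t|]$.

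To bound the first summand I will apply the $(0, I_0)$-marking procedure from Subsection~\ref{sec:ECBM}. Since every initial particle receives a finite mark and marks die only when their carriers branch or lose a coalescing, one has $|I_t \cap \mathcal U_1| = |I_t^{(0, I_0)}|$. Lemma~\ref{lem:Embedded} identifies this with the size of a killing-coalescing Brownian particle system started from $(x_i)_{i=1}^n$, which has $N_0$ distinct initial locations, so Lemma~\ref{lem:ComingDown} gives $\tilde{\mathbb E}[|I_t^{(0, I_0)}|] \leq (\Cr{c:Coalescing} N_0 / \sqrt{t}) \wedge n$. For each $\alpha \in J_t$, the $(\zeta_\alpha, \mathcal U_\alpha)$-marking procedure, applied exactly as in the proof of Lemma~\ref{lem:Branching}, gives $|I_t \cap \mathcal U_\alpha| \leq |I_t^{(\zeta_\alpha, \mathcal U_\alpha)}|$; because all $N_\alpha$ children of $\alpha$ are born at a single point, the associated embedded killing-coalescing system has only one distinct initial location, so Lemma~\ref{lem:ComingDown} yields $\tilde{\mathbb E}[|I_t^{(\zeta_\alpha, \mathcal U_\alpha)}| \mid \tilde{\mathscr F}_{\zeta_\alpha}] \leq \Cr{c:Coalescing}/\sqrt{t-\zeta_\alpha}$ whenever $t-\zeta_\alpha \leq \Tr{t:Coalescing}$, which is guaranteed by $t \leq \Tr{t:key}(\mu) \leq \Tr{t:Coalescing}$.

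Summing over $\alpha \in J_t$ and applying the compensator identity (Lemmas~\ref{eq:CompensatorDecom} and~\ref{lem:ThePointMeasures}) to $\mathfrak N$ with the predictable integrand $(s,\alpha,k) \mapsto \Cr{c:Coalescing}\mathbf 1_{\{X^\alpha_{s-}\in \mathbb R\}}(t-s)^{-1/2}$ converts the sum over branching times into an integral, producing the Volterra-type inequality
\begin{equation}
\phi(t) \leq \frac{\Cr{c:Coalescing} N_0}{\sqrt{t}}\wedge n + \mu\Cr{c:Coalescing} \int_0^t \frac{\phi(s)}{\sqrt{t-s}}\mathrm ds, \qquad t\in[0,\Tr{t:key}(\mu)].
\end{equation}
To close the argument I will iterate this inequality once and swap the order of the resulting double integral via the Beta-function identity $\int_r^t (t-s)^{-1/2}(s-r)^{-1/2}\mathrm ds = \pi$. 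This produces $\phi(t) \leq (\Cr{c:Coalescing} N_0/\sqrt t)\wedge n + \mu\Cr{c:Coalescing}^2 N_0\pi + \mu^2\Cr{c:Coalescing}^2\pi \int_0^t \phi(r)\mathrm dr$, and Corollary~\ref{cor:BranchingEvent} controls the remaining integral by $N_0/\mu$ on $[0,\Tr{t:key}(\mu)]$, yielding \eqref{eq:Local2}.

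The only subtlety I anticipate is justifying Fubini and the compensator formula in the presence of the singular kernel $(t-s)^{-1/2}$; in the current bounded-offspring, finite-$n$ setting this is routine since Lemma~\ref{lem:Coupling} dominates $|I_s|$ by the branching Brownian population $|\bar I_s|$, which has locally bounded expectation, so all integrals involved are absolutely finite.
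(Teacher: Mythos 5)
Your proposal is correct and follows essentially the same route as the paper's proof: the same decomposition of $|I_t|$ into initial-particle survivors plus children of branching events, the same use of the $(0,I_0)$- and $(\rho_\alpha,\mathcal U_\alpha)$-embedded killing-coalescing systems with Lemmas \ref{lem:Embedded} and \ref{lem:ComingDown}, the same compensator conversion of the sum over $J_t$ into a singular-kernel integral, and the same one-step iteration via the Beta identity combined with Corollary \ref{cor:BranchingEvent}. The minor differences (exact rather than inequality decomposition, and justifying the compensator step by dominating $|I_s|$ with $|\bar I_s|$ instead of capping the kernel at $m$) are harmless.
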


\begin{proof}
	Let us fix an arbitrary $T \in (0, \Tr{t:key}(\mu)]$.
	Notice that for every $\beta \in \mathcal U$ with $|\beta| > 1$, $\beta \in I_T$ implies $\overleftarrow{\beta} \in J_T$. 
	Therefore
	\begin{equation} \label{eq:ITT}
	\ABS{I_T} \leq  |I^{\varnothing}_T| + \sum_{\alpha \in J_T}  	\ABS{I_{T}^{\alpha} }
	\end{equation}
	where 
	\[
		I_{T}^{\varnothing} := \CB{\beta \in \mathcal U: |\beta| =1, X_T^{\beta} \in \mathbb R}
	\]
	is the collection of the labels of the initial particles still alive at time $T$,
	and 
	for every $\alpha \in \mathcal U$, 
	\[
	I_{T}^{\alpha} := \CB{\beta \in \mathcal U: \overleftarrow{\beta} = \alpha, X_T^{\beta} \in \mathbb R}
	\]
	is the collection of the labels of the children of particle $\alpha$ who are alive at time $T$.
	
		Consider the $(\tau, \mathcal A)$-marking procedure, given as in Subsection \ref{sec:ECBM}, with $\tau = 0$ and $\mathcal A = I_0$.
		From Lemma \ref{lem:Embedded}, the $(0, I_0)$-embedded killing-coalescing Brownian particle system 
		\[
		\CB{ \RB{X^{\psi(0,I_0,t, k)}_{t}}_{t\geq 0} :k \in \mathbb N  }
		\] 
		is a killing-coalescing Brownian particle system with killing rate $\mu$ and initial configuration $(x_i)_{i=1}^{n}$. 
		Recall that the sets of labels $I_T^{(0,I_0)}$ are given by \eqref{eq:IT} for every $T\geq 0$.
		Note that $I_T^{\varnothing}$ is a subset of $I_T^{(0,I_0)}$, since any initial particles that are alive at time $T$ are also marked by a finite integer in the $(0, I_0)$-marking procedure. 
		Therefore, by Lemma \ref{lem:ComingDown}, we have
		\begin{equation}
			\tilde{\mathbb E}\SB{\ABS{I_T^{\varnothing}} } \leq \tilde{\mathbb E}\SB{\ABS{ I_T^{(0,I_0)}} } \leq \frac{\Cr{c:Coalescing}N_0}{\sqrt{T}} \wedge n.
		\end{equation}
	
	Since the offspring distribution is bounded, there exists an $m \in \mathbb N$ such that $p_k = 0$ for every $k>m$.
	We claim that for every $\alpha \in \mathcal U$,
	\begin{equation} \label{eq:OneClaim}
		\tilde{\mathbb E}\SB{ \ABS{I_{T}^{\alpha}}\middle | \tilde {\mathscr F}_{\rho_\alpha}}  \leq \frac{\Cr{c:Coalescing}}{\sqrt{T - \rho_\alpha}} \wedge m
	\end{equation}
	where the stopping time $\rho_\alpha$ is defined by
	\[
	\rho_\alpha := 
	\begin{cases}
		\zeta_\alpha, & \quad \text{~if~} \alpha \in J_{T},
		\\ T,  &\quad \text{~otherwise.~}
	\end{cases}
	\]
	From this claim and \eqref{eq:ITT}, we know that 
	\begin{align}
		&\tilde {\mathbb E}\SB{\ABS{I_T}}
		\leq  \frac{\Cr{c:Coalescing}N_0}{\sqrt{T}} \wedge  n +  \tilde {\mathbb E} \SB{  \sum_{\alpha \in J_T}  	 \tilde {\mathbb E} \SB{\ABS{I_{T}^{\alpha} } \middle| \tilde {\mathscr F}_{\rho_\alpha}} }
		\\&\leq  \frac{\Cr{c:Coalescing}N_0}{\sqrt{T}} \wedge  n +  \tilde {\mathbb E} \SB{  \sum_{\alpha \in J_T}  	\RB{\frac{\Cr{c:Coalescing}}{\sqrt{T-\zeta_\alpha}} \wedge m} }.
	\end{align}
	Notice that for each $\alpha \in \mathcal U$, $\alpha \in J_T$ if and only if there exists a (unique) $s\in (0,T]$ such that $X_{s-}^{\alpha} \in \mathbb R$ and $\mathfrak N(\CB{s}\times \CB{\alpha}\times \bar{\mathbb N}) = 1$; and in this case, it holds that $\zeta_\alpha = s$. 
	Therefore, almost surely
	\begin{equation}\label{eq:DecOfNumOfParticle}
		\sum_{\alpha \in J_T}  	\RB{ \frac{\Cr{c:Coalescing}}{\sqrt{T-\zeta_\alpha}} \wedge m }  
		=\int_{\mathcal U}\int_0^T	\RB{ \frac{\Cr{c:Coalescing}}{\sqrt{T-s}} \wedge m }\mathbf 1_{\{X^{ \alpha}_{s-} \in \mathbb R \}} \mathfrak N(\mathrm ds, \mathrm d\alpha, \bar {\mathbb N}).
	\end{equation}
	Notice that the left hand side of \eqref{eq:DecOfNumOfParticle} is dominated by $m |\bar J_T|$ (see Lemma \ref{lem:Coupling}), which, under the assumption of the bounded offspring distribution and finite many initial particles, has finite first moment.
	Therefore from Lemmas \ref{eq:CompensatorDecom} and \ref{lem:ThePointMeasures}, we have
	\[
	\tilde{\mathbb E}\SB{\sum_{\alpha \in J_T}  	\RB{ \frac{\Cr{c:Coalescing}}{\sqrt{T-\zeta_\alpha}} \wedge m } } = \mu \tilde{\mathbb E}\SB{ \sum_{\alpha \in \mathcal U} \int_0^T \RB{ \frac{\Cr{c:Coalescing}}{\sqrt{T-s}} \wedge m }\mathbf 1_{\{X^{ \alpha}_{s-} \in \mathbb R \}} \mathrm ds }.
	\] 
	From this, and Fubini's theorem, we know that 
	\begin{equation}
		\tilde {\mathbb E}\SB{\ABS{I_T}}
		\leq  \frac{\Cr{c:Coalescing}N_0}{\sqrt{T}} \wedge  n + \mu \int_0^T \frac{\Cr{c:Coalescing}}{\sqrt{T-s}}  \tilde {\mathbb E} \SB{ \ABS{I_s} } \mathrm ds.
	\end{equation}
	Since $T \in (0, \Tr{t:key}(\mu)]$ is arbitrary, we can iterate the above inequality and get from Fubini's theorem that
		\begin{align}
		& \tilde {\mathbb E}\SB{\ABS{I_T}}
		\leq\frac{\Cr{c:Coalescing}N_0}{\sqrt{T}} \wedge n + \mu \int_0^T \frac{\Cr{c:Coalescing}}{\sqrt{T-s}}  \SB{ \frac{\Cr{c:Coalescing}N_0}{\sqrt{s}} \wedge n+ \mu \int_0^s \frac{\Cr{c:Coalescing}}{\sqrt{s-r}}  \tilde {\mathbb E} \SB{ \ABS{I_r} } \mathrm dr} \mathrm ds
			\\& \leq \frac{\Cr{c:Coalescing}N_0}{\sqrt{T}} \wedge n + \mu N_0 C_1^2 \int_0^T \frac{1}{\sqrt{T-s}} \frac{1}{\sqrt{s}}  \mathrm ds + {}
			\\&\qquad  \mu^2 \Cr{c:Coalescing}^2 \int_0^T  \tilde {\mathbb E} \SB{ \ABS{I_r} }  \RB{\int_r^T \frac{1}{\sqrt{T-s}}  \frac{1}{\sqrt{s-r}}  \mathrm ds } \mathrm dr
			\\& = \frac{\Cr{c:Coalescing}N_0}{\sqrt{T}} \wedge n  + \mu \Cr{c:Coalescing}^2 \pi N_0 + \mu^2 \Cr{c:Coalescing}^2 \pi \int_0^T  \tilde {\mathbb E} \SB{ \ABS{I_r} }  \mathrm dr.
	\end{align}
	Here in the last step, we used the fact that, for every $r\in [0,T)$,
	\begin{equation}
		\int_r^T \frac{1}{\sqrt{T-s}}  \frac{1}{\sqrt{s-r}}  \mathrm ds 
		=  \int_0^{T-r} \frac{1}{\sqrt{T-r-l}}  \frac{1}{\sqrt{l}} \mathrm dl
		= \int_0^{1}  \frac{1}{\sqrt{z(1-z)}} \mathrm d z
		= \pi.
	\end{equation}
	Now, from Corollary \ref{cor:BranchingEvent} and \eqref{eq:TheKeyTime}, we have
	\begin{align}
		& \tilde {\mathbb E}\SB{\ABS{I_T}} \leq   \frac{\Cr{c:Coalescing}N_0}{\sqrt{T}} \wedge n  + 2 \mu \Cr{c:Coalescing}^2 \pi N_0 
	\end{align}
	as desired for this lemma.
	
	We still need to verify  the claim \eqref{eq:OneClaim} for an arbitrarily fixed $\alpha \in \mathcal U$. 
	Consider the $(\rho_\alpha, \mathcal U_\alpha)$-marking procedure for the particle system where $\mathcal U_\alpha $ is given as in \eqref{eq:Ualpha}.
	From Lemma \ref{lem:Embedded} we know that, conditioned on $\tilde{\mathscr F}_{\rho_\alpha}$, the $(\rho_\alpha,\mathcal U_\alpha)$-embedded killing-coalescing Brownian particle system 
\[
	\CB{ \RB{X^{\psi(\rho_\alpha,\mathcal U_\alpha,t, k)}_{\rho_\alpha+t}}_{t\geq 0} :k \in \mathbb N  }
\] 
	is a killing-coalescing Brownian particle system with killing rate $\mu$ and initial configuration $(X^{(\alpha,k)}_{\rho_\alpha})_{k=1}^{N_\alpha}$. 
	Here, on the event $\{\alpha \in J_{T}\}$, $N_\alpha = Z_\alpha \leq m$ is the number of children of the particle $\alpha$; and on the event $\{\alpha \notin J_{T}\} = \{\rho_\alpha = T\}$, we have $N_\alpha = 0$, i.e., there is no initial particle for the $(\rho_\alpha,\mathcal U_\alpha)$-embedded killing-coalescing Brownian particle system.
	Recall that
\[
	\ABS{I_{T}^{(\rho_\alpha, \mathcal U_\alpha)}} 
	= \ABS{\CB{k\in \mathbb N: X^{\psi(\rho_\alpha, \mathcal U_\alpha,T-\rho_\alpha, k )}_{T} \in \mathbb R}}
\]
	is the number of particles of the $(\rho_\alpha,\mathcal U_\alpha)$-embedded killing-coalescing Brownian particle system at time $T$.
	Observe that 
	\[
	\ABS{I_{T}^{\alpha}} \leq  \ABS{I_{T}^{(\rho_\alpha, \mathcal U_\alpha)}}, \quad \text{a.s.}
	\] 
	To see this, note that both sides of the above inequality equals $0$ on the event $\{\alpha \notin J_T\} = \{\rho_\alpha = T\}$; and on the event $\{\alpha \in J_T\}$, any child of the particle $\alpha$ is always marked by a finite number in the $(\rho_\alpha,\mathcal U_\alpha)$-marking procedure.
	Now from Lemma \ref{lem:ComingDown} we have that 
	\begin{equation}
		\tilde {\mathbb E} \SB{\ABS{I_{T}^{\alpha}} \middle | \tilde{\mathscr F}_{\rho_\alpha}}
		\leq \tilde {\mathbb E} \SB{ \ABS{I_{T}^{(\rho_\alpha, \mathcal U_\alpha)}} \middle | \tilde{\mathscr F}_{\rho_\alpha}}
		\leq \frac{\Cr{c:Coalescing}}{\sqrt{T- \rho_\alpha}} \wedge m 
	\end{equation}
	as claimed.
\end{proof}

Corollary \ref{cor:BranchingEvent} and Lemma \ref{prop:ThePopulation} give the upper bounds for the expectations of the random variables listed in \eqref{eq:BigThree} up to the time $\Tr{t:key}(\mu)$. 
Using the Markov property of the measure valued process  $(\mathbb X_t)_{t\geq 0}$ given in \eqref{eq:MeasureValuedProcess}, we can verify that for any $t \in (\Tr{t:key}, 2\Tr{t:key}]$, 
\begin{align}
		&\tilde {\mathbb E}\SB{\ABS{I_t}} 
		\leq \tilde{\mathbb E}\SB{\frac{\Cr{c:Coalescing}N_{T_2}}{\sqrt{t-T_2}} \wedge \ABS{I_{T_2}}  + 2 \mu \Cr{c:Coalescing}^2 \pi N_{T_2}}
	\\&\leq  \RB{1+2 \mu \Cr{c:Coalescing}^2\pi } \tilde {\mathbb E}  \SB{\ABS{I_{\Tr{t:key}}} } 
	\leq \RB{1+2 \mu \Cr{c:Coalescing}^2\pi } \RB{\frac{\Cr{c:Coalescing}}{\sqrt{T_2}}  + 2 \mu \Cr{c:Coalescing}^2 \pi} N_0
\end{align}
and 
	\begin{align}
		&\tilde {\mathbb E}\SB{\ABS{J_t}} 
		= \mu \tilde {\mathbb E}\SB{\int_0^t \ABS{I_s} \mathrm ds} 
		= \mu \tilde {\mathbb E}\SB{\int_0^{\Tr{t:key}} \ABS{I_s} \mathrm ds}  + \mu \tilde {\mathbb E}\SB{\int_{\Tr{t:key}}^t \ABS{I_s} \mathrm ds} 
		\\&	\leq N_0 +  \tilde {\mathbb E}\SB{N_{T_2}} \leq N_0 +  \tilde {\mathbb E}\SB{\ABS{I_{\Tr{t:key}}} } 
		\leq\RB{1+\frac{\Cr{c:Coalescing}}{\sqrt{T_2}}  + 2 \mu \Cr{c:Coalescing}^2 \pi} N_0.
	\end{align}
Repeating this procedure inductively for $t\in (k\Tr{t:key}, (k+1)\Tr{t:key}]$ with $k \in \mathbb N$, one can verify the following result.
\begin{corollary} \label{thm:TheUseful}
	For every $t\geq 0$, there exist $\C\label{c:Universal1}(\mu,t)>0$ and $\C\label{c:Universal2}(\mu,t)>0$ such that
\begin{equation}
	\tilde {\mathbb E}\SB{\ABS{I_s}} 
	 \leq \frac{\Cr{c:Coalescing}N_0}{\sqrt{s}} \wedge n + \Cr{c:Universal1}(\mu,t) N_0, \quad 0\leq s\leq t
\end{equation}
	and
	\[
	\tilde {\mathbb E}\SB{\ABS{J_t}} 
	= \mu \tilde {\mathbb E}\SB{\int_0^t \ABS{I_s} \mathrm ds} 
	\leq \Cr{c:Universal2}(\mu,t)  N_0. 
	\]
	Here, the constants $\Cr{c:Universal1}(\mu,t)$ and $\Cr{c:Universal2}(\mu,t)$ are independent of the initial configuration and the bounded offspring distribution.
\end{corollary}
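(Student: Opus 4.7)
The plan is to iterate the local-in-time bounds from Lemma \ref{prop:ThePopulation} and Corollary \ref{cor:BranchingEvent} using the Markov property of the counting measure-valued process $(\mathbb X_t)_{t\geq 0}$ from \eqref{eq:MeasureValuedProcess}. Because the offspring distribution is bounded and $n<\infty$, explosions never occur, $(\mathbb X_t)_{t\geq 0}$ is a genuine $\mathcal N$-valued c\`adl\`ag Markov process, and the particle system restarted from $\mathbb X_\tau$ at a time $\tau$ behaves as a fresh branching-coalescing Brownian particle system whose initial configuration has $N_\tau$ distinct locations, with $N_\tau\leq |I_\tau|$ by definition.

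Write $T_2:=\Tr{t:key}(\mu)$. I would set up an induction over $k\in \mathbb N$, showing that there exist constants $A_k=A_k(\mu)$ and $B_k=B_k(\mu)$, independent of the initial configuration and of the bounded offspring distribution, such that $\tilde{\mathbb E}[|I_s|]\leq A_k N_0$ for $s\in [T_2, kT_2]$ and $\tilde{\mathbb E}[|J_{kT_2}|]\leq B_k N_0$. The base case $k=1$ is furnished directly by Lemma \ref{prop:ThePopulation} and Corollary \ref{cor:BranchingEvent}, which also supply the $\tfrac{\Cr{c:Coalescing}N_0}{\sqrt{s}}\wedge n$ term for $s\in [0,T_2]$ that appears in the statement. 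For the inductive step, fix $s\in (kT_2, (k+1)T_2]$ and condition on $\tilde{\mathscr F}_{kT_2}$: by the Markov property and Lemma \ref{prop:ThePopulation} applied to the shifted system,
\[
\tilde{\mathbb E}\bigl[|I_s|\bigm|\tilde{\mathscr F}_{kT_2}\bigr]
\leq \frac{\Cr{c:Coalescing}\,N_{kT_2}}{\sqrt{s-kT_2}}\wedge |I_{kT_2}| + 2\mu \Cr{c:Coalescing}^2 \pi\, N_{kT_2}
\leq \bigl(1+2\mu \Cr{c:Coalescing}^2 \pi\bigr)|I_{kT_2}|,
\]
using $N_{kT_2}\leq |I_{kT_2}|$ and $(\Cr{c:Coalescing}/\sqrt{s-kT_2})\wedge 1\leq 1$. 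Taking expectations and invoking the inductive hypothesis gives $\tilde{\mathbb E}[|I_s|]\leq (1+2\mu \Cr{c:Coalescing}^2 \pi)A_k N_0$, so $A_{k+1}:=(1+2\mu \Cr{c:Coalescing}^2 \pi)A_k$ works.

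The bound on $\tilde{\mathbb E}[|J_t|]$ is then immediate from Lemma \ref{lem:Compensator}, which identifies $\tilde{\mathbb E}[|J_t|]=\mu\tilde{\mathbb E}[\int_0^t |I_s|\,\mathrm ds]$. Integrating the pointwise bound on $\tilde{\mathbb E}[|I_s|]$ produces $\Cr{c:Universal2}(\mu,t)$; the only integrability issue is the $1/\sqrt s$ singularity near the origin, which integrates to $2\Cr{c:Coalescing}\sqrt{T_2}$. Given arbitrary $t\geq 0$, I would pick the smallest $k$ with $t\leq kT_2$ and set $\Cr{c:Universal1}(\mu,t):=\max\{2\mu\Cr{c:Coalescing}^2\pi,\,A_k\}$ and $\Cr{c:Universal2}(\mu,t)$ as the resulting integrated constant.

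The main technical point to watch is that every constant produced along the induction must depend only on $\mu$ and $t$, never on the initial configuration $(x_i)_{i=1}^n$ nor on the particular bounded offspring distribution. This is preserved because the constants $\Cr{c:Coalescing},\Tr{t:Coalescing}$ from Lemma \ref{lem:ComingDown} and the cut-off time $\Tr{t:key}(\mu)$ from Lemma \ref{lem:Branching} are themselves of this form, and the inductive update $A_k\mapsto A_{k+1}$ only multiplies by $(1+2\mu \Cr{c:Coalescing}^2 \pi)$. I expect no serious obstacle beyond bookkeeping; the only real subtlety is making sure to apply the Markov property to the truly measure-valued process $\mathbb X_{kT_2}$ (which encodes only the \emph{set} of locations up to multiplicity in $N_{kT_2}$), rather than to the labeled family of particles, so that the restarted system is again a genuine branching-coalescing Brownian particle system covered by the earlier lemmas.
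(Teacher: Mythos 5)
Your proposal is correct and follows essentially the same route as the paper: iterate the local-in-time bounds of Lemma \ref{prop:ThePopulation} and Corollary \ref{cor:BranchingEvent} over windows of length $\Tr{t:key}(\mu)$ via the Markov property of the measure-valued process $(\mathbb X_t)_{t\geq 0}$, using $N_{k\Tr{t:key}}\leq |I_{k\Tr{t:key}}|$ to close the induction, and then obtain the $|J_t|$ bound from Lemma \ref{lem:Compensator}. The only cosmetic difference is that you integrate the pointwise bound on $\tilde{\mathbb E}[|I_s|]$ to control $\tilde{\mathbb E}[|J_t|]$, whereas the paper restarts Corollary \ref{cor:BranchingEvent} on each window; both yield a constant of the required form.
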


\subsection{Uniform upper bound for arbitrary offspring distribution and initial configuration} \label{sec:proofkey}
In this subsection, let us consider a branching-coalescing Brownian particle system $\{(X^\alpha_t)_{t\geq 0}: \alpha \in \mathcal U\}$ with an arbitrary offspring distribution $(p_k)_{k\in \bar{\mathbb N}}$ and an arbitrary initial configuration $(x_i)_{i=1}^n$. 
 Suppose that $N_0:=\ABS{\{x_i:i\in \mathbb N, i\leq n\}} < \infty$.  
\begin{proof}[Proof of Theorem~\ref{prop:Key}]
	For every $m\in \mathbb N$, denote by $\{(X^{(m),\alpha}_t)_{t\geq 0}: \alpha \in \mathcal U\}$ the $m$-truncated version of the particle system $\{(X^\alpha_t)_{t\geq 0}: \alpha \in \mathcal U\}$ given as in Subsection \ref{sec:Truncated}.
	Denote $(I_t^{(m)})_{t \geq 0}$ and $(J_t^{(m)})_{t\geq 0}$ as in  \eqref{eq:Imt} and \eqref{eq:Jmt}. 
	It was known from Lemma \ref{lem:Coupling} that for every $t\geq 0$, $|I_t^{(m)}|$ and $|J_t^{(m)}|$ monotonically increase to $\ABS{I_t}$ and $\ABS{J_t}$, respectfully, as $m\uparrow \infty$. 
	
	Since $\{(X^{(m),\alpha}_t)_{t\geq 0}: \alpha \in \mathcal U\}$ is a branching-coalescing Brownian particle system with  its offspring distribution and initial number of particles both bounded by $m$, we can conclude from Corollary \ref{thm:TheUseful} that for every $t\geq 0$,
	\[
	\tilde {\mathbb E}\SB{\ABS{I^{(m)}_t}}  
	\leq 
	 \frac{\Cr{c:Coalescing}N_0}{\sqrt{t}} \wedge m + \Cr{c:Universal1}(\mu,t) N_0 
	\] 
	and
	\[
	\tilde {\mathbb E}\SB{\ABS{J^{(m)}_t}} 
	= \mu \tilde {\mathbb E}\SB{\int_0^t \ABS{I^{(m)}_s} \mathrm ds} 
	\leq \Cr{c:Universal2}(\mu,t)  N_0
	\]
	where the constants $\Cr{c:Universal1}(\mu,t)>0$ and $\Cr{c:Universal2}(\mu,t)>0$ are independent of the initial configuration $(x_i)_{i=1}^n$ and the truncation number $m$. 
	Taking $m\uparrow \infty$, the desired result now follows from the monotone convergence theorem. 
\end{proof}

\subsection{The exponential term} \label{sec:ET}
	In this subsection, let $(b_k)_{k\in \bar{\mathbb N}}$ be a family of real numbers satisfying \eqref{eq:NonDegenerate} and \eqref{eq:CLZsCondition} for some $R\geq 1$.
	Let $\{(X_t^\alpha)_{t\geq 0} : \alpha \in \mathcal U\}$ be a branching-coalescing Brownian particle system with initial configuration $(x_i)_{i=1}^n$  such that $n< \infty$, the branching rate $\mu$ is given as in \eqref{eq:TheBranchingRate}, and the offspring distribution $(p_k)_{k \in \bar {\mathbb N}}$ is given as in \eqref{eq:offspring}.
	Recall from \eqref{eq:Imt} that, for every $m\in \mathbb N$, $I^{(m)}_t$ is the labels of the particles in the $m$-truncated particle system living at time $t\geq 0$.
	Also recall that $(K_t)_{t\geq 0}$ is given as in \eqref{eq:Kt}.
	We will prove a result which is stronger than Proposition \ref{prop:ExponentialTerm}.
	This stronger result will be used later in the proof of Proposition \ref{prop:Duality}.

\begin{lemma}  \label{lem:exponential}
	For every $T\geq 0$, it holds that
\begin{equation} 
	\sup_{0\leq t\leq T}\tilde{\mathbb E}\SB{\RB{1+e^{K_t}}\RB{1+\ABS{I_t} + \ABS{I_t^{(m)}}^2}} 
	< \infty.
\end{equation}
\end{lemma}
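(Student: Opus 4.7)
The plan is to first observe that the constraint \eqref{eq:CLZsCondition} with $R\geq 1$ forces the exponential factor $e^{K_t}$ to be trivially bounded, thereby reducing the statement to a moment bound on the population processes. Since $R^{k-1}\geq 1$ for every $k\in\bar{\mathbb N}\setminus\{1\}$, the assumption gives
\[
-b_1 \;\geq\; \sum_{k\in \bar{\mathbb N}\setminus\{1\}}|b_k|R^{k-1}\;\geq\;\sum_{k\in \bar{\mathbb N}\setminus\{1\}}|b_k|\;=\;\mu,
\]
by the definition \eqref{eq:TheBranchingRate}, so that $\mu+b_1\leq 0$ and consequently $K_t=(\mu+b_1)\int_0^t|I_s|\,\mathrm ds\leq 0$. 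Hence $1+e^{K_t}\leq 2$ pathwise, and the claim reduces to showing
\[
\sup_{0\leq t\leq T}\tilde{\mathbb E}\SB{1+\ABS{I_t}+\ABS{I_t^{(m)}}^2}<\infty.
\]

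For the first-moment term, the plan is to apply Corollary \ref{thm:TheUseful} to each $k$-truncated system $\{(X_t^{(k),\alpha})_{t\geq 0}:\alpha\in\mathcal U\}$, which has at most $n\wedge k$ initial particles and offspring distribution bounded by $k$. This yields
\[
\tilde{\mathbb E}\SB{\ABS{I_s^{(k)}}}\leq \frac{\Cr{c:Coalescing}N_0}{\sqrt s}\wedge n+\Cr{c:Universal1}(\mu,T)N_0,\qquad 0\leq s\leq T,
\]
with a constant independent of $k$. Since $|I_s^{(k)}|\uparrow|I_s|$ as $k\uparrow\infty$ by Lemma \ref{lem:TC}, monotone convergence then gives the uniform bound $\tilde{\mathbb E}[|I_s|]\leq n+\Cr{c:Universal1}(\mu,T)N_0$ on $[0,T]$, as required.

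For the second-moment term, I would use Lemma \ref{lem:Coupling} to dominate $|I_t^{(m)}|\leq|\bar I_t^{(m)}|$, where $\bar I_t^{(m)}$ is the population of the pure branching Brownian motion (no coalescing) with offspring distribution $(p_k^{(m)})_{k\in\bar{\mathbb N}}$ bounded by $m$ and at most $n\wedge m$ initial particles. The size process $Y_t:=|\bar I_t^{(m)}|$ is a continuous-time pure-jump Markov chain whose generator applied to $y\mapsto y^2$ satisfies $Af(y)\leq \mu y\sum_{k\neq 1}p_k^{(m)}\bigl((k-1)^2+2y(k-1)\bigr)\leq C_1 y+C_2 y^2$, with constants depending only on $\mu$ and $m$. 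A routine Grönwall argument on $\tilde{\mathbb E}[Y_t]$ followed by $\tilde{\mathbb E}[Y_t^2]$ then gives $\sup_{0\leq t\leq T}\tilde{\mathbb E}[|\bar I_t^{(m)}|^2]<\infty$.

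The main point requiring care is ensuring that the constants produced by Corollary \ref{thm:TheUseful} are genuinely uniform in the truncation level $k$ before the monotone passage to the limit, and that the offspring mean and variance of $(p_k^{(m)})$ are finite (which is immediate, since this distribution is supported in $\{0,1,\dots,m\}$). Everything else is a routine combination of pathwise domination, Corollary \ref{thm:TheUseful}, and standard branching-process second-moment bounds.
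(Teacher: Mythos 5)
There is a genuine gap, and it is in the very first step. You claim that $R^{k-1}\geq 1$ for every $k\in\bar{\mathbb N}\setminus\{1\}$, so that \eqref{eq:CLZsCondition} would force $-b_1\geq\sum_{k\neq 1}|b_k|=\mu$ and hence $K_t\leq 0$. But $0\in\bar{\mathbb N}$, and the $k=0$ term in \eqref{eq:CLZsCondition} carries the weight $R^{0-1}=R^{-1}\leq 1$, not $\geq 1$. Consequently \eqref{eq:CLZsCondition} does \emph{not} imply $\mu+b_1\leq 0$ when $R>1$ and $b_0>0$. Concretely, take $b_0=10$, $b_1=-10$, $b_2=-1/2$, all other coefficients zero: then $b(0)=10\geq 0\geq b(1)=-1/2$, and with $R=1.1$ one has $|b_0|R^{-1}+|b_2|R\approx 9.64\leq 10=-b_1$, so \eqref{eq:CLZsCondition} holds, yet $\mu+b_1=10.5-10=0.5>0$ and $K_t=(\mu+b_1)\int_0^t|I_s|\,\mathrm ds$ is unbounded. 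So your reduction "$1+e^{K_t}\leq 2$" collapses precisely in the only nontrivial regime; indeed the paper's proof explicitly isolates this case ($\mu+b_1>0$, which forces $R>1$ and $b_\infty=0$) as the one requiring work. Were your reduction valid, Proposition \ref{prop:ExponentialTerm} would be trivial, which should have been a warning sign.

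What the paper does instead, and what your argument is missing, is a supermartingale estimate that uses the weights $R^{k-1}$ exactly as they appear in \eqref{eq:CLZsCondition}: one considers $Z_t:=e^{K_t}R^{\ABS{I_t}}$, computes its jumps against the compensators $\hat{\mathfrak N}$, $\hat{\mathfrak M}$ (a branching event with $k$ offspring multiplies $R^{\ABS{I_t}}$ by $R^{k-1}$, a coalescence by $R^{-1}\leq 1$), and observes that the drift per particle is $e^{K_s}R^{\ABS{I_{s-}}}\bigl(b_1+\sum_{k\neq 1}|b_k|R^{k-1}\bigr)\leq 0$ by \eqref{eq:CLZsCondition}; Fatou's lemma along a localizing sequence then gives $\tilde{\mathbb E}[Z_t]\leq R^n$, and since $R>1$ one has $1+|x|+|x|^2\leq \Cr{const:RI}(R)R^{|x|}$, which yields the bound with the exponential factor included. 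Your remaining two paragraphs (first moment of $\ABS{I_t}$ via Corollary \ref{thm:TheUseful} applied to truncations plus Lemma \ref{lem:TC} and monotone convergence, and second moment of $\ABS{I_t^{(m)}}$ by domination with a branching Brownian motion with offspring bounded by $m$) are correct and essentially identical to the paper's opening step, but they only dispose of the easy part of the statement; the coupling of $e^{K_t}$ with $\ABS{I_t}$ and $\ABS{I_t^{(m)}}^2$ when $\mu+b_1>0$ is left unproved.
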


	We postpone the proof of Lemma \ref{lem:exponential} to Appendix \ref{sec:exp}. 
	The proof uses a supermartingale argument which is in a similar spirit to the proof of Lemma 3 of \cite{MR1813840}. 

\begin{proof}[Proof of Proposition \ref{prop:ExponentialTerm}]
	The desired result is an immediate corollary of Lemma \ref{lem:exponential}.
\end{proof}

We also need another expectation bound for the truncated particle system. 
It will be used in the proof of Proposition \ref{prop:Duality}. 

\begin{lemma} \label{lem:bug}
	For every $m\in \mathbb N$ with $m\geq 2$ and $T\geq 0$, it holds that
\begin{equation} \label{eq:DR315}
	\tilde{\mathbb E}\SB{\RB{1+e^{K_T^{(m)}}} \sum_{\alpha,\beta \in I_{[0,T]}^{(m)}: \alpha \prec \beta} \RB{1 + \sup_{z\in \mathbb R} L^{\alpha,\beta}_{T,z}} } < \infty
\end{equation}
	where
	\begin{equation} 
		K^{(m)}_t
		:= \RB{\mu+b_1-\frac{1}{m}} \int_0^t |I^{(m)}_s| \mathrm ds, 
		\quad t\geq 0
	\end{equation}
	and
\begin{equation}
	I_{[0,T]}^{(m)} := \bigcup_{s\in [0,T]} I_s^{(m)}
\end{equation}
	is the collection of labels of the particles born up until time $T$ in the $m$-truncated branching-coalescing Brownian particle system.
\end{lemma}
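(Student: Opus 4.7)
The plan is to split the sum into a constant and a local-time piece:
\[
\sum_{\alpha\prec\beta \in I_{[0,T]}^{(m)}}\bigl(1+\sup_z L_{T,z}^{\alpha,\beta}\bigr)
=\binom{|I_{[0,T]}^{(m)}|}{2}+\sum_{\alpha\prec\beta \in I_{[0,T]}^{(m)}}\sup_z L_{T,z}^{\alpha,\beta},
\]
and then to apply Cauchy-Schwarz followed by AM-GM to the local-time piece:
\[
\sum_{\alpha\prec\beta}\sup_z L_{T,z}^{\alpha,\beta}
\leq \sqrt{\binom{|I_{[0,T]}^{(m)}|}{2}\sum_{\alpha\prec\beta}\bigl(\sup_z L_{T,z}^{\alpha,\beta}\bigr)^2}
\leq \tfrac{1}{2}\binom{|I_{[0,T]}^{(m)}|}{2}+\tfrac{1}{2}\sum_{\alpha\prec\beta}\bigl(\sup_z L_{T,z}^{\alpha,\beta}\bigr)^2.
\]
Thus it is enough to control $\tilde{\mathbb E}\bigl[(1+e^{K_T^{(m)}})|I_{[0,T]}^{(m)}|^2\bigr]$ and $\tilde{\mathbb E}\bigl[(1+e^{K_T^{(m)}})\sum_{\alpha\prec\beta}(\sup_z L_{T,z}^{\alpha,\beta})^2\bigr]$ separately. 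Since the $m$-truncated offspring distribution is supported on $\{0,1,\dots,m\}$, we have the pathwise bound $|I_{[0,T]}^{(m)}|\leq n+m|J_T^{(m)}|$, which further reduces the first expectation to $\tilde{\mathbb E}\bigl[(1+e^{K_T^{(m)}})|J_T^{(m)}|^2\bigr]$.

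For this exponential-weighted second moment of $|J_T^{(m)}|$ I intend to use a supermartingale strategy analogous to that underlying Lemma~\ref{lem:exponential}. Applying It\^o's formula to a functional of the form
\[
V_t := e^{K_t^{(m)}}\bigl(a_0+a_1|I_t^{(m)}|+a_2|I_t^{(m)}|^2+a_3|J_t^{(m)}|^2\bigr)
\]
for suitable constants $a_0,\dots,a_3$ (depending on $m$ and $(b_k)$) produces a predictable drift coming from three sources: the continuous exponential factor, the branching events (at which $|I_t^{(m)}|$ increases by at most $m-1$ while $|J_t^{(m)}|$ increases by $1$), and the coalescing events (at which $|I_t^{(m)}|$ decreases by $1$ and $|J_t^{(m)}|$ is unchanged). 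The role of the $-\tfrac{1}{m}$ correction built into $K^{(m)}$ is precisely to force the dominant $|I_t^{(m)}|^3$ contribution to this drift to be non-positive, so that $V_t$ is a local supermartingale. A standard localization (stopping at the first time $|I_t^{(m)}|$ exceeds a threshold) together with Fatou's lemma then yields $\tilde{\mathbb E}[V_T]<\infty$, giving the desired bound.

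For the local-time part, the key input is that for each pair $\alpha\prec\beta$ the process $\tilde X^\alpha-\tilde X^\beta$ is a continuous local martingale with $\langle \tilde X^\alpha-\tilde X^\beta\rangle_T\leq 2T$. The Dambis--Dubins--Schwarz time change reduces $\sup_z L_{T,z}^{\alpha,\beta}$ to the spatial supremum of the local time of a standard Brownian motion run up to a time at most $2T$, for which the classical moment bounds for Brownian sup-local-time give
\[
\tilde{\mathbb E}\bigl[(\sup_z L_{T,z}^{\alpha,\beta})^{2q}\bigr]\leq C_q T^{q},\quad q\geq 1,
\]
uniformly over pairs. Summing over the (random, finite) index set $I_{[0,T]}^{(m)}$ and applying H\"older's inequality to separate the exponential weight from the local-time sum, the remaining polynomial power of $|I_{[0,T]}^{(m)}|$ is absorbed by the first reduced expectation handled above.

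The main obstacle I foresee is calibrating the supermartingale argument correctly: one must choose $a_0,\dots,a_3$ so that the branching, coalescing and exponential drift contributions cancel with the right sign. The $-\tfrac{1}{m}$ correction in the exponent is tight, designed to dominate exactly the worst-case quadratic jump of $|J_t^{(m)}|^2$ at a branching event, and verifying the sign of the predictable drift requires careful bookkeeping of these jumps together with the monotone effect of coalescing events.
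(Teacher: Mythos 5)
Your outline breaks at its central step: the claimed local supermartingale. Note first that the only nontrivial case is $\mu+b_1-\frac1m>0$ (if $\mu+b_1-\frac1m\le 0$ the factor $1+e^{K^{(m)}_T}$ is bounded by $2$ and the whole issue disappears); this case does occur, e.g.\ when $b_0>0$ and \eqref{eq:CLZsCondition} only holds with some $R>1$. In that case $V_t=e^{K^{(m)}_t}\bigl(a_0+a_1|I^{(m)}_t|+a_2|I^{(m)}_t|^2+a_3|J^{(m)}_t|^2\bigr)$ cannot be a local supermartingale for any choice of constants. The compensator of the coalescing events is $\tfrac12\sum_{\alpha\prec\beta}\mathrm dL^{\alpha,\beta}$, a measure singular with respect to $\mathrm dt$, so it can never offset an absolutely continuous positive drift; hence the Lebesgue-absolutely-continuous part of the drift of $V$ must be nonpositive on its own. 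But that part contains the strictly positive term $(\mu+b_1-\frac1m)\,|I^{(m)}_t|\,e^{K^{(m)}_t}\bigl(a_2|I^{(m)}_t|^2+a_3|J^{(m)}_t|^2\bigr)$, of cubic order, whereas the branching jumps of your quadratic functional (jump $\le a_1(m-1)+a_2(2(m-1)|I^{(m)}_t|+(m-1)^2)+a_3(2|J^{(m)}_t|+1)$ at total rate $\mu|I^{(m)}_t|$) only produce absolutely continuous drift of quadratic order. No calibration of $a_0,\dots,a_3$ can cancel an $|I|^3$ term with $|I|^2$ terms, so the Fatou/localization conclusion $\tilde{\mathbb E}[V_T]<\infty$ is unsupported. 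Relatedly, you have misidentified the role of the $-\frac1m$: it is not there to make any pathwise drift nonpositive (it cannot, since $\mu+b_1$ may exceed $\frac1m$); in the paper it creates an \emph{integrability gap}. This is the reason the paper's Lemma \ref{lem:exponential} works with the exponential-in-$|I|$ weight $R^{|I_t|}$, where condition \eqref{eq:CLZsCondition} makes the branching and exponential drifts cancel — a mechanism that has no polynomial analogue.

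The rest of your plan also quietly needs an ingredient you never establish: to "separate the exponential weight by H\"older" you need $\tilde{\mathbb E}[e^{pK^{(m)}_T}]<\infty$ for some $p>1$, while your reductions only ever involve $e^{K^{(m)}_T}$ at power exactly one. This is precisely what the $-\frac1m$ buys in the paper: choosing $\vartheta_m>0$ with $(1+\vartheta_m)(\mu+b_1-\frac1m)=\mu+b_1$, Lemmas \ref{lem:TC} and \ref{lem:exponential} give $\tilde{\mathbb E}[e^{(1+\vartheta_m)K^{(m)}_T}]<\infty$, and then H\"older is applied against the $(1+\varrho_m)$-th moment of $\sum_{\alpha\prec\beta}(1+\sup_z L^{\alpha,\beta}_{T,z})$. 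That moment is finite because the truncated system is dominated by a pure branching system with exactly $m$ offspring per branching event (so $|I^{(m)}_{[0,T]}|$ has all moments by standard branching theory — no supermartingale needed), and because the local times, extended beyond the death times by independent Brownian continuations, are conditionally independent of the genealogical $\sigma$-field, giving a pairwise-uniform moment bound. Your DDS-based uniform moment bound for each fixed pair is fine and parallels the paper's first step, but the summation over the random pair set and the treatment of the exponential weight both hinge on the two inputs above; as written, your argument does not close, though it can be repaired by replacing the supermartingale step with exactly this use of Lemma \ref{lem:exponential} and the pure-branching domination.
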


	We postpone the proof of Lemma \ref{lem:bug} to Appendix \ref{sec:exp}. 
	In the proof, roughly speaking, we first analyze the moments of the exponential term and the local time terms separately, and then use H\"older's inequality.
	This is in a similar spirit to an argument in \cite[p.~1725]{MR1813840}.

\section{Proof of Proposition \ref{prop:Duality}}
\label{sec:Duality}
In this section, we assume that the assumptions in Proposition \ref{prop:Duality} hold.
More precisely, let $f\in \mathcal C(\mathbb R, [0,1])$, $n\in \mathbb N$ and $(x_i)_{i=1}^n$ be a finite list of real numbers.
Let the real-valued function $(b(z))_{z\in [0,1]}$ satisfy $b(0)\geq 0 \geq b(1)>-\infty$, $\eqref{eq:drift}$ and \eqref{eq:CLZsCondition} for some $R\geq 1$.
Suppose that the $\mathcal C(\mathbb R, [0,1])$-valued process $(u_t)_{t\geq 0}$, on a filtered probability space $(\Omega, \mathscr F, (\mathscr F_t)_{t\geq 0},\mathbb P_f)$, is a solution to the SPDE \eqref{eq:SPDE} with initial value $u_0 = f$.
Let $\{(X_{t}^\alpha)_{t\geq 0}: \alpha \in \mathcal U\}$ be a branching-coalescing Brownian particle system, on a probability space $(\tilde \Omega, \tilde {\mathscr F}, \tilde{\mathbb P})$, with initial configuration $(x_i)_{i=1}^n$, branching rate $\mu>0$ given by \eqref{eq:Branching}, and offspring distribution $(p_k)_{k\in  \bar{\mathbb N}}$ given by \eqref{eq:offspring}.

Denote by $\mathbf P$ the product probability measure $\mathbb P_f \times \tilde{\mathbb P}$ on the product space $\Omega \times \tilde \Omega$.
To establish the duality (Proposition \ref{prop:Duality}), we consider
\begin{equation} \label{eq:gts}
	\Xi_{t,s}^{\epsilon,m}:= \mathbf E \SB{ (-1)^{\ABS{\tilde J^{(m)}_s}} e^{K^{(m)}_s}\prod_{\alpha \in I^{(m)}_s} \RB{P_\epsilon u_t} \RB{X_s^{\alpha}}}, \quad t,s\geq 0, \epsilon \geq 0, m \in \mathbb N.
\end{equation}
Here, $(P_\epsilon)_{\epsilon\geq 0}$ is the one-dimensional heat semi-group, i.e. the transition semi-group of the one-dimensional Brownian motion; $I_s^{(m)}$ is the collections of the labels of the living particles at time $s\geq 0$ of the $m$-truncated branching-coalescing Brownian particle system given as in \eqref{eq:Imt};
\[
\tilde J_s^{(m)} := \{\alpha \in \mathcal U: \zeta^{(m)}_\alpha = \zeta_{\alpha,\alpha} \leq s, b_{Z_\alpha} < 0\}, \quad s\geq 0, m \in \mathbb N;
\]
and
\[
K_s^{(m)}:= (\mu + b_1 - \frac{1}{m})\int_0^s |I_r^{(m)}| \mathrm dr, \quad s\geq 0, m \in \mathbb N.
\]
Notice that, as $m\uparrow \infty$, the almost sure limit of the alternating  term $(-1)^{|\tilde J^{(m)}_s|}$ in \eqref{eq:gts} is $(-1)^{|\tilde J_s|}$, since by Theorem~\ref{prop:Key} and Lemma \ref{lem:TC}, $|\tilde J^{(m)}_s| \uparrow |\tilde J_s| \leq |J_s| < \infty$, a.s. 
Also, observe that
\begin{equation} \label{eq:Domination}
	\ABS{(-1)^{|\tilde J^{(m)}_s|} e^{K^{(m)}_s}\prod_{\alpha \in I^{(m)}_s}\RB{P_\epsilon u_t}\RB{X_s^{\alpha}}} 
	\leq 1+e^{K_s} \in L^1(\mathbf P), \quad s,t\geq 0, \epsilon \geq 0, m \in \mathbb N,
\end{equation}
by Proposition \ref{prop:ExponentialTerm}. 
Therefore, the right hand side of \eqref{eq:gts} is well-defined and finite.
Moreover, by Lemma \ref{lem:TC} and the dominated convergence theorem, we have
\begin{equation} \label{eq:mToInfinity}
	\lim_{m\to \infty} \Xi_{t,s}^{\epsilon,m}= \Xi_{t,s}^{\epsilon,\infty} := \mathbf E \SB{ (-1)^{|\tilde J_s|} e^{K_s}\prod_{\alpha \in I_s}\RB{P_\epsilon u_t}\RB{X_s^\alpha} }, \quad t,s, \epsilon \geq 0.
\end{equation}
Now, the desired duality formula \eqref{eq:Duality} can be written as $\Xi_{T,0}^{0,\infty}=\Xi_{0,T}^{0,\infty}$.

Our proof of Proposition \ref{prop:Duality} follows closely the strategy of the proof of Theorem 1 of \cite{MR1813840}. 
There are mainly three steps: 
\begin{itemize}
	\item Step 1. By applying  Ito's formula to
  $ (-1)^{\ABS{\tilde J^{(m)}_s}} e^{K^{(m)}_s}\prod_{\alpha \in I^{(m)}_s} \RB{P_\epsilon u_t} \RB{X_s^{\alpha}}$ as a function of $u_t$,
 we obtain a decomposition for  $\Xi_{t,s}^{\epsilon,m} - \Xi_{0,s}^{\epsilon,m}$.
	\item Step 2. 
 By applying  Ito's formula to
  $ (-1)^{\ABS{\tilde J^{(m)}_s}} e^{K^{(m)}_s}\prod_{\alpha \in I^{(m)}_s} \RB{P_\epsilon u_t} \RB{X_s^{\alpha}}$ as a function of  $\{(X_{s}^\alpha)_{s\geq 0}: \alpha \in \mathcal U\}$,
 we obtain a decomposition for 	$\Xi_{t,s}^{\epsilon,m} - \Xi_{t,0}^{\epsilon,m}$.
	\item Step 3. By inserting the two decompositions above in the equality 
\[
	\int_0^T \RB{\Xi_{r,0}^{\epsilon,m} - \Xi_{0,r}^{\epsilon,m}}\mathrm dr
	= \int_0^T \RB{ \Xi_{T-s,s}^{\epsilon,m} - \Xi_{0,s}^{\epsilon,m}} \mathrm ds - \int_0^T \RB{ \Xi_{t, T-t}^{\epsilon,m} - \Xi_{t,0}^{\epsilon,m}} \mathrm dt, 
\]
	and then by taking the iterated limit
as we first let $\epsilon \downarrow 0$ and then let $m\uparrow \infty$, we can verify that
\[
	\int_0^T \RB{\Xi_{r,0}^{0,\infty} - \Xi_{0,r}^{0,\infty}}\mathrm dr = 0
\]
	for every $T$. This, and a continuity result of the maps $r\mapsto \Xi_{0,r}^{0,\infty}$ and $r\mapsto \Xi_{r,0}^{0,\infty}$ 
will finish the proof of  Proposition \ref{prop:Duality}. 
\end{itemize}

	Let us mention a crucial difference between our approach and the one in \cite{MR1813840}. In \cite{MR1813840}, the particle system is stopped at a sequence of stopping times, instead of having truncated offspring at each of its branching events. 
	This is partially due to the fact that the offspring distribution considered in \cite{MR1813840} already 
	have all finite moments, without the need of further truncation. 

	The precise statements of the three main steps above are given in the following three lemmas.  

\begin{lemma}[Step 1] \label{lem:DecXiBySPDE}
	For any $t,s\geq 0$, $\epsilon > 0$ and $m\in \mathbb N$, it holds that
\[
	\Xi_{t,s}^{\epsilon,m} - \Xi_{0,s}^{\epsilon,m} = \Lambda_{t,s}^{\epsilon,m} + \Phi_{t,s}^{\epsilon,m}  + \Psi_{t,s}^{\epsilon,m} 
\]
	where
\begin{equation}
	\Lambda_{t,s}^{\epsilon,m}  
	:= \mathbf E \SB{ (-1)^{|\tilde J^{(m)}_s|} e^{K^{(m)}_s} \int_0^t \RB{\sum_{\alpha \in I_s^{(m)}}  \RB{\frac{\Delta}{2}P_\epsilon u_r}\RB{X^{\alpha}_s} \prod_{\beta\in I^{(m)}_s\setminus\{\alpha\}}  \RB{P_\epsilon u_r}\RB{X^{\beta}_s} }\mathrm dr},
\end{equation}
\begin{equation}\label{eq:Phi1}
	\Phi_{t,s}^{\epsilon,m}  
	:= \mathbf E \SB{ (-1)^{|\tilde J^{(m)}_s|} e^{K^{(m)}_s} \int_0^t \RB{\sum_{\alpha \in I_s^{(m)}}  \RB{ P_\epsilon \RB{b\circ u_r}}\RB{X^{\alpha}_s} \prod_{\beta\in I^{(m)}_s\setminus\{\alpha\}}  \RB{P_\epsilon u_r}\RB{X^{\beta}_s} }\mathrm dr},
\end{equation}
	and
\begin{align}
	&\Psi_{t,s}^{\epsilon,m}  
	:= \mathbf E\left[  (-1)^{|\tilde J^{(m)}_s|} e^{K^{(m)}_s} \times {} \vphantom{\int_0^t \mathrm dr \int \sigma\RB{u_r(y)} \RB{\sum_{i,j \in I_s^{(m)}: i\prec j} p_\epsilon(y-X^i_s)p_\epsilon(y-X^j_s) \prod_{k \in I_s^{(m)} \setminus \{i,j\}}\RB{P_\epsilon u_r}(X^k_s)} \mathrm dy} \right.
	\\ &\left. \vphantom{(-1)^{|\tilde J^{(m)}_s|} e^{K^{(m)}_s}} \int_0^t \mathrm dr \int \sigma\RB{u_r(y)}^2 \RB{\sum_{\alpha,\beta \in I_s^{(m)}: \alpha\prec \beta} p_\epsilon(y-X^{\alpha}_s)p_\epsilon(y-X^{\beta}_s) \prod_{\gamma \in I_s^{(m)} \setminus \{\alpha,\beta\}}\RB{P_\epsilon u_r}(X^{\gamma}_s)} \mathrm dy\right]
\end{align}
	are all well-defined. 
	Furthermore, for any $\epsilon > 0$ and $m\in \mathbb N$,
	\begin{equation}
		\sup_{s,t\in [0, T]} \max\CB{	\ABS{\Xi_{t,s}^{\epsilon,m}}, \ABS{\Lambda_{t,s}^{\epsilon,m}}, \ABS{\Phi_{t,s}^{\epsilon,m}}, \ABS{\Psi_{t,s}^{\epsilon,m}}} < \infty, \quad T\geq 0.
	\end{equation}
\end{lemma}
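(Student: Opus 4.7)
The plan is: fix $m \in \mathbb N$ and $\epsilon > 0$, condition on the dual variable $\tilde\omega \in \tilde\Omega$, and apply Ito's product rule in $t$ on $(\Omega, \mathbb P_f)$. Conditionally on $\tilde\omega$, for almost every realization, the set $I_s^{(m)}$ is a fixed finite subset of $\mathcal U$ (the $m$-truncated system has offspring $\le m$ and finitely many initial particles, so by Lemma \ref{lem:Coupling} is dominated by a branching Brownian motion with a.s.\ finite population), the positions $(X_s^\alpha)_{\alpha \in I_s^{(m)}}$ are fixed real numbers, and the weight $(-1)^{|\tilde J_s^{(m)}|} e^{K_s^{(m)}}$ is a deterministic factor lying in $[-1,1]$, since \eqref{eq:CLZsCondition} forces $\mu + b_1 \le 0$ and hence $K_s^{(m)} \le 0$.

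For each $x \in \mathbb R$, pairing the mild form \eqref{eq:mild} against the smooth test function $p_\epsilon(x-\cdot)$ shows that $t \mapsto Y_t(x) := (P_\epsilon u_t)(x)$ is a continuous semimartingale with decomposition
\[
dY_t(x) = \Bigl[\tfrac{1}{2}\Delta (P_\epsilon u_t)(x) + P_\epsilon(b \circ u_t)(x)\Bigr] dt + \int p_\epsilon(x-y)\, \sigma(u_t(y))\, W(dt, dy),
\]
and cross-variation
\[
d\langle Y_\cdot(x), Y_\cdot(x')\rangle_t = \Bigl(\int p_\epsilon(x-y) p_\epsilon(x'-y) \sigma(u_t(y))^2\, dy\Bigr) dt.
\]
Applying Ito's product formula to $Y_t^\alpha := Y_t(X_s^\alpha)$ over the (conditionally deterministic, finite) index set $I_s^{(m)}$, and using symmetry $\tfrac{1}{2}\sum_{\alpha \ne \alpha'} = \sum_{\alpha \prec \alpha'}$ for the covariation, gives
\[
\prod_\alpha Y_t^\alpha - \prod_\alpha Y_0^\alpha = \sum_\alpha \int_0^t \Bigl(\prod_{\beta \ne \alpha} Y_r^\beta\Bigr) dY_r^\alpha + \sum_{\alpha \prec \alpha'} \int_0^t \Bigl(\prod_{\gamma \ne \alpha, \alpha'} Y_r^\gamma\Bigr) d\langle Y^\alpha, Y^{\alpha'}\rangle_r.
\]

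Taking $\mathbb E_f$, I would show that the Walsh stochastic integral contributes zero: since $u_r \in [0,1]$ the factors $Y_r^\beta$ lie in $[0,1]$, while $\sigma \le 1/2$ and $p_\epsilon(X_s^\alpha - \cdot) \in L^2(\mathbb R)$ (with norm depending on $\epsilon$ only), so each Walsh integrand belongs to $L^2(\mathbb P_f \otimes dr \otimes dy)$ on $[0,t]$ and produces a true $L^2$-martingale with zero mean. The remaining bounded-variation parts reproduce, after multiplying by the $\tilde\omega$-measurable weight $(-1)^{|\tilde J_s^{(m)}|} e^{K_s^{(m)}}$ and taking $\tilde{\mathbb E}$, exactly the $\Lambda + \Phi + \Psi$ decomposition, once Fubini is used to interchange $\tilde{\mathbb E}$, $\int_0^t dr$ and (for $\Psi$) the spatial $dy$-integral. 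The same bounds handle the ``furthermore'' claim: $|\Xi_{t,s}^{\epsilon,m}| \le 1$ is immediate, while $|\Lambda_{t,s}^{\epsilon,m}|$ and $|\Phi_{t,s}^{\epsilon,m}|$ are controlled by $C(\epsilon) \tilde{\mathbb E}[|I_s^{(m)}|]$ and $|\Psi_{t,s}^{\epsilon,m}|$ by $C(\epsilon) \tilde{\mathbb E}[|I_s^{(m)}|^2]$, both finite uniformly in $s \in [0,T]$ by Lemma \ref{lem:exponential}.

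The main obstacle is the Fubini interchange for $\Psi$, whose summand is quadratic in $|I_s^{(m)}|$ and involves the kernel product $p_\epsilon(y-X^\alpha_s) p_\epsilon(y-X^\beta_s)$; this is precisely where the second-moment estimate from Lemma \ref{lem:exponential} is indispensable. It is also the reason the mollification must be kept at $\epsilon > 0$ throughout Step 1, since the constants $\|p_\epsilon\|_\infty$, $\|p_\epsilon\|_{L^2}^2$, and $\|\Delta p_\epsilon\|_{L^1}$ are finite for each fixed $\epsilon > 0$ but diverge as $\epsilon \downarrow 0$; the limit $\epsilon \downarrow 0$ is deferred to the combined Step~3.
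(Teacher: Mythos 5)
Your overall route is the same as the paper's: write $(P_\epsilon u_t)(x)$ as a continuous semimartingale via the mild form, apply It\^o's product formula over the (finite) index set $I_s^{(m)}$, discard the Walsh integral as a true $L^2$-martingale, and then integrate against the dual randomness with Fubini, using the moment bounds of Lemma \ref{lem:exponential} for the quadratic term in $\Psi_{t,s}^{\epsilon,m}$. That architecture is correct and matches Steps 1--3 of the paper's proof.

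There is, however, a genuine error in your justification: you claim that \eqref{eq:CLZsCondition} forces $\mu+b_1\le 0$, hence $K_s^{(m)}\le 0$ and the weight $(-1)^{|\tilde J_s^{(m)}|}e^{K_s^{(m)}}$ lies in $[-1,1]$. This is only true when $R=1$. Under the standing hypotheses ($b(0)\ge 0\ge b(1)$, \eqref{eq:NonDegenerate}, \eqref{eq:CLZsCondition} for some $R\ge 1$) one can have $\mu+b_1>0$: take for instance $b_0=1$, $b_1=-0.95$, $b_2=-0.1$, all other coefficients zero, which satisfies \eqref{eq:CLZsCondition} with $R=2$ and $b(0)\ge 0\ge b(1)$, yet $\mu+b_1=0.15>0$. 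Indeed the paper's proofs of Lemma \ref{lem:exponential} and Lemma \ref{lem:bug} explicitly treat the case $\mu+b_1>0$, so the exponential weight is genuinely unbounded in general. Consequently your bounds $|\Xi_{t,s}^{\epsilon,m}|\le 1$ and $|\Lambda|,|\Phi|\le C(\epsilon)\,\tilde{\mathbb E}[|I_s^{(m)}|]$, $|\Psi|\le C(\epsilon)\,\tilde{\mathbb E}[|I_s^{(m)}|^2]$ are not valid as stated, and the same issue infects the domination you invoke for the Fubini interchanges. The repair is exactly what the paper does: keep the factor $e^{K_s^{(m)}}\le 1+e^{K_s}$ inside all expectations and dominate by $(1+e^{K_s})(1+|I_s|+|I_s^{(m)}|^2)$, whose $\tilde{\mathbb P}$-integrability, uniformly on $[0,T]$, is precisely the content of Lemma \ref{lem:exponential} and Proposition \ref{prop:ExponentialTerm}; the uniform bound on $\Xi$ then becomes $\sup_{s\le T}\tilde{\mathbb E}[1+e^{K_s}]<\infty$ rather than $1$. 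With that correction your argument coincides with the paper's proof.
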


\begin{lemma}[Step 2]\label{lem:DecXiByCBBM}
	For any $t,s \geq 0$, $\epsilon > 0$ and $m\in \mathbb N$, it holds that
\[
	\Xi_{t,s}^{\epsilon,m} - \Xi_{t,0}^{\epsilon,m} 
	= \tilde \Lambda_{t,s}^{\epsilon,m} + \tilde \Phi_{t,s}^{\epsilon,m}  + \tilde \Psi_{t,s}^{\epsilon,m} 
\]
	where
\begin{equation}
	\tilde \Lambda_{t,s}^{\epsilon,m} 
	:= \mathbf E \SB{ \int_0^s (-1)^{|\tilde J^{(m)}_r|} e^{K^{(m)}_r}  \RB{\sum_{\alpha \in I_r^{(m)}}  \RB{\frac{\Delta}{2}P_\epsilon u_t}\RB{X^{\alpha}_r} \prod_{\beta \in I^{(m)}_r\setminus\{\alpha\}}  \RB{P_\epsilon u_t}\RB{X^{\beta}_r} }\mathrm dr},
\end{equation}
\begin{equation}\label{eq:Phi2} 
	\tilde \Phi_{t,s}^{\epsilon,m} 
	:= \mathbf E \SB{ \int_0^s (-1)^{|\tilde J^{(m)}_r|} e^{K^{(m)}_r} \RB{\sum_{\alpha \in I_r^{(m)}}  \RB{  b^{(m)}\circ \RB{P_\epsilon u_t}}\RB{X^{\alpha}_r} \prod_{\beta\in I^{(m)}_r\setminus\{\alpha\}}  \RB{P_\epsilon u_t}\RB{X^{\beta}_r} }\mathrm dr},
\end{equation}
	and
\begin{align} \label{eq:RL}
	&\tilde \Psi_{t,s}^{\epsilon,m}  
	:= \frac{1}{2} \mathbf E  \Bigg[ \int_0^s (-1)^{|\tilde J^{(m)}_r|} e^{K^{(m)}_r}  \times {}
	\\&\qquad \qquad \sum_{\alpha,\beta\in I^{(m)}_r: \alpha\prec \beta}  \RB{\sigma \circ \RB{P_\epsilon u_t}}(X_{r}^{\alpha})^2 \RB{\prod_{\gamma \in I^{(m)}_r\setminus \{\alpha,\beta\} } \RB{P_\epsilon u_t}\RB{X_r^{\gamma}} } \mathrm dL_r^{\alpha,\beta}\Bigg]
\end{align}
	are all well-defined. 
	Here,
	\begin{equation} \label{eq:DefBmz}
		b^{(m)}(z) :=\sum_{k\in \bar{\mathbb N}} b_k z^{k\wedge m} - \frac{1}{m}z, \quad z\in [0,1], m \in \mathbb N.
	\end{equation}
	Furthermore, for any $\epsilon>0$ and $m\in \mathbb N$, 
	\begin{equation}
		\sup_{s,t\in [0, T]} \max\CB{	\ABS{\tilde \Lambda_{t,s}^{\epsilon,m}}, \ABS{\tilde \Phi_{t,s}^{\epsilon,m}}, \ABS{\tilde \Psi_{t,s}^{\epsilon,m}}} < \infty, \quad T\geq 0.
	\end{equation}
\end{lemma}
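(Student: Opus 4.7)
The plan is to apply It\^o's formula on $(\tilde \Omega, \tilde{\mathscr F}, \tilde{\mathbb P})$ to the real-valued semimartingale
\[
F_r := (-1)^{|\tilde J^{(m)}_r|}\, e^{K^{(m)}_r} \prod_{\alpha \in I^{(m)}_r} g(X^\alpha_r), \qquad r \in [0,s],
\]
with $g := P_\epsilon u_t$ treated as a fixed (in $r$), $\mathscr F_t$-measurable $[0,1]$-valued smooth function whose spatial derivatives are bounded, uniformly in $t \in [0,T]$, thanks to the heat-semigroup smoothing. Writing $A_r := (-1)^{|\tilde J^{(m)}_r|} e^{K^{(m)}_r}$, the process $F_r$ receives contributions from four independent mechanisms: the Brownian diffusion of each alive particle, the continuous factor $e^{K^{(m)}_r}$, the branching point measure $\mathfrak N$, and the coalescing point measure $\mathfrak M$. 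Lemmas~\ref{lem:ThePointMeasures} and~\ref{eq:CompensatorDecom} split the $\mathfrak N$- and $\mathfrak M$-integrals into their continuous compensators plus mean-zero martingales, so that the product expectation $\mathbf E = \mathbb E_f \otimes \tilde{\mathbb E}$ of $F_s - F_0$ will reduce to $\Xi^{\epsilon,m}_{t,s} - \Xi^{\epsilon,m}_{t,0}$.

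The Brownian part produces the drift $\sum_{\alpha \in I^{(m)}_r} \tfrac{1}{2}\Delta g(X^\alpha_r) \prod_{\gamma \ne \alpha} g(X^\gamma_r) A_r$, which integrates to $\tilde \Lambda_{t,s}^{\epsilon,m}$. For the branching mechanism, a jump of $\alpha$ with $Z_\alpha = k$ offspring replaces $g(X^\alpha)$ in the product by $g(X^\alpha)^{k \wedge m}$ and multiplies $F_{r-}$ by the sign factor $\epsilon_k := -1$ if $b_k < 0$ and $+1$ otherwise. The pivotal identity $\mu p_k \epsilon_k = b_k$ for every $k \in \bar{\mathbb N} \setminus \{1\}$, combined with $p_1 = 0$, turns the resulting compensator into
\[
\int_0^s \sum_{\alpha \in I^{(m)}_r} \Bigl[\sum_{k \in \bar{\mathbb N} \setminus \{1\}} b_k\, g(X^\alpha)^{k\wedge m} - \mu\, g(X^\alpha)\Bigr] A_r \prod_{\gamma \ne \alpha} g(X^\gamma)\, dr.
\]
Adding the continuous derivative of $e^{K^{(m)}_r}$, which contributes $(\mu + b_1 - 1/m) g(X^\alpha)$ inside the bracket per $\alpha$, cancels the $-\mu g(X^\alpha)$ term and reassembles the bracket as $\sum_{k \in \bar{\mathbb N}} b_k g(X^\alpha)^{k \wedge m} - \tfrac{1}{m} g(X^\alpha) = b^{(m)}(g(X^\alpha))$, which integrates precisely to $\tilde \Phi_{t,s}^{\epsilon,m}$.

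For the coalescing mechanism, a jump at $(\alpha, \beta)$ with $\alpha \prec \beta$ removes the factor $g(X^\beta)$ from the product; on the support of $L^{\alpha,\beta}$ one has $X^\alpha = X^\beta$, so $g(X^\alpha) = g(X^\beta)$, and the $\mathfrak M$-compensator reads
\[
\tfrac{1}{2}\int_0^s \sum_{\alpha \prec \beta,\, \alpha, \beta \in I^{(m)}_r} A_r\, g(X^\alpha)\bigl(1 - g(X^\alpha)\bigr) \prod_{\gamma \in I^{(m)}_r \setminus \{\alpha, \beta\}} g(X^\gamma)\, dL^{\alpha, \beta}_r,
\]
which, via $\sigma(z)^2 = z(1 - z)$, is precisely the integrand of $\tilde \Psi_{t,s}^{\epsilon,m}$. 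The main obstacle will be upgrading the local martingales to genuine martingales and establishing the stated uniform bound: the Brownian and branching martingale brackets are controlled, up to constants depending only on $\epsilon$, by $\int_0^s e^{K^{(m)}_r} |I^{(m)}_r| \, dr$ or $|I^{(m)}_r|^2$, which Lemma~\ref{lem:exponential} bounds; the coalescing martingale is controlled by $\sum_{\alpha \prec \beta \in I^{(m)}_{[0,s]}} e^{K^{(m)}_s} L^{\alpha, \beta}_s$, which Lemma~\ref{lem:bug} bounds via $L^{\alpha, \beta}_s \le \sup_z L^{\alpha, \beta}_{s,z}$. Fubini's theorem then closes the argument, and the supremum over $s, t \in [0, T]$ is uniform because every derivative of $g$ admits a bound depending only on $\epsilon$.
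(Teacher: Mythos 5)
Your route is the paper's route: apply It\^o's formula to $H^{m,h}_r=(-1)^{|\tilde J^{(m)}_r|}e^{K^{(m)}_r}\prod_{\alpha\in I^{(m)}_r}h(X^{(m),\alpha}_r)$ with $h=P_\epsilon u_t$ (the paper first carries this out for a deterministic $h\in\mathcal C^2_{\mathrm b}(\mathbb R,[0,1])$, justifying the product over the countable label set by an exhaustion $\mathcal I_n\uparrow\mathcal U$, and only afterwards substitutes $P_\epsilon u_t$ and integrates over the SPDE space via Fubini --- which is exactly what your ``fixed in $r$'' treatment of $g$ amounts to on the product space); compensate $\mathfrak N$ and $\mathfrak M$ through Lemmas \ref{lem:ThePointMeasures} and \ref{eq:CompensatorDecom}; use $\mu p_k(-1)^{\mathbf 1_{\{b_k<0\}}}=b_k$ for $k\neq 1$ together with the $(\mu+b_1-\tfrac1m)$ drift of $e^{K^{(m)}}$ to reassemble $b^{(m)}$; and use $X^\alpha_{r-}=X^\beta_{r-}$ on the support of $\mathrm dL^{\alpha,\beta}$ to turn the coalescing compensator into the $\sigma^2$ term, with Lemmas \ref{lem:exponential} and \ref{lem:bug} supplying the integrability and the uniform bound in $s,t\in[0,T]$.

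The one step that fails as written is your control of the Brownian stochastic integral. Its quadratic variation is bounded by $\|h'\|_\infty^2\int_0^s e^{2K^{(m)}_r}|I^{(m)}_r|\,\mathrm dr$: the exponential enters \emph{squared}, and neither Lemma \ref{lem:exponential} nor anything else in the paper provides a moment bound on $e^{2K^{(m)}}$ (the supermartingale argument behind Lemma \ref{lem:exponential} uses \eqref{eq:CLZsCondition} exactly at the rate $\mu+b_1$ and does not extend to $2(\mu+b_1)$), so the bracket bound you cite, $\int_0^s e^{K^{(m)}_r}|I^{(m)}_r|\,\mathrm dr$, is not the correct bracket and the ``upgrade to a genuine martingale'' does not follow from it except when $\mu+b_1-\tfrac1m\leq 0$. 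The paper avoids proving any such upgrade: it localizes with stopping times $\tau_n$ at which the $e^{2K^{(m)}}$-bracket reaches $n$, takes expectations of the stopped identity (where the Brownian integral is an $L^2$-martingale by construction of $\tau_n$), and then removes the localization by dominated convergence applied to the remaining terms, all of which are dominated by quantities involving only $e^{K^{(m)}}$ times $|I^{(m)}|$ (Lemma \ref{lem:exponential}) or $(1+e^{K^{(m)}_s})\sum_{\alpha\prec\beta}L^{\alpha,\beta}_s$ (Lemma \ref{lem:bug}). Inserting this localization step, your argument closes and coincides with the paper's; relatedly, the expectation identities for the $\mathfrak N$- and $\mathfrak M$-integrals should be taken from the $\mathscr L^1$ part of Lemma \ref{eq:CompensatorDecom} rather than from $L^2$ brackets, which would again generate $e^{2K^{(m)}}$.
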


The above two lemmas allows us to write down the following decomposition:
For any $T\geq 0$, $\epsilon > 0$ and $m\in \mathbb N$,
\begin{align}\label{eq:DifferenceXi}
	&\int_0^T \RB{\Xi_{r,0}^{\epsilon,m} - \Xi_{0,r}^{\epsilon,m}}\mathrm dr
	= \int_0^T \RB{ \Xi_{T-s,s}^{\epsilon,m} - \Xi_{0,s}^{\epsilon,m}} \mathrm ds - \int_0^T \RB{ \Xi_{t, T-t}^{\epsilon,m} - \Xi_{t,0}^{\epsilon,m}} \mathrm dt
	\\&= \int_0^T  \Lambda_{T-s,s}^{\epsilon,m} \mathrm ds - \int_0^T  \tilde \Lambda_{t,T-t}^{\epsilon,m} \mathrm dt + \int_0^T \Phi_{T-s,s}^{\epsilon,m} \mathrm ds -  \int_0^T \tilde \Phi_{t,T-t}^{\epsilon,m} \mathrm dt + {}
	\\ &\quad \int_0^T \Psi_{T-s,s}^{\epsilon,m} \mathrm ds -  \int_0^T \tilde \Psi_{t,T-t}^{\epsilon,m} \mathrm dt.
\end{align}

\begin{lemma}[Step 3] \label{lem:Cancellation} $~$
	\begin{enumerate}
		\item For every $s,t\geq 0$ and $m \in \mathbb N$, it holds that 
		\[
		\lim_{\epsilon \downarrow 0} \Xi_{t,s}^{\epsilon, m}=  \Xi_{t,s}^{0, m}.
		\]
		\item Both $r\mapsto \Xi_{r,0}^{0,\infty}$ and $r\mapsto \Xi_{0,r}^{0,\infty}$ are continuous functions on $[0,\infty)$.
		\item For every $T\geq 0$, $\epsilon > 0$ and $m\in \mathbb N$, it holds that
		\[\int_0^T \Lambda_{T-s,s}^{\epsilon,m} \mathrm ds =\int_0^T \tilde \Lambda_{t,T-t}^{\epsilon,m} \mathrm dt.\]
		\item For every $T\geq 0$ and $m\in \mathbb N$, it holds that
		\[
		\lim_{\epsilon \downarrow 0}  \RB{ \int_0^T \Psi_{T-s,s}^{\epsilon,m} \mathrm ds - \int_0^T \tilde \Psi_{t,T-t}^{\epsilon,m} \mathrm dt } = 0.
		\]
		\item For every $T\geq 0$, it holds that
		\[
		\lim_{m\uparrow \infty} \lim_{\epsilon \downarrow 0} \RB{ \int_0^T \Phi_{T-s,s}^{\epsilon,m} \mathrm ds - \int_0^T \tilde \Phi_{t,T-t}^{\epsilon,m} \mathrm dt } = 0.
		\]
	\end{enumerate}
\end{lemma}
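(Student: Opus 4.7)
The plan is to handle the five parts in order, with (1)–(3) being direct applications of dominated convergence and Fubini once the right bounds are identified, while (4) and (5) require dedicated cancellation arguments. For part (1), the integrand in \eqref{eq:gts} is bounded in absolute value by $1+e^{K_s}$, which is in $L^1(\mathbf P)$ by Proposition \ref{prop:ExponentialTerm}; combined with the pointwise continuity $P_\epsilon u_t(x)\to u_t(x)$ as $\epsilon\downarrow 0$, dominated convergence gives the claim. Part (2) likewise follows from dominated convergence: continuity of $r\mapsto u_r(x_i)$ gives continuity of $r\mapsto \Xi_{r,0}^{0,\infty}$, while for $r\mapsto \Xi_{0,r}^{0,\infty}$ the integrand $r\mapsto(-1)^{|\tilde J_r|}e^{K_r}\prod_{\alpha\in I_r}f(X^\alpha_r)$ is $\tilde{\mathbb P}$-a.s.\ continuous off the countable random set of branching and coalescing times, and the distribution of any single such event-time is continuous since the compensators $\hat{\mathfrak N}$ and $\hat{\mathfrak M}$ are absolutely continuous in time. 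Part (3) is just Fubini: once the order of integration is swapped (justified for fixed $m$ and $\epsilon>0$ by using $|I^{(m)}_s|\leq|\bar I^{(m)}_s|$ with finite expectation, together with $\|\tfrac{\Delta}{2}P_\epsilon u_r\|_\infty<\infty$), a relabeling $(s,r)\leftrightarrow(r,t)$ of the dummy variables makes the two resulting integrands identical.

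For part (4), the key computation is the Gaussian factorization
\[
p_\epsilon(y-x_\alpha)\,p_\epsilon(y-x_\beta) = p_{2\epsilon}(x_\alpha-x_\beta)\,p_{\epsilon/2}\bigl(\tfrac{x_\alpha+x_\beta}{2}-y\bigr),
\]
which rewrites the $y$-integral in $\Psi^{\epsilon,m}_{t,s}$ as $p_{2\epsilon}(X^\alpha_s-X^\beta_s)\cdot(P_{\epsilon/2}(\sigma^2\circ u_r))(\bar X^{\alpha,\beta}_s)$ with $\bar X^{\alpha,\beta}_s:=(X^\alpha_s+X^\beta_s)/2$. Swapping $\iint_{s+r\leq T}$ as in part (3), for each pair $\alpha\prec\beta$ I would compare
\[
\int_0^{T-v}p_{2\epsilon}(X^\alpha_u-X^\beta_u)(P_{\epsilon/2}(\sigma^2\circ u_v))(\bar X^{\alpha,\beta}_u)\,\mathrm du \quad\text{and}\quad \tfrac12\int_0^{T-v}\sigma^2(P_\epsilon u_v(X^\alpha_u))\,\mathrm dL^{\alpha,\beta}_u.
\]
By the occupation-times formula together with $\mathrm d\langle X^\alpha-X^\beta\rangle_u=2\,\mathrm du$, one has $\int_0^T p_{2\epsilon}(X^\alpha_u-X^\beta_u)\,\mathrm du=\tfrac12\int p_{2\epsilon}(y)L^{\alpha,\beta}_{T,y}\,\mathrm dy\to\tfrac12 L^{\alpha,\beta}_T$ using continuity of $y\mapsto L^{\alpha,\beta}_{T,y}$, while continuity of $u_v$ and of $\sigma^2$ on $[0,1]$ identifies both limits with $\tfrac12\int\sigma^2(u_v(X^\alpha_u))\,\mathrm dL^{\alpha,\beta}_u$. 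The integrability needed to pass these limits through $\mathbf E$ and the sum over pairs $\alpha\prec\beta\in I^{(m)}_{[0,T]}$ is furnished by Lemma \ref{lem:bug}.

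For part (5), I would push $\epsilon\downarrow 0$ first. In $\int_0^T\Phi^{\epsilon,m}_{T-s,s}\,\mathrm ds$ the smoothed term converges as $P_\epsilon(b\circ u_v)(X^\alpha_u)\to b(u_v(X^\alpha_u))$ at Lebesgue points of $b\circ u_v$, which covers Lebesgue-a.e.\ particle position; the discontinuity of $b$ at $z=1$ is handled by splitting off the piece $b_\infty\mathbf 1_{\{u_v=1\}}$ and analysing it separately. In $\int_0^T\tilde\Phi^{\epsilon,m}_{t,T-t}\,\mathrm dt$, since $b^{(m)}$ is continuous on $[0,1]$, $(b^{(m)}\circ P_\epsilon u_v)(X^\alpha_u)\to b^{(m)}(u_v(X^\alpha_u))$ simply by continuity of $u_v$. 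After swapping integration orders, the two resulting integrals differ in the $\alpha$-factor only by $b(u_v(X^\alpha_u))-b^{(m)}(u_v(X^\alpha_u))$, with $|b-b^{(m)}|$ uniformly bounded on $[0,1]$ thanks to \eqref{eq:CLZsCondition} and $b^{(m)}(z)\to b(z)$ pointwise on $[0,1]$ (using $z^m\to 0$ for $z<1$ and $b^{(m)}(1)=\sum_k b_k-1/m\to b(1)$). Dominated convergence using Proposition \ref{prop:ExponentialTerm} and Lemma \ref{lem:TC} then sends the difference to zero as $m\uparrow\infty$.

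The main obstacle I anticipate is part (5): the interplay between the discontinuity of $b$ at $z=1$ and the two-stage limit is the reason the lemma is stated as an iterated limit rather than a joint one. The $1/m$ corrections placed in both $b^{(m)}$ and $K^{(m)}$ are algebraically tailored so that the residual $b_\infty\mathbf 1_{\{1\}}$ contribution on the SPDE side is absorbed by the $b^{(m)}(1)=\sum_k b_k-1/m$ term on the particle side only after taking $\epsilon\downarrow 0$ first, while the corresponding $e^{-\frac{1}{m}\int_0^\cdot|I^{(m)}_s|\,\mathrm ds}$ factor inside $K^{(m)}$ then provides the uniform integrability furnished by Lemma \ref{lem:exponential}. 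Careful bookkeeping of these two contributions, together with verification that the smoothed-drift limit $\lim_{\epsilon\downarrow 0}P_\epsilon(b\circ u_v)(X^\alpha_u)=b(u_v(X^\alpha_u))$ holds outside a null set in the ambient joint law, is what the proof hinges on.
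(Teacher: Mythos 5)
Parts (1)--(3) of your plan coincide with the paper's argument (dominated convergence via Proposition \ref{prop:ExponentialTerm}, and Fubini for (3)), and your part (4), though organized through the Gaussian product identity and the midpoint rather than the paper's shift $y\mapsto y+X^\alpha_s$, is in the same spirit: occupation-density formula, continuity of $z\mapsto L^{\alpha,\beta}_{T,z}$, and domination through Lemma \ref{lem:bug}. Two points there are thinner than they should be: in (2), continuity of $r\mapsto \Xi^{0,\infty}_{0,r}$ is not just ``no event occurs at the fixed time $r$''; because coalescing times accumulate immediately after an infinite branching event, you must also rule out that $r$ is approached by infinitely many events and that $|I_\cdot|$ is infinite near $r$ (the paper does this using $|J_t|<\infty$ and $\int_0^t|I_s|\,\mathrm ds<\infty$ from Theorem \ref{prop:Key} to get an event-free interval around $r$ on which the population is finite and constant). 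In (4), your occupation-density step is stated only for the unweighted integral $\int_0^T p_{2\epsilon}(X^\alpha_u-X^\beta_u)\,\mathrm du$; the actual integrand carries the predictable weight $(-1)^{|\tilde J^{(m)}_u|}e^{K^{(m)}_u}\mathbf 1_{\{\alpha,\beta\in I^{(m)}_u\}}\prod_{\gamma}(\cdot)$, and passing to $\mathrm dL^{\alpha,\beta}_u$ with this weight, plus the continuity in the level variable of the weighted local-time integral, is exactly the content of the paper's Step 3--4 for Lemma \ref{lem:Cancellation}(4); these are fillable but not free.

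The genuine gap is in part (5), which is the crux of the lemma. The difficult limit is $\lim_{\epsilon\downarrow 0}\int_0^T\Phi^{\epsilon,m}_{T-s,s}\,\mathrm ds$, because $P_\epsilon(b\circ u_r)(X^\alpha_s)$ need not converge to $b(u_r(X^\alpha_s))$ when $b_\infty\neq 0$: $b\circ u_r$ is merely bounded measurable. Your proposal asserts convergence ``at Lebesgue points of $b\circ u_v$, which covers Lebesgue-a.e.\ particle position,'' but this is precisely the statement that needs proof: you must show that, for fixed positive times, the law of a surviving particle's position (jointly with the survival indicator and the weights, and after conditioning out the independent field $u$) charges no Lebesgue-null set. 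Nothing in your write-up establishes this, and your fallback — ``splitting off the piece $b_\infty\mathbf 1_{\{u_v=1\}}$ and analysing it separately'' — is exactly the hard term with no method supplied. The paper resolves this by conditioning at the last branching/coalescing time $\tilde\tau_{k-1}$ before $s$, using the strong Markov property to reduce to the killed free system, and then proving Proposition \ref{lem:Last}: $\mathfrak P^\epsilon_tF\to\mathfrak P^0_tF$ for bounded measurable $F$, which requires a Girsanov transformation and a continuous-density estimate for the resulting SDE with bounded drift. A Lebesgue-point route could in principle replace this (by dominating the alive-particle law by the free Brownian law, which is absolutely continuous at positive times, and using independence of $u$ and the particle system under $\mathbf P$), but you neither state nor prove that step, so the key convergence is unsubstantiated. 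Finally, your closing explanation of the $1/m$ corrections is mistaken: the discontinuous piece $b_\infty\mathbf 1_{\{1\}}$ is recovered through the truncation $b_\infty z^{m}$ in $b^{(m)}$ as $m\uparrow\infty$, while the $-z/m$ in $b^{(m)}$ and the $-\frac1m$ in $K^{(m)}$ are there to create the integrability margin exploited in Lemmas \ref{lem:exponential} and \ref{lem:bug}, not to ``absorb'' the $b_\infty\mathbf 1_{\{1\}}$ contribution.
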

Let us first explain how Proposition \ref{prop:Duality} follows from  Lemmas \ref{lem:DecXiBySPDE}, \ref{lem:DecXiByCBBM}, and \ref{lem:Cancellation}.
\begin{proof}[Proof of Proposition \ref{prop:Duality}]
	Let $T\geq 0$ be arbitrary.
	By \eqref{eq:Domination} and Proposition \ref{prop:ExponentialTerm}, we know that
	\[
	\sup_{0\leq s\leq T, t\geq 0, \epsilon\geq 0, m \in \mathbb N} |\Xi_{t,s}^{\epsilon, m}| \leq \tilde{\mathbb E}[1+e^{K_T}] < \infty. 
	\]
	Therefore, by Lemma \ref{lem:Cancellation} (1), \eqref{eq:mToInfinity}, and the bounded convergence theorem, we have
	\[
	\lim_{m\to \infty} \lim_{\epsilon\downarrow 0} \int_0^T \RB{\Xi_{r,0}^{\epsilon,m} - \Xi_{0,r}^{\epsilon,m}}\mathrm dr = \int_0^T \RB{\Xi_{r,0}^{0,\infty} - \Xi_{0,r}^{0,\infty}}\mathrm dr. 
	\]
	By taking $\epsilon\downarrow 0$ and then $m \uparrow \infty$ in \eqref{eq:DifferenceXi}, we get from Lemma \ref{lem:Cancellation} (3--5) that
	\begin{align}
		\int_0^T \RB{\Xi_{r,0}^{0,\infty} - \Xi_{0,r}^{0,\infty}}\mathrm dr = 0.
	\end{align}
	Finally, since $T>0$ is arbitrary, from Lemma \ref{lem:Cancellation} (2), we get $\Xi_{T,0}^{0,\infty} =  \Xi_{0,T}^{0,\infty}$ as desired.
\end{proof}

The rest of this section is devoted to  the proofs of Lemmas \ref{lem:DecXiBySPDE}, \ref{lem:DecXiByCBBM}, and \ref{lem:Cancellation}.

\begin{proof}[Proof of Lemma \ref{lem:DecXiBySPDE}]
	Let us fix an arbitrary $\epsilon > 0$.

	\emph{Step 1.}
	Recall that for any $g \in \mathcal C(\mathbb R, [0,1])$ and $x\in \mathbb R$,
\[
	(P_\epsilon g) (x) = \int p_\epsilon (x-y) g(y) \mathrm d y
\]
	where $p_\epsilon(x) = e^{-x^2/(2\epsilon)}/\sqrt{2\pi \epsilon}, (\epsilon,x)\in (0,\infty)\times \mathbb R$ is the heat kernel. 
	It is standard to argue, see \cite[p. 431]{MR1271224} for example, that for any $t\geq 0$ and $x\in \mathbb R$, the following holds almost surely
\begin{align}
	&\RB{P_\epsilon u_t}(x)- (P_\epsilon u_0)(x)
	\\&= \int_0^t \frac{\Delta}{2} (P_\epsilon u_r) (x) \mathrm dr 	+ \int_0^t \big(P_\epsilon (b\circ u_r)\big)(x)\mathrm dr  + \iint_0^t \sigma \RB{u_r(y)} p_\epsilon (x-y) W(\mathrm dr\mathrm dy).
\end{align}
\begin{extra}
	(We will verify this in Note \ref{note:411}.)
\end{extra} 
	Applying It\^o's formula, for any real-valued finite list $(x_i)_{i\in I}$ with $I = \{1,\cdots, n\}$ and $t\geq 0$, we have
\begin{align} \label{eq:TheItoDec}
	&\prod_{i=1}^n \RB{P_\epsilon u_t}(x_i) - \prod_{i = 1}^n \RB{P_\epsilon u_0}(x_i)
	\\&= \int_0^t \RB{\sum_{i\in I} \RB{\frac{\Delta}{2}P_\epsilon u_r}(x_i) \prod_{j\in I \setminus \{i\}} (P_\epsilon u_r)(x_j) } \mathrm dr + {} 
	\\& \quad \int_0^t \RB{\sum_{i\in I} \RB{P_\epsilon \RB{b\circ u_r}}(x_i) \prod_{j\in I \setminus \{i\}} (P_\epsilon u_r)(x_j) } \mathrm dr + {}
	\\& \quad \iint_0^t  \RB{\sum_{i\in I} \sigma(u_r(y)) p_\epsilon(x_i-y) \prod_{j\in I \setminus \{i\}} (P_\epsilon u_r)(x_j)} W(\mathrm dr \mathrm dy) + {}
	\\& \quad \iint_0^t \RB{\sum_{(i,j) \in I^2: i < j} \sigma(u_r(y))^2 p_\epsilon(x_i - y)p_\epsilon(x_j - y)\prod_{k\in I \setminus \{i,j\}} (P_\epsilon u_r)(x_k)} \mathrm dr \mathrm dy.
\end{align}
	
	\begin{extra}
		\begin{note} \label{note:411}
			Define norms
			\[
			\|f\|_{(\lambda)} = \sup_{x\in \mathbb R} |e^{\lambda|x|}f(x)|, \quad f\in \mathcal C(\mathbb R), \lambda \in \mathbb R, 
			\]
			and spaces
			\[
			\mathcal C_{\mathrm{rap}} := \{ f\in \mathcal C(\mathbb R): \|f\|_{(\lambda)} < \infty \text{ for all } \lambda > 0\}
			\]
			and
			\[
			\mathcal C_{\mathrm{rap}}^2 := \{ f\in\mathcal C_{\mathrm{rap}}: f' , f'' \in \mathcal C_{\mathrm{rap}}\}.
			\]
			For an arbitrarily fixed $x\in \mathbb R$, let we define
			\[
			\phi(y):=p_\epsilon(x-y) = e^{-(x-y)^2/(2\epsilon)}/\sqrt{2\pi \epsilon}, \quad y\in \mathbb R.
			\]
			It is clear that $(\phi(y))_{y\in \mathbb R} \in \mathcal C_{\mathrm{rap}}^2$. 
			From \cite[p. 430]{MR1271224}, we know that for any $t\geq 0$ almost surely, 
			\begin{equation} \label{eq:TempEq}
				\langle u_t, \phi\rangle = \langle f, \phi\rangle + \int_0^t \langle u_s, \frac{\Delta}{2}\phi \rangle \mathrm ds + \int_0^t \langle b(u_s), \phi \rangle\mathrm ds + \iint_0^t \sqrt{\sigma(u_s(y))} \phi(y) W(\mathrm ds \mathrm dy).
			\end{equation}
			Notice that, for $y \in \mathbb R$, we have
			\[
			\partial_x p_\epsilon (x-y) = p_\epsilon (x-y) \partial_x (-\frac{(x-y)^2}{2\epsilon} )= \frac{-(x-y)}{\epsilon}  p_\epsilon (x-y)
			\]
			and
			\[
			\partial_x^2 p_\epsilon (x-y) =  p_\epsilon (x-y)	\partial_x (\frac{-(x-y)}{\epsilon}) +  \frac{-(x-y)}{\epsilon} \partial_x p_\epsilon (x-y) = \frac{(x-y)^2-\epsilon}{\epsilon^2}p_\epsilon (x-y).
			\]
			Therefore, for any finite open interval $U\subset \mathbb R$, we can verify that, for every $s\geq 0$,
			\[
			\int \sup_{x\in U}|\partial_x \RB{u_s(y) p_\epsilon (x-y)}| \mathrm dy < \infty; \quad \int \sup_{x\in U}|\partial^2_x \RB{u_s(y) p_\epsilon (x-y)}| \mathrm dy < \infty.
			\]
			Therefore, by the dominated convergence theorem and the mean value theorem, we can verify that for every $s\geq 0$,
			\begin{equation} \label{eq:ChangeDerivative1}
				\int u_s(y) \partial_x p_\epsilon(x-y) \mathrm dy =  \partial_x\int u_s(y) p_\epsilon(x-y) \mathrm dy 
			\end{equation}
			and
			\[
			\int u_s(y) \partial^2_x p_\epsilon(x-y) \mathrm dy =  \partial_x\int u_s(y) \partial_x p_\epsilon(x-y) \mathrm dy
			= \partial_x^2 \int u_{s}(y) p_\epsilon(x-y) \mathrm dy.
			\]
			This implies that for $s\geq 0$, 
			\begin{equation} \label{eq:ChangeDerivative}
				\int u_s(y) \frac{\Delta_y}{2} p_\epsilon(x-y) \mathrm dy=\frac{\Delta}{2} P_\epsilon u_s (x).
			\end{equation}
			Put this back in \eqref{eq:TempEq} we arrived at what we want.
		\end{note}
	\end{extra}
	
	\emph{Step 2.}
It is easy to see that the stochastic integral with respect to white noise  at the  right hand side of~\eqref{eq:TheItoDec} is a maringale. Then, by taking  expectation on both sides of  \eqref{eq:TheItoDec} with respect to the measure $\mathbb P$, we can verify that for each $(x_i)_{i\in I}$ and $t\geq 0$, 
	\begin{align}\label{eq:TakingExpectation}
		\\&\mathbb E\SB{\prod_{i\in I} \RB{P_\epsilon u_t}(x_i)} - \mathbb E\SB{ \prod_{i\in I} \RB{P_\epsilon u_0}(x_i) }
		\\&= \mathbb E\SB{ \int_0^t \RB{\sum_{i\in I} \RB{\frac{\Delta}{2}P_\epsilon u_r}(x_i) \prod_{j\in I \setminus \{i\}} (P_\epsilon u_r)(x_j) } \mathrm dr} + {} 
		\\& \qquad \mathbb E\SB{\int_0^t \RB{\sum_{i\in I} \RB{P_\epsilon \RB{b\circ u_r}}(x_i) \prod_{j\in I \setminus \{i\}} (P_\epsilon u_r)(x_j) } \mathrm dr} + {}
		\\& \qquad \mathbb E\SB{ \iint_0^t \RB{\sum_{(i,j) \in I^2: i < j} \sigma(u_r(y))^2 p_\epsilon(x_i - y)p_\epsilon(x_j - y)\prod_{k\in I \setminus \{i,j\}} (P_\epsilon u_r)(x_k)} \mathrm dr \mathrm dy}.
	\end{align}
	\begin{extra}
		(We will verify this in Note \ref{note:412}.)
	\end{extra}
	Here, it is straightforward to verify that the two expectations on the left hand side of \eqref{eq:TakingExpectation} are bounded by $1$; the first, second, and the third, expectations on the right hand side of  \eqref{eq:TakingExpectation} are bounded by $\epsilon^{-1} |I| t$, $\|b\|_\infty |I| t$, and $\|\sigma\|^2_\infty \|p_\epsilon\|_\infty |I|^2t$, respectively.
	\begin{extra}
		(We will verify these in Note \ref{note:412}.)
	\end{extra}
	
\begin{extra}
\begin{note}\label{note:412}
	Fix $(x_i)_{i\in I}$ and $t\geq 0$. 
	To justify \eqref{eq:TakingExpectation}, we first need to verify that
\begin{equation} \label{eq:TheMartingalePart}
	\mathbb E\SB{ \iint_0^t  \RB{\sum_{i\in I}\sigma(u_r(y)) p_\epsilon(x_i-y) \prod_{j\in I \setminus \{i\}} (P_\epsilon u_r)(x_j)} W(\mathrm dr \mathrm dy)} 
	= 0.
\end{equation}
	Notice that the integrand, with respect to $\mathbb P$, is a continuous local martingale whose quadratic variation at time $t$ is bounded by $\|p_\epsilon\|_\infty |I|^2 \|\sigma\|^2_\infty t$.
	In fact, the quadratic variation is given by 
\begin{align}
	&\iint_0^t  \RB{\sum_{i\in I}\sigma(u_r(y)) p_\epsilon(x_i-y) \prod_{j\in I \setminus \{i\}} (P_\epsilon u_r)(x_j)}^2 \mathrm dr \mathrm dy
	\\&\leq 	\iint_0^t |I| \sum_{i\in I}\RB{\sigma(u_r(y)) p_\epsilon(x_i-y) \prod_{j\in I \setminus \{i\}} (P_\epsilon u_r)(x_j)}^2 \mathrm dr \mathrm dy
	\\&\leq  |I| \|\sigma\|^2_\infty \|p_\epsilon\|_\infty \sum_{i\in I} \iint_0^t p_\epsilon(x_i-y) \mathrm dr \mathrm dy 
	= |I|^2 \|\sigma\|^2_\infty \|p_\epsilon\|_\infty t.
\end{align}
	Therefore, \eqref{eq:TheMartingalePart} holds since this integrand is actually an $L^2$-martingale.
	We still need to verify that each of the expectations in \eqref{eq:TakingExpectation} are well-define. 
	In fact, the left two expectations are well-defined, since the integrands are bounded by $1$.
	For the first term on the right-hand side, we first notice from \eqref{eq:ChangeDerivative} that for every $x\in \mathbb R$, 
\[
	\frac{\Delta}{2} P_\epsilon u_t (x) 
	= \frac{1}{2}\int p''_\epsilon (x-y) u_t(y) \mathrm dy,
\]
	and that
\[
	p'_\epsilon(x)
	= p_\epsilon (x) \cdot (-\frac{x}{\epsilon}); \quad p''_\epsilon(x) = p_\epsilon(x) \SB{-\frac{1}{\epsilon}+\frac{x^2}{\epsilon^2}},
\]
	and therefore
\begin{align} \label{eq:LaplaceHeat}
	&\ABS{\frac{\Delta}{2} P_\epsilon u_t (x)} \leq \frac{1}{2}\int p_\epsilon(x-y) \SB{\frac{1}{\epsilon}+\frac{(x-y)^2}{\epsilon^2}} u_t(y) \mathrm dy
	\\&\leq  \frac{1}{2}\RB{\frac{1}{\epsilon} + \frac{1}{\epsilon^2}\int p_\epsilon(y)y^2 \mathrm dy} = \epsilon^{-1}.
\end{align}
	From this we know that the first term on the right-hand side of \eqref{eq:TakingExpectation} is bounded by $\epsilon^{-1} |I| t$. 
	Let us consider the second term on the right hand side. 
	Recall the drift 
\[
	b(z) 
	= \sum_{k=0}^\infty b_k z^k + b_\infty z^\infty, \quad z\in [0,1]
\]
	where we assumed \eqref{eq:CLZsCondition} for some $R\geq 1$. 
	In particular, $b$ is a bounded function on $[0,1]$.
	Now the integrand, with respect to $\mathbb E$, in the second term of the right hand side of \eqref{eq:TakingExpectation} is bounded by $\|b\|_\infty |I| t$.
	Finally, observing that
\[
	\int \sigma(u_r(y))^2p_\epsilon(x_i - y)p_\epsilon(x_j - y) \mathrm dy \leq \|\sigma\|^2_\infty \|p_\epsilon\|_\infty,
\]
	the integrand, with respect to $\mathbb E$, in the last term on the right hand side of \eqref{eq:TakingExpectation} is bounded by $\|\sigma\|^2_\infty \|p_\epsilon\|_\infty |I|^2t$. 
	Therefore \eqref{eq:TakingExpectation} holds.
\end{note}
\end{extra}
	
\emph{Step 3.}
	Let us replace the deterministic $(x_i)_{i\in I}$ in \eqref{eq:TakingExpectation} by the random $(X_s^{\alpha})_{\alpha \in I_s^{(m)}}$, and take expectations with respect to $\tilde{\mathbb P}$, after multiplied by $(-1)^{\tilde J_s^{(m)}} e^{K_s^{(m)}}$, for each of the terms in \eqref{eq:TakingExpectation}. 
	This leads us to the desired result for this lemma after applying Fubini's theorem.
	To use Fubini's theorem, of course, we need to verify the integrable conditions for each term.

	For instance, for the first term on the right hand side, it is sufficient to show that
\begin{equation} \label{eq:IntegratableCondiotion}
	\mathbf E\SB{(1+e^{K_s}) |I_s| } < \infty, \quad s\geq 0,
\end{equation}
	since for each $s\geq 0$, almost surely,
\begin{align}
	&\int_0^t \ABS{(-1)^{|\tilde J^{(m)}_s|} e^{K^{(m)}_s} \RB{\sum_{\alpha \in I_s^{(m)}}  \RB{\frac{\Delta}{2}P_\epsilon u_r}\RB{X^\alpha_s} \prod_{\beta \in I^{(m)}_s\setminus\{\alpha\}}  \RB{P_\epsilon u_r}\RB{X^\beta_s} }} \mathrm dr
	\\&\leq \epsilon^{-1}\int_0^t e^{K^{(m)}_s} |I_s^{(m)}| \mathrm dr 
	\leq t (1+e^{K_s}) \epsilon^{-1} |I_s|. 
\end{align}
	Note that  \eqref{eq:IntegratableCondiotion} holds by Lemma \ref{lem:exponential}, and furthermore,
\begin{equation}
	\sup_{s,t \in [0,T]} |\Lambda_{t,s}^{\epsilon,m}| 
	\leq \frac{T}{\epsilon} \sup_{0\leq s\leq T} \tilde{\mathbb E} \SB{(1+e^{K_s}) |I_s|} < \infty, \quad T\geq 0.
\end{equation}
	Similar arguments are valid for other terms of \eqref{eq:TakingExpectation} as well, while replacing $(x_i)_{i\in I}$  with $(X_s^\alpha)_{\alpha \in I_s^{(m)}}$, multiplying each terms with $(-1)^{\tilde J_s^{(m)}} e^{K_s^{(m)}}$, and then taking expectations with respect to $\tilde{\mathbb P}$. 
\begin{extra}
	(Let us verify this in Note \ref{note:413}.)
\end{extra}
	We are done. 
\end{proof}	
	\begin{extra}
		\begin{note} \label{note:413}
			Let us also verify for the other terms.
			\begin{itemize}
				\item 
				For the two terms on the left hand side of \eqref{eq:TakingExpectation}, it is sufficient to show that for every $s\geq 0$,
				\begin{equation} \label{eq:eKs}
					\mathbf E[1+e^{K_s}] = \tilde{\mathbb E}[1+e^{K_s}]< \infty
				\end{equation}
				since almost surely, 
				\begin{equation}
					\ABS{(-1)^{J_s^{(m)}} e^{K_s^{(m)}} \prod_{\alpha \in I_s^{(m)}} \RB{P_\epsilon u_t}(X_s^\alpha) } \leq 1+e^{K_s}.
				\end{equation}
				Note from Proposition \ref{prop:ExponentialTerm}, \eqref{eq:eKs} holds as desired, and furthermore, 
				\begin{equation}
					\sup_{s,t \in [0,T]} |\Xi_{t,s}^{\epsilon,m}| \leq \sup_{s \in [0,T]}\tilde{\mathbb E}[1+e^{K_s}]< \infty, \quad T\geq 0.
				\end{equation}
				\item 
				For the second term on the right hand side of \eqref{eq:TakingExpectation}, it is sufficient to show that  \eqref{eq:IntegratableCondiotion} holds 
				since for every $t\geq 0$, 
				\begin{align}
					&\int_0^t \ABS{ (-1)^{J_s^{(m)}} e^{K_s^{(m)}} \RB{\sum_{\alpha \in I_s^{(m)}} \RB{P_\epsilon \RB{b\circ u_r}}(X^\alpha_s) \prod_{\beta \in I_s^{(m)} \setminus \{\alpha\}} (P_\epsilon u_r)(X^\beta_s) } } \mathrm dr 
					\\&\leq t (1+ e^{K_s}) \|b\|_\infty |I_s|.
				\end{align}
				Note from Lemma \ref{lem:exponential}, \eqref{eq:IntegratableCondiotion} holds as desired, and furthermore,
				\begin{equation}
					\sup_{s,t \in [0,T]} |\Phi_{t,s}^{\epsilon,m}| \leq \|b\|_\infty T \sup_{s\in [0,T]}\tilde {\mathbb E}[(1+e^{K_s})  |I_s|] < \infty , \quad T\geq 0.
				\end{equation}
				\item 
				For the last term on the right hand side of \eqref{eq:TakingExpectation}, it is sufficient to show that 
				\begin{align} 
					&\mathbf E\SB{\iint_0^t \sum_{(\alpha,\beta) \in (I_s^{(m)})^2: \alpha\prec \beta} (1+e^{K_s}) p_\epsilon(X^\alpha_s - y)  \mathrm dr \mathrm dy} 
					\\&\leq \mathbf E\SB{\int_0^t (1+e^{K_s}) |I^{(m)}_s|^2 \mathrm dr} = t \mathbf E\SB{(1+e^{K_s}) |I^{(m)}_s|^2 } 
					\\&\label{eq:Is2}< \infty, \quad t\geq 0,
				\end{align}
				since
				\begin{align}
					&\ABS{(-1)^{J_s^{(m)}} e^{K_s^{(m)}} 
						\RB{\sum_{(\alpha,\beta) \in (I_s^{(m)})^2: \alpha \prec \beta} \sigma(u_r(y))^2 p_\epsilon(X^\alpha_s - y)p_\epsilon(X^\beta_s - y)\prod_{\gamma \in I_s^{(m)} \setminus \{\alpha,\beta\}} (P_\epsilon u_r)(X_s^\gamma)} }
					\\& \leq \|p_\epsilon\|_\infty \|\sigma\|^2_\infty (1+e^{K_s}) \sum_{(\alpha,\beta) \in (I_s^{(m)})^2: \alpha \prec \beta} p_\epsilon(X^\alpha_s - y).
				\end{align}
				Note from Lemma \ref{lem:exponential}, \eqref{eq:Is2} holds as desired; and	furthermore,
				\begin{equation}
					\sup_{s,t \in [0,T]} |\Psi_{t,s}^{\epsilon,m}| 
					\leq \|p_\epsilon\|_\infty \|\sigma\|^2_\infty T \sup_{0\leq s\leq T} \tilde{\mathbb E}[(1+e^{K_s})|I_s^{(m)}|^2]
					< \infty, \quad T\geq 0.
				\end{equation}
			\end{itemize}
		\end{note}
	\end{extra}

\begin{proof}[Proof of Lemma \ref{lem:DecXiByCBBM}]
	\emph{Step 1.} 
	Take an arbitrary $h \in \mathcal C_{\mathrm b}^{2}(\mathbb R, [0,1])$, i.e. a $[0,1]$-valued twice continuously differentiable function $h$ on $\mathbb R$ satisfying $\|h'\|_\infty < \infty$ and $\|h''\|_\infty < \infty$. 
	Also define $h(\dagger) := 1$. 
	Recall that $\xi_\alpha$ and $\zeta_\alpha^{(m)}$ are respectively the birth-time and the death-time of the particle $\alpha \in \mathcal U$ in the $m$-truncated branching-coalescing Brownian particle system. 
	Using Ito's formula, it is standard to verify that for any deterministic finite subset $\mathcal I$ of $\mathcal U$ and $s \geq 0$, we have almost surely 
\begin{align} \label{eq:PreIF}
	&\prod_{\alpha \in \mathcal I}h(X^{(m),\alpha}_s) - \prod_{\alpha \in \mathcal I}h(X^{(m),\alpha}_0) -  \sum_{r\leq s} \RB{\mathbf \Delta \prod_{\alpha \in \mathcal I}h(X^{(m),\alpha}_r)} 
	\\&= \int_0^s \sum_{\alpha \in \mathcal I} \RB{\prod_{\beta \in \mathcal I \setminus \{\alpha\}}h(X^{(m),\beta}_{r-})}\mathbf 1_{(\xi_\alpha, \zeta^{(m)}_\alpha]}(r) \frac{1}{2}h''(X^{(m),\alpha}_{r-})\mathrm dr + {}
	\\&\quad \int_0^s \sum_{\alpha \in \mathcal I} \RB{\prod_{\beta \in \mathcal I \setminus \{\alpha\}}h(X^{(m),\beta}_{r-})}\mathbf 1_{(\xi_\alpha, \zeta^{(m)}_\alpha]}(r) h'(X^{(m),\alpha}_{r-}) \mathrm dB^\alpha_r.
\end{align}
	Here, we write $\mathbf {\Delta} A_t := A_{t} - A_{t-}$ for any $t\geq 0$ and c\`adl\`ag process $(A_t)_{t\geq 0}$.
\begin{extra}
	(We will verify the result for this step in Note \ref{note:414})
\end{extra}
\begin{extra}
\begin{note}\label{note:414}
	In order to verify the claimed result in Step 1, we introduce Ito's formula for processes with finite jumps. 
	We say a semimartingale $(A_t)_{t\geq 0}$ has locally finite jumps, if 
\[
	\sum_{s\leq t} \mathbf 1_{\{\mathbf \Delta A_s \neq 0\}} < \infty, \quad t\geq 0, \text{a.s.}
\]
	For a given semimartingale $(A_t)_{t\geq 0}$ with locally finite jumps, define its pure jump part
\[
	\{A_t\}^{\mathrm J}:= \sum_{s\leq t} \mathbf \Delta A_s, \quad t\geq 0,
\]
	and its continuous part
\[
	\{A_t\}^{\mathrm C}:= A_t - \{A_t\}^{\mathrm J}, \quad t\geq 0.
\]
	Ito's formula for semimartingales with locally finite jumps can be formulated as follows, c.f. \cite[Theorem 20.7]{MR4226142}:
	Suppose that $\{(A^i_t)_{t\geq 0}: i = 1, \cdots, n\}$ is a family of semimartingales with locally finite jumps, and that $H \in \mathcal C^{2}(\mathbb R^n, \mathbb R)$, then $(H(A^1_t, \cdots, A^n_t))_{t\geq 0}$ is a semimartingale with locally finite jump, and almost surely,
\begin{align}\label{eq:Ito} 
	\mathrm d\{H(A^1_t, \cdots, A^n_t)\}^{\mathrm C} 
	&= \sum_{i=1}^n (\partial_i H)(A^1_{t-}, \cdots, A^n_{t-}) \mathrm d \{A^i_t\}^{\mathrm C} + {}
	\\& \quad \frac{1}{2}\sum_{i=1}^n \sum_{j=1}^n(\partial_j\partial_i H)(A^1_{t-}, \cdots, A^n_{t-}) \mathrm d\AB{ \{A^i_t\}^{\mathrm C}, \{A^j_t\}^{\mathrm C}}.
\end{align}
	
	Now, from $1- h(X^{(m),\alpha}_r) = \RB{1-h(\tilde X^\alpha_r)} \mathbf 1_{[\xi_\alpha, \zeta^{(m)}_\alpha)}(r) $ and the fact that $(\tilde X^\alpha_r)_{r\geq 0}$ is a Brownian motion, we have 
\begin{align}
	&-\mathrm dh(X^{(m),\alpha}_r)
	= \RB{1-h(\tilde X^\alpha_r)} \mathrm d\mathbf 1_{[\xi_\alpha, \zeta^{(m)}_\alpha)}(r) - \mathbf 1_{(\xi_\alpha, \zeta^{(m)}_\alpha]}(r)\mathrm d h(\tilde X^\alpha_r)
	\\&=\RB{1-h(\tilde X^\alpha_r)} \mathrm d\mathbf 1_{[\xi_\alpha, \zeta^{(m)}_\alpha)}(r) -\mathbf 1_{(\xi_\alpha, \zeta^{(m)}_\alpha]}(r) \RB{h'(\tilde X^\alpha_r) \mathrm d\tilde X^\alpha_r + \frac{1}{2}h''(\tilde X^\alpha_r)\mathrm dr}.
\end{align}
	From the definition of $(\tilde X^\alpha_r)_{r\geq 0}$, we have 
\begin{align} \label{eq:XAR}
	\mathrm d\tilde X^\alpha_r = \mathbf 1_{(0, \xi_\alpha]}(r) \mathrm d\tilde X^{\overleftarrow{\alpha}}_r + \mathbf 1_{( \xi_\alpha,\infty)}(r) \mathrm dB^\alpha_r.
\end{align}
	Now, noticing that $\tilde X^\alpha_r = X^{(m),\alpha}_{r-}$ for $r\in (\xi_\alpha, \zeta^{(m)}_\alpha]$, we have that $h(X^{(m),\alpha}_\cdot)$ is a semimartingale with locally finite jumps, and that 
\begin{align}
	\mathrm d\{h(X^{(m),\alpha}_r)\}^{\mathrm C}= \mathbf 1_{(\xi_\alpha, \zeta^{(m)}_\alpha]}(r) \RB{h'(X^{(m),\alpha}_{r-}) \mathrm dB^\alpha_r+ \frac{1}{2}h''(X^{(m),\alpha}_{r-})\mathrm dr}.
\end{align}
	For $\alpha \neq \beta$, it is easy to see that $\mathrm d\AB{\CB{h(X^{(m),\alpha}_r)}^{\mathrm C},\CB{h(X^{(m),\beta}_r)}^{\mathrm C}}=0$.
	Therefore, for any deterministic finite subset $\mathcal I$ of $\mathcal U$, we have 
\begin{equation}
	\mathrm d\CB{\prod_{i \in \mathcal I}h(X^{m,i}_r)}^{\mathrm C}
	= \sum_{i \in \mathcal I} \RB{\prod_{j \in \mathcal I \setminus \{i\}}h(X^{m,j}_{r-})}\mathbf 1_{(\xi_i, \zeta^{(m)}_i]}(r) \RB{h'(X^{m,i}_{r-}) \mathrm dB^i_r+ \frac{1}{2}h''(X^{m,i}_{r-})\mathrm dr}.
\end{equation}
	Writing this in its integral form, we arrive at the desired result \eqref{eq:PreIF}.
\end{note}
\end{extra}
	
	\emph{Step 2.}
	Let us fix the arbitrary $m \in \mathbb N$, and take a sequence of deterministic finite subset $\mathcal I_n$ of $\mathcal U$ such that $\mathcal I_n \uparrow \mathcal U$ as $n\uparrow \infty$.
	Recall that $I_r^{(m)}$ is the set of labels of the living particles at time $r\geq 0$ for the $m$-truncated branching-coalescing Brownian particle system.
	From Subsection \ref{sec:Truncated}, we have for each $s\geq 0$,
\[
	I_{[0,s]}^{(m)}
	:= \bigcup_{r\in [0, s]} I_r^{(m)}
\]
	is a (random) finite subset of $\mathcal U$.  
	Therefore, almost surely for every $s\geq 0$, there exists a large (random) $\tilde N_s \in \mathbb N$, such that for any $n \geq \tilde N_s$ it holds that $I_{[0,s]}^{(m)} \subset \mathcal I_n$.
	Using this, we can verify that, after replacing $\mathcal I$ by this sequence of $\mathcal I_n$ and then taking $n\uparrow \infty$, the second term on the right hand side of  \eqref{eq:PreIF} is Cauchy sequence in $\mathscr M^2_{\mathrm c}$, the space of continuous $L^2$-martingales; while all the other terms of \eqref{eq:PreIF} converges almost surely. 
\begin{extra}
	(We will verify this in Note \ref{note:415}.)
\end{extra}
	This allows us to verify that almost surely for each $s\geq 0$, 
\begin{align}
	&\prod_{ \alpha \in I_s^{(m)}}h(X^{(m),\alpha}_s) - \prod_{\alpha \in I_0^{(m)}}h(X^{(m),\alpha}_0) - \sum_{r\leq s} \CB{\mathbf \Delta \prod_{\alpha \in I_r^{(m)}}h(X^{(m),\alpha}_r)}
	\\&=  \frac{1}{2}\int_0^s \sum_{\alpha \in I_{r-}^{(m)} } \RB{\prod_{\beta \in I_{r-}^{(m)}\setminus \{\alpha\}}h(X^{(m),\beta}_{r-})}  h''(X^{(m),\alpha}_{r-})\mathrm dr + M^{m,h}_s
\end{align}
	where $M_\cdot^{m,h}$ is a continuous $L^2$-martingale with quadratic variation 
\[
	\langle M^{m,h}_s \rangle = \int_0^s \sum_{\alpha \in I^{(m)}_{r-}}  \RB{\prod_{\beta \in I^{(m)}_{r-} \setminus \{\alpha\}}h(X^{(m),\beta}_{r-})}^2 h'(X^{(m),\beta}_{r-})^2  \mathrm dr, \quad s\geq 0.
\]
\begin{extra}
	(We will verify this in Note \ref{note:416}.) 
\end{extra}
	
	\begin{extra}
		\begin{note} \label{note:415}
			Let us verify that $\{(M_n(s))_{s\geq 0}: n \in \mathbb N\}$ is a Cauchy sequence in $\mathscr M^2_{\mathrm c}$ where
			\begin{equation}
				M_n(s) : = \int_0^s \sum_{\alpha \in \mathcal I_n} \RB{\prod_{\beta \in \mathcal I_n \setminus \{\alpha\}}h(X^{(m),\beta}_{r-})}\mathbf 1_{(\xi_i, \zeta^{(m)}_\alpha]}(r) h'(X^{(m),\alpha}_{r-}) \mathrm dB^\alpha_r.
			\end{equation}
			In fact, to see $\{M_n(\cdot): n\in \mathbb N\}$ is an $\mathscr M^2_{\mathrm c}$-valued sequence, it is suffice to show the following that their quadratic variations is integrable:  For every $n\in \mathbb N$ and $s\geq 0$, 
			\begin{align} \label{eq:LntUB}
				\langle M_n(s)\rangle &= \sum_{\alpha \in \mathcal I_n} \int_0^s \RB{\prod_{\beta \in \mathcal I_n \setminus \{\alpha\}}h(X^{(m),\beta}_{r-})}^2 \mathbf 1_{(\xi_\alpha, \zeta^{(m)}_\alpha]}(r)^2 h'(X^{(m),\alpha}_{r-})^2  \mathrm dr 
				\\& \leq \|h'\|_\infty \int_0^s  |I_{r-}^{(m)}| \mathrm dr 
				\\& \in L^1(\tilde {\mathbb P}), \quad \text{by Theorem~\ref{prop:Key}.}
			\end{align}
			Also, since $(\mathcal I_n)_{n\geq 0}$ is an increasing sequence of sets, we have for $k \leq n$ and $s\geq 0$, 
			\begin{align}
				M_n(s)- M_k(s) &= \int_0^s \sum_{\alpha \in \mathcal I_k} \RB{\prod_{\beta \in \mathcal I_n \setminus \{\alpha\}}h(X^{(m),\beta}_{r-})-\prod_{\beta \in \mathcal I_k \setminus \{\alpha\}}h(X^{(m),\beta}_{r-})}\mathbf 1_{(\xi_\alpha, \zeta^{(m)}_\alpha]}(r) h'(X^{(m),\alpha}_{r-}) \mathrm dB^\alpha_r
				+ {}
				\\& \quad \int_0^s \sum_{\alpha \in \mathcal I_n\setminus \mathcal I_k} \RB{\prod_{\beta \in \mathcal I_n \setminus \{\alpha\}}h(X^{(m),\beta}_{r-}) }\mathbf 1_{(\xi_\alpha, \zeta^{(m)}_\alpha]}(r) h'(X^{(m),\alpha}_{r-}) \mathrm dB^\alpha_r.
			\end{align}
			Calculating the quadratic variation of $M_n(\cdot) - M_k(\cdot)$, we obtain for every $s\geq 0$,
			\begin{align}
				&\langle M_n(s)- M_k(s) \rangle 
				\\&= \sum_{\alpha \in \mathcal I_k} \int_0^s  \RB{\prod_{\beta \in \mathcal I_n \setminus \{\alpha\}}h(X^{(m),\beta}_{r-}) - \prod_{\beta \in \mathcal I_k \setminus \{\alpha\}}h(X^{(m),\beta}_{r-})}^2 \mathbf 1_{(\xi_\alpha, \zeta^{(m)}_\alpha]}(r)^2 h'(X^{(m),\alpha}_{r-})^2 \mathrm dr
				+ {}
				\\& \quad  \sum_{\alpha \in \mathcal I_n \setminus \mathcal I_k} \int_0^s \RB{\prod_{\beta \in \mathcal I_n \setminus \{\alpha\}}h(X^{(m),\beta}_{r-}) }^2 \mathbf 1_{(\xi_\alpha, \zeta^{(m)}_\alpha]}(r)^2 h'(X^{(m),\alpha}_{r-})^2 \mathrm dr.
			\end{align}
			Observe that for $s\geq 0$ and $n,k\geq \tilde N_s$, it holds almost surely that $\langle M_n(s)- M_k(s) \rangle =0$.
			This implies that 
			\[
			\lim_{k\to \infty} \sup_{n\geq k} \AB{M_n(s)- M_k(s)} = 0, \quad s\geq 0.
			\]
			Also notice that, almst surely, we have the domination
			\begin{align}
				\sup_{k,n\in \mathbb N: n\geq  k}\ABS{\AB{M_n(s)- M_k(s)}}  
				&\leq \|h'\|_\infty^2 3 \int_0^s |I_{r-}^{(m)}| \mathrm dr
				\\& \in L^1(\tilde {\mathbb P}), \quad \text{by Theorem~\ref{prop:Key}.}
			\end{align}
			Now, by Ito's isometry and dominated convergence theorem, we have for $s\geq 0$,
			\begin{align}
				&\lim_{k\to \infty} \sup_{n\geq k}\tilde {\mathbb E}\SB{\ABS{M_n(s) - M_k(s)}^2} 
				= \lim_{k\to \infty} \sup_{n\geq k} \tilde{\mathbb E}\SB{\langle M_n(s)- M_k(s) \rangle} 
				\\&\leq \lim_{k\to \infty}  \tilde{\mathbb E}\SB{\sup_{n\geq k} \langle M_n(s)- M_k(s) \rangle}= 0.
			\end{align}
			This is sufficient for the claim that $\{M_n(\cdot): n \in \mathbb N\}$ is a Cauchy sequence in $\mathscr M^2_{\mathrm c}$.
		\end{note}
		
		\begin{note} \label{note:416}
			To see the claimed result, we first verify, using $h(\dagger) = 1$, that for every $s\geq 0$, almost surely
			\begin{align}
				&\lim_{n\uparrow \infty} \prod_{ \alpha \in \mathcal I_n}h(X^{(m),\alpha}_s) = \prod_{\alpha \in I_s^{(m)}}h(X^{(m),\alpha}_s);
				\\ &\lim_{n\uparrow \infty} \prod_{\alpha \in \mathcal I_n}h(X^{(m),\alpha}_0) = \prod_{\alpha \in I_s^{(m)}}h(X^{(m),\alpha}_0);
				\\&\lim_{n\uparrow \infty} \sum_{r\leq s} \RB{\mathbf \Delta \prod_{\alpha \in \mathcal I_n}h(X^{(m),\alpha}_r)}  = \sum_{r\leq s} \RB{\mathbf \Delta \prod_{\alpha \in I^{(m)}_r}h(X^{(m),\alpha}_r)};
			\end{align}
			and 
			\begin{align}
				&\lim_{n\uparrow \infty} \int_0^s \sum_{\alpha \in \mathcal I_n} \RB{\prod_{\beta \in \mathcal I_n \setminus \{\alpha\}}h(X^{(m),\beta}_{r-})}\mathbf 1_{(\xi_\alpha, \zeta^{(m)}_\alpha]}(r) \frac{1}{2}h''(X^{(m),\alpha}_{r-})\mathrm dr
				\\&= \int_0^s \sum_{\alpha \in I^{(m)}_{r-}} \RB{\prod_{\beta \in I^{(m)}_{r-} \setminus \{\alpha\}}h(X^{(m),\beta}_{r-})} \frac{1}{2}h''(X^{(m),\alpha}_{r-})\mathrm dr.
			\end{align}
			Let us denote the $\mathscr M^2_{\mathrm c}$-limit of the sequence $\{M_n(\cdot): n \in \mathbb N\}$ by $M^{m,h}_\cdot$.
			To find the quadratic variation of $M^{m,h}_\cdot$, let us first define a process $\AB{M^{m,h}_\cdot}$ so that
			\begin{equation}\label{eq:Mmh}
				\AB{M_n(s)} \xrightarrow[n\to \infty]{\text{a.s.}} \int_0^s \sum_{i \in I^{(m)}_{r-}}  \RB{\prod_{j \in I^{(m)}_{r-} \setminus \{i\}}h(X^{m,j}_{r-})}^2 h'(X^{m,i}_{r-})^2  \mathrm dr =: \AB{M^{m,h}_s}, \quad s\geq 0.
			\end{equation}
			Observe that $\AB{M^{m,h}_\cdot}$ is a predictable integrable increasing process, in the sense of \cite{MR1011252}*{p.~35}, since
			\[
			\AB{M^{m,h}_s} \leq \|h'\|_\infty^2 \int_0^s |I^{(m)}_{r-}| \mathrm dr \in L^1(\tilde{\mathbb P}), \quad \text{by Theorem~\ref{prop:Key}.}
			\]
			Therefore, to verify that $\AB{M^{m,h}_\cdot}$ is the quadratic variation of the process  $M^{m,h}_\cdot$, we only need to verify that $(M^{m,h}_\cdot)^2 - \AB{M^{m,h}_\cdot}$ is a martingale \cite{MR1011252}*{Definition 2.1}.
			Notice that, we already have
			\[
			\tilde {\mathbb E}[ M_n(t)^2 - \langle M_n(t)\rangle | \tilde{\mathscr F}_s] = M_n(s)^2 - \langle M_n(s)\rangle
			\]
			for every $n\in \mathbb N$ and $0\leq s\leq t$.
			This is equivalent of saying that
			\begin{equation} \label{eq:CEF}
				\tilde {\mathbb E}\SB{\RB{M_n(t)^2 - \langle M_n(t)\rangle} \mathbf 1_H} = \tilde {\mathbb E}\SB{\RB{M_n(s)^2 - \langle M_n(s)\rangle}\mathbf 1_H}
			\end{equation}
			for every $n\in \mathbb N$, $0\leq s\leq t$ and $H \in \mathscr F_s$.
			Fix arbitrary $0\leq s\leq t $ and $H \in \mathscr F_s$ and observe that
			\[
			\tilde {\mathbb E}\SB{ (M_n(t) \mathbf 1_H - M^{m,h}_t \mathbf 1_H)^2 } \leq \tilde {\mathbb E}\SB{ (M_n(t) - M^{m,h}_t)^2 } \xrightarrow[n\to \infty]{} 0,
			\]
			which says that 
			\[M_n(t)\mathbf 1_H \xrightarrow[n\to \infty]{L^2} M^{m,h}_t \mathbf 1_H,\]
			which further implies that
			\[\tilde {\mathbb E}\SB{ M_n(t)^2 \mathbf 1_H} \xrightarrow[n\to \infty]{} \tilde {\mathbb E}\SB{ (M^{m,h}_t)^2 \mathbf 1_H}. \]
			Also from \eqref{eq:LntUB} and the dominated convergence theorem, we have 
			\[\tilde {\mathbb E}\SB{\AB{M_n(t)} \mathbf 1_H} 
			\xrightarrow [n\to \infty]{}
			\tilde {\mathbb E}
			\SB{\AB{M_t^{m,h}} \mathbf 1_H}.
			\]
			Therefore, by taking $n\to \infty$ in \eqref{eq:CEF}, we get
			\[
			\tilde {\mathbb E}\SB{\RB{(M^{m,h}_t)^2 - \langle M^{m,h}_t \rangle} \mathbf 1_H} = \tilde {\mathbb E}\SB{\RB{(M^{m,h}_s)^2- \langle M^{m,h}_s \rangle }\mathbf 1_H}, \quad H \in \mathscr F_s.
			\]
			As we have discussed, this implies that $\langle M^{m,h}_\cdot \rangle$, given by \eqref{eq:Mmh}, is the quadratic variation of $M^{m,h}_\cdot$. 
			Now, after replacing $\mathcal I$ by $\mathcal I_n$ in \eqref{eq:PreIF} and then taking $n\to \infty$, we can verify the desired claim.
		\end{note}
	\end{extra}
	
	\emph{Step 3.}
	Let us define the process
\[
	H^{m,h}_r 
	:= (-1)^{\tilde J^{(m)}_r} e^{K_r^{(m)}} \prod_{\alpha \in I_r^{(m)}}h\RB{X^{(m),\alpha}_r}, \quad r\geq 0.
\]
	Since there are only finitely many jumps for the process $H^{m,h}_\cdot$ up to any finite time, it is straightforward to verify, using Step 2 and Ito's formula, that almost surely for every $s\geq 0$,
\begin{align}\label{eq:Conti}
	&H_s^{m,h} - H_0^{m,h} 
	\\&= \int_0^s H_{r-}^{m,h} \sum_{\alpha \in I_{r-}^{(m)}}   \RB{\mu + b_1 -\frac{1}{m} + \frac{h''(X^{(m),\alpha}_{r-})}{2h(X^{(m),\alpha}_{r-})} } \mathrm dr + {} 
		\\& \quad \int_0^s (-1)^{\tilde J^{(m)}_{r-}} e^{K_{r-}^{(m)}} \mathrm dM^{m,h}_r + {}
	\\& \quad  \int_{(0,s]\times \mathcal U \times \bar{\mathbb N}} \mathbf 1_{\{\alpha \in I_{r-}^{(m)}\}} H_{r-}^{m,h} \RB{(-1)^{\mathbf 1_{\{b_k<0\}}}\frac{h(X^{(m),\alpha}_{r-})^{k\wedge m}}{h(X^{(m),\alpha}_{r-})} -1}
	\mathfrak N(\mathrm dr, \mathrm d\alpha, \mathrm dk) + {}
	\\& \quad \int_{(0,s]\times \mathcal R} \mathbf 1_{\{\alpha,\beta\in I^{(m)}_{r-}\}} H_{r-}^{m,h} (h(X^{(m),\alpha}_{r-})^{-1}-1)\mathfrak M(\mathrm dr, \mathrm d(\alpha,\beta)).
\end{align}
\begin{extra}
	(We will verify this in Note \ref{note:417})
\begin{note} \label{note:417}
	In fact, by using \eqref{eq:Ito} and Step 2, we have 
\begin{align}
	&\mathrm d \CB{H_r^{m,h}}^{\mathrm c}
	=  H_{r-}^{m,h} \mathrm dK_r^{(m)} + (-1)^{\tilde J^{(m)}_{r-}} e^{K_{r-}^{(m)}} \mathrm d\CB{\prod_{\alpha \in I_r^{(m)}}h(X^{(m),\alpha}_r)}^{\mathrm c} 
	\\&	= H_{r-}^{m,h} \RB{\mu + b_1-\frac{1}{m}} |I_{r-}^{(m)}|\mathrm dr + \sum_{\alpha \in I_{r-}^{(m)}} H_{r-}^{m,h}\frac{h''(X^{(m),\alpha}_{r-})}{2h(X^{(m),\alpha}_{r-})}\mathrm dr+ (-1)^{\tilde J^{(m)}_{r-}} e^{K_{r-}^{(m)}} \mathrm dM^{m,h}_r
	\\&	 = H_{r-}^{m,h} \sum_{\alpha \in I_{r-}^{(m)}}   \RB{\mu + b_1 - \frac{1}{m} + \frac{h''(X^{(m),\alpha}_{r-})}{2h(X^{(m),\alpha}_{r-})} } \mathrm dr + (-1)^{\tilde J^{(m)}_{r-}} e^{K_{r-}^{(m)}} \mathrm dM^{m,h}_r.
\end{align}
	As for the jumping part, since there are only finitely many jumps up to any finite time, it is straightforward to verify that 
\begin{align}
	&\sum_{r\leq s} \RB{\mathbf \Delta H_r^{m,h}} 
	\\&\label{eq:Jumping}= \int_{(0,s]\times \mathcal U \times \mathbb Z_+} \mathbf 1_{\{\alpha \in I_{r-}^{(m)}\}} H_{r-}^{m,h}
	\RB{(-1)^{\mathbf 1_{\{b_k < 0\}}} \frac{ h(X^{(m),\alpha}_{r-})^{k\wedge m}}{h(X^{(m),\alpha}_{r-})} -1}
	\mathfrak N(\mathrm dr, \mathrm d\alpha, \mathrm dk) + {}
	\\& \quad \int_{(0,s]\times \mathcal R} \mathbf 1_{\{\alpha,\beta \in I^{(m)}_{r-}\}} H_{r-}^{m,h} (h(X^{(m),\alpha}_{r-})^{-1}-1)\mathfrak M(\mathrm dr, \mathrm d(\alpha,\beta)).
\end{align}
\end{note} 
\end{extra}
	
	\emph{Step 4.}
	We want to take the expectation of \eqref{eq:Conti}. 
	However, it is not clear whether the second term on the right hand side 
	is a (true) martingale.
	Notice that its quadratic variation is given by 
	\begin{align}
		&\AB{\int_0^s (-1)^{\tilde J^{(m)}_{r-}} e^{K_{r-}^{(m)}} \mathrm dM^{m,h}_r}
		= \int_0^s e^{2K_{r-}^{(m)}}  \sum_{\alpha \in I^{(m)}_{r-}}  \RB{\prod_{\beta \in I^{(m)}_{r-} \setminus \{\alpha\}}h(X^{(m),\beta}_{r-})}^2 h'(X^{(m),\alpha}_{r-})^2  \mathrm dr
		\\&\leq \|h'\|_\infty^2 \int_0^s e^{2K_{r-}^{(m)}} |I_{r-}^{(m)}|  \mathrm dr, \quad s\geq 0.
	\end{align}
	Therefore, we can define a sequence of predictable stopping time $\tau_n \uparrow \infty$ by
	\begin{equation}
		\tau_n := \inf\CB{t\geq 0: \|h'\|_\infty^2 \int_0^t e^{2K_{r-}^{(m)}} |I_{r-}^{(m)}|  \mathrm dr = n}, \quad n \in \mathbb N,
	\end{equation}
	which guarantees that, for each $n\in \mathbb N$,
	\begin{equation}
		s\mapsto \int_0^{s\wedge \tau_n} (-1)^{\tilde J^{(m)}_{r-}} e^{K_{r-}^{(m)}} \mathrm dM^{m,h}_r
	\end{equation}
	is an $L^2$-martingale.
	Let us then take the expectation of \eqref{eq:Conti} while replacing $s$ by $s\wedge \tau_n$
	and obtain, for each $n\in \mathbb N$ and $s\geq 0$,
\begin{align} \label{eq:PreSE}
	\\&\tilde{\mathbb E}\SB{H_{s\wedge \tau_n}^{m,h}} -\tilde{\mathbb E}\SB{ H_0^{m,h}} 
	\\& = \tilde {\mathbb E}\SB{ \int_0^{s\wedge \tau_n} H_{r-}^{m,h} \sum_{\alpha \in I_{r-}^{(m)}}   \RB{\mu + b_1 -\frac{1}{m}+ \frac{h''(X^{(m),\alpha}_{r-})}{2h(X^{(m),\alpha}_{r-})} } \mathrm dr} + {}
	\\& ~\quad \tilde {\mathbb E}\SB{\int_{(0,s\wedge \tau_n]\times \mathcal U \times \bar{\mathbb N}} \mathbf 1_{\{\alpha \in I_{r-}^{(m)}\}} H_{r-}^{m,h}
	\RB{(-1)^{\mathbf 1_{\{b_k < 0\}}}\frac{h(X^{(m),\alpha}_{r-})^{k\wedge m}}{h(X^{(m),\alpha}_{r-})}  -1} \hat{\mathfrak N}(\mathrm dr, \mathrm d\alpha, \mathrm dk)} + {}
	\\& ~\quad \tilde{\mathbb E} \SB{\int_{(0,s \wedge \tau_n]\times \mathcal R} \mathbf 1_{\{\alpha,\beta \in I^{(m)}_{r-}\}} H_{r-}^{m,h} (h(X^{(m),\alpha}_{r-})^{-1}-1)\hat{\mathfrak M}(\mathrm dr, \mathrm d(\alpha,\beta))}.
\end{align}
	\begin{extra}
		(We will explain this in Note \ref{note:418}.)
	\end{extra}
	Here, we have replaced $\mathfrak N$, and $\mathfrak M$, by their compensators $\hat{\mathfrak N}$, and $\hat{\mathfrak M}$, respectively. 
	This is allowed, due to Lemma \ref{eq:CompensatorDecom} and the fact that for any $s\geq 0$ and $n\in \mathbb N$, 
\begin{align} \label{eq:Cret}
	\\&\int_{(0,s\wedge \tau_n]\times \mathcal U \times \bar{\mathbb N} } \ABS{ \mathbf 1_{\{\alpha \in I_{r-}^{(m)}\}} H_{r-}^{m,h}
	\RB{(-1)^{\mathbf 1_{\{b_k < 0\}}}\frac{h(X^{(m),\alpha}_{r-})^{k\wedge m}}{h(X^{(m),\alpha}_{r-})} -1} }
	\hat{\mathfrak N}(\mathrm dr, \mathrm d\alpha, \mathrm dk) 
	\\&\leq  \int_0^{s} \sum_{\alpha \in I_{r-}^{(m)}} \sum_{k\in \bar {\mathbb N}} \ABS{ \mathbf 1_{\{\alpha \in I_{r-}^{(m)}\}} \frac{H_{r-}^{m,h}}{h(X^{(m),\alpha}_{r-})}
	\RB{ (-1)^{\mathbf 1_{\{b_k < 0\}}} h(X^{(m),\alpha}_{r-})^{k\wedge m} -h(X^{(m),\alpha}_{r-}) } } \mu p_k \mathrm dr
	\\& \leq 2 \mu \int_0^{s} e^{K_{r-}^{(m)}}|I_{r-}^{(m)}| \mathrm dr \in L^1(\tilde {\mathbb P}),  \quad \text{by Lemma \ref{lem:exponential}}
\end{align}
	and that
\begin{align}\label{eq:MC}
	&2\int_{(0,s \wedge \tau_n]\times \mathcal R} \ABS{\mathbf 1_{\{ \alpha,\beta\in I^{(m)}_{r-}\}} H_{r-}^{m,h} (h(X^{(m),\alpha}_{r-})^{-1}-1)}\hat{\mathfrak M}(\mathrm dr, \mathrm d(\alpha,\beta))
	\\&\leq \sum_{(\alpha,\beta) \in \mathcal R} \int_0^s \ABS{\mathbf 1_{\{\alpha,\beta\in I^{(m)}_{r-}\}} \frac{H_{r-}^{m,h}}{h(X^{(m),\alpha}_{r-})} (1-h(X^{(m),\alpha}_{r-}))} \mathrm d L^{\alpha,\beta}_{r}
	\\&\leq 2\sum_{(\alpha,\beta) \in \mathcal R} \int_0^s \mathbf 1_{\{\alpha,\beta\in I^{(m)}_{r-}\}} e^{K_{r-}^{(m)}} \mathrm d L^{\alpha,\beta}_{r}
	\\& \leq 2 (1+e^{K_{s}^{(m)}} ) \sum_{\alpha,\beta \in I_{[0,s]}^{(m)}: \alpha \prec \beta} L_s^{\alpha,\beta}
	\in L^1(\tilde {\mathbb P}),  \quad \text{by Lemma \ref{lem:bug}.}
\end{align}
	Finally, observing that there are certain cancellations on the right hand side of \eqref{eq:PreSE}, we obtain that for any $s\geq 0$ and $n\in \mathbb N$, 
\begin{align}\label{eq:StopedExpectation} 
	&\tilde{\mathbb E}\SB{H_{s\wedge \tau_n}^{m,h}} -\tilde{\mathbb E}\SB{ H_0^{m,h}} 
	\\& =\tilde {\mathbb E}\SB{\int_0^{s\wedge \tau_n}  H_{r-}^{m,h} \sum_{\alpha \in I_{r-}^{(m)}} \frac{h''(X^\alpha_{r-})}{2h(X^\alpha_{r-})}\mathrm dr} +{} 
	\\& \qquad \tilde {\mathbb E}\SB{\int_0^{s\wedge \tau_n}  H_{r-}^{m,h} \sum_{\alpha \in I_{r-}^{(m)}} \frac{b^{(m)}(h(X^\alpha_{r-}))}{h(X^\alpha_{r-})}  \mathrm dr} + {}
	\\&  \qquad \tilde {\mathbb E}\SB{\int_{(0,s\wedge \tau_n]\times \mathcal R} \mathbf 1_{\{\alpha,\beta\in I^{(m)}_{r-}\}} H_{r-}^{m,h} (h(X^\alpha_{r-})^{-1}-1)\hat{\mathfrak M}(\mathrm dr, \mathrm d(\alpha,\beta))}.
\end{align}
	Here, recall that $b^{(m)}$ is defined in \eqref{eq:DefBmz}. 
\begin{extra}
	(This cancellation will also be made clear in Note \ref{note:418}.)
\end{extra}
\begin{extra}
\begin{note} \label{note:418}
	Let us give more details while taking expectations over \eqref{eq:Conti}.
	Essentially, we observe the following:
\begin{itemize}
\item 
	The first term on the left hand side of \eqref{eq:Conti}, after replacing $s$ by $s\wedge \tau_n$, is dominated by $e^{K_s^{(m)}}\in L^1(\tilde {\mathbb P})$;
\item 
	The second term on the left hand side of \eqref{eq:Conti}, after replacing $s$ by $s\wedge \tau_n$, is dominated by $1$;
\item 
	The first term on the right hand side of \eqref{eq:Conti}, after replacing $s$ by $s\wedge \tau_n$, is dominated by 
\begin{align}
	&\int_0^{s} \ABS{ \sum_{\alpha \in I_{r-}^{(m)}} \frac{H_{r-}^{m,h}}{h(X^\alpha_{r-})}  \RB{(\mu + b_1-\frac{1}{m})h(X^\alpha_{r-}) + \frac{h''(X^\alpha_{r-})}{2} } } \mathrm dr 
	\\&\leq (\mu + |b_1| + \frac{1}{m}+ \|h''\|_\infty/2) \int_0^s e^{K_{r-}^{(m)}} |I_{r-}^{(m)}| \mathrm dr 
	\in L^1(\tilde {\mathbb P})
\end{align}
	thanks to Lemma \ref{lem:exponential};
\item 
	The second term on the right hand side of \eqref{eq:Conti} while replacing $s$ by $s\wedge \tau_n$ is a martingale, thanks to how $\tau_n$ is defined;
\item 
	Thanks to Lemma \ref{eq:CompensatorDecom} and \eqref{eq:Cret}, the third term on the right hand side of \eqref{eq:Conti}, after replacing $s$ by $s\wedge \tau_n$, has expectation
\begin{align}
	&\tilde {\mathbb E}\SB{\int_{(0,s\wedge \tau_n]\times \mathcal U \times {\bar{\mathbb N}}} \mathbf 1_{\{\alpha \in I_{r-}^{(m)}\}} H_{r-}^{m,h}\RB{(-1)^{\mathbf 1_{\{b_k < 0\}}}\frac{h(X^{(m),\alpha}_{r-})^{m\wedge k}}{h(X^{(m),\alpha}_{r-})}  -1} \hat{\mathfrak N}(\mathrm dr, \mathrm d\alpha, \mathrm dk)}
	\\& = \mu \tilde {\mathbb E}\SB{\int_0^{s\wedge \tau_n}  \sum_{\alpha \in I_{r-}^{(m)}} \sum_{k\in \bar{\mathbb N}}  p_k H_{r-}^{m,h}\RB{(-1)^{\mathbf 1_{\{b_k < 0\}}} \frac{h(X^{(m),\alpha}_{r-})^{m\wedge k}}{h(X^{(m),\alpha}_{r-})}  -1} \mathrm dr}
	\\&=	\tilde {\mathbb E}\SB{\int_0^{s\wedge \tau_n}  \sum_{\alpha \in I_{r-}^{(m)}} H_{r-}^{m,h} \RB{\sum_{k\in \bar{\mathbb N}: k\neq 1} b_k \frac{  h(X^{(m),\alpha}_{r-})^{k\wedge m}  }{h(X^{(m),\alpha}_{r-})} - \mu} \mathrm dr}
	\\&=	\tilde {\mathbb E}\SB{\int_0^{s\wedge \tau_n}  \sum_{\alpha \in I_{r-}^{(m)}} H_{r-}^{m,h} \RB{ \frac{b^{(m)}(h(X^\alpha_{r-}))}{h(X^\alpha_{r-})}-\RB{\mu+b_1 - \frac{1}{m}}} \mathrm dr}.
\end{align}
	It is also clear now how the cancellation happens between the first and the third term.
\item 
	Since all the other terms of \eqref{eq:Conti}, after replacing $s$ by $s\wedge \tau_n$, are integrable against $\tilde {\mathbb P}$, so is the last term, i.e. the fourth term on the right hand side. 
	Furthermore, \eqref{eq:MC} and Lemma \ref{eq:CompensatorDecom} allow us to replace $\mathfrak M$ by its compensator.
\end{itemize}
\end{note}
\end{extra}

\emph{Step 5.}
Let us fix and arbitrary $s\geq 0$.
Observe that, for each $n\in \mathbb N$, the integrands in the first and the second terms on the left hand side of  \eqref{eq:StopedExpectation} are dominated by $1+e^{K_s^{(m)}}$ and $1$ repectively; while the integrands in the first and the second terms on the right hand side of \eqref{eq:StopedExpectation} are dominated by 
\[
 \frac{1}{2}\|h''\|_\infty \int_0^s e^{K_{r-}^{(m)}} |I_{r-}^{(m)}| \mathrm dr \quad  \text{and} \quad  \|b^{(m)}\|_\infty \int_0^s e^{K_{r-}^{(m)}} |I_{r-}^{(m)}| \mathrm dr
\]
respectively.
Also observe from \eqref{eq:MC} that the integrand in the third term on the right hand side of \eqref{eq:StopedExpectation} is dominated by 
\begin{align}\label{eq:MM}
	2 (1+e^{K_{s}^{(m)}} ) \sum_{\alpha,\beta \in I_{[0,s]}^{(m)}: \alpha \prec \beta} L_s^{\alpha,\beta}.
\end{align}
Now, by using Lemmas \ref{lem:exponential}, \ref{lem:bug} and the dominated convergence theorem, after taking $n\to \infty$ in \eqref{eq:StopedExpectation}, we can verify that  \eqref{eq:StopedExpectation} still holds after replacing $s\wedge \tau_n$ by $s$. That is
\begin{align} \label{eq:StopedExpectation3}
	\\&\tilde{\mathbb E}\SB{H_{s}^{m,h}} -\tilde{\mathbb E}\SB{ H_0^{m,h}} 
	\\& = \tilde {\mathbb E}\SB{\int_0^{s}  H_{r-}^{m,h} \sum_{\alpha \in I_{r-}^{(m)}} \frac{h''(X^\alpha_{r-})}{2h(X^\alpha_{r-})} \mathrm dr} + \tilde {\mathbb E}\SB{\int_0^{s}  H_{r-}^{m,h} \sum_{ \alpha \in I_{r-}^{(m)}} \frac{b^{(m)}(h(X^\alpha_{r-}))}{h(X^\alpha_{r-})}  \mathrm dr} + {}
	\\& ~\quad \tilde {\mathbb E}\SB{\int_{(0,s]\times \mathcal R} \mathbf 1_{\{\alpha,\beta\in I^{(m)}_{r-}\}} H_{r-}^{m,h} (h(X^\alpha_{r-})^{-1}-1)\hat{\mathfrak M}(\mathrm dr, \mathrm d(\alpha,\beta))}.
\end{align}

\emph{Step 6.}
\begin{extra}
	(We will verify in Note \ref{note:419} that $P_\epsilon u_t \in C^2_{\mathrm b}(\mathbb R)$ for any $t\geq 0$ and $\epsilon > 0$.)
\end{extra}
	Fix an arbitrary $t\geq 0$. 
	After replacing the arbitrarily chosen $h \in C^2_{\mathrm b}(\mathbb R,[0,1])$ by $P_\epsilon u_t$ and then taking expectation with respect to $\mathbb P$ on both sides of \eqref{eq:StopedExpectation3}, we can verify the desired result for this proposition while applying Fubini's theorem.
	Of course, to use Fubini's theorem, we need to verify the integrability of \eqref{eq:StopedExpectation3} for each term.
	For instance, for the third term on the right hand side, we can verify, with a similar argument as in \eqref{eq:MC}, that 
 \begin{align}\label{eq:UP3}
	&\mathbf E \SB{\int_{(0,s]\times \mathcal R} \ABS{\mathbf 1_{\{\alpha,\beta\in I^{(m)}_{r-}\}} H_{r-}^{m,P_\epsilon u_t} (\RB{P_\epsilon u_t}(X^\alpha_{r-})^{-1}-1)}\hat{\mathfrak M}(\mathrm dr, \mathrm d(\alpha,\beta))}
	\\& \leq 2 \mathbf E\SB{(1+e^{K_{s}^{(m)}} ) \sum_{\alpha,\beta \in I_{[0,s]}^{(m)}: \alpha \prec \beta} L_s^{\alpha,\beta}}
	< \infty.
\end{align}
	Therefore, by Fubini's theorem and Lemma \ref{lem:ThePointMeasures}, we have
\begin{align}
	&\int \left. \tilde {\mathbb E}\SB{\int_{(0,s]\times \mathcal R} \mathbf 1_{\{\alpha,\beta\in I^{(m)}_{r-}\}} H_{r-}^{m,h} (h(X^\beta_{r-})^{-1}-1)\hat{\mathfrak M}(\mathrm dr, \mathrm d(\alpha,\beta))} \right|_{h = P_\epsilon u_t} \mathrm d\mathbb P
	\\& = \mathbf E \SB{\int_{(0,s]\times \mathcal R} \mathbf 1_{\{\alpha,\beta\in I^{(m)}_{r-}\}} H_{r-}^{m,P_\epsilon u_t} (\RB{P_\epsilon u_t}(X^\beta_{r-})^{-1}-1)\hat{\mathfrak M}(\mathrm dr, \mathrm d(\alpha,\beta))}
	\\& = \frac{1}{2}\mathbf E \left[ \int_0^s \sum_{\alpha,\beta \in I^{(m)}_{r-}:\alpha\prec \beta} (-1)^{J_{r-}^m}e^{K_{r-}^{(m)}} \RB{ \prod_{\gamma \in I_{r-}^{(m)}\setminus \{\alpha,\beta\}} \RB{P_\epsilon u_t}(X^\gamma_{r-})} \times \vphantom{\RB{\RB{P_\epsilon u_t}(X^\alpha_{r-})-\RB{P_\epsilon u_t}(X^\alpha_{r-})^2} \mathrm dL_r^{\alpha,\beta}} \right.
	\\&\qquad \qquad \qquad \qquad  \left. \vphantom{\int_0^s \sum_{i,j\in I^{(m)}_{r-}:i\prec j} (-1)^{J_{r-}^m}e^{K_{r-}^{(m)}} \RB{ \prod_{k\in I_{r-}^{(m)}\setminus \{i,j\}} \RB{P_\epsilon u_t}(X^k_{r-})} }\RB{\RB{P_\epsilon u_t}(X^\alpha_{r-})-\RB{P_\epsilon u_t}(X^\alpha_{r-})\cdot \RB{P_\epsilon u_t}(X^\beta_{r-})} \mathrm dL_r^{\alpha,\beta} \right]
	 =	\tilde \Psi_{t,s}^{\epsilon,m}.
\end{align}
	Here, we used the fact that $\mathbf 1_{\{\alpha,\beta \in I_{r-}^{(m)}\}}\mathrm dL_r^{\alpha,\beta} = \mathbf 1_{\{\alpha,\beta \in I_{r-}^{(m)}, X^{\alpha}_{r-} = X^{\beta}_{r-}\}} \mathrm d L_r^{\alpha,\beta}$ which is standard for the Brownian local times.
	Note that \eqref{eq:UP3} also implies that
\[
	\sup_{0\leq s,t\leq T} \ABS{\tilde \Psi_{t,s}^{\epsilon,m}} 
	\leq 2 \mathbf E\SB{(1+e^{K_{T}^{(m)}} ) \sum_{\alpha,\beta \in I_{[0,T]}^{(m)}: \alpha \prec \beta} L_T^{\alpha,\beta}}
	< \infty.
\]
Similar arguments are valid for all the other terms on the right hand side of  \eqref{eq:StopedExpectation3}, and we are done.
\begin{extra}
	(We will verify this in Note \ref{note:420}.)
\end{extra}
\begin{extra}
\begin{note}\label{note:419}
	Let us verify that $P_\epsilon u_t \in C^2_{\mathrm b}(\mathbb R,[0,1])$ as long as $\epsilon > 0$.
	It is obvious that $P_\epsilon u_t \in C^2(\mathbb R, [0,1])$. So we only need to calculate $\| \RB{P_\epsilon u_t}'\|_\infty$ and $\|\RB{P_\epsilon u_t}''\|_\infty$.
	Using \eqref{eq:ChangeDerivative1}, we have 
	\begin{align}
		&\ABS{\partial_x\int u_s(y) p_\epsilon(x-y) \mathrm dy} 
		= \ABS{\int u_s(y) \partial_x p_\epsilon(x-y) \mathrm dy}
		= \ABS{\int u_s(y) p_\epsilon (x-y) \cdot (-\frac{x-y}{\epsilon}) \mathrm dy}
		\\&\leq \int p_\epsilon (x-y) \cdot \frac{|x-y|}{\epsilon} \mathrm dy < \infty.
	\end{align}
	This implies that $\| \RB{P_\epsilon u_t}'\|_\infty < \infty$.
	And it is already verified in \eqref{eq:LaplaceHeat} that $\| \RB{P_\epsilon u_t}''\|_\infty < \epsilon^{-1}$.
\end{note}
\begin{note} \label{note:420}
	Let us verify the integrable conditions for the Fubini's theorems that have been used for this step. 
	\begin{itemize}
		\item 
		Using Fubini's theorem and Proposition \ref{prop:ExponentialTerm} that
		\begin{align}
			\ABS{H_r^{m,P_\epsilon u_t }}
			\leq  e^{K_r^{(m)}} \in L^1(\mathbf P), \quad r\geq 0
		\end{align}
		we know that the first and the second term on the left hand side of \eqref{eq:StopedExpectation3}, after replacing the arbitrary $h \in C^2_{\mathrm b}(\mathbb R)$ by $P_\epsilon u_t$ and then taking expectation against $\mathbb P$, become $\mathbf E\SB{H_s^{m,P_\epsilon u_t} }=\Xi_{t,s}^{\epsilon,m} $ and $\mathbf E\SB{H_0^{m,P_\epsilon u_t} }= \Xi_{t,0}^{\epsilon,m} $ respectively.
		\item 
		Using Fubini's theorem, \eqref{eq:LaplaceHeat}, and Lemma \ref{lem:exponential} that
		\begin{align}
			&\ABS{\int_0^{s}  H_{r-}^{m,P_\epsilon u_t} \sum_{\alpha \in I_{r-}^{(m)}} \frac{(\Delta P_\epsilon u_t)(X^\alpha_{r-})}{2\RB{P_\epsilon u_t}(X^\alpha_{r-})} \mathrm dr}
			\\&\leq  \epsilon^{-1} \int_0^s e^{K_{r-}^{(m)}} |I_{r-}^{(m)}| \mathrm dr \in L^1(\mathbf P)
		\end{align}
		we know that the first term on the right hand side of \eqref{eq:StopedExpectation3}, after replacing the arbitrary $h \in C^2_{\mathrm b}(\mathbb R)$ by $P_\epsilon u_t$ and then taking expectation against $\mathbb P$, becomes 
		\begin{align}
			\mathbf E\SB{\int_0^{s}  H_{r-}^{m,P_\epsilon u_t} \sum_{\alpha \in I_{r-}^{(m)}} \frac{(\Delta P_\epsilon u_t)(X^\alpha_{r-})}{2\RB{P_\epsilon u_t}(X^\alpha_{r-})} \mathrm dr} = \tilde \Lambda_{t,s}^{\epsilon,m}.
		\end{align}
		\item 
		Using Fubini's theorem and Lemma \ref{lem:exponential} that
		\begin{align}
			&\ABS{\int_0^{s}  H_{r-}^{m,P_\epsilon u_t} \sum_{\alpha \in I_{r-}^{(m)}} \frac{b^{(m)}(\RB{P_\epsilon u_t}(X^\alpha_{r-}))}{\RB{P_\epsilon u_t}(X^\alpha_{r-})}  \mathrm dr}
			\\&\leq \|b^{(m)}\|_\infty  \int_0^s e^{K_{r-}^{(m)}} |I_{r-}^{(m)}| \mathrm dr \in L^1(\mathbf P)
		\end{align}
		we know that the second term on the right hand side of \eqref{eq:StopedExpectation3}, after replacing the arbitrary $h \in C^2_{\mathrm b}(\mathbb R)$ by $P_\epsilon u_t$ and then taking expectation against $\mathbb P$, becomes 
		\begin{align}
			\mathbf E\SB{\int_0^{s}  H_{r-}^{m,P_\epsilon u_t} \sum_{\alpha \in I_{r-}^{(m)}} \frac{b^{(m)}(\RB{P_\epsilon u_t}(X^\alpha_{r-}))}{\RB{P_\epsilon u_t}(X^\alpha_{r-})}  \mathrm dr} = \tilde \Phi_{t,s}^{\epsilon,m}.
		\end{align}
	\end{itemize}
\end{note}
\end{extra}
\end{proof}

\begin{proof}[Proof of Lemma \ref{lem:Cancellation} (1)]
		\begin{extra} 
		Note that for any bounded continuous function $f$, 
		\[
		P_\epsilon f(x) = \mathbb E_x[f(B_\epsilon)] \xrightarrow[\epsilon\downarrow 0]{} f(x), \quad \text{by bounded convergence theorem.}
		\]
	\end{extra}
	The desired result follows from the continuity of $u$ and the dominated convergence theorem, immediately after noticing that the integrand in  \eqref{eq:gts} is dominated by $e^{K_s^{(m)}}$, which is integrable with respect to $\mathbf P$ by Proposition \ref{prop:ExponentialTerm}.
\end{proof}

\begin{proof}[Proof of Lemma \ref{lem:Cancellation} (2)]
	From \eqref{eq:mToInfinity}, we know that for any $r\geq 0$,
	\begin{equation} 
		\Xi_{r,0}^{0,\infty} = \mathbb E \SB{ \prod_{i =1}^n u_r(x_i)} \quad \text{and} \quad \Xi_{0,r}^{0,\infty} = \tilde{\mathbb E} \SB{ (-1)^{|\tilde J_r|} e^{K_r}\prod_{\alpha \in I_r} f(X_r^\alpha)}.
	\end{equation}
	From the bounded convergence theorem, and the fact that $r\mapsto u_r(x)$ is a continuous function bounded by $1$ for each $x\in \mathbb R$, we have that $r\mapsto \Xi_{r,0}^{0,\infty}$ is continuous. 
	
		To show that $r \mapsto \Xi_{0,r}^{0,\infty}$ is continuous, we fix an arbitrary (deterministic) $r\geq 0$ and define the event $\tilde \Omega_r \subset \tilde \Omega$ such that
		\begin{align}
			\tilde{\Omega}_{r}
			&=\CB{ \mathfrak N(\{r\}\times\mathcal U \times {\bar{\mathbb N}}) = 0} \cap \CB{\mathfrak M(\{r\}\times\mathcal R) = 0} \cap {} 
			\\& \quad \CB{\int_0^{t} |I_{s}| \mathrm ds < \infty, \forall t\geq 0} \cap \CB{J_t < \infty, \forall t\geq 0}.
		\end{align}
		From the property of the Poisson random measure, we have
\begin{equation}
	 \tilde{\mathbb E}[\mathfrak N(\{r\}\times\mathcal U \times \mathbb Z_+)]= \sum_{\alpha \in \mathcal U} \sum_{k\in \bar {\mathbb N}}\hat{\mathfrak N}(\{r\}\times\{\alpha\}\times \{k\}) = 0,
\end{equation}
and
\begin{equation}
	\tilde{\mathbb E}[\mathfrak M(\{r\}\times\mathcal R)]=\sum_{(\alpha,\beta) \in \mathcal R} \tilde{\mathbb E}[\mathfrak M(\{r\}\times\{(\alpha,\beta)\})] = \sum_{(\alpha,\beta) \in \mathcal R} \tilde{\mathbb E}[\hat{\mathfrak M}(\{r\}\times\{(\alpha,\beta)\})] = 0. 
\end{equation}
		From this and Theorem~\ref{prop:Key}, it is clear that $\tilde {\mathbb P}(\tilde{\Omega}_r) = 1$. 
		
		Firstly note that, on the event $\tilde {\Omega}_r$, since there are only finitely many branching events up to any finite time and there is no branching occurring at time $r$, there exists a (random) $\varepsilon:=\varepsilon(r) > 0$ such that there is no branching event occurring in the time interval $(r-2\varepsilon, r + 2\varepsilon)$.
		Secondly note that, on the event $\tilde {\Omega}_r$, since $t\mapsto |I_t|$ is non-increasing on $(r-2\varepsilon, r+2\varepsilon)$ and
		\[
		\int_{r-2\varepsilon}^{r+2\varepsilon} |I_t| \mathrm dt 
		< \infty,
		\]
		it must hold that $|I_{t}| < \infty$ for every $t\in (r-2\varepsilon, r+2\varepsilon)$; in particular, $|I_{r-\varepsilon}| < \infty$.
		Thirdly note that, on the event $\tilde {\Omega}_r$, since there are only finitely many coalescing events occurring in the time interval $(r-\varepsilon, r+\varepsilon)$ and non of them occurs at the time $r$, there exists a random $\tilde \varepsilon:=\tilde \varepsilon(r) > 0$ such that there is no change of the total number of particles in the time interval $(r-\tilde \varepsilon , r+\tilde \varepsilon)$. 
		Let us now take an arbitrary (deterministic) sequence $(r_n)_{n\in \mathbb N} \subset (0,r+1)$ such that $r_n \to r$ as $n\to \infty$.
		Then, it can be verified that 
		\[
		H_n
		:=\mathbf 1_{\tilde \Omega_r}(-1)^{|\tilde J_{r_n}|} e^{K_{r_n}}\prod_{\alpha \in I_{r_n}} f(X_{r_n}^\alpha) \xrightarrow[n\to\infty]{\text{a.s.}} H:=\mathbf 1_{\tilde \Omega_r}(-1)^{|\tilde J_r|} e^{K_r}\prod_{\alpha \in I_r} f(X_r^\alpha).
		\]
		From Proposition \ref{prop:ExponentialTerm}, we know that each element of $(H_n)_{n\in \mathbb N}$ 
		is dominated by $1+e^{K_{r+1}} \in L_1(\tilde {\mathbb P})$.
		Therefore, by the dominated convergence theorem, we have
		\[
		\Xi_{0,{r_n}}^{0,\infty} 
		= \tilde {\mathbb E}[H_n] \xrightarrow[n\to \infty]{}  \tilde {\mathbb E}[H] =  \Xi_{0,{r}}^{0,\infty}.
		\]
		Finally, since $r\geq 0$ and $(r_n)_{n\in \mathbb N}$ are arbitrary, we obtain the continuity of $r\mapsto \Xi_{0,{r}}^{0,\infty}$.
\end{proof}

\begin{proof}[Proof of Lemma \ref{lem:Cancellation} (3)]
	From Lemma \ref{lem:exponential} and the fact that $\NORM{\frac{\Delta}{2}P_\epsilon u_t}_{\infty} \leq \epsilon^{-1}$, we have 
\begin{align}
	&\mathbf E \SB{ \iint_{s,t\geq 0, s+t\leq T} \ABS{(-1)^{|\tilde J^{(m)}_s|} e^{K^{(m)}_s} \RB{\sum_{\alpha \in I_s^{(m)}}  \RB{\frac{\Delta}{2}P_\epsilon u_t}\RB{X^\alpha_s} \prod_{\beta\in I^{(m)}_s\setminus\{\alpha\}}  \RB{P_\epsilon u_t}\RB{X^\beta_s} }}\mathrm dt \mathrm ds} 
	\\& \leq \epsilon^{-1} \mathbf E \SB{ \iint_{s,t\geq 0, s+t\leq T} e^{K^{(m)}_s} \ABS{I_s^{(m)}} \mathrm dt \mathrm ds} 
	 \leq \epsilon^{-1} T \tilde {\mathbb E}\SB{\int_{0}^T e^{K^{(m)}_s} \ABS{I_s^{(m)}} \mathrm ds}< \infty.
\end{align}
	Now by Fubini's theorem we know that both 
	\[\int_0^T \Lambda_{T-s,s}^{\epsilon,m} \mathrm ds \quad \text{and} \quad \int_0^T \tilde \Lambda_{t,T-t}^{\epsilon,m} \mathrm dt \] 
	are equal to 
\begin{align}
	\mathbf E \SB{ \iint_{s,t\geq 0, s+t\leq T} (-1)^{|\tilde J^{(m)}_s|} e^{K^{(m)}_s} \RB{\sum_{\alpha \in I_s^{(m)}}  \RB{\frac{\Delta}{2}P_\epsilon u_t}\RB{X^\alpha_s} \prod_{\beta\in I^{(m)}_s\setminus\{\alpha\}}  \RB{P_\epsilon u_t}\RB{X^\beta_s} }\mathrm dt \mathrm ds}.
\end{align}
	The desired result now follows.
\end{proof}

\begin{proof}[Proof of Lemma \ref{lem:Cancellation} (4)]
\emph{Step 1.} For any $m\in \mathbb N$ and $s,t\geq 0$, define $\tilde \Psi^{0,m}_{t,s}$ by replacing $\epsilon$ by $0$ in \eqref{eq:RL}.
	Also, define random variables
	\begin{equation} \label{eq:Ktr}
		\mathcal K_{t,r}^{\epsilon, m} (\alpha,\beta)
		:= \mathbf 1_{\{\alpha,\beta \in I_r^{(m)}\}} (-1)^{|\tilde J^{(m)}_r|} e^{K^{(m)}_r} \RB{\sigma \circ (P_\epsilon u_{t})}(X_{r}^\alpha)^2 \RB{\prod_{\gamma \in I^{(m)}_r\setminus \{\alpha,\beta\} } \RB{P_\epsilon u_{t}}\RB{X_r^\gamma} }
	\end{equation}
	which clearly satisfies
	\begin{equation}
		\ABS{\mathcal K_{t,r}^{\epsilon, m} (\alpha,\beta)} \leq \|\sigma\|^2_\infty \mathbf 1_{\{\alpha,\beta \in I_r^{(m)}\}} e^{K^{(m)}_r}
	\end{equation}
	for every $\epsilon \geq 0$, $m\in \mathbb N$, $t,r\geq 0$ and $(\alpha,\beta) \in \mathcal R$.
	Therefore, we can verify from \eqref{lem:bug} that
	\begin{align} \label{eq:SS}
		&\int_0^T \mathbf E \SB{ \sum_{(\alpha,\beta)\in \mathcal R} \int_0^{T-t} \ABS{\mathcal K_{t,r}^{\epsilon, m} (\alpha,\beta)} \mathrm dL^{\alpha,\beta}_r }  \mathrm dt
		\\&\leq \int_0^T \mathbf E \SB{\sum_{(\alpha,\beta)\in \mathcal R} \|\sigma\|^2_\infty \int_0^{T-t} \mathbf 1_{\{\alpha,\beta \in I_r^{(m)}\}} e^{K^{(m)}_r} \mathrm dL_r^{\alpha,\beta} } \mathrm dt
		\\&\leq  \|\sigma\|^2_\infty T \mathbf E \SB{ (1+ e^{K^{(m)}_T} ) \sum_{\alpha,\beta\in I_{[0,T]}^{(m)}: \alpha \prec \beta} L_T^{\alpha,\beta} } < \infty.
	\end{align}
	Therefore, by the Fubini's theorem and dominated convergence theorem, we can verify
\begin{align}
	&\int_0^T \tilde \Psi^{\epsilon,m}_{t,T-t} \mathrm dt = \frac{1}{2}	\int_0^T \mathbf E \SB{ \sum_{\alpha,\beta\in \mathcal R} \int_0^{T-t} \mathcal K_{t,r}^{\epsilon, m} (\alpha,\beta) \mathrm dL^{\alpha,\beta}_r  } \mathrm dt
	\\& \xrightarrow[\epsilon \to 0]{} \frac{1}{2}\int_0^T \mathbf E \SB{ \sum_{\alpha,\beta\in \mathcal R} \int_0^{T-r} \mathcal K_{t,r}^{0, m} (\alpha,\beta) \mathrm dL^{\alpha,\beta}_r  } \mathrm dt = \int_0^T \tilde \Psi^{0,m}_{t,T-t} \mathrm dt.
\end{align}

\emph{Step 2.}
	It can be verified from Fubini's theorem that for any $s,t\geq 0$, $\epsilon > 0$, and $m \in \mathbb N$,
\begin{align}
	\Psi_{t,s}^{\epsilon,m}  = \mathbf E\SB{ \sum_{(\alpha,\beta) \in \mathcal R}  \int p_\epsilon(y-X^\alpha_s)p_\epsilon(y-X^\beta_s) Y_{t,s}^{\epsilon, m}(y;\alpha,\beta) \mathrm dy}
\end{align}
	where
\begin{equation} \label{eq:Yts}
	Y_{t,s}^{\epsilon, m}(y;\alpha,\beta)
	:=(-1)^{|\tilde J^{(m)}_{s}|} e^{K^{(m)}_{s}} \mathbf 1_{\{\alpha,\beta \in I_{s}^{(m)}\}} \int_0^{t}  \sigma\RB{u_r(y)}^2 \RB{   \prod_{\gamma \in I_{s}^{(m)} \setminus \{\alpha,\beta\}}\RB{P_\epsilon u_r}(X^\gamma_{s})} \mathrm dr.
\end{equation}
\begin{extra}
	(For the integrable condition used here, see Note \ref{note:430}.)
\end{extra}
	By Fubini's theorem again, and by substituting $y$ with $y+X^\alpha_s$, we have 
\begin{align}
	&\int_0^T  \Psi_{T-s,s}^{\epsilon,m} \mathrm ds
	= \mathbf E\SB{\sum_{(\alpha,\beta) \in \mathcal R} \int \mathrm dy \int_0^T p_\epsilon(y)p_\epsilon(y+X^\alpha_s-X^\beta_s) Y_{T-s,s}^{\epsilon, m}(y+X^\alpha_s;\alpha,\beta)  \mathrm ds }.
\end{align}
\begin{extra}
	(For the integrable condition used here, see Note \ref{note:431}.)
\end{extra}

\begin{extra}
	\begin{note}\label{note:430}
		Notice that 
		\begin{align}
			&\mathbf E\Bigg\{ \sum_{(i,j) \in \mathcal R}  
				\int_0^t\int 
			\\&  \ABS{(-1)^{|\tilde J^{(m)}_s|} e^{K^{(m)}_s}\sigma\RB{u_r(y)}^2 \RB{\mathbf 1_{\{i,j \in I_s^{(m)}: i\prec j\}} p_\epsilon(y-X^i_s)p_\epsilon(y-X^j_s) \prod_{k \in I_s^{(m)} \setminus \{i,j\}}\RB{P_\epsilon u_r}(X^k_s)} } 
			\\& \qquad \qquad \qquad \qquad \qquad \qquad \qquad \qquad \qquad \qquad \qquad \qquad \qquad \qquad \qquad \qquad \qquad  \mathrm dy \mathrm dr\Bigg\}
			\\& \leq \|\sigma\|_\infty \|p_\epsilon\|_\infty \mathbf E\SB{
				\int_0^t  e^{K^{(m)}_s} \ABS{I_s^{(m)}}^2 \mathrm dr } < \infty
		\end{align}
		by Lemma \ref{lem:exponential}. 
	\end{note}
\end{extra}

\begin{extra}\begin{note}
		\label{note:431}
		Notice that 
		\begin{align}
			&\int_0^T \mathbf E\SB{ \sum_{(i,j) \in \mathcal R}  \int \ABS{p_\epsilon(y-X^i_s)p_\epsilon(y-X^j_s) Y_{T-s,s}^{\epsilon, m}(y;i,j)} \mathrm dy} \mathrm ds
			\\ &\leq T \|\sigma\|^2_\infty \|p_\epsilon\|_\infty \int_0^T \mathbf E
			\SB{
				e^{K^{(m)}_s} |I_s^{(m)}|^2
			} \mathrm ds < \infty
		\end{align}
				by Lemma \ref{lem:exponential}. 
	\end{note}
\end{extra}

\emph{Step 3.}
	Recall from \eqref{eq:LocalTime} that $L^{\alpha,\beta}_{\cdot,z}$ is the local time of the process $\tilde X^\alpha_\cdot - \tilde X^\beta_\cdot$ at the level $z\in \mathbb R$.
	By the theorem  of the occupation density, c.f. \cite[Theorem 29.5]{MR4226142} and \cite[Lemma 2]{MR1813840}, we can verify that for each $(\alpha,\beta) \in \mathcal R$, $\epsilon > 0$ and $y\in \mathbb R$, almost surely 
\begin{align} \label{eq:NeedExplain}
	&2\int_0^T p_\epsilon(y)p_\epsilon(y+X^\alpha_s-X^\beta_s) Y_{T-s,s}^{\epsilon, m}(y+X^\alpha_s;\alpha,\beta)  \mathrm ds 
	\\&= \int_0^T p_\epsilon(y)p_\epsilon(y+\tilde X^\alpha_s-\tilde X^\beta_s) Y_{T-s,s}^{\epsilon, m}(y+X^\alpha_s;\alpha,\beta)  \mathrm d \AB{ \tilde X^\alpha_s-\tilde X^\beta_s}
	\\& =  \int \mathrm dz \int_0^T p_\epsilon(y)p_\epsilon(y+z) Y_{T-s,s}^{\epsilon, m}(y+X^\alpha_s;\alpha,\beta)  \mathrm d L_{s,z}^{\alpha,\beta}.
\end{align}
\begin{extra}
	(We will explain \eqref{eq:NeedExplain} with details in Note \ref{note:433}.)
\end{extra}
Using the dominated convergence theorem, we can verify that the expression in \eqref{eq:NeedExplain} is almost surely continuous in $y \in \mathbb R$.
\begin{extra}
	(We will explain this continuity with details in Note \ref{note:434}.)
\end{extra}
Therefore, \eqref{eq:NeedExplain}  actually holds for every $y \in \mathbb R$ and $(\alpha,\beta) \in \mathcal R$, almost surely, for every $\epsilon > 0$.
Therefore, we have
\begin{align}
	&2\int_0^T  \Psi_{T-s,s}^{\epsilon,m} \mathrm ds
	= \mathbf E\SB{\sum_{(\alpha,\beta) \in \mathcal R} \int \mathrm dy \int \mathrm dz \int_0^T p_\epsilon(y)p_\epsilon(y+z) Y_{T-s,s}^{\epsilon, m}(y+X^\alpha_s;\alpha,\beta)  \mathrm d L_{s,z}^{\alpha,\beta}}.
\end{align}

\begin{extra}
	\begin{note} \label{note:433}
		Let us give some explanation for \eqref{eq:NeedExplain} while fixing $(i,j) \in \mathcal R$, $\epsilon > 0$ and $y\in \mathbb R$.
		For $(i,j) \in \mathcal R$ with 
		$i = (i_1, \cdots, i_n)$ and $j = (j_1, \cdots, j_m)$, firstly define $i_{n'} =0 $ and $j_{m'} =0$ for $n' > n$ and $m' > m$, and then, under the condition that $i_1=j_1$ but $i \neq j$, define their common ancestor $i\wedge j := (i_1, \cdots, i_k)$ with $k$ being the unique number satisfying $(i_1,\cdots, i_k) = (j_1,\cdots, j_k)$ but $i_{k+1}\neq j_{k+1}$.
		If $i_1 \neq j_1$, then we write $i\wedge j =\varnothing$ and write $\zeta_{\varnothing,\varnothing} =0$.
		By \eqref{eq:XAR}, we can verify that, for $i, j\in \mathcal R$,  
		\[\mathrm d \AB{ \tilde X^i_s, \tilde X^j_s }= \mathbf 1_{(0, \zeta_{i\wedge j, i\wedge j}]} \mathrm d s\]
		From this, we can verify that, for $(i, j)\in \mathcal R$,  
		\begin{align}
			\mathrm d\AB{\tilde X^i_s-\tilde X^j_s} = \mathrm d\AB{\tilde X^i_s} - 2\mathrm d\AB{\tilde X^i_s, \tilde X^j_s} + \mathrm d\AB{\tilde X^j_s}
			= 2 \mathbf 1_{(\zeta_{i\wedge j, i\wedge j},\infty)}\mathrm ds.
		\end{align}
		Now, it is clear that  for $(i, j)\in \mathcal R$, 
		\begin{align}
			\mathbf 1_{\{i,j \in I_{s-}^{(m)}\}}\mathrm d\AB{\tilde X^i_s-\tilde X^j_s}
			= 2 \mathbf 1_{\{i,j \in I_{s-}^{(m)}\}} \mathrm ds.
		\end{align}
		This explains the second line of \eqref{eq:NeedExplain}. 
		For the third line, we work with the filtration $\{\tilde{\mathscr G}_s = \tilde {\mathscr F}_s \otimes \mathscr F: s\geq 0\}$. 
		Notice that $s\mapsto Y_{T-s,s}^{\epsilon, m}(y+X^i_s;i,j)$ is a $(\tilde {\mathscr G}_s)$-predictable process. 
		Now the third line follows from the fact that
		\begin{align}
			&\int_0^T \ABS{p_\epsilon(y)p_\epsilon(y+\tilde X^i_s-\tilde X^j_s) Y_{T-s,s}^{\epsilon, m}(y+X^i_s;i,j)}  \mathrm d \AB{ \tilde X^i_s-\tilde X^j_s}
			\\&\leq 2 T^2 \|\sigma\|_\infty \|p_\epsilon\|_\infty^2 e^{K^m_T} < \infty, \quad \text{a.s.}
		\end{align}
		and the theorem of the occupation density, which can be summarized as follows.
	\end{note}
	
	\begin{lemma} \label{lem:OD}
		Let $X$ be a continuous semi-martingale with regularized local time $L$ (in the sense of \cite[Theorem 29.4]{MR4226142}).
		For any bounded non-negative measurable function $f: \mathbb R \to \mathbb R_+$ and any predictable process $(Y_s)_{s\geq 0}$ such that $Y$ either is non-negative or satisfies
		\begin{equation} \label{eq:IOTI}
			\int_0^t f(X_s) |Y_s| \mathrm d\AB{X_s} < \infty, \quad t\geq 0, \text{a.s.}
		\end{equation}
		it holds that
		\begin{equation} \label{eq:OD}
			\int_0^t f(X_s) Y_s \mathrm d\AB{X_s} = \int \mathrm dz \int_0^t f(z)Y_s \mathrm dL_{s,z}, \quad t\geq 0, \text{a.s.}
		\end{equation}
	\end{lemma}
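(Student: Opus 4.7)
The plan is to prove Lemma \ref{lem:OD} by a monotone class argument on the predictable process $Y$, bootstrapping from the classical occupation density formula for continuous semi-martingales (see \cite[Theorem 29.5]{MR4226142}). The classical formula, applied between two fixed times $0\leq u\leq v$, gives $\int_u^v f(X_s)\mathrm d\AB{X_s}=\int f(z)(L_{v,z}-L_{u,z})\mathrm dz$ for any bounded non-negative measurable $f$.

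First I would verify \eqref{eq:OD} for simple predictable processes of the form $Y_s=H\mathbf 1_{(u,v]}(s)$, where $0\leq u<v$ and $H$ is a bounded random variable measurable with respect to the underlying $\sigma$-algebra at time $u$. For such $Y$, both sides of \eqref{eq:OD} reduce to $H\int_{u\wedge t}^{v\wedge t}f(X_s)\mathrm d\AB{X_s}$ by the classical formula above; on the right-hand side, one uses $\int_0^t Y_s\mathrm dL_{s,z}=H(L_{v\wedge t,z}-L_{u\wedge t,z})$ for each fixed $z\in\mathbb R$. Linearity then yields \eqref{eq:OD} for all bounded elementary predictable processes.

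Next, I would extend to arbitrary bounded non-negative predictable $Y$ by the monotone class theorem on the predictable $\sigma$-algebra. The key point is that the class of bounded non-negative processes satisfying \eqref{eq:OD} is closed under bounded monotone increasing limits: since $f\geq 0$, the left-hand side is monotone in $Y$ by monotone convergence against $\mathrm d\AB{X_s}$, while the right-hand side is monotone by successive applications of monotone convergence---first to the Stieltjes integral $\int_0^tY_s\mathrm dL_{s,z}$ for each fixed $z$ (using that $s\mapsto L_{s,z}$ is non-decreasing), and then to the outer Lebesgue integral in $z$. Passing to unbounded non-negative $Y$ follows by truncating $Y$ at level $n$ and letting $n\uparrow\infty$, again via monotone convergence on both sides.

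For general predictable $Y$ satisfying \eqref{eq:IOTI}, I would decompose $Y=Y^+-Y^-$ and apply the non-negative case to each part; the hypothesis \eqref{eq:IOTI} ensures both $\int_0^tf(X_s)Y_s^\pm\mathrm d\AB{X_s}$ are finite almost surely, so the subtraction on both sides is well-defined, and Fubini--Tonelli (applied to $f(z)Y_s^\pm\mathrm dL_{s,z}\mathrm dz$) justifies the rearrangement of the iterated integral on the right-hand side. The main technical obstacle lies in the monotone class step: one must verify that $z\mapsto\int_0^tY_s\mathrm dL_{s,z}$ is jointly measurable so that the outer $\mathrm dz$-integral makes sense. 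This reduces to joint measurability of $(s,z)\mapsto L_{s,z}$, which is part of the standard regularization of local times (as invoked in \eqref{eq:LocalTime} via \cite[Corollary 1.8, Chapter VI]{MR1725357}), combined with a predictable Fubini argument that first establishes the identity pathwise for elementary $Y$ and then transfers through the monotone limits.
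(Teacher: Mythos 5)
Your proposal is correct and follows essentially the same route as the paper's own argument: verify \eqref{eq:OD} for elementary predictable processes via the classical occupation density formula between deterministic times, extend to all bounded predictable $Y$ by a monotone class theorem, then handle non-negative $Y$ by truncation and general $Y$ satisfying \eqref{eq:IOTI} via the decomposition $Y=Y^+-Y^-$. No gaps worth flagging.
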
 
	\begin{proof}
		Fix the function $f$.
		Denote by $\mathscr H$ the collection of bounded measurable process $Y$ such that \eqref{eq:OD} holds.
		Firstly note that $\mathscr H$ contains the constant function \cite[Theorem 29.5]{MR4226142}.
		Secondly, it is easy to verify that $\mathscr H$ is a vector space over $\mathbb R$.
		Thirdly, note by the monotone convergence theorem that
		\[
		(Y^n)_{n\in \mathbb N} \subset \mathscr H,  0 \leq Y^1 \leq \cdots \leq Y^n \uparrow Y, Y \text{~bounded~} \implies Y \in \mathscr H. 
		\]
		Therefore $\mathscr H$ is an MVS in the sense of \cite[Appendix A0.]{Sharpe1988General}.
		Now suppose that $Y \in \mathscr K$, the space of bounded predictable step processes i.e. 
		\[
		Y_s = K_0 \mathbf 1_{\{0\}}(s) + \sum_{k=1}^n K_{a_k} 	\mathbf 1_{(a_k, b_k]}(s), \quad s\geq 0,
		\]
		for some (deterministic) $a_0:=0\leq a_1 < b_1 \leq a_2 < b_2 \leq \cdots \leq a_n < b_n < \infty$ and, for each $k\in \{0,\dots n\}$, $\tilde{\mathscr F}_{a_k}$-measurable bounded $\mathbb R$-valued random variable $K_{a_k}$.
		We can verify that almost surely for every $t\geq 0$, 
		\begin{align}
			&\int_0^t f(X_s) Y_s \mathrm d\AB{X_s} = 
			\sum_{k=1}^n K_{a_k}  \int_{0}^{t} f(X_s) \mathbf 1_{(a_k, b_k]}(s) \mathrm d\AB{X_s} 
			\\&= \sum_{k=1}^n K_{a_k} \int \mathrm dz \int_{0}^{t} f(z) \mathbf 1_{(a_k, b_k]}(s) \mathrm d L_{s,z}
			=   \int \mathrm dz \int_{0}^{t} f(z) Y_s  \mathrm d L_{s,z}.
		\end{align}
		Therefore, by the monotone class theorem (\cite[Theorem A0.6]{Sharpe1988General}), we have $\mathrm b \sigma(\mathscr K) \subset \mathscr H$.
		Also from the fact that for any left continuous process $Y(\omega,s)$, there exists a sequence of $(Y^n)_{n\in \mathbb N} \subset \mathscr K$ such that $Y^n(w,s) \xrightarrow[]{} Y(w,s)$ for every $s\geq 0$ and $\omega \in \tilde \Omega$, we can verify that $\sigma (\mathscr K)$ is the predictable $\sigma$-field over $\Omega \times \mathbb R_+$.
		
		Now we have shown that \eqref{eq:OD} holds for every bounded predictable process $Y$. 
		Therefore \eqref{eq:OD} also holds for non-negative predictable process $Y$, by using the the approximation $Y\wedge n$ and the monotone convergence theorem.
		Finally, for any predictable process $Y$ satisfying the integrable condition \eqref{eq:IOTI}, by using the decomposition $Y =Y^+ - Y^-$, we can verify the desired result. 
	\end{proof}
\end{extra}

\begin{extra}
	\begin{note}
		\label{note:434}
		To see the desired continuity, it is sufficient to verify that 
		\begin{align}
			&\int_0^T \sup_{y\in \mathbb R} \ABS{p_\epsilon(y)p_\epsilon(y+X^i_s-X^j_s) Y_{T-s,s}^{\epsilon, m}(y+X^i_s;i,j) } \mathrm ds 
			\\&\leq T^2 \|\sigma\|_\infty \|p_\epsilon\|_\infty^2 e^{K^m_T} < \infty, \quad \text{a.s.}
		\end{align}
		and
		\begin{align}
			& \int \mathrm dz \int_0^T \sup_{y\in \mathbb R} \ABS{p_\epsilon(y)p_\epsilon(y+z) Y_{T-s,s}^{\epsilon, m}(y+X^i_s;i,j)}  \mathrm d L_{s,z}^{i,j}
			\\& \leq T \|\sigma\|_\infty \|p_\epsilon\|_\infty^2 e^{K^m_T} \int \mathrm dz \int_0^T   \mathrm d L_{s,z}^{i,j}
			= T \|\sigma\|_\infty \|p_\epsilon\|_\infty^2 e^{K^m_T} \AB{\tilde X^i_T - \tilde X^j_T} < \infty, \quad \text{a.s.}
		\end{align}
	\end{note}
\end{extra}

\emph{Step 4.}
	By an argument similar to \cite[p.~1725]{MR1813840}, we can verify that almost surely
\begin{align}
	& \lim_{\epsilon \downarrow 0} \sum_{(\alpha,\beta)\in \mathcal R}\int \mathrm dy \int \mathrm dz \int_0^T p_\epsilon(y)p_\epsilon(y+z) Y_{T-s,s}^{\epsilon, m}(y+X^\alpha_s;\alpha,\beta)  \mathrm d L_{s,z}^{\alpha,\beta} 
	\\& \label{eq:DR}= \sum_{(\alpha,\beta)\in \mathcal R}\int_0^T Y_{T-s,s}^{0, m}(X^\alpha_s;\alpha,\beta)  \mathrm d L_{s}^{\alpha,\beta}.
\end{align}
\begin{extra} (We will provide more details for \eqref{eq:DR} in Note \ref{note:444}.) \end{extra}
	Notice also that almost surely
\begin{align}
	\int_0^T \ABS{Y_{T-s,s}^{\epsilon, m}(y+X^\alpha_s;\alpha,\beta)} \mathrm d L_{s,z}^{\alpha,\beta} 
	\leq (1+ e^{K_T^{(m)}}) T \|\sigma\|^2_\infty \mathbf 1_{\{\alpha,\beta \in I_{[0,T]}^{(m)}\}} \sup_{z_0\in \mathbb R} L^{\alpha,\beta}_{T,z_0},
\end{align}
	and therefore
\begin{align}
	&\sum_{(\alpha,\beta)\in \mathcal R}\ABS{ \int \mathrm dy \int \mathrm dz \int_0^T p_\epsilon(y)p_\epsilon(y+z) Y_{T-s,s}^{\epsilon, m}(y+X^\alpha_s;\alpha,\beta)  \mathrm d L_{s,z}^{\alpha,\beta} }
	\\ &\leq (1+ e^{K_T^{(m)}}) T \|\sigma\|^2_\infty \sum_{\alpha,\beta \in I_{[0,T]}^{(m)}: \alpha \prec \beta } \sup_{z_0\in \mathbb R} L^{\alpha,\beta}_{T,z_0}
\end{align}
which is integrable, thanks to Lemma \ref{lem:bug}.
Now, by Steps 2 and 3, the dominated convergence theorem, and Fubini's theorem, we can verify that
\begin{align}\label{eq:last}
	&2\int_0^T  \Psi_{T-s,s}^{\epsilon,m} \mathrm ds 
	\xrightarrow[\epsilon \downarrow 0]{}\mathbf E\SB{\sum_{(\alpha,\beta) \in \mathcal R}  \int_0^T Y_{T-s,s}^{0, m}(X^\alpha_s;\alpha,\beta)  \mathrm d L_{s}^{\alpha,\beta}}
	= 2\int_0^T \tilde \Psi_{t,T-t}^{0, m} \mathrm dt.
\end{align}
\begin{extra} (We verify the equality using Fubini's theorem in Note \ref{note:446}.) \end{extra}
Combining this with Step 1, we are done.
\end{proof}
\begin{extra}
\begin{note} \label{note:444}
	Notice that, the summation in \eqref{eq:DR} is in fact a finite sum, over indices $(i,j)\in (I_{[0,T]}^{(m)})^2\cap \mathcal R$. 
	Therefore, we only have to show that almost surely, for every $(i,j) \in \mathcal R$,
\begin{align}
	& \lim_{\epsilon \downarrow 0} \int \mathrm dy \int \mathrm dz \int_0^T p_\epsilon(y)p_\epsilon(y+z) Y_{T-s,s}^{\epsilon, m}(y+X^i_s;i,j)  \mathrm d L_{s,z}^{i,j} 
	\\&\label{eq:RDD}= \int_0^T Y_{T-s,s}^{0, m}(X^i_s;i,j)  \mathrm d L_{s}^{i,j}.
\end{align}
	Fixing $(i,j)\in \mathcal R$, $T>0$ and $m\in \mathbb N$, let us define random variables
	\begin{equation} \label{eq:G}
	G(\epsilon, y,z) := \int_0^T Y_{T-s,s}^{\epsilon, m}(y+X^i_s;i,j)  \mathrm d L_{s,z}^{i,j}, \quad \epsilon \geq 0, y, z\in \mathbb R
	\end{equation}
	where, recall from \eqref{eq:Yts} that
	\[
	Y_{t,s}^{\epsilon, m}(y;i,j)
	=(-1)^{|\tilde J^{(m)}_{s-}|} e^{K^{(m)}_{s-}} \mathbf 1_{\{i,j \in I_{s-}^{(m)}\}} \int_0^{t}  \sigma\RB{u_r(y)}^2 \RB{   \prod_{k \in I_{s-}^{(m)} \setminus \{i,j\}}\RB{P_\epsilon u_r}(X^k_{s-})} \mathrm dr.
	\]
	Using the fact that the truncated particle system won't explode, and the continuity of the Brownian paths, the random field $u$, and the function $\sigma$, we can define random variables
	\[
	\mathbf r := \sup_{s\in [0,T], i\in I_{[0,T]}^{(m)}} |\tilde X^i_s| < \infty,
	\]
	\[
	\chi_{y} := \sup_{r\in [0,T]}\sup_{x\in[-\mathbf r, \mathbf r] }|\sigma(u_r(y+x))-\sigma(u_r(x)) |< \infty, \quad y \in \mathbb R,
	\]
	and
	\[
	\kappa_{\epsilon} =  \sup_{r\in [0,T]}\sup_{x\in[-\mathbf r, \mathbf r] }\ABS{(P_\epsilon u_r)(x)-u_r(x)}< \infty, \quad \epsilon \geq 0.
	\]
	One key observation is that almost surely $\chi_y \to \chi_0 = 0$ when $y\to 0$.
	In fact, suppose for the sake of contradiction that with positive probability, there exists a sequence $y_n \to 0$ such that $\limsup_{n\to \infty}\chi_{y_n} > 0$;
	then on this event, there exists a sequence $y'_n\to 0$ such that $\inf_{n\in \mathbb N}\chi_{y'_n} > 0$; 
	and therefore on this event, there exists sequences $y'_n\to 0$, $(r_n) \in [0,T]$ and $(x_n) \in [-\mathbf r, \mathbf r]$ such that $\inf_{n\in \mathbb N}|\sigma(u_{r_n}(y'_n+x_n))-\sigma(u_{r_n}(x_n)) | > 0$;
	and therefore on this event, there exists sequences $y''_n\to 0$, $r'_n \to r$ in $[0,T]$ and $x'_n \to x$ in $[-\mathbf r, \mathbf r]$ such that 
	\begin{align}
		&0 < \inf_{n\in \mathbb N} |\sigma(u_{r'_n}(y''_n+x'_n))-\sigma(u_{r'_n}(x'_n))| 
		\\&\leq \lim_{n\to \infty}  |\sigma(u_{r'_n}(y''_n+x'_n))-\sigma(u_{r'_n}(x'_n))| = 0,
	\end{align}
	which is impossible.
	Using a exactly similar argument, we have that almost surely $\kappa_\epsilon \to \kappa_0 = 0$ when $\epsilon \downarrow 0$.
	Now we verify that almost surely on the event $\{i,j\in I^{(m)}_{[0,T]}\}$, for $\epsilon \geq 0$, $y\in \mathbb R$ and $z\in \mathbb R$,
	\begin{align}
		&\ABS{G(\epsilon, y, z) - G(\epsilon, 0 , z)} \leq \int_0^T \ABS{Y_{T-s,s}^{\epsilon, m}(y+X^i_s;i,j)  - Y_{T-s,s}^{\epsilon, m}(X^i_s;i,j) }\mathrm d L_{s,z}^{i,j}
		\\& \leq \int_0^T e^{K^{(m)}_{T-}} \int_0^{T}   \ABS{   \sigma\RB{u_r(y+\tilde X^i_s)} - \sigma\RB{u_r(\tilde X^i_s)}  }\mathrm dr \mathrm d L_{s,z}^{i,j}
		\\& \leq e^{K^{(m)}_{T-}} \chi_{y}T \sup_{z_0\in \mathbb R}L_{T,z_0}^{i,j}.
	\end{align}
	We can also verify that almost surely on the event $\{i,j\in I^{(m)}_{[0,T]}\}$, for every $\epsilon \geq 0$ and $z\in \mathbb R$, 
	\begin{align}
		&\ABS{G(\epsilon, 0, z) - G(0, 0 , z)} 
		\leq \int_0^T \ABS{Y_{T-s,s}^{\epsilon, m}(X^i_s;i,j) - Y_{T-s,s}^{0, m}(X^i_s;i,j)}  \mathrm dL_{s,z}^{i,j} 
		\\& 	\leq e^{K^{(m)}_{T-}} \|\sigma\|_\infty \int_0^T \int_0^{T} \mathbf 1_{\{i,j \in I_{s-}^{(m)}\}}  \ABS{   \prod_{k \in I_{s-}^{(m)} \setminus \{i,j\}}\RB{P_\epsilon u_r}(X^k_{s-})-\prod_{k \in I_{s-}^{(m)} \setminus \{i,j\}}u_r(X^k_{s-})} \mathrm dr \mathrm dL_{s,z}^{i,j} 
		\\& 	\leq e^{K^{(m)}_{T-}} \|\sigma\|_\infty \int_0^T \int_0^{T} \mathbf 1_{\{i,j \in I_{s-}^{(m)}\}}  \sum_{k \in I_{s-}^{(m)} \setminus \{i,j\}} \ABS{   \RB{P_\epsilon u_r}(X^k_{s-}) - u_r(X^k_{s-})} \mathrm dr \mathrm dL_{s,z}^{i,j} 
		\\& 	\leq T\kappa_\epsilon  e^{K^{(m)}_{T-}} \|\sigma\|_\infty  \RB{\sup_{s\in [0,T]}|I_s^{(m)}|} \RB{ \sup_{z_0\in \mathbb R} L^{i,j}_{T,z_0}}.
	\end{align}
	Here we used the inequality that for any $(a_i)_{i=1}^n, (b_i)_{i=1}^n \subset [0,1]$ we have
	\[
	\ABS{\prod_{i=1}^n a_i - \prod_{i=1}^n b_i} \leq \sum_{i=1}^n \ABS{a_i - b_i},
	\]
	which can be verified by induction that
	\begin{align}
		&\ABS{\prod_{i=1}^{n+1} a_i - \prod_{i=1}^{n+1} b_i} 
		\leq \ABS{a_{n+1}\prod_{i=1}^{n} a_i - b_{n+1}\prod_{i=1}^{n} a_i} + \ABS{b_{n+1}\prod_{i=1}^{n} a_i- b_{n+1} \prod_{i=1}^{n} b_i} 
		\\&\leq \ABS{a_{n+1} - b_{n+1}}\prod_{i=1}^{n} a_i + b_{n+1}\ABS{\prod_{i=1}^{n} a_i- \prod_{i=1}^{n} b_i} 
		\leq \sum_{i=1}^{n+1}\ABS{a_i - b_i}.
	\end{align}
	Next, let us notice that
	\[
	Y_s := Y_{T-s,s}^{0, m}(X^i_s;i,j), \quad s\in [0,T]
	\]
	is LCRL process with only finitely many jumps up to time $T$. 
	If we denote by $\tau_0:= 0< \tau_1 < \cdots < \tau_{l-1} < T=: \tau_l$ its jumping times, then for each $k \in \{1, \cdots, l\}$, $s\mapsto Y_s$ is continuous, and therefore uniformly continuous by the Heine–Cantor theorem, on $(\tau_{k-1}, \tau_k]$.
	Therefore, we can verify that almost surely, for any $\iota \in \mathbb N$, there exists a (random) partition $a^{(\iota)}_0:= 0< a^{(\iota)}_1 < \cdots < a^{(\iota)}_{N_\iota-1} < T=: a^{(\iota)}_{N_\iota}$, such that 
	\begin{align}
		\sup_{s\in [0, T]} \ABS{Y_s - Y_s^{(\iota)}} \leq \frac{1}{\iota}
	\end{align}
	where $Y^{(\iota)}$ are elementary functions given by 
	\[
	Y^{(\iota)}_s := Y_0 \mathbf 1_{\{0\}}(s)+ \sum_{k=1}^{N_\iota} Y_{a^{(\iota)}_{k-1}} \mathbf 1_{(a^{(\iota)}_{k-1}, a^{(\iota)}_k]}(s), \quad s \in [0, T].
	\]
	Now, we can verify that almost surely for any $z\in \mathbb R$ and $\iota\in \mathbb N$, 
	\begin{align}
		&\ABS{G(0,0,z) - G(0,0,0)} = \ABS{\int_0^T Y_s \mathrm dL_{s,z}^{i,j} - \int_0^T Y_s \mathrm dL_{s,0}^{i,j}}
		\\&\leq  \ABS{\int_0^T Y_s \mathrm dL_{s,z}^{i,j} - \int_0^T Y^{(\iota)}_s \mathrm dL_{s,z}^{i,j}} + \ABS{\int_0^T Y^{(\iota)}_s \mathrm dL_{s,z}^{i,j} - \int_0^T Y^{(\iota)}_s \mathrm dL_{s,0}^{i,j}} 
		\\& \quad + \ABS{\int_0^T Y^{(\iota)}_s \mathrm dL_{s,0}^{i,j} - \int_0^T Y_s \mathrm dL_{s,0}^{i,j}} 
		\\& \leq \frac{2}{\iota} \RB{\sup_{z_0\in \mathbb R} L^{i,j}_{T,z_0}} + \ABS{\sum_{k=1}^{N_\iota} Y_{a^{(\iota)}_{k-1}} \RB{L^{i,j}_{a^{(\iota)}_k,z} - L^{i,j}_{a^{(\iota)}_{k-1},z}} - \sum_{k=1}^{N_\iota} Y_{a^{(\iota)}_{k-1}} \RB{L^{i,j}_{a^{(\iota)}_k,0} - L^{i,j}_{a^{(\iota)}_{k-1},0}}}.
	\end{align}
	Therefore, by taking $z\to 0$ first and then $\iota \to \infty$, we can verify that almost surely
	\begin{equation}
		\lim_{z\to 0} \ABS{G(0,0,z) - G(0,0,0)} = 0. 
	\end{equation}
	To summarize, we have that almost surely on the event $\{i,j\in I^{(m)}_{[0,T]}\}$ when $(\epsilon, y, z) \to (0,0,0)$,
	\begin{align}
		&\ABS{G(\epsilon, y, z) - G(0, 0, 0) }
		\\&\leq \ABS{G(\epsilon, y, z) - G(\epsilon, 0, z)}+\ABS{G(\epsilon, 0, z) - G(0, 0, z)}+ \ABS{G(0, 0, z) - G(0, 0, 0)}
		\\& \leq e^{K^{(m)}_{T-}} \chi_{y}T \RB{\sup_{z_0\in \mathbb R}L_{T,z_0}^{i,j}} + T\kappa_{\epsilon}  e^{K^{(m)}_{T-}} \|\sigma\|_\infty  \RB{\sup_{s\in [0,T]}|I_s^{(m)}|} \RB{ \sup_{z_0\in \mathbb R} L^{i,j}_{T,z_0}}+ {}
		\\& \qquad \ABS{G(0, 0, z) - G(0, 0, 0)} 
		\\& \xrightarrow{} 0.
	\end{align}
	Notice that on the compliment of the event $\{i,j\in I^{(m)}_{[0,T]}\}$, we always have $G(\epsilon, y, z) = G(0, 0, 0) = 0$.
	In other word, we have shown that almost surely $(\epsilon, y, z)\mapsto G(\epsilon, y, z)$ is continuous at $(\epsilon, y, z) = (0,0,0)$.
	Also note that, almost surely, for any $\epsilon \geq 0, y \in \mathbb R$ and $z\in \mathbb R$,
	\begin{align}
		&\ABS{G(\epsilon, y, z) } 
		\leq \int_0^T \ABS{Y_{T-s,s}^{\epsilon, m}(y+X^i_s;i,j) } \mathrm d L_{s,z}^{i,j}
		\\&\leq \int_0^T \ABS{(-1)^{|\tilde J^{(m)}_{s-}|} e^{K^{(m)}_{s-}} \mathbf 1_{\{i,j \in I_{s-}^{(m)}\}} \int_0^{T-s}  \sigma\RB{u_r(y+X^i_s)} \RB{   \prod_{k \in I_{s-}^{(m)} \setminus \{i,j\}}\RB{P_\epsilon u_r}(X^k_{s-})} \mathrm dr} \mathrm d L_{s,z}^{i,j}
		\\& \leq (1+e^{K^{(m)}_{T}}) T \|\sigma\|_\infty \RB{\sup_{z_0\in \mathbb R} L^{i,j}_{T,z_0}}  < \infty.
	\end{align}
	From these, we can verify the desired result \eqref{eq:RDD} using the following lemma.
\end{note}
	\begin{lemma}
		Suppose that $G: (\epsilon, y, z) \mapsto G(\epsilon, y, z)$ is a bounded function on $\mathbb R_+\times \mathbb R^2$, and is continuous at $(\epsilon, y, z) = (0,0,0)$.
		Then
		\begin{align}
			\int \mathrm dy \int p_\epsilon(y)p_\epsilon(y+z) G(\epsilon, y, z)\mathrm dz \xrightarrow[\epsilon \downarrow 0]{} G(0,0,0).
		\end{align}
\end{lemma}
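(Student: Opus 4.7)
My plan is to prove Theorem~\ref{prop:Key} by a truncation argument, reducing everything to particle systems with bounded offspring and finitely many initial particles, where a comparison with a coupled killing-coalescing system—and ultimately with a pure coalescing system that comes down from infinity at a known rate—yields moment bounds uniform in the truncation parameter.

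First, I would construct the $m$-truncated particle system $\{(X^{(m),\alpha}_t)\colon \alpha\in \mathcal U\}$ on the same probability space as $\{(X^\alpha_t)\}$, by keeping only the first $m$ initial particles and capping each branching event's offspring count at $m$. The custom order $\prec$ on $\mathcal U$ is designed precisely so that one has the monotonic identification $I^{(m)}_t\uparrow I_t$ and $J^{(m)}_t\uparrow J_t$ almost surely as $m\uparrow\infty$ (the content of Lemma~\ref{lem:TC}). Hence, by monotone convergence, Theorem~\ref{prop:Key} reduces to proving bounds on $\tilde{\mathbb E}[|I_t^{(m)}|]$ and $\tilde{\mathbb E}[|J_t^{(m)}|]$ that do not depend on $m$. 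For fixed $m$, the truncated system has bounded offspring and a finite initial population, so explosion does not occur and standard compensator machinery (Lemma~\ref{lem:ThePointMeasures}, Lemma~\ref{lem:Compensator}) gives the identity $\tilde{\mathbb E}[|J^{(m)}_t|]=\mu\,\tilde{\mathbb E}[\int_0^t |I^{(m)}_s|\,ds]$, reducing matters to controlling the expected population size.

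Next, for the short-time estimate I would employ the marking procedure of Subsection~\ref{sec:ECBM}. Marking the initial particles by integers and all newly born offspring by $\infty$, Lemma~\ref{lem:Embedded} shows that the marked survivors evolve as a killing-coalescing Brownian particle system, whose population is dominated by (and therefore inherits) the coming-down-from-infinity bound $C_1 N_0/\sqrt{t}\wedge n$ from Lemma~\ref{lem:ComingDown}. Choosing a deterministic time $T_2(\mu)$ small enough that $2\mu C_1\sqrt{T_2}\le 1/2$, the compensator identity then produces $\tilde{\mathbb E}[|J_{T_2}(1)|]\le N_0/2$, where $J_t(j)$ denotes the branching events in generation $j$. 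To iterate over generations, for each branching particle $\alpha$ I would apply the $(\zeta_\alpha,\mathcal U_\alpha)$-marking procedure to its children; the strong Markov property and Lemma~\ref{lem:Embedded} again yield a killing-coalescing sub-system dominated by one with the coming-down rate $C_1/\sqrt{t}$, independent of how many children $\alpha$ actually produced. This gives the conditional bound $\tilde{\mathbb E}[|J^\alpha_{T_2}|\mid \tilde{\mathscr F}_{\rho_\alpha}]\le \mathbf 1_{\{\alpha\in J_{T_2}\}}/2$, hence inductively $\tilde{\mathbb E}[|J_{T_2}(j)|]\le N_0/2^j$, and summing the geometric series controls $\tilde{\mathbb E}[|J_{T_2}|]\le N_0$ uniformly in $m$ and in the initial configuration.

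To bound $\tilde{\mathbb E}[|I_T|]$ for $T\le T_2$, I would observe that any alive particle at time $T$ either is an initial particle (controlled by the embedded killing-coalescing system and Lemma~\ref{lem:ComingDown}) or is a descendant of some particle $\alpha$ that branched earlier (controlled by the analogous embedded system issuing from $\zeta_\alpha$). Rewriting the sum over branching events as an integral against the compensator $\hat{\mathfrak N}$ produces a Volterra-type inequality $\tilde{\mathbb E}[|I_T|]\le C_1N_0/\sqrt{T}\wedge n +\mu\int_0^T (C_1/\sqrt{T-s})\tilde{\mathbb E}[|I_s|]\,ds$, which I would iterate once and combine with the branching-event bound from the previous step to obtain an estimate of the desired form on $[0,T_2]$. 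Extending from $[0,T_2]$ to arbitrary $t>0$ uses the Markov property of the finite-measure-valued process $\mathbb X_t$ at time $T_2$ and a restart argument. Finally, sending $m\uparrow \infty$ using monotone convergence completes the proof.

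The main obstacle is the generation-by-generation induction step: one needs a bound on the post-branching progeny of a single particle that is independent of the number of its children, which may grow with $m$ and is unbounded in the limit. The crucial observation that makes this work is that, irrespective of how many offspring the coalescing mechanism must absorb, the embedded killing-coalescing sub-system has the universal short-time coming-down rate $C_1/\sqrt{t}$ of Lemma~\ref{lem:ComingDown}; consequently all the resulting bounds depend only on $\mu$, $t$, and the number $N_0$ of \emph{distinct} initial locations—precisely the parameters that persist in the limit $m\uparrow \infty$.
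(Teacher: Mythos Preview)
Your proposal does not address the stated lemma at all. The statement you were asked to prove is an elementary analytic fact: if $G$ is bounded on $\mathbb R_+\times\mathbb R^2$ and continuous at $(0,0,0)$, then $\int\!\!\int p_\epsilon(y)p_\epsilon(y+z)\,G(\epsilon,y,z)\,\mathrm dy\,\mathrm dz\to G(0,0,0)$ as $\epsilon\downarrow 0$. Instead, you have sketched the proof of Theorem~\ref{prop:Key} (finiteness of $\tilde{\mathbb E}[|I_t|]$ and $\tilde{\mathbb E}[|J_t|]$ for the branching-coalescing particle system), which is an entirely different result.

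For the record, the paper's proof of the lemma in question is a two-line argument: after the substitution $z\mapsto z-y$ the double integral becomes $\Pi[G(\epsilon,b^1_\epsilon,b^2_\epsilon-b^1_\epsilon)]$ for two independent standard Brownian motions $b^1,b^2$, and since $(b^1_\epsilon,b^2_\epsilon-b^1_\epsilon)\to(0,0)$ almost surely while $G$ is bounded and continuous at the origin, the bounded convergence theorem gives $G(0,0,0)$. Your write-up, while a reasonable outline of the proof of Theorem~\ref{prop:Key}, is simply not a proof of this lemma.
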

\begin{proof}
	Suppose that $b^1$ and $b^2$ are two independent standard Brownian motions initiated at position $0$ under the probability $\Pi$. 
	Then it holds that
	\begin{align}
			&\int \mathrm dy \int p_\epsilon(y)p_\epsilon(y+z) G(\epsilon, y, z)\mathrm dz  = 	\int \mathrm dy \int p_\epsilon(y)p_\epsilon(z) G(\epsilon, y, z-y)\mathrm dz
			\\&=\Pi\SB{G(\epsilon, b^1_\epsilon, b^2_\epsilon-b^1_\epsilon)} \xrightarrow[\epsilon \downarrow 0]{} G(0,0,0)
	\end{align}
	by the bounded convergence theorem.
\end{proof}
\begin{note}
	\label{note:446}
	From \eqref{eq:SS} and Fubini's theorem
	\begin{align}
		&2\int_0^T \tilde \Psi^{0,m}_{t,T-t} \mathrm dt 
		= \int_0^T \mathbf E \SB{ \sum_{(i,j)\in \mathcal R} \int_0^{T-t} \mathcal K_{t,r}^{0, m} (i,j) \mathrm dL^{i,j}_r  } \mathrm dt
		\\&=  \mathbf E \SB{ \sum_{(i,j)\in \mathcal R}  \int_0^T \int_0^T \mathbf 1_{\{r+t\leq T\}}  \mathcal K_{t,r}^{0, m} (i,j)\mathrm dt \mathrm dL^{i,j}_r    } 
		\\&=  \mathbf E \SB{ \sum_{(i,j)\in \mathcal R}  \int_0^T \int_0^{T-s}  \mathcal K_{t,s}^{0, m} (i,j)\mathrm dt \mathrm dL^{i,j}_s    }.
	\end{align}
	Notice from their definition \eqref{eq:Ktr} and  \eqref{eq:Yts} that
	\begin{align}
		\int_0^{T-s}  \mathcal K_{t,s}^{0, m} (i,j)\mathrm dt = Y_{T-s,s}^{0, m}(X^i_s;i,j).
	\end{align}
\end{note}
\end{extra}

\begin{proof}[Proof of Lemma \ref{lem:Cancellation} (5)]
\emph{Step 1.}
	For every $t,s\geq 0$ and $m\in \mathbb N$, define $\Phi_{t,s}^{0,m}$ and $\tilde \Phi_{t,s}^{0,m}$ by taking $\epsilon = 0$ in \eqref{eq:Phi1} and \eqref{eq:Phi2} respectively.
	For every $t,s\geq 0$, define $\Phi_{t,s}^{0,\infty}$ and $\tilde \Phi_{t,s}^{0,\infty}$ by 
	\begin{align}
		\Phi_{t,s}^{0,\infty}  := \mathbf E \SB{ (-1)^{|\tilde J_s|} e^{K_s} \int_0^t \RB{\sum_{\alpha \in I_s}   \RB{b\circ u_r} \RB{X^\alpha_s} \prod_{\beta\in I_s\setminus\{\alpha\}}  u_r\RB{X^\beta_s} }\mathrm dr}
	\end{align}
	and 
	\begin{align}
	 \tilde \Phi_{t,s}^{0,\infty}  := \mathbf E \SB{ \int_0^s (-1)^{|\tilde J_r|} e^{K_r} \RB{\sum_{\alpha \in I_r}  (b \circ u_t) \RB{X^\alpha_r} \prod_{\beta \in I_r\setminus\{\alpha\}}   u_t\RB{X^\beta_r} }\mathrm dr}.
	\end{align}
	
\emph{Step 2.}
	We will show that, for a fixed arbitrary $T>0$ and $m\in \mathbb N$ with $m\geq 2$,
	\begin{align}
	\lim_{\epsilon \downarrow 0} \int_0^T \tilde \Phi_{t,T-t}^{\epsilon,m} \mathrm dt
		= \int_0^T \tilde \Phi_{t,T-t}^{0,m} \mathrm dt.
	\end{align}
	Firstly, we note that almost surely with respect to $\mathbf P$, for any $t,s\geq 0$ and $\epsilon \geq 0$, the random variable
	\begin{align}
		\ABS{\mathbf 1_{\{s+t \leq T\}} (-1)^{|\tilde J^{(m)}_s|} e^{K^{(m)}_s} \mathbf 1_{\{\alpha \in I_s^{(m)}\}}  \RB{  b^{(m)}\circ \RB{P_\epsilon u_t}}\RB{X^\alpha_s} \prod_{\beta\in I^{(m)}_s\setminus\{\alpha\}}  \RB{P_\epsilon u_t}\RB{X^\beta_s}}
	\end{align}
	is bounded by $\C\label{const:b}\mathbf 1_{\{\alpha \in I_s\}}  (1+e^{K_s})$ where $
		\Cr{const:b} := \sum_{k\in \bar {\mathbb N}} |b_k|+1 < \infty$.
	\begin{extra}
	(We will explain this in Note \ref{note:077}.)
	\end{extra}
	Secondly notice that, by Lemma \ref{lem:exponential}, 
	\begin{align}
		&\int_0^T \mathbf E \SB{ \sum_{\alpha \in \mathcal U}  \int_0^T \mathbf 1_{\{\alpha \in I_s\}}  \RB{1+e^{K_s}} \mathrm dt} \mathrm ds 
		= T \int_0^T \mathbf E \SB{ |I_s|  \RB{1+e^{K_s}} } \mathrm ds < \infty.
	\end{align}
	Thirdly notice that, almost surely with respect to $\mathbf P$, for any $t,s\geq 0$ and $\alpha \in \mathcal U$, the random variables 
\[
	\mathbf 1_{\{\alpha \in I_s^{(m)}\}}\RB{ P_\epsilon u_t}(X_s^\alpha) 
	\text{~and~} \mathbf 1_{\{\alpha \in I_s^{(m)}\}}\RB{ b^{(m)} \circ \RB{P_\epsilon u_t}}(X_s^\alpha)
\] 
	converge, as $\epsilon \downarrow 0$, to  
\[
	\mathbf 1_{\{\alpha \in I_s^{(m)}\}}u_t(X_s^\alpha)
	\text{~and~} \mathbf 1_{\{\alpha \in I_s^{(m)}\}}\RB{ b^{(m)} \circ u_t}(X_s^\alpha),
\] 
	respectively.
	Here, we used the fact that $z\mapsto b^{(m)}(z)$ is a bounded continuous map, and almost surely, $x\mapsto u_t(x)$ is a bounded continuous map. 
	From those, we can verify using Fubini's theorem and dominated convergence theorem that the desired result for this step holds.
\begin{extra}
	(We will verify this with more details in Note \ref{note:499}.)
\end{extra}

\emph{Step 3.}
We will show that, for fixed arbitrary $T>0$ and $m\in \mathbb N$ with $m\geq 2$,
\begin{align}
	\lim_{\epsilon \downarrow 0} \int_0^T \Phi_{T-s,s}^{\epsilon,m} \mathrm ds
	= \int_0^T \Phi_{T-s,s}^{0,m} \mathrm ds.
\end{align}
Recall that
\begin{equation}
	b(z)= \sum_{k=0}^\infty b_k z^k + b_\infty z^\infty, \quad z\in [0,1],
\end{equation}
where $z^\infty := \mathbf 1_{\{z=1\}}$ for $z\in [0,1]$. 
If $b_\infty = 0$ then the map $z \mapsto b(z)$ is continuous on $[0,1]$, and the desired result for this step follows from an argument similar to Step 2. 
However, if $b_\infty > 0$, then $b(z)$ is not continuous at $z = 1$.  
So we will use a different argument here which depends on a technical result: Proposition \ref{lem:Last} in Appendix \ref{sec:Las}.

Notice that for any $\epsilon \geq 0$, 
\begin{equation} \label{eq:KK3}
	\int_0^T \Phi_{T-s,s}^{\epsilon,m} \mathrm ds = \int_0^T \mathbf E \SB{  \int_0^{T-s} \mathfrak{B}^{\epsilon, m}_{r,s} \mathrm dr} \mathrm ds,
\end{equation}
where
\begin{equation}
	\mathfrak{B}^{\epsilon, m}_{r,s} := (-1)^{|\tilde J^{(m)}_s|} e^{K^{(m)}_s} \RB{\sum_{\alpha \in I_s^{(m)}}  \RB{ P_\epsilon \RB{b\circ u_r}}\RB{X^{\alpha}_s} \prod_{\beta \in I^{(m)}_s\setminus\{\alpha\}}  \RB{P_\epsilon u_r}\RB{X^{\beta}_s} }.
\end{equation}
Also note that, for any $\epsilon\geq 0$ and $r,s \in [0,T]$, 
\begin{equation} \label{eq:UPs}
	|\mathfrak{B}^{\epsilon, m}_{r,s}| \leq e^{K^{(m)}_s} \|b\|_\infty |I_s^{(m)}|
\end{equation}
and that by Lemma \ref{lem:exponential},
\begin{equation}
	\int_0^T \mathbf E \SB{  \int_0^{T-s} e^{K^{(m)}_s} \|b\|_\infty |I_s^{(m)}| \mathrm dr} \mathrm ds = (T-s) \mathbf E \SB{\int_0^T  e^{K^{(m)}_s} \|b\|_\infty |I_s^{(m)}|  \mathrm ds} < \infty. 
\end{equation}
Therefore, by Fubini's theorem, 
\begin{itemize}
	\item[\eq\label{eq:T1}] the orders of the integration and the expectations on the right hand side of \eqref{eq:KK3} are interchangeable.
\end{itemize}
On the other hand, we can verify from Lemma \ref{lem:Coupling} that
\begin{itemize}
	\item[\eq\label{eq:T2}] $\tilde\tau_n \uparrow \infty$ as $n\uparrow \infty$ where  $0=\tilde \tau_0< \tilde \tau_1 < \tilde \tau_2 < \cdots$ are the occurring times of the branching/coalescing events for the $m$-truncated branching-coalescing particle system $\{(X^{(m),\alpha}_t)_{t\geq 0}: \alpha \in \mathcal U\}$. 
\end{itemize}
Therefore, from \eqref{eq:KK3}, \eqref{eq:T1} and \eqref{eq:T2}, for any $\epsilon \geq 0$, 
\begin{equation}
	\int_0^T \Phi_{T-s,s}^{\epsilon,m} \mathrm ds = \int_0^T \mathbf E \SB{   \int_0^{T-r}\mathfrak{B}^{\epsilon, m}_{r,s} \mathrm ds } \mathrm dr
	= \int_0^T \mathbf E \SB{   \sum_{k=1}^\infty  \int_{0}^{T-r} \mathbf 1_{\{s\in [\tilde \tau_{k-1},\tilde \tau_k)\}}\mathfrak{B}^{\epsilon, m}_{r,s} \mathrm ds } \mathrm dr.
\end{equation}
Using Fubini's theorem again, we have for every $\epsilon \geq 0$, 
\begin{align}
	&\int_0^T \Phi_{T-s,s}^{\epsilon,m} \mathrm ds = \int_0^T \mathbb E_f \SB{   \sum_{k=1}^\infty \int_{0}^{T-r} \tilde{\mathbb E} \SB{ \mathbf 1_{\{s\in [\tilde \tau_{k-1},\tilde \tau_k)\}} \mathfrak{B}^{\epsilon, m}_{r,s}} \mathrm ds  } \mathrm dr
	\\& \label{eq:PL34} = \int_0^T \mathbb E_f \SB{   \sum_{k=1}^\infty  \int_{0}^{T-r}  \tilde{\mathbb E} \SB{ \tilde{\mathbb E} \SB{ \mathbf 1_{\{s\in [\tilde \tau_{k-1},\tilde \tau_k)\}} \mathfrak{B}^{\epsilon, m}_{r,s} \middle| \tilde{\mathscr F}_{\tilde \tau_{k-1}}} }  \mathrm ds } \mathrm dr.
\end{align}
Recall here that $\mathbb E_f$ is the expectation corresponding to the random field $u$, and $\tilde {\mathbb E}$ is the expectation corresponding to the $m$-truncated branching-coalescing Brownian particles system. 

Fixing arbitrary $r,s\in [0,T]$ and $k \in \mathbb N$, we can write for every $\epsilon \geq 0$,
\begin{align} \label{eq:LS9}
	&\tilde{\mathbb E} \SB{ \mathbf 1_{\{s\in [\tilde \tau_{k-1},\tilde \tau_k)\}} \mathfrak{B}^{\epsilon, m}_{r,s} \middle| \tilde{\mathscr F}_{\tilde \tau_{k-1}}}
	= \mathbf 1_{\{s\geq\tilde \tau_{k-1}\}} 	\tilde{\mathbb E} \SB{ \mathbf 1_{\{s<\tilde \tau_k\}} \mathfrak{B}^{\epsilon, m}_{r,s} \middle| \tilde{\mathscr F}_{\tilde \tau_{k-1}}}
	\\& = 	\mathbf 1_{\{s\geq \tilde \tau_{k-1}\}} (-1)^{\ABS{\tilde J^{(m)}_{\tilde \tau_{k-1}}}} e^{K^{(m)}_{\tilde \tau_{k-1}}} \exp\CB{(\mu+b_1+\frac{1}{m})(s-\tilde \tau_{k-1})\ABS{I^{(m)}_{\tilde \tau_{k-1}}}}	\times {}
	\\& \qquad \tilde{\mathbb E} \SB{ \mathbf 1_{\{s<\tilde \tau_k\}} \RB{\sum_{\alpha \in I_s^{(m)}}  \RB{ P_\epsilon \RB{b\circ u_r}}\RB{X^{\alpha}_s} \prod_{\beta \in I^{(m)}_s\setminus\{\alpha\}}  \RB{P_\epsilon u_r}\RB{X^{\beta}_s} } \middle| \tilde{\mathscr F}_{\tilde \tau_{k-1}}}.
\end{align}
Notice that, from the strong Markov property of Brownian motions, after the time $\tilde \tau_{k-1}$, the particles in the $m$-truncated branching-coalescing Brownian particle system will evolve as independent Brownian motions until the next occurring time of its branching/coalescing event.
Therefore, we can further write for every $\epsilon\geq 0$ that 
\begin{align} 
	&\tilde{\mathbb E} \SB{ \mathbf 1_{\{s\in [\tilde \tau_{k-1},\tilde \tau_k)\}} \mathfrak{B}^{\epsilon, m}_{r,s} \middle| \tilde{\mathscr F}_{\tilde \tau_{k-1}}}
	\\& \label{eq:Rep} = 	\mathbf 1_{\{s\geq \tilde \tau_{k-1}\}} (-1)^{\ABS{\tilde J^{(m)}_{\tilde \tau_{k-1}}}} e^{K^{(m)}_{\tilde \tau_{k-1}}} e^{(\mu+b_1+\frac{1}{m})(s-\tilde \tau_{k-1})\ABS{I^{(m)}_{\tau_{k-1}}}} (\mathfrak P^\epsilon_{s-\tilde \tau_{k-1}} F)(\tilde X).
\end{align}
Here,
\begin{equation}
	\tilde X := (X^{\alpha_1}_{\tilde \tau_{k-1}},  \cdots, X^{\alpha_N}_{\tilde \tau_{k-1}}) \in \mathbb R^N
\end{equation} 
is a (random) finite list of real numbers with $N \in \mathbb N$ and $(\alpha_k)_{k=1}^N$ given so that 
\begin{equation}
	\alpha_1 \prec \cdots \prec \alpha_N\quad \text{and} \quad \{\alpha_1, \cdots, \alpha_N\}=I^{(m)}_{\tilde \tau_{k-1}};
\end{equation}
$F$ is a (random) bounded measurable function on $\mathbb R^N$ given so that
\begin{equation}
	F(x_1, \cdots, x_N) = \sum_{i=1}^N (b\circ u_r)(x_i)\prod_{j\in \{1,\cdots, N\}\setminus\{i\}} u_r(x_j), \quad (x_1,\dots, x_n)\in \mathbb R^N;
\end{equation}
and $(\mathfrak P^\epsilon_t)_{t\geq 0,\epsilon \geq 0}$ are the operators given as in \eqref{eq:Last} with $n$ replaced by the random $N$. 

Now, from \eqref{eq:Rep}, Proposition \ref{lem:Last} and the fact that $\tilde{\mathbb P}(\tilde \tau_{k-1} = s) = 0$, we have almost surely
\begin{equation}
	\tilde{\mathbb E} \SB{ \mathbf 1_{\{s\in [\tilde\tau_{k-1},\tilde\tau_k)\}} \mathfrak{B}^{\epsilon, m}_{r,s} \middle| \tilde{\mathscr F}_{\tilde\tau_{k-1}}} \xrightarrow[\epsilon \to 0]{} \tilde{\mathbb E} \SB{ \mathbf 1_{\{s\in [\tilde\tau_{k-1},\tilde\tau_k)\}} \mathfrak{B}^{0, m}_{r,s} \middle| \tilde{\mathscr F}_{\tilde\tau_{k-1}}}.
\end{equation}
Also observe from \eqref{eq:UPs} that
\begin{align}
	\ABS{\tilde{\mathbb E} \SB{ \mathbf 1_{s\in [\tilde\tau_{k-1},\tilde\tau_k)} \mathfrak{B}^{\epsilon, m}_{r,s} \middle| \tilde{\mathscr F}_{\tilde\tau_{k-1}}}}
	\leq \tilde{\mathbb E} \SB{ \mathbf 1_{s\in [\tilde\tau_{k-1},\tilde\tau_k)} e^{K^{(m)}_s} \|b\|_\infty |I_s^{(m)}| \middle| \tilde{\mathscr F}_{\tilde\tau_{k-1}}}
\end{align}
and that 
\begin{align}
	&\int_0^T \mathbb E_f \SB{   \sum_{k=1}^\infty  \int_{0}^{T-r}  \tilde{\mathbb E} \SB{  \tilde{\mathbb E} \SB{ \mathbf 1_{s\in [\tilde \tau_{k-1},\tilde \tau_k)} e^{K^{(m)}_s} \|b\|_\infty |I_s^{(m)}| \middle| \tilde{\mathscr F}_{\tilde \tau_{k-1}}} }  \mathrm ds } \mathrm dr
	\\&= \int_0^T \mathbf E \SB{   \sum_{k=1}^\infty  \int_{0}^{T-r}  \mathbf 1_{s\in [\tilde\tau_{k-1},\tilde\tau_k)} e^{K^{(m)}_s} \|b\|_\infty |I_s^{(m)}| \mathrm ds } \mathrm dr
	\\& = \int_0^T \mathbf E \SB{   \int_{0}^{T-r}   e^{K^{(m)}_s} \|b\|_\infty |I_s^{(m)}|    \mathrm ds } \mathrm dr < \infty. 
\end{align}
So by applying the dominated convergence theorem on the right hand side of \eqref{eq:PL34}, we get the desired result for this step. 
	
\emph{Step 4.}
	Notice from Lemma \ref{lem:TC} that almost surely with respect to $\mathbf P$, for any $t,s\geq 0$ and $\alpha \in \mathcal U$, the random variables 
\begin{equation}
	(-1)^{|\tilde J^{(m)}_s|}, \quad K^{(m)}_s, \quad \mathbf 1_{\{\alpha \in I_s^{(m)}\}}, \quad \prod_{\beta\in I^{(m)}_s\setminus\{\alpha\}}  u_t\RB{X^\beta_s}, \quad\text{and}\quad \mathbf 1_{\{\alpha \in I_s^{(m)}\}}b^{(m)}\circ u_t(X_s^\alpha),
\end{equation}
converge, as $m\uparrow \infty$, to 
\begin{equation}
	(-1)^{|\tilde J_s|}, \quad K_s, \quad \mathbf 1_{\{\alpha \in I_s\}}, \quad \prod_{\beta \in I_s\setminus\{\alpha\}}  u_t\RB{X^\beta_s }, \quad\text{and}\quad \mathbf 1_{\{\alpha \in I_s\}}b\circ u_t(X_s^\alpha),
\end{equation}
respectively. 
\begin{extra} (We will explain this in Note \ref{note:489}.) \end{extra}
	From this, we can verify, using Fubini's theorem and dominated convergence theorem, that
\begin{equation} \label{eq:E1}
	\lim_{m \uparrow \infty}  \int_0^T \Phi_{T-s,s}^{0,m} \mathrm ds  =  \int_0^T \Phi_{T-s,s}^{0,\infty} \mathrm ds,
\end{equation}
	and that
\begin{equation} \label{eq:E2}
	\lim_{m \uparrow \infty} \int_0^T \tilde \Phi_{t,T-t}^{0,m} \mathrm dt 
	= \int_0^T \tilde \Phi_{t,T-t}^{0,\infty} \mathrm dt.
\end{equation}
		\begin{extra}
		(We will verify those in Note \ref{note:500}.)
	\end{extra}
	Using Fubini's theorem again, we have
	\begin{equation} \label{eq:E3}
		 \int_0^T \Phi_{T-s,s}^{0,\infty} \mathrm ds = \int_0^T \tilde \Phi_{t,T-t}^{0,\infty} \mathrm dt.
	\end{equation}
				\begin{extra}
		(This will also be verified in Note \ref{note:500}.)
	\end{extra}
	Finally, from the results in Steps 1 and 2, \eqref{eq:E1}, \eqref{eq:E2} and \eqref{eq:E3}, we have
\begin{equation}
	\lim_{m\uparrow \infty}\lim_{\epsilon \downarrow 0}\int_0^T \Phi_{T-s,s}^{\epsilon,m} \mathrm ds = \lim_{m\uparrow \infty}\lim_{\epsilon \downarrow 0}\int_0^T \tilde \Phi_{t,T-t}^{\epsilon,m} \mathrm dt,
\end{equation}
as desired.
\end{proof}

	\begin{extra}
	\begin{note}\label{note:077}
		The only thing we need to verify is that 
	\[
		\ABS{b(z)}  \leq \sum_{k\in \bar {\mathbb N}} \ABS{b_k} z^k \leq \sum_{k\in \bar {\mathbb N}} \ABS{b_k} +1 = \Cr{const:b}
	\]
	and that
	\begin{align} \label{eq:refl}
		&\ABS{b^{(m)}(z)} \leq \sum_{k\in \bar{\mathbb N}} \ABS{b_k}  + \frac{1}{m} \leq  \Cr{const:b},
	\end{align}
	when $z\in [0,1]$.
	\end{note}
		\begin{note}\label{note:499}
		By Fubini's theorem and the dominated convergence theorem, we have
		\begin{align}
			&\int_0^T \tilde \Phi_{t,T-t}^{\epsilon,m} \mathrm dt
			\\& = \int_0^T \mathbf E \SB{ \int_0^{T-t} (-1)^{|\tilde J^{(m)}_r|} e^{K^{(m)}_r} \RB{\sum_{i\in I_r^{(m)}}  \RB{  b^{(m)}\circ \RB{P_\epsilon u_t}}\RB{X^i_r} \prod_{j\in I^m_r\setminus\{i\}}  \RB{P_\epsilon u_t}\RB{X^j_r} }\mathrm dr} \mathrm dt
			\\& = \int_0^T \mathbf E \SB{ \sum_{i\in \mathcal U}  \int_0^T \mathbf 1_{\{t+s\leq T\}} (-1)^{|\tilde J^{(m)}_s|} e^{K^{(m)}_s}  \mathbf 1_{\{\alpha \in I_s^{(m)}\}}  \RB{  b^{(m)}\circ \RB{P_\epsilon u_t}}\RB{X^i_s} \prod_{j\in I^{(m)}_s\setminus\{i\}}  \RB{P_\epsilon u_t}\RB{X^j_s} \mathrm ds} \mathrm dt
			\\& \xrightarrow[\epsilon \downarrow 0]{}\int_0^T \mathbf E \SB{ \sum_{i\in \mathcal U}  \int_0^T \mathbf 1_{\{t+s\leq T\}} (-1)^{|\tilde J^{(m)}_s|} e^{K^{(m)}_s}  \mathbf 1_{\{\alpha \in I_s^{(m)}\}}  \RB{  b^{(m)}\circ u_t}\RB{X^i_s} \prod_{j\in I^{(m)}_s\setminus\{i\}}  u_t\RB{X^j_s} \mathrm ds} \mathrm dt
			\\&= \int_0^T \tilde \Phi_{t,T-t}^{0,m} \mathrm dt.
		\end{align}
		Here we used the fact that the polynomial $z\mapsto b^{(m)}(z)$ is a continuous function on $[0,1]$.
	\end{note}
	\begin{note}
		\label{note:489}
		Let us note that
		\[
		b^{(m)}(z) = \sum_{k \in \bar{\mathbb N}} b_kz^{k\wedge m}- \frac{1}{m}z, \quad z\in [0,1].
		\]
		Fix arbitrary $z\in [0,1]$.
		Observe that for any $k \in \bar{\mathbb N}$, $b_kz^{k\wedge m} \to b_kz^k$ as $m\to \infty$. Also observe that $|b_kz^k| \leq |b_k|$ and $\sum_k |b_k|< \infty$.
		So by the dominated convergence theorem, we have $b^{(m)}$ converges to $b$ as $m\to \infty$. 
	\end{note}
	\begin{note} \label{note:500}
		By using Fubini's theorem and the dominated convergence theorem again, we have 
					\begin{align}
			&\int_0^T \Phi_{T-s,s}^{0,m} \mathrm ds 
			\\& = \int_0^T \mathbf E \SB{ \sum_{i\in \mathcal U}  \int_0^T \mathbf 1_{\{s+t\leq T\}} (-1)^{|\tilde J^{(m)}_s|} e^{K^{(m)}_s}  \mathbf 1_{\{\alpha \in I_s^{(m)}\}}  \RB{b\circ u_t}\RB{X^i_s} \prod_{j\in I^{(m)}_s\setminus\{i\}}  u_t\RB{X^j_s }\mathrm dt} \mathrm ds
			\\& \xrightarrow[m \uparrow \infty]{}\int_0^T \mathbf E \SB{ \sum_{i\in \mathcal U}  \int_0^T \mathbf 1_{\{s+t\leq T\}} (-1)^{|\tilde J_s|} e^{K_s}  \mathbf 1_{\{\alpha \in I_s\}}  \RB{b\circ u_t}\RB{X^i_s} \prod_{j\in I_s\setminus\{i\}}  u_t\RB{X^j_s }\mathrm dt} \mathrm ds
			\\&= \int_0^T \Phi_{T-s,s}^{0,\infty} \mathrm ds.
		\end{align}
By using Fubini's theorem and the dominated convergence theorem again, we have 
\begin{align}
	&\int_0^T \tilde \Phi_{t,T-t}^{0,m} \mathrm dt
	\\& = \int_0^T \mathbf E \SB{ \sum_{i\in \mathcal U}  \int_0^T \mathbf 1_{\{t+s\leq T\}} (-1)^{|\tilde J^{(m)}_s|} e^{K^{(m)}_s}  \mathbf 1_{\{\alpha \in I_s^{(m)}\}}  \RB{  b^{(m)}\circ u_t}\RB{X^i_s} \prod_{j\in I^{(m)}_s\setminus\{i\}}  u_t\RB{X^j_s} \mathrm ds} \mathrm dt
	\\& \xrightarrow[m \uparrow \infty]{}\int_0^T \mathbf E \SB{ \sum_{i\in \mathcal U}  \int_0^T \mathbf 1_{\{t+s\leq T\}} (-1)^{|\tilde J_s|} e^{K_s}  \mathbf 1_{\{\alpha \in I_s\}}  \RB{  b\circ u_t}\RB{X^i_s} \prod_{j\in I_s\setminus\{i\}}  u_t\RB{X^j_s} \mathrm ds} \mathrm dt
	\\&= \int_0^T \tilde \Phi_{t,T-t}^{0,\infty} \mathrm dt.
\end{align}
Finally, by use Fubini's theorem, it is easy to see that
	\begin{equation}
	\int_0^T \Phi_{T-s,s}^{0,\infty} \mathrm ds = \int_0^T \tilde \Phi_{t,T-t}^{0,\infty} \mathrm dt.
\end{equation}
		\end{note}
	\end{extra}
	
	\section{Proof of the weak existence part of Theorem \ref{thm:Main}} \label{sec:Last}

	As have been noted in Subsection \ref{sec:Main}, the weak existence of SPDE \eqref{eq:SPDE} is standard for $b_\infty = 0$. 
		So, we have to verify existence only for the case of $b_\infty \neq 0$. 
		For simplicity, let us also assume that $b_k = 0$ for every $k\in \mathbb N\setminus\{1\}$, as the argument for the more general cases  is similar.
		Then, with these parameters, the SPDE \eqref{eq:SPDE} is given by
		\begin{equation} \label{eq:Ex}
		\begin{cases}
		\partial_t u_t= \frac{1}{2}\partial_x^2 u_t + b_1 u_t + b_\infty \1_{\{1\}}(u_t) + \sqrt{u_t(1-u_t)} \dot W,
		\\ u_0 = f.
		\end{cases}
		\end{equation}
		Due to the condition \eqref{eq:CLZsCondition}, we have  $b_1 \leq -|b_\infty|$.

The idea of the existence proof is to construct an approximating sequence of $\mathcal C(\mathbb R, [0,1])$-valued processes $\{(u^{(m)}_t(x))_{t\geq 0, x\in \mathbb R}: m\geq 1\}$, to show that this sequence is tight, and it has a   limit point that solves~\eqref{eq:Ex}. 

	This sequence will be constructed to solve the following SPDEs, 
	\begin{equation}
		\label{eq:8_1}
		\begin{cases}
			\partial_t u^{(m)}_t= \frac{1}{2}\partial_x^2 u^{(m)}_t + b_1 u^{(k)}_t + b_\infty \RB{u^{(m)}_t}^m + \sqrt{u^{(m)}_t(1-u^{(m)}_t)} \dot W^{(m)},
			\\ u^{(m)}_0 = f,
		\end{cases}
	\end{equation}
	where $(\dot W^{(m)})_{n\in \mathbb N}$ is a sequence of space-time  white noises.

The main difficulty in the proof is to show that a sub-sequential weak limit point of $\{u^{(m)}: m\geq 1\}$ indeed solves~\eqref{eq:Ex}. 
It is non-trivial since it is not clear why convergence of any subsequence  
$u^{(m_k)}$  to $u$, would imply convergence of $\RB{u^{(m_k)}_t(x)}^{m_k}$ to $\1_{\{1\}}(u_t(x))$. 
The challenge comes from the discontinuity of the function $\1_{\{1\}}(\cdot)$. 
 
 	This difficulty will be resolved via a duality argument, which is based on the convergence of the 
 	dual particle system of $u^{(m)}$ to  the dual particle system of $u$. 
 We will give  the details below, but  first
let us introduce the settings for the rest of this section and state some useful lemmas. 
	Let $x\in \mathbb R$ be arbitrary. 
	Let $\{(X_t^\alpha)_{t\geq 0}: \alpha \in \mathcal U\}$ be a coalescing-branching Brownian particle system with branching rate $\mu := |b_\infty|$, offspring distribution $p_\infty = 1$, and initial configuration $(x_i)_{i=1}^\infty$ such that $x_i = x$ for every $i \in \mathbb N$.
	This particle system is defined on some probability space $(\tilde \Omega, \tilde {\mathcal F}, \tilde{\mathbb P})$.  If we want to emphasize that all the particles start at $x$, we write 
$ \tilde{\mathbb P}^x$ for the probability measure, and  $\tilde{\mathbb E}^x$ for the corresponding expectation.
	For each $l, m\in \mathbb N \cup \{\infty\}$, recall  that the $(l,m)$-truncated version of this particle system $\{(X^{(l,m),\alpha}_t)_{t\geq 0}: \alpha \in \mathcal U\}$, given as in Subsection \ref{sec:Truncated}, is a coalescing-branching Brownian particle system with branching rate $\mu$, offspring distribution $p_m = 1$, and initial configuration $(x_i)_{i=1}^l$. 
	Also recall the sets of labels $I_t^{(l,m)}$ and $J_t^{(l,m)}$ for $t\geq 0$ are given as in \eqref{eq:Imt} and \eqref{eq:Jmt} respectively.
	The following lemma is a variant of Lemma \ref{lem:TC}. It allows us to approximate $|I^{(l,\infty)}_t|$ and $|J^{(l,\infty)}_t|$ from below. 
	
	\begin{lemma} \label{lem:DT}
		Almost surely, for each $l,m\in \mathbb N$ with $l\leq m$ and $t\geq 0$, we have
		\begin{equation}
			I_t^{(l,m)} = \{\alpha \in \mathcal U: \|\alpha\|_{\infty}\leq m, \alpha \in I^{(l,\infty)}_t\}
		\end{equation}
		and
		\begin{equation}
			J_t^{(l,m)} = \{\alpha \in \mathcal U: \|\alpha\|_{\infty} \leq m, \alpha \in J^{(l,\infty)}_t\}.  
		\end{equation}
	\end{lemma}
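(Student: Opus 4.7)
The plan is to replicate the argument of Lemma \ref{lem:TC} with the non-truncated system replaced by the $(l,\infty)$-truncation and the $m$-truncated system replaced by the $(l,m)$-truncation. The core claim is that for every $\alpha \in \mathcal U$ with $\|\alpha\|_\infty \leq m$, one has $\zeta^{(l,m)}_\alpha = \zeta^{(l,\infty)}_\alpha$ almost surely; I will prove this by induction over $\prec$. Conversely, I will show that no $\alpha$ with $\|\alpha\|_\infty > m$ ever appears in the $(l,m)$-system. The two displayed identities for $I_t^{(l,m)}$ and $J_t^{(l,m)}$ are immediate consequences, since membership in $I_t^{(\cdot)}$ and $J_t^{(\cdot)}$ is characterized by $\zeta^{(\cdot)}_\alpha$ via \eqref{eq:Imt}--\eqref{eq:Jmt}.

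The inductive step for the core claim rests on three observations. First, since $l \leq m$, every initial label $\alpha \leq l$ satisfies $\|\alpha\|_\infty = \alpha \leq m$, so the initial configurations of the two truncations agree. Second, when $\|\alpha\|_\infty \leq m$ and $|\alpha| > 1$, the offspring-admissibility condition $\alpha_{|\alpha|} \leq Z^{(m)}_{\overleftarrow{\alpha}} = Z_{\overleftarrow{\alpha}} \wedge m$ used for the $(l,m)$-system coincides with $\alpha_{|\alpha|} \leq Z_{\overleftarrow{\alpha}}$ used for the $(l,\infty)$-system, because $\alpha_{|\alpha|} \leq \|\alpha\|_\infty \leq m$. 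Third, and most crucially, the defining property of $\prec$ forces every $\beta \prec \alpha$ to satisfy $\|\beta\|_\infty \leq \|\alpha\|_\infty \leq m$, so the inductive hypothesis applies to every $\beta$ appearing in the infimum \eqref{eq:TT} defining $\zeta^{(l,m)}_\alpha$. Combining these, the infima for the two truncations become identical.

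For the complementary statement, a short induction on $|\alpha|$ suffices: if $\|\alpha\|_\infty > m$ and $|\alpha| = 1$, then $\alpha > l$ so $\alpha$ is not an initial particle in the $(l,m)$-system; if $|\alpha| > 1$, then either $\alpha_{|\alpha|} > m$, in which case the birth condition in \eqref{eq:TT}(ii) automatically fails as $Z^{(m)}_{\overleftarrow{\alpha}} \leq m$, or else $\|\overleftarrow{\alpha}\|_\infty > m$, in which case by the inductive hypothesis $\overleftarrow{\alpha}$ never branches in the $(l,m)$-system, so $\alpha$ cannot be born either. In both sub-cases $\zeta^{(l,m)}_\alpha = \xi_\alpha$, and hence $\alpha \notin I^{(l,m)}_t \cup J^{(l,m)}_t$ for every $t$.

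The main obstacle, shared with Lemma \ref{lem:TC}, is that $\prec$ is not well-founded in the naive sense: any $\alpha$ with $\|\alpha\|_\infty \geq 2$ has infinitely many $\prec$-predecessors, so a direct structural induction is not available. This is overcome by stratifying the induction first over $\|\alpha\|_\infty$, then over $|\alpha|$, and finally over the lexicographic component (iii) of $\prec$; within each stratum only finitely many new particles appear, which makes the construction and the induction meaningful. This is exactly the bookkeeping carried out in the proof of Lemma \ref{lem:TC} in Appendix \ref{sec:TC}, and no new technical ingredient beyond that is needed here.
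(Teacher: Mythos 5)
Your proposal is correct and matches the paper's (omitted) argument: the paper proves exactly your two claims in one stroke, showing by induction along the order $\prec$ that $\zeta^{(l,m)}_\alpha = \zeta^{(l,\infty)}_\alpha$ when $\|\alpha\|_\infty \leq m$ and $\zeta^{(l,m)}_\alpha = \xi_\alpha$ when $\|\alpha\|_\infty > m$, using precisely your three observations (agreement of initial configurations since $l\leq m$, equivalence of the conditions $\alpha_{|\alpha|}\leq Z_{\overleftarrow{\alpha}}\wedge m$ and $\alpha_{|\alpha|}\leq Z_{\overleftarrow{\alpha}}$, and the fact that $\beta\prec\alpha$ forces $\|\beta\|_\infty\leq\|\alpha\|_\infty$), plus the null-set observation $\zeta_{\gamma,\gamma}>\xi_\gamma$ a.s.\ that you invoke implicitly when saying a never-branching parent cannot produce $\alpha$. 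Only a cosmetic remark: $\prec$ is in fact a well-order (stratified by $\|\cdot\|_\infty$, then $|\cdot|$, then lexicographically), so the induction you describe is just ordinary induction along $\prec$ as in the proof of Lemma \ref{lem:TC}; infinitely many predecessors is no obstacle.
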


		\begin{extra}
			(We give the proof of Lemma \ref{lem:DT} in Note \ref{note:DTT} below.)
\begin{note} \label{note:DTT}
	\begin{proof}[Proof of Lemma \ref{lem:DT}]
		We claim that, 
		\begin{itemize}
			\item[\eq\label{eq:CR}] for each $\alpha \in \mathcal U$ and $l,m\in \mathbb N$ with $l\leq m$, $\zeta^{(l,m)}_\alpha = \mathbf 1_{\{m< \|\alpha\|_\infty \}}\xi_\alpha +  \mathbf 1_{\{m\geq \|\alpha\|_\infty \}}\zeta^{(l,\infty)}_\alpha$ almost surely.
		\end{itemize}
		Fixing $l,m \in \mathbb N$ with $l\leq m$, we will prove this claim by induction over $\alpha \in \mathcal U$.
		If $\alpha = 1$, then it is easy to see that $\zeta^{(l,m)}_\alpha= \zeta_{\alpha,\alpha} = \zeta^{(l,\infty)}_\alpha$. 
		Let us fix an arbitrary $\beta \in \mathcal U$, and for the sake of induction, assume that the desired claim \eqref{eq:C} holds for every $\alpha \prec \beta$. 
		
		Firstly, we will show that almost surely $\zeta^{(l,m)}_\beta = \xi_\beta$ provided $m < \|\beta \|_\infty$. 
		To do this, we discuss in two different cases.
		\begin{itemize}
			\item[(i)] $m < \|\beta \|_\infty$ and $|\beta|=1$. In this case, we have $l\leq m < \beta$. So by (iii) of \eqref{eq:TT}, we have $\zeta^{(l,m)}_\beta = \xi_\beta$ as desired. 
			\item[(ii)] $m < \|\beta \|_\infty$ and $|\beta |>1$. 
			In this case, we can show that the event 
			\[ \CB{ \zeta^{(l,m)}_{\overleftarrow{\beta}} = \zeta_{\overleftarrow{\beta},\overleftarrow{\beta}}} \cap \CB{\beta_{|\beta|} \leq Z_{\overleftarrow{\beta}} \wedge m} \]
			has $0$ probability. 
			In fact, if the above event happens, we have $\beta_{|\beta|} \leq m$. 
			This, and the condition $m < \|\beta\|_\infty$, implies that $m < \|\overleftarrow{\beta}\|_\infty$.
			From what we assumed for the sake of induction, we must have $\zeta^{(l,m)}_{\overleftarrow{\beta}} = \xi_{\overleftarrow{\beta}}$. 
			This further implies that $ \xi_{\overleftarrow{\beta}} = \zeta_{\overleftarrow{\beta},\overleftarrow{\beta}}$ which has $0$ probability.
			Now, by (iii) of \eqref{eq:TT}, we have $\zeta^{(l,m)}_\beta = \xi_\beta$ as desired. 
		\end{itemize}
		
		Secondly, we will show that almost surely $\zeta^{(l,m)}_\beta = \zeta^{(l,\infty)}_\beta$ provided $m \geq \|\beta\|_\infty$. 
		To do this, we discuss in four different cases.
		\begin{itemize}
			\item[(i)] 
			$m \geq \|\beta\|_\infty$, $|\beta|=1$ and $\beta > l$. 
			In this case, (iii) of \eqref{eq:TT} we have $\zeta^{(l,m)}_\beta = \xi_\beta = \zeta^{(l,\infty)}_\beta$ as desired. 
			\item[(ii)] 
			$m \geq \|\beta\|_\infty$, $|\beta|=1$ and $\beta \leq l$. 
			In this case, by (i) of \eqref{eq:TT}  we have
			\begin{equation}
				\zeta^{(l,m)}_\beta 
				= \inf \RB{ \CB{\zeta_{\beta,\beta}} \cup \CB{\zeta_{\alpha,\beta}: \alpha \in \mathcal U, \alpha \prec \beta, \zeta_{\alpha,\beta} \leq \zeta^{(l,m)}_\alpha}}
			\end{equation}
			and 
			\begin{equation}
				\zeta^{(l,\infty)}_\beta 
				= \inf \RB{ \CB{\zeta_{\beta,\beta}} \cup \CB{\zeta_{\alpha,\beta}: \alpha \in \mathcal U, \alpha \prec \beta, \zeta_{\alpha,\beta} \leq \zeta^{(l,\infty)}_\alpha}}.
			\end{equation}
			Note that, $\alpha \prec \beta$ actually implies that $\|\alpha\|_\infty \leq m$. 
			So by what we assumed for the sake of induction, we have $\zeta^{(l,m)}_\alpha = \zeta^{(l,\infty)}_\alpha$ for every $\alpha \prec \beta$. 
			Now the above two equations implies that $\zeta^{(l,m)}_\beta = \zeta^{(l,\infty)}_\beta$ as desired. 
			\item[(iii)] 	
			$m \geq \|\beta\|_\infty$, $|\beta| > 1$, $\zeta^{(l,m)}_{\overleftarrow{\beta}} = \zeta_{\overleftarrow{\beta},\overleftarrow{\beta}}$ and $\beta_{|\beta|} \leq Z_{\overleftarrow{\beta}} ^{(m)} = Z_{\overleftarrow{\beta}} \wedge m$. 
			In this case, by (ii) of \eqref{eq:TT}, we have 
			\begin{equation}
				\zeta^{(l,m)}_\beta 
				= \inf \RB{ \CB{\zeta_{\beta,\beta}} \cup \CB{\zeta_{\alpha,\beta}: \alpha \in \mathcal U, \alpha \prec \beta, \zeta_{\alpha,\beta} \leq \zeta^{(l,m)}_\alpha}}.
			\end{equation}
			Also note that $\|\overleftarrow{\beta}\|_\infty \leq \|\beta\|_\infty \leq m$. 
			So from what we assumed for the sake of induction, we have $\zeta^{(l,\infty)}_{\overleftarrow{\beta}} =\zeta^{(l,m)}_{\overleftarrow{\beta}} = \zeta_{\overleftarrow{\beta}}$. 
			By (ii) of \eqref{eq:TT} again, we have 
			\begin{equation}
				\zeta^{(l,\infty)}_\beta 
				= \inf \RB{ \CB{\zeta_{\beta,\beta}} \cup \CB{\zeta_{\alpha,\beta}: \alpha \in \mathcal U, \alpha \prec \beta, \zeta_{\alpha,\beta} \leq \zeta^{(l,\infty)}_\alpha}}.
			\end{equation}
			Similar to the previous case, we have $\zeta^{(l,m)}_\beta  = 	\zeta^{(l,\infty)}_\beta $ as desired.
			\item[(iv)] 
			$m \geq \|\beta\|_\infty$, $|\beta| > 1$, but the condition in (iii) does not hold. 
			In this case, by (ii) of \eqref{eq:TT}, we have $\zeta^{( l,m)}_\beta  = \xi_\beta$. 
			We can verify by contradiction that the event 
			\[ 
			\CB{\zeta^{(l,\infty)}_{\overleftarrow{\beta}} = \zeta_{\overleftarrow{\beta},\overleftarrow{\beta}}} \cap \CB{\beta_{|\beta|} \leq Z_{\overleftarrow{\beta}}} 
			\]
			happens with $0$ probability.  
			In fact, if it does happen, from $\|\overleftarrow{\beta}\|_\infty \leq \|\beta\|_\infty \leq m$ and what we have assumed for the sake of induction, $\zeta^{(l,m)}_{\overleftarrow{\beta}} = \zeta_{\overleftarrow{\beta},\overleftarrow{\beta}}$. 
			This implies that $\xi_\beta = \zeta_{\overleftarrow{\beta},\overleftarrow{\beta}}$ which happens with $0$ probability.
			Now, by (iii) of  \eqref{eq:TT}, we have $\zeta^{(l,\infty)}_\beta  = 	\xi_\beta = \zeta^{(l,m)} $ as desired. 
		\end{itemize} 
		
		To sum up, we have proved claim \eqref{eq:CR}. 
		The desired result for this lemma follows immediately.
	\end{proof}
	\end{note}
	\end{extra}

		We omit the proof of the above lemma, because it is very similar to the proof of Lemma \ref{lem:TC} in Appendix \ref{sec:TC}. 
Recall that the sets of indices $(I_t)_{t\geq 0},(J_t)_{t\geq 0}$ for the non-truncated system were defined in~\eqref{eq:It}, \eqref{eq:Jt}.
	Let us consider the event $\tilde \Omega' := \cup_{n=1}^\infty\{|J_{1/n}| = 0\}$. 
	From Theorem \ref{prop:Key}, we have for every $\epsilon > 0$, 
	\begin{align}
		\tilde {\mathbb P}(|J_\epsilon | = 0) = 1 - \tilde {\mathbb P}(|J_\epsilon | \geq 1) \geq 1 - \tilde {\mathbb E}[|J_\epsilon |] = 1 -  \mu  \int_0^\epsilon \tilde {\mathbb E}\SB{|I_s|} \mathrm ds 
	\end{align}
	which implies that 
	\begin{equation} \label{eq:FF}
	\tilde {\mathbb P}(\tilde \Omega') = \lim_{n\to \infty} \tilde {\mathbb P}(|J_{1/n} | = 0) = 1.
	\end{equation}
	Define the random integer
	$L_\epsilon = \sup\{\alpha_1: \alpha \in I_\epsilon\} \vee 0$ for each $\epsilon > 0$.
	Since for $\epsilon > 0$, almost surely $I_\epsilon$ is a finite set (Theorem \ref{prop:Key}), we have almost surely $L_\epsilon < \infty$. 
	The following lemma allows us to approximate $|I_t|$ and $|J_t|$ by $|I_t^{(l,\infty)}|$ and $|J_t^{(l,\infty)}|$ when $l\uparrow \infty$.
	\begin{lemma} \label{lem:KS}
For any $\epsilon > 0$, almost surely on the event $\{|J_\epsilon| = 0\}$, for every finite integer $l\geq L_\epsilon$ and $t\geq 0$, we have
\begin{equation}
	I_{t}^{(l,\infty)} = \{\alpha \in \mathcal U: \alpha_1 \leq l, \alpha \in I_t\}
\end{equation}
and
\begin{equation}
	J_{t}^{(l,\infty)} = \{\alpha \in \mathcal U: \alpha_1 \leq l, \alpha \in J_t\}.
\end{equation}
	\end{lemma}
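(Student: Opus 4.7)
The plan is to prove, by induction on the order $\prec$, that on the event $\{|J_\epsilon|=0\}\cap\{l\geq L_\epsilon\}$ one has $\zeta^{(l,\infty)}_\beta = \zeta_\beta$ for every $\beta\in\mathcal U$ with $\beta_1\leq l$. Once this identity is established, the two set-equalities of the lemma follow from the definitions \eqref{eq:TTT}, \eqref{eq:Imt}, \eqref{eq:Jmt}, since $\xi_\alpha$ and $\tilde X^\alpha$ do not depend on the truncation and—by the separate fact below—any $\alpha$ with $\alpha_1>l$ has $\zeta^{(l,\infty)}_\alpha = \xi_\alpha$, contributing an empty lifespan.

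As a preliminary step, I would establish the auxiliary identity $\zeta^{(l,\infty)}_\alpha = \xi_\alpha$ for every $\alpha$ with $\alpha_1>l$ by a short induction on $\prec$. The base case $|\alpha|=1$ is case (iii) of \eqref{eq:TT}. For $|\alpha|>1$ one has $\overleftarrow\alpha\prec\alpha$ with $\overleftarrow\alpha_1=\alpha_1>l$; the inductive hypothesis $\zeta^{(l,\infty)}_{\overleftarrow\alpha}=\xi_{\overleftarrow\alpha}$ combined with the strict inequality $\xi_{\overleftarrow\alpha}<\zeta_{\overleftarrow\alpha,\overleftarrow\alpha}$ (built into \eqref{eq:xiab}) violates the first condition in case (ii) of \eqref{eq:TT} and forces case (iii).

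The main induction proceeds analogously. The base case $\beta=1$ is trivial since no $\alpha\prec 1$ exists, so both sides reduce to $\zeta_{1,1}$. For the inductive step at $\beta$ with $\beta_1\leq l$: if $|\beta|>1$, the inductive hypothesis applied to $\overleftarrow\beta$ (which satisfies $\overleftarrow\beta_1=\beta_1\leq l$ and $\overleftarrow\beta\prec\beta$) guarantees that the three cases (i)--(iii) of \eqref{eq:Lifetime} and of \eqref{eq:TT} are activated in the same situations; case (iii) is trivial, as both expressions reduce to $\xi_\beta$, while in cases (i)--(ii) the problem reduces to matching the defining infima. For every $\alpha\prec\beta$ with $\alpha_1\leq l$ the inductive hypothesis directly matches the conditions $\zeta_{\alpha,\beta}\leq\zeta^{(l,\infty)}_\alpha$ and $\zeta_{\alpha,\beta}\leq\zeta_\alpha$, so those contributions coincide.

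The main obstacle, and the sole place where the hypotheses $|J_\epsilon|=0$ and $l\geq L_\epsilon$ are used, is treating $\alpha\prec\beta$ with $\alpha_1>l$—a situation that arises precisely when $\|\beta\|_\infty>l\geq\beta_1$, e.g.\ $\beta=(1,l+1)$. The plan is to show that such $\alpha$ is excluded from both defining infima. In the truncated system this follows from the auxiliary identity together with the strict inequality $\zeta_{\alpha,\beta}>\xi_\alpha\vee\xi_\beta\geq\xi_\alpha=\zeta^{(l,\infty)}_\alpha$ from \eqref{eq:xiab}. For the full system, no branching in $[0,\epsilon]$ forces $\xi_\beta=\zeta_{\overleftarrow\beta,\overleftarrow\beta}>\epsilon$, hence $\zeta_{\alpha,\beta}\geq\xi_\beta>\epsilon$; and a short parallel induction on $\prec$—using $\alpha_1>L_\epsilon$ to dispatch the initial-particle base case $|\alpha|=1$, and the strict inequality $\zeta_{\overleftarrow\alpha,\overleftarrow\alpha}>\xi_{\overleftarrow\alpha}=\zeta_{\overleftarrow\alpha}$ to dispatch the general step—shows that $\zeta_\alpha\leq\epsilon$ whenever $\alpha_1>L_\epsilon$. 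Combining these gives $\zeta_{\alpha,\beta}>\zeta_\alpha$, excluding $\alpha$ from the full-system infimum and closing the induction.
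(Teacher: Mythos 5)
Your overall architecture mirrors the paper's own argument: an induction along $\prec$ giving $\zeta^{(l,\infty)}_\alpha=\xi_\alpha$ whenever $\alpha_1>l$, a main induction giving $\zeta^{(l,\infty)}_\beta=\zeta_\beta$ whenever $\beta_1\le l$, and an exclusion step for the high-labelled $\alpha\prec\beta$ that uses $\{|J_\epsilon|=0\}$ and $l\ge L_\epsilon$. The gap is in the auxiliary claim you formulate for the untruncated system: ``$\zeta_\alpha\le\epsilon$ whenever $\alpha_1>L_\epsilon$'' is false for $|\alpha|>1$, and the inductive step you sketch would not prove it. Indeed, on $\{|J_\epsilon|=0\}$ take an initial particle $j>L_\epsilon$ that is removed by a coalescence before time $\epsilon$ while its branching clock first rings at $\zeta_{j,j}>\epsilon$; for the child label $\alpha=(j,1)$ the condition in case (ii) of \eqref{eq:Lifetime} fails (the parent's death is not a branching), so case (iii) gives $\zeta_\alpha=\xi_\alpha=\zeta_{j,j}>\epsilon$. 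The statement that is true, and is what the paper proves, is the dichotomy: for $\alpha_1>l$, if $|\alpha|=1$ then $\zeta_\alpha\le\epsilon$, while if $|\alpha|>1$ then $\zeta_\alpha=\xi_\alpha$. Relatedly, the identity $\xi_{\overleftarrow{\alpha}}=\zeta_{\overleftarrow{\alpha}}$ you invoke in the general step is only available from the induction hypothesis when $|\overleftarrow{\alpha}|>1$; when the parent is an initial particle with index $>L_\epsilon$ you must use $\{|J_\epsilon|=0\}$ once more, to rule out that its death at a time $\le\epsilon$ was a branching (otherwise case (ii) of \eqref{eq:Lifetime} could fire for the child), and your sketch omits this use of the event. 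As wired, your final step ``combine $\zeta_{\alpha,\beta}>\epsilon$ with $\zeta_\alpha\le\epsilon$'' therefore fails exactly for those $\alpha$ with $|\alpha|>1$.

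The repair is local and brings you back to the paper's proof: with the dichotomy in hand, exclusion of every $\alpha\prec\beta$ with $\alpha_1>l$ from both infima in \eqref{eq:Lifetime} and \eqref{eq:TT} follows since for $|\alpha|=1$ one has $\zeta_{\alpha,\beta}>\xi_\beta=\zeta_{\overleftarrow{\beta},\overleftarrow{\beta}}>\epsilon\ge\zeta_\alpha$ (the matching is only needed when case (ii) is active, which is where $|J_\epsilon|=0$ yields the middle inequality), while for $|\alpha|>1$ one has $\zeta_{\alpha,\beta}>\xi_\alpha\vee\xi_\beta\ge\xi_\alpha=\zeta_\alpha$ by \eqref{eq:xiab}; together with your (correct) truncated-side identity $\zeta^{(l,\infty)}_\alpha=\xi_\alpha$ this removes all such $\alpha$, and the remainder of your induction, including the reduction of the lemma to the identity $\zeta^{(l,\infty)}_\beta=\zeta_\beta$ via \eqref{eq:Imt} and \eqref{eq:Jmt}, is sound. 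So the route is the right one, but as written the proof relies on a false intermediate claim and does not go through without the correction.
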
 
We omit the proof of the above lemma, because it is also similar to the proof of Lemma \ref{lem:TC} in Appendix \ref{sec:TC}. 
\begin{lemma} \label{lem:CP} 
	Suppose that $(Z_k)_{k\in \mathbb N}$ is a sequence of $[0,1]$-valued random variables converging almost surely to a $[0,1]$-valued random variable $Z$. 
	Assume that $\mathbb E[Z_k^k]$ converges to $\mathbb E[\1_{\{1\}}(Z)]$ when $k\uparrow \infty$.  
	Then $Z_k^k$ converges to $\1_{\{1\}}(Z)$ in $L^p$ for every $p\geq 1$ when $k\uparrow \infty$.
\end{lemma}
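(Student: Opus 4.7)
The key observation is that pointwise convergence of $Z_k^k$ to $\1_{\{1\}}(Z)$ can fail on the set $\{Z=1\}$: the rate at which $Z_k(\omega)$ approaches $1$ determines whether $Z_k(\omega)^k$ tends to $0$, to $1$, or to some intermediate value. This is precisely why the hypothesis on the convergence of expectations is needed, and my plan is to use it to force $L^1$ convergence directly, then upgrade to $L^p$ via the boundedness of the random variables.

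First, I would dispose of the easy set. On $\{Z<1\}$, the almost-sure convergence $Z_k\to Z$ implies that eventually $Z_k\leq (1+Z)/2<1$, so $Z_k^k\to 0=\1_{\{1\}}(Z)$ almost surely on that set. This gives me pointwise control away from the bad set $\{Z=1\}$.

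Next, set $W_k:=\1_{\{1\}}(Z)-Z_k^k$. On $\{Z<1\}$ one has $W_k=-Z_k^k\in[-1,0]$ converging a.s.\ to $0$, while on $\{Z=1\}$ one has $W_k=1-Z_k^k\in[0,1]$. Consequently,
\begin{equation}
W_k^- = Z_k^k\,\1_{\{Z<1\}}\xrightarrow[k\to\infty]{\text{a.s.}} 0,\qquad 0\leq W_k^+\leq \1_{\{Z=1\}}.
\end{equation}
Since $W_k^-\leq 1$, the bounded convergence theorem yields $\mathbb E[W_k^-]\to 0$. By the hypothesis, $\mathbb E[W_k]=\mathbb E[\1_{\{1\}}(Z)]-\mathbb E[Z_k^k]\to 0$ as $k\uparrow\infty$. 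Combining these two, I obtain $\mathbb E[W_k^+]=\mathbb E[W_k]+\mathbb E[W_k^-]\to 0$, and therefore
\begin{equation}
\mathbb E\SB{\ABS{Z_k^k-\1_{\{1\}}(Z)}}=\mathbb E[W_k^+]+\mathbb E[W_k^-]\xrightarrow[k\to\infty]{}0,
\end{equation}
which is $L^1$ convergence.

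Finally, to upgrade to $L^p$ for arbitrary $p\geq 1$, I observe that both $Z_k^k$ and $\1_{\{1\}}(Z)$ take values in $[0,1]$, so $|W_k|\leq 1$ and hence $|W_k|^p\leq |W_k|$. Thus $\mathbb E[|W_k|^p]\leq \mathbb E[|W_k|]\to 0$, finishing the proof. The only real obstacle is the absence of almost-sure convergence of $Z_k^k$ on $\{Z=1\}$, and the trick of splitting $W_k$ into its positive and negative parts, combined with the convergence of expectations, completely bypasses this issue.
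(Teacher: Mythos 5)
Your proof is correct. The $L^1$ part is essentially the paper's own computation in different clothing: splitting the error over $\{Z=1\}$ and $\{Z<1\}$, using bounded convergence on $\{Z<1\}$ (where $Z_k^k\to 0$ a.s.) and the assumed convergence of $\mathbb E[Z_k^k]$ to absorb the contribution from $\{Z=1\}$; your identity $\mathbb E[|W_k|]=\mathbb E[W_k]+2\mathbb E[W_k^-]$ is exactly the paper's displayed identity written in terms of positive and negative parts. Where you genuinely diverge is the upgrade to $L^p$: the paper deduces convergence in probability, checks convergence of the $p$-th moments by bounded convergence, and then invokes a standard criterion (convergence in probability plus convergence of $L^p$ norms implies $L^p$ convergence, cited from Kallenberg) — a step that implicitly relies on the $L^1$ convergence already established. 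Your route is more elementary and arguably cleaner: since $|Z_k^k-\1_{\{1\}}(Z)|\leq 1$, one has $|W_k|^p\leq |W_k|$ for every $p\geq 1$, so $L^p$ convergence follows immediately from the $L^1$ convergence with no auxiliary theorem. Both arguments are valid; yours trades the citation for a one-line pointwise bound, which is a small but real simplification.
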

\begin{proof}
	Let us first prove the $L^1$ convergence.
	Note that for every $k\in \mathbb N$,
\begin{align}
	&\mathbb E\SB{\ABS{Z_k^k - \1_{\{1\}}(Z)}} =\mathbb E\SB{\ABS{Z_k^k - \1_{\{1\}}(Z)} \mathbf 1_{\{Z = 1\}}} +  \mathbb E\SB{\ABS{Z_k^k - 
\1_{\{1\}}(Z)} \mathbf 1_{\{Z < 1\}}} 
	 \\&= \mathbb E\SB{\RB{1-Z_k^k} \mathbf 1_{\{Z = 1\}}} +  \mathbb E\SB{ Z_k^k  \mathbf 1_{\{Z < 1\}}} 
	 \\& \label{eq:AA}= \mathbb E\SB{\1_{\{1\}}(Z)} - \mathbb E \SB{Z_k^k} +  2\mathbb E\SB{ Z_k^k  \mathbf 1_{\{Z < 1\}}}.
\end{align}
	It is clear that $Z_k^k$ converges to $0$ on the event $\{Z<1\}$, and therefore the third term on the right hand side of \eqref{eq:AA} converges to $0$, when $k\uparrow \infty$. 
	Now \eqref{eq:AA} implies that $Z_k^k$ converges to $\1_{\{1\}}(Z)$ in $L^1$ when $k\uparrow \infty$. 
	
	It is then clear that $Z_k^k$ converges in probability to $\1_{\{1\}}(Z)$.
	Also, it can be verified form bounded convergence theorem that for any $p\geq 1$, 
	\[
		\lim_{k\to \infty} \mathbb E\SB{\RB{Z_k^k}^p} = \mathbb E\SB{\RB{\1_{\{1\}}(Z)}^p}.
	\]
	Now from \cite[Theorem 5.12]{MR4226142} we also obtain the $L^p$ convergence for every $p\geq 1$ as desired. 
\end{proof}
\begin{extra}
	(We give the proof of Lemma \ref{lem:KS} in Note \ref{note:KS} below.)
	\begin{note} \label{note:KS}
		\begin{proof}[Proof of Lemma \ref{lem:KS}]
			Fix the arbitrary $\epsilon > 0$ and $l\in \mathbb N$.
			We first claim that almost surely on the event $\{|J_\epsilon| = 0, l \geq L_\epsilon\}$, 
			\begin{itemize}
				\item[\eq\label{eq:RR}] for each $\alpha \in \mathcal U$ with $\alpha_1 > l$, if $|\alpha| = 1$ then $\zeta_\alpha \leq \epsilon$, and if $|\alpha|>1$ then $\xi_\alpha = \zeta_\alpha$. 
			\end{itemize}
			Let us prove the claim \eqref{eq:RR} by induction over $\alpha \in \mathcal U$. Obviously, it holds for $\alpha = 1$ since nothing needs to be proved. 
			Let us fix an arbitrary $\beta \in \mathcal U$, and for the sake of induction, assume that the desired claim \eqref{eq:RR} holds for every $\alpha \prec \beta$. 
			We discuss in six different cases under the event $\{|J_\epsilon| = 0, l \geq L_\epsilon\}$:
			\begin{itemize}
			\item[(i)] $|\beta| = 1$ and $\beta_1\leq l$. In this case, nothing needs to be proved.
			\item[(ii)] $|\beta| = 1$ and $\beta_1> l$. For the same of contradiction, assume that $\zeta_\beta > \epsilon$. Then $\beta \in I_\epsilon$, and therefore $L_\epsilon \geq \beta$. 
			This contradicts to the condition $ l \geq L_\epsilon$. So the desired \eqref{eq:RR} holds in this case.  
			\item[(iii)] $|\beta|>1$ and $\beta_1 \leq l$. In this case, there is nothing to be proved.
			\item[(iv)] $|\beta| > 1$, $\beta_1 > l$, $|\overleftarrow{\beta}|>1$, $\zeta_{\overleftarrow{\beta}} = \zeta_{\overleftarrow{\beta},\overleftarrow{\beta}}$ and $\beta_{|\beta|} \leq Z_{\overleftarrow{\beta}}$.
			In this case, since $\overleftarrow{\beta}_1 = \beta_1 > l$, from what we have assumed for the sake of induction, we have $\xi_{\overleftarrow{\beta}} = \zeta_{\overleftarrow{\beta}}$. 
			Now, we have $\xi_{\overleftarrow{\beta}}  = \zeta_{\overleftarrow{\beta},\overleftarrow{\beta}}$ which happens with $0$ probability. So this case won't really happen.
			\item[(v)] $|\beta| > 1$, $\beta_1 > l$, $|\overleftarrow{\beta}|=1$, $\zeta_{\overleftarrow{\beta}} = \zeta_{\overleftarrow{\beta},\overleftarrow{\beta}}$ and $\beta_{|\beta|} \leq Z_{\overleftarrow{\beta}}$.
			In this case, since $\overleftarrow{\beta} = \beta_1 > l$, from what we have assumed for the sake of induction, we have $\zeta_{\overleftarrow{\beta}} \leq \epsilon$. 
			Therefore $\zeta_{\overleftarrow{\beta}} = \zeta_{\overleftarrow{\beta},\overleftarrow{\beta}} \leq \epsilon$, and from the definition of $J_\epsilon$, we have $\overleftarrow{\beta} \in J_\epsilon$.
			This contradicts to the condition that $|J_\epsilon| = 0$. Therefore, this case won't really happen.
			\item[(vi)] Both $|\beta| > 1$ and $\beta_1 > l$ hold, but one of  $\zeta_{\overleftarrow{\beta}} = \zeta_{\overleftarrow{\beta},\overleftarrow{\beta}}$ and $\beta_{|\beta|} \leq Z_{\overleftarrow{\beta}}$ doesn't hold. In this case, by (iii) of \eqref{eq:Lifetime}, we have $\zeta_\beta = \xi_\beta$ as desired. 
		\end{itemize}
			
			We then claim that almost surely on the event $\{|J_\epsilon| = 0, l \geq L_\epsilon\}$, 
			\begin{itemize}
				\item[\eq\label{eq:RI}] for each $\alpha \in \mathcal U$, $\zeta^{(l,\infty)}_\alpha = \mathbf 1_{\{\alpha_1> l\}}\xi_\alpha +  \mathbf 1_{\{\alpha_1 \leq l \}}\zeta_\alpha$.
			\end{itemize}
			We will prove this claim again by induction over $\alpha \in \mathcal U$.
			If $\alpha = 1$, then it is easy to see that $\zeta^{(l,\infty)}_\alpha= \zeta_{\alpha,\alpha} = \zeta_\alpha$. 
			Let us fix an arbitrary $\beta \in \mathcal U$, and for the sake of induction, assume that the desired claim \eqref{eq:RI} holds for every $\alpha \prec \beta$. 
			We discuss in six different cases under the event $\{|J_\epsilon| = 0, l \geq L_\epsilon\}$:
			\begin{itemize}
				\item[(i)] $\beta_1> l$ and $|\beta|=1$. In this case, by (iii) of \eqref{eq:TT}, we have $\zeta^{(l,\infty)}_\beta = \xi_\beta$ as desired. 
				\item[(ii)] $\beta_1> l$ and $|\beta |>1$. 
				In this case, we can show that the event 
				\[ \CB{ \zeta^{(l,\infty)}_{\overleftarrow{\beta}} = \zeta_{\overleftarrow{\beta},\overleftarrow{\beta}}} \cap \CB{\beta_{|\beta|} \leq Z_{\overleftarrow{\beta}}} \]
				won't happen. 
				In fact, from $\beta_1> l$ we have $\overleftarrow{\beta}_1> l$. 
				So by what we assumed for the sake of induction, we must have $\zeta^{(l,\infty)}_{\overleftarrow{\beta}} = \xi_{\overleftarrow{\beta}}$. 
				Now, we have $ \xi_{\overleftarrow{\beta}} = \zeta_{\overleftarrow{\beta},\overleftarrow{\beta}}$ which has $0$ probability.
				Therefore, by (iii) of \eqref{eq:TT}, we have $\zeta^{(l,\infty)}_\beta = \xi_\beta$ as desired. 
				\item[(iii)] 
				$\beta_1\leq l$ and $|\beta|=1$. 
				In this case, by (i) of \eqref{eq:TT}  we have
				\begin{equation}
					\zeta^{(l,\infty)}_\beta 
					= \inf \RB{ \CB{\zeta_{\beta,\beta}} \cup \CB{\zeta_{\alpha,\beta}: \alpha \in \mathcal U, \alpha \prec \beta, \zeta_{\alpha,\beta} \leq \zeta^{(l,\infty)}_\alpha}}
				\end{equation}
				and by (i) of \eqref{eq:Lifetime}, we have
				\begin{equation}
					\zeta_\beta 
					= \inf \RB{ \CB{\zeta_{\beta,\beta}} \cup \CB{\zeta_{\alpha,\beta}: \alpha \in \mathcal U, \alpha \prec \beta, \zeta_{\alpha,\beta} \leq \zeta_\alpha}}.
				\end{equation}
				Note that, $\alpha \prec \beta$ actually implies that $\alpha_1\leq \|\alpha\|_\infty \leq \| \beta\|_\infty = \beta_1 \leq l$. 
				So by what we assumed for the sake of induction, we have $\zeta^{(l,\infty)}_\alpha = \zeta_\alpha$ for every $\alpha \prec \beta$. 
				Now the above two equations implies that $\zeta^{(l,\infty)}_\beta = \zeta_\beta$ as desired. 
				
				\item[(iv)] 	
				$\beta_1\leq l$, $|\beta| > 1$, $\zeta^{(l,\infty)}_{\overleftarrow{\beta}} = \zeta_{\overleftarrow{\beta},\overleftarrow{\beta}}$ and $\beta_{|\beta|} \leq Z_{\overleftarrow{\beta}}$. 
				Firstly, note that in this case $\overleftarrow{\beta}_1 = \beta_1 \leq l$. 
				So from what we assumed for the sake of induction, we have $\zeta^{(l,\infty)}_{\overleftarrow{\beta}} = \zeta_{\overleftarrow{\beta}}$.
				Therefore $\zeta_{\overleftarrow{\beta}} = \zeta_{\overleftarrow{\beta},\overleftarrow{\beta}}$.
				We also know that $\zeta_{\overleftarrow{\beta},\overleftarrow{\beta}}> \epsilon$. 
				In fact, if otherwise, then from the definition of $J_\epsilon$, we have $\overleftarrow{\beta} \in J_\epsilon$ which contradicts to the condition $|J_\epsilon| = 0$. 
				
				Secondly, we claim that in this case 
				$\zeta_{\alpha,\beta} > \zeta^{(l,\infty)}_\alpha \vee \zeta_\alpha$ 
				for every $\alpha \prec \beta$ with $\alpha_1 > l$. 
				We discuss this claim in two different cases:
				\begin{itemize}
					\item[(a)] $|\alpha| = 1$. In this case, from \eqref{eq:RR}, we have $\zeta_\alpha < \epsilon$; and from what we have assumed for the sake of induction, $\zeta^{(l,\infty)}_\alpha = \xi_\alpha$.
					Now from \eqref{eq:xiab}, the definition of $\zeta_{\alpha,\beta}$, we have
									$
					\zeta_{\alpha,\beta} > \xi_\beta = \zeta_{\overleftarrow{\beta},\overleftarrow{\beta}} > \epsilon > \zeta_\alpha
					$
					and $\zeta_{\alpha,\beta} > \xi_\alpha = \zeta^{(l,\infty)}_\alpha$ as desired.
					\item[(b)] $|\alpha|>1$.  In this case, from \eqref{eq:RR}, we have $\zeta_\alpha = \xi_\alpha$; and from what we have assumed for the sake of induction, $\zeta^{(l,\infty)}_\alpha = \xi_\alpha$.
					Now from \eqref{eq:xiab}, the definition of $\zeta_{\alpha,\beta}$, we have
					$
					\zeta_{\alpha,\beta}  > \xi_\alpha = \zeta_\alpha = \zeta^{(l,\infty)}_\alpha$ as desired.
				\end{itemize}
				
				Now from this claim and (ii) of \eqref{eq:TT}, we have 
				\begin{align}
					\zeta^{(l,\infty)}_\beta 
					&= \inf \RB{ \CB{\zeta_{\beta,\beta}} \cup \CB{\zeta_{\alpha,\beta}: \alpha \in \mathcal U, \alpha \prec \beta, \zeta_{\alpha,\beta} \leq \zeta^{(l,\infty)}_\alpha}}
					\\&= \inf \RB{ \CB{\zeta_{\beta,\beta}} \cup \CB{\zeta_{\alpha,\beta}: \alpha \in \mathcal U, \alpha \prec \beta, \alpha_1\leq l, \zeta_{\alpha,\beta} \leq \zeta^{(l,\infty)}_\alpha}}.
				\end{align}
				Also, from the previous claim and (ii) of \eqref{eq:Lifetime}, we have 
				\begin{align}
					\zeta_\beta 
					&= \inf \RB{ \CB{\zeta_{\beta,\beta}} \cup \CB{\zeta_{\alpha,\beta}: \alpha \in \mathcal U, \alpha \prec \beta, \zeta_{\alpha,\beta} \leq \zeta_\alpha}}
					\\&= \inf \RB{ \CB{\zeta_{\beta,\beta}} \cup \CB{\zeta_{\alpha,\beta}: \alpha \in \mathcal U, \alpha \prec \beta, \alpha_1\leq l, \zeta_{\alpha,\beta} \leq \zeta_\alpha}}.
				\end{align}
				Note that for every $\alpha \prec \beta$ with $\alpha_1 \leq l$, from what we assumed for the sake of induction, $\zeta^{(l,\infty)}_\alpha = \zeta_\alpha$.
				Now the above two equations implies that $\zeta^{(l,\infty)}_\beta  = \zeta_\beta$ as desired. 
				
				\item[(v)] 
				$\beta_1\leq l$, $|\beta| > 1$, but the condition in (iv) does not hold. 
				In this case, by (iii) of \eqref{eq:TT}, we have $\zeta^{( l,\infty)}_\beta  = \xi_\beta$. 
				We can verify by contradiction that the event 
				\[ 
				\CB{\zeta_{\overleftarrow{\beta}} = \zeta_{\overleftarrow{\beta},\overleftarrow{\beta}}} \cap \CB{\beta_{|\beta|} \leq Z_{\overleftarrow{\beta}}} 
				\]
				won't happen.  
				In fact, if otherwise, from $\overleftarrow{\beta}_1 = \beta_1 \leq l$ and what we have assumed for the sake of induction, $\zeta^{(l,\infty)}_{\overleftarrow{\beta}} = \zeta_{\overleftarrow{\beta}}$. 
				This implies that $\zeta^{(l,\infty)}_{\overleftarrow{\beta}} = \zeta_{\overleftarrow{\beta},\overleftarrow{\beta}}$ which contradicts to the assumption that the condition in (iv) does not hold.
				Now, by (iii) of  \eqref{eq:Lifetime}, we have $ \zeta_\beta = 	\xi_\beta =  \zeta^{(l,\infty)}_\beta $ as desired. 
			\end{itemize} 
			
			To sum up, we have proved claim \eqref{eq:RI}. 
			The desired result for this lemma follows immediately.
		\end{proof}
	\end{note}
\end{extra}
	
	Now we are ready to present the main steps of the existence proof. 
	The construction of  the approximating sequence $u^{(m)}$ and one of its weak limit points will be carried out in the Steps~1, 2. 
	The non-trivial part, as we have mentioned above, is to show that the limit point indeed solves the SPDE~\eqref{eq:Ex}; this will be done in the Steps 3--7.  
	\begin{proof}[Proof of the weak existence part of Theorem \ref{thm:Main}]
		\emph{Step 1.}
		Let $\mathcal C(\mathbb R)$ be the space of real-valued continuous functions on $\mathbb R$. 
		Define $\|f\|_{(p)} := \sup_{x\in \mathbb R} |e^{p|x|}f(x)|$ for every $p\in \mathbb R$ and $f\in \mathcal C(\mathbb R)$. 
		Define the complete separable metric space $\mathcal C_{\mathrm{tem}}(\mathbb R):=\{f\in \mathcal C(\mathbb R): \forall p > 0,\|f\|_{(-p)}<\infty\}$ equipped with the metric   \[ d_{\mathrm{tem}}(f,g) := \sum_{k=1}^\infty 2^{-k}  (\|f-g\|_{(-k^{-1})}\wedge 1), \quad f,g\in \mathcal C_{\mathrm {tem}}(\mathbb R).\]
		Denote by $\mathcal C(\mathbb R_+, \mathcal C_{\mathrm{tem}}(\mathbb R))$ the space of $\mathcal C_{\mathrm{tem}}(\mathbb R)$-valued continuous paths on $\mathbb R_+$, equipped with the topology of uniform convergence on compact sets.
		For each $m \in \mathbb N$, we have the existence of a $\mathcal C(\mathbb R_+, \mathcal C_{\mathrm{tem}}(\mathbb R))$-valued random element $u^{(m)}=(u_t^{(m)})_{t\geq 0} =(u^{(m)}_t(x))_{t\geq 0, x\in \mathbb R}$ satisfying the SPDE~\eqref{eq:8_1} on  some probability space. 
As we have mentioned, existence of solution to~\eqref{eq:8_1} is standard (see e.g. 
\cite[Theorem 2.6]{MR1271224} and \cite[Section 2.1]{MR4259374}).
		Moreover, $u^{(m)}_t(x)$ takes values in $[0,1]$ for each $m\in \mathbb N$, $t\geq 0$ and $x\in \mathbb R$. 
		Note in particular, the random elements $u^{(m)}$, for different $m \in \mathbb N$, are not necessarily driven by the same noise, nor necessarily defined in the same probability space.  
		
		\emph{Step 2.}
		One can also verify, by the standard theory (c.g. \cite{MR1271224}), that the sequence of $\mathcal C(\mathbb R_+, \mathcal C_{\text{tem}}(\mathbb R))$-valued random elements $\{u^{(m)}: m\in \mathbb N\}$ is tight. 
		In particular, by using Prohorov's theorem and Skorohod's representation theorem, there exists a (deterministic) strictly increasing $\mathbb N$-valued sequence $(m_k)_{k \in \mathbb N}$, and a sequence of $\mathcal C(\mathbb R_+, \mathcal C_{\text{tem}}(\mathbb R))$-valued random elements $(\tilde u^{(m_k)})_{k\in \mathbb N}$ defined in a common probability space, such that 
		\begin{itemize}
			\item[\eq\label{eq:FE}] for each $k \in \mathbb N$, the law of $\tilde u^{(m_k)}$ equals to the law of $u^{(m_k)}$;
			\item[\eq\label{eq:EF}] the limit $\tilde u = \lim_{k \to \infty} \tilde u^{(m_k)}$ exists almost surely with respect to the topology of $\mathcal C(\mathbb R_+, \mathcal C_{\text{tem}}(\mathbb R))$. 
		\end{itemize}
		It is also clear that $(\tilde u_t(x))_{t\geq 0, x\in \mathbb R}$ is a $[0,1]$-valued continuous random field. 
		
		In the rest of the proof, we will show that $\tilde u$ solves the martingale problem corresponding to the SPDE \eqref{eq:Ex}, that is, for any compactly supported smooth (testing) function $\phi$ on $\mathbb R$, almost surely for every $t\geq 0$,
		\begin{align}
			&\int \phi(x)\tilde u_t(x) \mathrm dx - \int \phi(x) f(x) \mathrm dx 
			\\& \label{eq:AL}= \frac{1}{2}\iint_0^t \phi''(x) \tilde u_s(x) \mathrm ds\mathrm  dx + b_1 \iint_0^t  \phi(x) \tilde u_s(x) \mathrm ds\mathrm  dx 
			+ {}
			\\& \qquad b_\infty \iint_0^t  \phi(x)  \1_{\{1\}}(\tilde u_s(x)) \mathrm ds\mathrm  dx + M_t(\phi)
		\end{align}
		where $(M_t(\phi))_{t\geq 0}$ is an $L^2$-martingale with quadratic variation  
				\begin{equation}
			\AB{M_\cdot(\phi)}_t  = \iint_0^t \phi(x)^2 \tilde u_s(x) (1- \tilde u_s(x)) \mathrm ds \mathrm dx, \quad t\geq 0. 
		\end{equation}
		
		\emph{Step 3.}
		Fix an arbitrary $x\in \mathbb R$. 
		From Proposition \ref{prop:Duality}, we have that 
		\begin{align}
			&\mathbb E\SB{ \RB{u_t^{(m)}(x)}^l} 
			\label{eq:um}\\&= \tilde {\mathbb E}^{x}\SB{(-1)^{|J^{(l,m)}_t| \mathbf 1_{\{b_\infty < 0\}}} \exp\CB{(b_1+|b_\infty|)\int_0^t \ABS{I^{(l,m)}_s}\mathrm ds}\prod_{\alpha \in I^{(l,m)}_t} f\RB{X^{\alpha}_t}}
		\end{align}
		holds for each finite $l,m\in \mathbb N$, and $t\geq 0$.
		While fixing $l$, replacing $m$ by $m_k$, and then taking $k\uparrow \infty$ in \eqref{eq:um}, we obtain 
		\begin{align}
			&\mathbb E\SB{ \tilde u_t(x)^l} 
			\\\label{eq:mm}&= \tilde {\mathbb E}^{x}\SB{(-1)^{|J^{(l,\infty)}_t| \mathbf 1_{\{b_\infty < 0\}}} \exp\CB{(b_1+|b_\infty|)\int_0^t \ABS{I^{(l,\infty)}_s}\mathrm ds}\prod_{\alpha \in I^{(l,\infty)}_t} f\RB{X^{\alpha}_t}}
		\end{align}
		for every $l\in \mathbb N$ and $t\geq 0$. 
		Here, we used the bounded convergence theorem (recall that $b_1+|b_\infty|\leq 0$), Step 2, Lemma \ref{lem:DT}, and the fact (from Theorem \ref{prop:Key}) that 
 $|J^{(l,\infty)}_t| < \infty$ almost surely. 
		
		\emph{Step 4.} In this step, we will show that for every $t\geq 0$, 
				\begin{align}
\label{eq:9_2}
			&\mathbb E\SB{  \1_{\{1\}}(\tilde u_t(x)) } 
			= \tilde {\mathbb E}^{x}\SB{(-1)^{|J_t| \mathbf 1_{\{b_\infty < 0\}}} \exp\CB{(b_1+|b_\infty|)\int_0^t \ABS{I^{}_s}\mathrm ds}\prod_{\alpha \in I^{}_t} f\RB{X^{\alpha}_t}}.
		\end{align}
		Note that, from Step 3, for arbitrary $t\geq 0$ and $\epsilon > 0$, 
		\begin{align}
			&\ABS{\mathbb E\SB{ \tilde u_t(x)^l} - \tilde {\mathbb E}^{x}\SB{\mathbf 1_{\{|J_\epsilon| = 0\}}(-1)^{|J^{(l,\infty)}_t| \mathbf 1_{\{b_\infty < 0\}}} e^{(b_1+|b_\infty|)\int_0^t |I^{(l,\infty)}_s|\mathrm ds}\prod_{\alpha \in I^{(l,\infty)}_t} f\RB{X^{\alpha}_t}}}
			\\ & \leq \tilde {\mathbb P}^{x}\RB{|J_\epsilon| > 0};
		\end{align}
		which, by taking $l\uparrow \infty$ and then $\epsilon\downarrow 0$, implies the desired result for this step.
		Here, we used the bounded convergence theorem, Lemma \ref{lem:KS}, \eqref{eq:FF} and the fact (from Theorem \ref{prop:Key}) that almost surely $|J_t| < \infty$.
		
		\emph{Step 5.} While taking $l = m = m_k$ and then $k \uparrow \infty$ in \eqref{eq:um}, we obtain 
		\begin{equation}
			\lim_{k\to \infty} \mathbb E\SB{\RB{\tilde u^{(m_k)}_t(x)}^{m_k}}  = \tilde {\mathbb E}^{x}\SB{(-1)^{|J_t| \mathbf 1_{\{b_\infty < 0\}}} \exp\CB{(b_1+|b_\infty|)\int_0^t \ABS{I^{}_s}\mathrm ds}\prod_{\alpha \in I^{}_t} f\RB{X^{\alpha}_t}}.
		\end{equation}
		Here, we used the bounded convergence theorem, Lemma \ref{lem:TC}, and again the fact that almost surely $|J_t| < \infty$.
		Combine this with Step 4, we obtain that
			\begin{equation}
			\lim_{k\to \infty} \mathbb E\SB{\RB{\tilde u^{(m_k)}_t(x)}^{m_k}} = \mathbb E\SB{  \1_{\{1\}}(\tilde u_t(x))}. 
		\end{equation}
		Combine this further with \eqref{eq:EF} and Lemma \ref{lem:CP}, we obtain that
		\begin{equation}
			 \RB{\tilde u^{(m_k)}_t(x)}^{m_k} \to \1_{\{1\}}(\tilde u_t(x)) 
		\end{equation}
		in $L^p$ when $k\to \infty$ for every $p\geq 1$.
		
		\emph{Step 6.}
		Fix arbitrary $\phi\in \mathcal C_c^\infty(\mathbb R)$, where $ \mathcal C_c^\infty(\mathbb R)$ is the space of  compactly supported  infinitely differentiable functions on $\mathbb R$. 
		In this step, we want to show that, for every $t\geq 0$, when $k\uparrow \infty$,
		\[
			\iint_0^t \phi(x) \RB{\tilde u^{(m_k)}_s(x)}^{m_k}\mathrm ds \mathrm dx \to \iint_0^t \phi(x) \1_{\{1\}}(\tilde u_s(x)) \mathrm ds \mathrm dx
		\]
		in $L^2$. 
		In fact, we can calculate by H\"older's inequality that, for each $t\geq 0$ and $k\in \mathbb N$, 
		\begin{align}
			&\mathbb E\SB{\RB{\iint_0^t \phi(x) \RB{\tilde u^{(m_k)}_s(x)}^{m_k}\mathrm ds \mathrm dx - \iint_0^t \phi(x) \1_{\{1\}}(\tilde u_s(x)) \mathrm ds \mathrm dx}^2}
			\\ &\leq \mathbb E\SB{ \RB{ \iint_0^t \phi(x)^2 \RB{\RB{\tilde u^{(m_k)}_s(x)}^{m_k} - \1_{\{1\}}(\tilde u_s(x))  }^2 \mathrm ds \mathrm dx} \cdot \RB{ \iint_0^t \mathbf 1_{\{\phi(x)\neq 0\}} \mathrm ds \mathrm dx}}
			\\& \label{eq:ek} \leq t \Cr{const:phi} (\phi) \iint_0^t\phi(x)^2 \mathbb E\SB{\RB{\RB{\tilde u^{(m_k)}_s(x)}^{m_k}-\1_{\{1\}}(\tilde u_s(x)) }^2} \mathrm ds
		\end{align}
		where $0<\C\label{const:phi}(\phi)<\infty$ is a constant only depending on the support of $\phi$. 
		From Step 5, while taking $k\uparrow \infty$, we have for every $s\geq 0$ and $x\in \mathbb R$, 
				\begin{equation}
			\mathbb E\SB{\RB{\RB{\tilde u^{(m_k)}_s(x)}^{m_k}-\1_{\{1\}}(\tilde u_s(x)) }^2 } \to 0.
		\end{equation}
		From this and \eqref{eq:ek}, we obtain the desired result for this step using the bounded convergence theorem. 
		
		\emph{Step 7.} 	Fix arbitrary $\phi \in \mathcal C_c^\infty(\mathbb R)$.
		From \eqref{eq:FE}, we know that almost surely for every $t\geq 0$, 
		\begin{align} 
		&\int \phi(x)\tilde u^{(m_k)}_t(x) \mathrm dx - \int \phi(x) f(x) \mathrm dx 
		\\& \label{eq:LA}= \frac{1}{2}\iint_0^t \phi''(x) \tilde u^{(m_k)}_s(x) \mathrm ds\mathrm  dx + b_1 \iint_0^t  \phi(x) \tilde u^{(m_k)}_s(x) \mathrm ds\mathrm  dx 
		+ {}
		\\& \qquad b_\infty \iint_0^t  \phi(x) \RB{\tilde u^{(m_k)}_s(x)}^{m_k} \mathrm ds\mathrm  dx + M^{(m_k)}_t(\phi)
		\end{align}
		where $(M^{(m_k)}_t(\phi))_{t\geq 0}$ is an $L^2$-martingale with quadratic variation
		\begin{equation}
			\AB{M^{(m_k)}_\cdot(\phi)}_t = \iint_0^t \phi(x)^2 \tilde u^{(m_k)}_s(x) (1- \tilde u^{(m_k)}_s(x)) \mathrm ds \mathrm dx. 
		\end{equation}
		From \eqref{eq:EF} and the bounded convergence, we can verify that, for any $t\geq 0$, the left hand side and the first two terms on the right hand side of \eqref{eq:LA} all converge in $L^2$ while $k\uparrow \infty$. 
		Combine these with the result in Step 6, we know that for any $t\geq 0$, $M^{(m_k)}_t(\phi)$ converges in $L^2$ to a random variable $M_t(\phi)$ while $k\uparrow \infty$. 
		Now, by the standard theory for continuous $L^2$-martingales, see \cite[Proposition 1.3 \& Theorem 4.6]{MR1102676} for example, the limit $(M_t(\phi))_{t\geq 0}$ is an $L^2$-martingale with quadratic variation 
		\begin{equation}
			\AB{M_\cdot(\phi)}_t  = \iint_0^t \phi(x)^2 \tilde u_s(x) (1- \tilde u_s(x)) \mathrm ds \mathrm dx. 
		\end{equation}
		Now, by taking $k\uparrow \infty$ in \eqref{eq:LA}, we can verify the desired result \eqref{eq:AL}. 
		
		\emph{Final Step.} 
		By extending the probability space if necessary, it is standard to show (c.f. \cite[Proof of Lemma 2.4]{MR0958288}) that the martingale problem solution $(\tilde u_t(x))_{t\geq 0, x\in \mathbb R}$ is also a (mild) solution to the SPDE \eqref{eq:Ex} with respect to some space-time white noise in some probability space. Therefore, we are done. 
	\end{proof}
	
	\section{Proofs of Lemmas \ref{lem:discont1} and \ref{lem:discont2}} \label{sec:Last1}
	\begin{proof}[Proof of Lemma~\ref{lem:discont1}]
		We will prove the lemma by contradiction. Fix arbitrary $b^{(1)}_\infty, b^{(2)}_\infty\in[-1,1]$ with $b^{(1)}_\infty\not=b^{(2)}_\infty$. 
		For $i=1,2$, let $w^{(i)}$ be the unique in law solution to \eqref{SPDE_3_1} with $b_\infty=b^{(i)}_\infty$, $w^{(i)}_0= f \in\mathcal C(\mathbb R, [0,1])$. 
		Assume that $w^{(1)}$ and $w^{(2)}$ have the same laws. Fix arbitrary  non-negative and not identically zero  $\phi\in \mathcal C_c^\infty(\mathbb R)$. 
		Then for $i=1,2,$ we have 
		\begin{align}
			&\int \phi(x)w^{(i)}_t(x) \mathrm dx - \int \phi(x) f(x) \mathrm dx 
			\\& \label{eq:discont_8_1}= \frac{1}{2}\iint_0^t \phi''(x) w^{(i)}_s(x) \mathrm ds\mathrm  dx +\iint_0^t  \phi(x) (1-w^{(i)}_s(x)) \mathrm ds\mathrm  dx 
			{}
			\\& \qquad -b^{(i)}_\infty \iint_0^t  \phi(x) \1_{\{0\}}(w^{(i)}_s(x)) \mathrm ds\mathrm  dx + M^{(i)}_t(\phi)
		\end{align}
		where $(M^{(i)}_t(\phi))_{t\geq 0}$ is an $L^2$-martingale with quadratic variation  
		\begin{equation}
			\AB{M^{(i)}_\cdot(\phi)}_t  = \iint_0^t \phi(x)^2 w^{(i)}_s(x) (1- w^{(i)}_s(x)) \mathrm ds \mathrm dx, \quad t\geq 0. 
		\end{equation}
		By taking expectation on both sides of~\eqref{eq:discont_8_1}, and recalling that  $w^{(1)}$ and $w^{(2)}$ have the same law, we immediately get that 
		\begin{equation}
			\label{eq:9_1}
			b^{(1)}_\infty \mathbb E\SB{ \iint_0^t  \phi(x) \1_{\{0\}}(w^{(1)}_s(x)) \mathrm ds\mathrm  dx}  = b^{(2)}_\infty\mathbb E\SB{ \iint_0^t  \phi(x) \1_{\{0\}}(w^{(2)}_s(x)) \mathrm ds\mathrm  dx}, \quad t\geq 0. 
		\end{equation}
		
		Let us check that the expectations in~\eqref{eq:9_1} are not zero. To  this end, it is enough to check that 
		\begin{eqnarray}
			&&\mathbb E\SB{\1_{\{0\}}(w^{(i)}_t(x))}=\mathbb P\left(w^{(i)}_t(x)=0\right)>0,\;\; \forall t>0, x\in \mathbb R, i=1,2.
		\end{eqnarray}
		We will derive this by comparison argument. Let $w$ be any solution to 
		\begin{equation}
			\label{eq:9_3}
			\begin{cases}
				\partial_t w_t = \frac{1}{2}\Delta w_t + 2(1 -w_t)   + \sqrt{w_t(1-w_t)} \dot W, 
				\\ w_0 = f. 
			\end{cases}
		\end{equation}
		Clearly $u_t=1-w_t, t\geq 0,$ satisfies~\eqref{eq:SPDE} with $b(u)=-2u$ and initial conditions $u_0=1-f$. 
		Thus, such $u$ is a unique in law solution of~\eqref{eq:SPDE}. 
		Thefere, $w$  is a unique in law solution  to~\eqref{eq:9_3}. By our assumptions on $b^{(i)}_\infty$, we immediately have 
		$2(1 -z)\geq (1-z)-b^{(i)}_\infty \1_{\{0\}}(z),$  for $z\in[0,1]$, and $i=1,2.$ 
		Therefore, the drift in~\eqref{eq:9_3} dominates from above the drifts in equations for $w^{(i)}, i=1,2.$ This, by weak uniqueness, implies that $w$ scholastically dominates $w^{(1)}$ and $w^{(2)}$ from above. Therefore, 
		\begin{equation}
			\label{eq:9_5}
			\mathbb E\SB{\1_{\{0\}}(w^{(i)}_t(x))}\geq  \mathbb E\SB{\1_{\{0\}}(w_t(x))},\;\;\forall t\geq 0, x\in \mathbb R, i=1,2.
		\end{equation} 
		However, by duality formula~\eqref{eq:9_2} we obtain
		\begin{align}
			\label{eq:9_4}
			&\mathbb E\SB{\1_{\{0\}}(w_t(x))} = \mathbb E\SB{  \1_{\{1\}}(u_t(x)) } 
			= \tilde {\mathbb E}^{x}\SB{\exp\CB{-2\int_0^t \ABS{I^{}_s}\mathrm ds}\prod_{\alpha \in I^{}_t} (1-f)\RB{X^{\alpha}_t}}
		\end{align}
		where $\{(X^\alpha_t)_{t\geq 0}: \alpha \in \mathcal U\}$ is a coalescing Brownian particle system with initial configuration $(x_i)_{i=1}^\infty$ such that $x_i = x$ for every $i\in \mathbb N$. 
		Since $f\not\equiv1$, and $\int_0^t \ABS{I^{}_s}\mathrm ds$, $\ABS{I^{}_t}$  are almost surely finite for $t>0$ by Theprem~\ref{prop:Key},   we immediately  get from the properties of coalescent Brownian motions that 
		$$ \mathbb E\SB{\1_{\{0\}}(w_t(x))}>0, \quad t> 0, x\in \mathbb R.$$ 
		Then, by~\eqref{eq:9_5}
		we have 
		\begin{equation}
			\label{eq:9_6}
			\mathbb E\SB{\1_{\{0\}}(w^{(i)}_t(x))}>0,\;\;\forall t> 0, x\in \mathbb R. 
		\end{equation} 
		This and our assumptions of $\phi$  imply that the expectations in~\eqref{eq:9_1} do not equal to zero. 
		
		Now, from
		our assumption (by contradiction) that  $w^{(1)}$ and $w^{(2)}$ have the same laws, we obtain that $b^{(1)}_\infty=b^{(2)}_\infty$ which contradicts  the condition of this lemma.  This implies that $w^{(1)}$ and $w^{(2)}$  should  have different laws. 
	\end{proof}
	
	\begin{proof}[Proof of Lemma~\ref{lem:discont2} (i)]
		Let us show the non-uniqueness result when the initial value $X_0 = 0$. 
		It is easy to see that one solution to~\eqref{SDE_4_1}  with $b_\infty=1$ is $X_t\equiv 0, t\geq 0,$ while the other one can be the solution to~\eqref{SDE_4_2}. 
		Let us show that solution  to~\eqref{SDE_4_2} indeed also 
		solves~\eqref{SDE_4_1} (with $b_\infty=1$).  First let us  check that if $X$ solves~\eqref{SDE_4_2}, then 
		\begin{equation}
			\label{eq:occup_time1}
			\int_0^t \mathbf 1_{\{0\}}(X_s)\mathrm ds=0,\;\forall t\geq 0.
		\end{equation}
		In fact, one can easily get~\eqref{eq:occup_time1}, by following  the steps in the proof of Proposition~XI.1.5 in~\cite{MR1725357}. First, by using Theorem~VI.1.7 in~\cite{MR1725357} one shows that 
		\begin{equation}
			\label{eq:loc_time0}
			L_t^0(X)=2\int_0^t \mathbf 1_{\{0\}}(X_s)\RB{1 -X_s}\mathrm ds= 2 \int_0^t \mathbf 1_{\{0\}}(X_s) \mathrm ds,
		\end{equation}
		where $L_t^0(X), t\geq 0,$ is  the local time of $X$ at zero.  Then, again 
		following the proof of 
		of Proposition~XI.1.5 in~\cite{MR1725357}
		one derives that $L_t^0(X)=0, \forall t\geq 0$, and then \eqref{eq:occup_time1} follows by~\eqref{eq:loc_time0}. 
		With~\eqref{eq:occup_time1} at hand the result is immediate. 
	\end{proof}
	
	\begin{proof}[Proof of Lemma~\ref{lem:discont2} (ii)]
		The pathwise (and thus week) uniqueness for~\eqref{SDE_4_2}
		follows from Theorem~IX.3.5 in~\cite{MR1725357}. Fix arbitrary $b_\infty\in [-1, 1)$. To prove the claim we need to show that any solution $X$ to~\eqref{SDE_4_1} also solves 
		\eqref{SDE_4_2}. This will follow if we show that for any $X$ solving~\eqref{SDE_4_1}, the following holds:
		\begin{equation}
			\label{eq:occup_time2}
			b_\infty \int_0^t \mathbf 1_{\{0\}}(X_s)\mathrm ds=0,\;\forall t\geq 0.
		\end{equation}
		To obtain~\eqref{eq:occup_time2}, we follow the same strategy as in the proof of (i) of this lemma. 
		By Theorem~ VI.1.7 in~\cite{MR1725357}   we get that 
		\begin{equation}
			\label{eq:loc_time1}
			L_t^0(X)= 2 \int_0^t \mathbf 1_{\{0\}}(X_s) \mathrm dV_s,
		\end{equation}
		where 
		$V_t=\int_0^t \RB{\mathbf 1_{(0,1]}(X_s)(1 -X_s) \mathrm  + (1-b_\infty)\mathbf 1_{\{0\}}(X_s)}\, \mathrm ds, t\geq 0,$ is the drift in~\eqref{SDE_4_1}.  Substituting the definition of $V$ into
		\eqref{eq:loc_time1} we get 
		\begin{equation}
			\label{eq:loc_time2}
			L_t^0(X)= 2(1-b_\infty)\int_0^t \mathbf 1_{\{0\}}(X_s) \mathrm ds,
		\end{equation}
		Then, again 
		following the proof of 
		of Proposition~XI.1.5 in~\cite{MR1725357}
		we  derive that $L_t^0(X)=0, \forall t\geq 0$, and hence from~\eqref{eq:loc_time2} (recall that $b_\infty<1$)  we get that 
		\begin{equation}
			\label{eq:loc_time3}
			\int_0^t \mathbf 1_{\{0\}}(X_s) \mathrm ds = 0.
		\end{equation}
		Thus, \eqref{eq:occup_time2} follows and we are done.   
	\end{proof}

\appendix \section{}

\subsection{Proof of Lemma \ref{lem:TC}}\label{sec:TC}
\begin{proof}[Proof of Lemma \ref{lem:TC}]
	We claim that, 
	\begin{itemize}
		\item[\eq\label{eq:C}] for each $\alpha \in \mathcal U$ and $m\in \mathbb N$, $\zeta^{(m)}_\alpha = \mathbf 1_{\{m< \|\alpha\|_\infty \}}\xi_\alpha +  \mathbf 1_{\{m\geq \|\alpha\|_\infty \}}\zeta_\alpha$ almost surely.
	\end{itemize}
	Fixing $m \in \mathbb N$, we will prove this claim by induction over $\alpha \in \mathcal U$.
	If $\alpha = 1$, then it is easy to see that $\zeta^{(m)}_\alpha= \zeta_{\alpha,\alpha} = \zeta_\alpha$. 
	Let us fix an arbitrary $\beta \in \mathcal U$, and for the sake of induction, assume that the desired claim \eqref{eq:C} holds for every $\alpha \prec \beta$. 
	
	Firstly, we show that almost surely $\zeta^{(m)}_\beta = \xi_\beta$ provided $m < \|\beta \|_\infty$. 
	To do this, we discuss in two different cases.
	\begin{itemize}
		\item[(i)] $m < \|\beta \|_\infty$ and $|\beta|=1$. In this case, we have $m < \beta$. So by (iii) of \eqref{eq:TT}, we have $\zeta^{(m)}_\beta = \xi_\beta$ as desired. 
		\item[(ii)] $m < \|\beta \|_\infty$ and $|\beta |>1$. 
		In this case, we can show that the event 
		\[ \CB{ \zeta^{(m)}_{\overleftarrow{\beta}} = \zeta_{\overleftarrow{\beta},\overleftarrow{\beta}}} \cap \CB{\beta_{|\beta|} \leq Z_{\overleftarrow{\beta}} \wedge m} \]
		almost surely won't happen. 
		In fact, if the above event happens, we have $\beta_{|\beta|} \leq m$. 
		This, and the condition $m < \|\beta\|_\infty$, implies that $m < \|\overleftarrow{\beta}\|_\infty$.
		From what we assumed for the sake of induction, we must have $\zeta^{(m)}_{\overleftarrow{\beta}} = \xi_{\overleftarrow{\beta}}$. 
		This further implies that $\xi_{\overleftarrow{\beta}} = \zeta_{\overleftarrow{\beta},\overleftarrow{\beta}}$ which has $0$ probability.
		Now, by (iii) of \eqref{eq:TT}, we have $\zeta^{(m)}_\beta = \xi_\beta$ as desired. 
	\end{itemize}
	
	Secondly, we show that almost surely $\zeta^{(m)}_\beta = \zeta_\beta$ provided $m \geq \|\beta\|_\infty$. 
	To do this, we discuss in four different cases.
	\begin{itemize}
		\item[(i)] 
		$m \geq \|\beta\|_\infty$, $|\beta|=1$ and $\beta > n$. 
		In this case, by (iii) of \eqref{eq:Lifetime} and (iii) of \eqref{eq:TT} we have $\zeta^{(m)}_\beta = \xi_\beta = \zeta_\beta$ as desired. 
		\item[(ii)] 
		$m \geq \|\beta\|_\infty$, $|\beta|=1$ and $\beta \leq n$. 
		In this case, since $\beta \leq m$, by (i) of \eqref{eq:TT}  we have
		\begin{equation}
			\zeta^{(m)}_\beta 
			= \inf \RB{ \CB{\zeta_{\beta,\beta}} \cup \CB{\zeta_{\alpha,\beta}: \alpha \in \mathcal U, \alpha \prec \beta, \zeta_{\alpha,\beta} \leq \zeta^{(m)}_\alpha}}.
		\end{equation}
		Note that, $\alpha \prec \beta$ actually implies that $\|\alpha\|_\infty \leq m$. 
		So by what we assumed for the sake of induction, we have $\zeta^{(m)}_\alpha = \zeta_\alpha$ for every $\alpha \prec \beta$. 
		Now the above equation can be rewritten as
		\begin{equation}
			\zeta^{(m)}_\beta 
			= \inf \RB{ \CB{\zeta_{\beta,\beta}} \cup \CB{\zeta_{\alpha,\beta}: \alpha \in \mathcal U, \alpha \prec \beta, \zeta_{\alpha,\beta} \leq \zeta_\alpha}}.
		\end{equation}
		This and (i) of \eqref{eq:Lifetime} imply that $\zeta^{(m)}_\beta = \zeta_\beta$ as desired. 
		\item[(iii)] 	
		$m \geq \|\beta\|_\infty$, $|\beta| > 1$, $\zeta^{(m)}_{\overleftarrow{\beta}} = \zeta_{\overleftarrow{\beta},\overleftarrow{\beta}}$ and $\beta_{|\beta|} \leq Z_{\overleftarrow{\beta}} ^{(m)} = Z_{\overleftarrow{\beta}} \wedge m$. 
		In this case, by (ii) of \eqref{eq:TT}, we have 
		\begin{equation}
			\zeta^{(m)}_\beta 
			= \inf \RB{ \CB{\zeta_{\beta,\beta}} \cup \CB{\zeta_{\alpha,\beta}: \alpha \in \mathcal U, \alpha \prec \beta, \zeta_{\alpha,\beta} \leq \zeta^{(m)}_\alpha}}.
		\end{equation}
		Similar to the previous case, we can rewrite the above as
		\begin{equation}
			\zeta^{(m)}_\beta 
			= \inf \RB{ \CB{\zeta_{\beta,\beta}} \cup \CB{\zeta_{\alpha,\beta}: \alpha \in \mathcal U, \alpha \prec \beta, \zeta_{\alpha,\beta} \leq \zeta_\alpha}}.
		\end{equation}
		Also note that $\|\overleftarrow{\beta}\|_\infty \leq \|\beta\|_\infty \leq m$. 
		So from what we assumed for the sake of induction, we have $\zeta^{(m)}_{\overleftarrow{\beta}} = \zeta_{\overleftarrow{\beta}}$. 
		This implies that $\zeta_{\overleftarrow{\beta}} = \zeta_{\overleftarrow{\beta},\overleftarrow{\beta}}$ and $\beta_{|\beta|} \leq Z_{\overleftarrow{\beta}}$.
		Now from (ii) of \eqref{eq:Lifetime}, we have $\zeta^{(m)}_\beta  = 	\zeta_\beta $ as desired.
		\item[(iv)] 
		$m \geq \|\beta\|_\infty$, $|\beta| > 1$, and the condition in (iii) does not hold. 
		In this case, by (ii) of \eqref{eq:TT}, we have $\zeta^{(m)}_\beta  = \xi_\beta$. 
		We can verify by contradiction that the event 
		\[ 
		\CB{\zeta_{\overleftarrow{\beta}} = \zeta_{\overleftarrow{\beta},\overleftarrow{\beta}}} \cap \CB{\beta_{|\beta|} \leq Z_{\overleftarrow{\beta}}} 
		\]
		almost surely won't happen.  
		In fact, if otherwise, then from the condition $\|\beta\|_\infty \leq m$, we have $\beta_{|\beta|} \leq Z_{\overleftarrow{\beta}}\wedge m$. 
		Note that we also have $\|\overleftarrow{\beta}\|_\infty \leq \|\beta\|_\infty \leq m$. 
		So from what we have assumed for the sake of induction, $\zeta^{(m)}_{\overleftarrow{\beta}} = \zeta_{\overleftarrow{\beta}}$. 
		Then we arrived at a contradiction that the condition in (iii) holds. 
		Now, by (iii) of \eqref{eq:Lifetime}, we have $\zeta^{(m)}_\beta  = 	\zeta_\beta $ as desired. 
	\end{itemize} 
	
	To sum up, we proved claim \eqref{eq:C}. 
	The desired result for this lemma follows immediately.
\end{proof}

\subsection{Proofs of Lemmas \ref{lem:exponential} and \ref{lem:bug}} \label{sec:exp}

\begin{proof}[Proof of Lemma \ref{lem:exponential}]
	Note that by Proposition \ref{thm:TheUseful},
	\begin{equation}
		\sup_{0\leq t\leq T}\tilde{\mathbb E}\SB{ |I_t| } 
		< \infty.
	\end{equation}
	We can also verify that
	\begin{equation}
		\sup_{0\leq t\leq T} \tilde{\mathbb E} \SB{ \ABS{I^{(m)}_t}^2 } 
		< \infty.  
	\end{equation}
	In fact, by  Lemma \ref{lem:Coupling}, $|I^{(m)}_t|$ is dominated by the total population at time $t$ of a continuous-time Galton-Watson process with a bounded offspring distribution; and therefore, have all finite moments, thank to the standard theory of branching processes \cite[p. 103]{MR0163361}.
	
	It is then clear that the desired result for this lemma is trivial if $\mu + b_1 \leq 0$, because in this case, the term $1+ e^{K_t}$ is almost surely bounded by $2$.
	In particular, the result is trivial if $R = 1$, because $R = 1$ actually implies that $\mu + b_1 \leq 0$ by \eqref{eq:CLZsCondition}.
	Also note that the result is trivial if $b_\infty \neq 0$, since in order that the condition \eqref{eq:CLZsCondition} to hold, we must have $R = 1$ in this case. 
	So for the rest of this proof, we only have to show
	\begin{equation} \label{eq:DDR}
		\sup_{0\leq t\leq T}\tilde{\mathbb E}\SB{e^{K_t}\RB{1+\ABS{I_t} + \ABS{I_t^{(m)}}^2}} 
		< \infty
	\end{equation}
	under the assumption that $\mu + b_1 > 0$,  $b_\infty = 0$  and $R>1$. 
	
	From Theorem~\ref{prop:Key} and that $p_\infty = \mu^{-1}|b_\infty| =0$, we can verify that  the process $(|I_t|)_{t\geq 0}$ has finite jumps up to any finite time.
	This allows us to write down the decomposition
	\begin{align}
		R^{|I_t|} - R^{|I_0|} 
		= &\int_{\mathcal U \times \mathbb Z_+}\int_0^t (R^{|I_{s-}| + k - 1}-R^{|I_{s-}|})\mathbf 1_{\{X_{s-}^\alpha\in \mathbb R\}} \mathfrak N(\mathrm ds, \mathrm d\alpha,\mathrm dk) 
		+ {}
		\\& \int_{\mathcal R}\int_{0}^t (R^{|I_{s-}| - 1}-R^{|I_{s-}|}) \mathbf 1_{\{X_{s-}^\alpha, X_{s-}^\beta \in \mathbb R\}} \mathfrak M(\mathrm ds, \mathrm d(\alpha,\beta))
	\end{align}
	for every $t\geq 0$ where the integrals are simply finite sums.
	Consider the process
	\begin{align}
		Z_t 
		:= e^{K_t} R^{|I_t|}, \quad t\geq 0.
	\end{align}
	From the integration by parts formula, see \cite[p.~444]{MR4226142} for example, we have for $t \geq 0$,
	\begin{align}
		&Z_t - Z_0= \int_0^t R^{|I_{s-}|} \mathrm d e^{K_s} + \int_0^t e^{K_s} \mathrm d R^{|I_s|} 
		\\&= (\mu+b_1)\int_0^t e^{K_s}  R^{|I_{s-}|} |I_{s-}|\mathrm d  s + {}
		\\& \label{eq:VerifyIntegrand1} \quad \int_{\mathcal U \times \mathbb Z_+}\int_0^t e^{K_s} (R^{|I_{s-}| + k - 1}-R^{|I_{s-}|})\mathbf 1_{\{X_{s-}^\alpha\in \mathbb R\}} \mathfrak N(\mathrm ds, \mathrm d\alpha,\mathrm dk) + {}
		\\& \label{eq:VerifyIntegrand2} \quad \int_{\mathcal R}\int_{0}^t e^{K_s} (R^{|I_{s-}| - 1}-R^{|I_{s-}|}) \mathbf 1_{\{X_{s-}^\alpha, X_{s-}^\beta \in \mathbb R\}} \mathfrak M(\mathrm ds, \mathrm d(\alpha,\beta)). 
	\end{align}
	From Lemma \ref{lem:ThePointMeasures}, we know that $\mathfrak N$ and $\mathfrak M$ are QL point processes with compensators $\hat{\mathfrak N}$ and $\hat{\mathfrak M}$ respectively.
	(Recall that $\hat{\mathfrak N}$ and $\hat{\mathfrak M}$ are given in \eqref{eq:hatN} and \eqref{eq:ComM} respectively.)
	Now from Lemma \ref{eq:CompensatorDecom}, there exists a local martingale $(m_t)_{t\geq 0}$ such that  
	\begin{align}
		&Z_t - Z_0 
		= m_t + (\mu+b_1)\int_0^t e^{K_s} R^{|I_{s-}|} |I_{s-}|\mathrm d  s + {}
		\\& \qquad \qquad \quad \int_{\mathcal U \times \mathbb Z_+}\int_0^t e^{K_s} (R^{|I_{s-}| + k - 1}-R^{|I_{s-}|})\mathbf 1_{\{X_{s-}^\alpha\in \mathbb R\}} \hat{\mathfrak N}(\mathrm ds, \mathrm d\alpha,\mathrm dk) + {}
		\\&  \qquad \qquad \quad \int_{\mathcal R}\int_{0}^t e^{K_s} (R^{|I_{s-}| - 1}-R^{|I_{s-}|}) \mathbf 1_{\{X_{s-}^\alpha, X_{s-}^\beta \in \mathbb R\}} \hat{\mathfrak M}(\mathrm ds, \mathrm d(\alpha,\beta))
		\\&= m_t + \int_0^t e^{K_s} R^{|I_{s-}|}  |I_{s-}| \RB{ b_1 + \sum_{k\in \mathbb Z_+ \setminus \{1\}} R^{k - 1}  |b_k| }\mathrm ds + {}
		\\& \quad \int_{\mathcal R}\int_{0}^t e^{K_s} (R^{|I_{s-}| - 1}-R^{|I_{s-}|}) \mathbf 1_{\{X_{s-}^\alpha, X_{s-}^\beta \in \mathbb R\}} \hat{\mathfrak M}(\mathrm ds, \mathrm d(\alpha,\beta)).
	\end{align}
	\begin{extra}
		(Let us verify this in Note \ref{note:sub}.)
	\end{extra}
	By \eqref{eq:CLZsCondition}, we know that $(Z_t)_{t\geq 0}$ is a local supermartingale. 
	From the fact that $(Z_t)_{t\geq 0}$ is non-negative, we can verify that it has finite mean for any $t\geq 0$.
	In fact, since there exists a sequence of stopping time $\tau_k$ such that $\tau_k \uparrow \infty$ almost surely as $k \uparrow \infty$ and $(Z_{t\wedge \tau_k})_{t\geq 0}$ is a supermartingale for each $k \in \mathbb N$, we have by Fatou's lemma
	\[
	\tilde{\mathbb E}[Z_t] 
	= \tilde{\mathbb E}\SB{\lim_{k \to \infty} Z_{t\wedge \tau_k}} 
	\leq \liminf_{k \to\infty} \tilde{\mathbb E} [Z_{t\wedge \tau_k}] 
	\leq  \tilde{\mathbb E}[Z_{0}] = R^n < \infty.
	\]
	It is also clear from Lemma \ref{lem:TC} that $|I_t^{(m)}|\leq |I_t|$ almost surely for every $t\geq 0$.
	The desired result \eqref{eq:DDR} now follows since
	\begin{align}
		&\tilde {\mathbb E}\SB{e^{K_t}(1+|I_t| + |I_t^{(m)}|^2)} 
		\leq \tilde {\mathbb E}\SB{e^{K_t}(1+|I_t| + |I_t|^2)}
		\\&\leq \Cr{const:RI}(R) \tilde {\mathbb E}\SB{e^{K_t} R^{|I_t|}} \leq \Cr{const:RI}(R) R^n
	\end{align}
	where $\C\label{const:RI}(R) = \sup_{x\in \mathbb R} (1+|x| +|x|^2)/ R^{|x|} \in (0,\infty)$.
	\begin{extra}
		\begin{note} \label{note:sub}
			We need to show that the integrand in \eqref{eq:VerifyIntegrand1} belongs to $\mathscr L^{1,\mathrm{loc}}_{\hat{\mathfrak N}}$ and the integrand in \eqref{eq:VerifyIntegrand2} belongs to $\mathscr L^{1,\mathrm{loc}}_{\hat{\mathfrak M}}$. Those are actually easy to verify: Almost surely
			\begin{align}
				&\int_{\mathcal U \times \mathbb Z_+}\int_0^t \ABS{ e^{K_s} (R^{|I_{s-}| + k - 1}-R^{|I_{s-}|})\mathbf 1_{\{X_{s-}^\alpha\in \mathbb R\}} } \hat{\mathfrak N}(\mathrm ds, \mathrm d\alpha,\mathrm dk)
				\\&= \sum_{\alpha \in \mathcal U} \sum_{k\in \mathbb Z_+} p_k \int_0^t e^{K_s}  \ABS{ R^{k - 1}-1} R^{|I_{s-}|} \mathbf 1_{\{X_{s-}^\alpha\in \mathbb R\}}  \mu\mathrm ds
				\\&= \RB{\sum_{k\in \mathbb Z_+} p_k\ABS{ R^{k - 1}-1} }  \int_0^t e^{K_s}  R^{|I_{s-}|} |I_{s-}| \mu\mathrm ds < \infty,
			\end{align}
			and
			\begin{align}
				&\int_{\mathcal R}\int_{0}^t \ABS{e^{K_s} (R^{|I_{s-}| - 1}-R^{|I_{s-}|}) \mathbf 1_{\{X_{s-}^\alpha, X_{s-}^\beta \in \mathbb R\}} } \hat{\mathfrak M}(\mathrm ds, \mathrm d(\alpha,\beta))
				\\&=\frac{1}{2}\sum_{(\alpha,\beta)\in \mathcal R} \int_{0}^t \ABS{e^{K_s} (R^{|I_{s-}| - 1}-R^{|I_{s-}|}) } \mathbf 1_{\{X_{s-}^\alpha, X_{s-}^\beta \in \mathbb R\}}  \mathrm dL^{\alpha,\beta}_s
				\\& = \frac{1}{2} \RB{ \sup_{s\in [0,t]}\ABS{e^{K_s} (R^{|I_{s-}| - 1}-R^{|I_{s-}|}) } }  \int_{0}^t \sum_{\alpha,\beta\in I_{s-}: \alpha \prec \beta}  \mathrm dL^{\alpha,\beta}_s < \infty.
			\end{align}
			The very last inequality holds because it is essentially finite sums.
		\end{note}
	\end{extra}
\end{proof}

\begin{proof}[Proof of Lemma \ref{lem:bug}]
	
	\emph{Step 1.} 
	Let us first mention a result about the all-level supremum of the local time of the Brownian motion.
	Suppose that $L_{t,z}$ is the local time  of a standard 1-dimensional Brownian motion at level $z\in \mathbb R$ up to time $t\geq 0$. 
	Without loss of generality, we can assume that $L_{t,z}$ is jointly continuous in $t\geq 0$ and $z\in \mathbb R$ \cite[Corollary 1.8 Chapter VI]{MR1725357}. 
	It is known that for any finite time $t\geq 0$, the all-level supremum of this local time 
	\begin{equation} \label{eq:LS}
		\sup_{z\in \mathbb R} L_{t,z}
	\end{equation}
	up to time $t$ has all finite moments.
	Indeed, this result has already been used in \cite[p. 1725]{MR1813840}; and two different characterizations of \eqref{eq:LS} appeared in \cite[Excises 1.22]{MR1725357} and \cite{MR0788182}, respectively. 
	\begin{extra}
		For the sake of completeness, we give an explanation for this in Note \ref{note:5}.
	\end{extra}
	
	\emph{Step 2.}
	Fix $m\in \mathbb N$ with $m\geq n$ and $T\geq 0$. 
	Let us construct yet another particle system, denoted by $\{(\bar X^{(m),\alpha}_t)_{t\geq 0}: \alpha \in \mathcal U\}$, in the probability space where both the original branching-coalescing Brownian particle system $\{(X^{\alpha}_t)_{t\geq 0}: \alpha \in \mathcal U\}$ and its $m$-truncated version $\{(X^{(m),\alpha}_t)_{t\geq 0}: \alpha \in \mathcal U\}$ are constructed.
	This new particle system $\{(\bar X^{(m),\alpha}_t)_{t\geq 0}: \alpha \in \mathcal U\}$ will be constructed as a branching Brownian particle system which produces exactly $m$-many children at each of its branching event, and does not induce coalescing event. 
	And it will be sharing the same initial configuration $(x_i)_{i=1}^n$.  
	
	More precisely, we construct $\{(\bar X^{(m),\alpha}_t)_{t\geq 0}: \alpha \in \mathcal U\}$ through \eqref{eq:MBBM1} and \eqref{eq:MBBM2} below. 
	(Recall that $\{\zeta_{\alpha,\alpha}:\alpha \in \mathcal U\}$ and $\{(\tilde X^\alpha_t)_{t\geq 0}: \alpha \in \mathcal U\}$ are already constructed in Section \ref{eq:DualParticle} along with $\{(X^{\alpha}_t)_{t\geq 0}: \alpha \in \mathcal U\}$.) 
	\begin{itemize}
		\item[\eq\label{eq:MBBM1}]
		For each $\beta \in \mathcal U$, define $\mathbb R_+$-valued random variable $\bar \zeta^{(m)}_\beta$ inductively so that
		\begin{itemize}
			\item[(i)] 
			if $|\beta|=1$ and $\beta \leq n$, then $\bar \zeta^{(m)}_\beta := \zeta_{\beta,\beta}$. 
			\item[(ii)] 
			if $|\beta| > 1$, $\bar \zeta^{(m)}_{\overleftarrow{\beta}} = \zeta_{\overleftarrow{\beta},\overleftarrow{\beta}}$ and $\beta_{|\beta|} \leq m$, then $\bar \zeta^{(m)}_\beta := \zeta_{\beta,\beta};$
			\item[(iii)] 
			if neither of the conditions in (i) nor (ii) holds, then $\bar \zeta^{(m)}_\beta := \xi_\beta$.
		\end{itemize}
		\item[\eq\label{eq:MBBM2}]
		For each $\beta \in \mathcal U$, define $\mathbb R\cup \{\dagger\}$-valued process 
		\begin{equation}
			\bar X^{(m),\beta}_t:= 
			\begin{cases}
				\dagger, &\quad t\in [0, \xi_\beta),
				\\ \tilde X^\beta_t, &\quad t\in [\xi_\beta, \bar \zeta^{(m)}_\beta),
				\\ \dagger, & \quad t\in [\bar \zeta^{(m)}_\beta, \infty).
			\end{cases}
		\end{equation}
	\end{itemize}
	It is also clear, c.f. Lemma \ref{lem:Coupling}, that the $m$-truncated branching-coalescing Brownian particle system is dominated by $\{(\bar X^{(m),\alpha}_t)_{t\geq 0}: \alpha \in \mathcal U\}$  in the sense that almost surely,
	\begin{equation}
		I^{(m)}_t \subset \bar I^{(m)}_t 
		:= \{\alpha \in \mathcal U: \bar X^{(m),\alpha}_t \in \mathbb R\},
	\end{equation}
	and
	\[
	J^{(m)}_t \subset \bar J^{(m)}_t
	:= \{\alpha \in \mathcal U: \zeta_{\alpha,\alpha}=\bar \zeta^{(m)}_\alpha \leq t\}. 
	\]
	
	\emph{Step 3.}
	Fix arbitrary $(\alpha,\beta) \in \mathcal R$. 
	From the construction of the Brownian motions $(\tilde X^\alpha_t)_{t\geq 0}$ and $(\tilde X^\beta_t)_{t\geq 0}$, and the strong Markov property of the Brownian motions, we know that there exists a stopping time $\tau_{\alpha,\beta}$ satisfying $\tilde X^\alpha_t = \tilde X^\beta_t$ on $[0, \tau_{\alpha,\beta}]$; and that
	\begin{equation}
		\hat X^{\alpha}_t:=\tilde X^\alpha_{\tau_{\alpha,\beta}+ t} - \tilde X^\alpha_{\tau_{\alpha,\beta}}, \quad t\geq 0
	\end{equation}
	and 
	\begin{equation}
		\hat X^{\beta}_t:=\tilde X^\beta_{\tau_{\alpha,\beta}+ t} - \tilde X^\beta_{\tau_{\alpha,\beta}}, \quad t\geq 0
	\end{equation}
	are two independent Brownian motions with zero initial values.
	Denote by $\hat L^{\alpha,\beta}_{t,z}$ the local time of the process $\hat X^\alpha_\cdot - \hat X^\beta_\cdot$ up to time $t\geq 0$ at level $z\in \mathbb R$. 
	We can assume without loss of generality that $\hat L^{\alpha,\beta}_{t,z}$ is continuous in both $t\geq 0$ at $z\in \mathbb R$, c.f. \cite[Theorem 29.4]{MR4226142}.
	It is clear that
	\[
	L^{\alpha,\beta}_{T,z} = \mathbf 1_{\{\tau_{\alpha,\beta} \leq T\}} \hat L^{\alpha,\beta}_{T -\tau_{\alpha,\beta}  ,z} \leq \hat L^{\alpha,\beta}_{T,z} , \quad T\geq 0, z\in \mathbb R, \text{a.s.}
	\]
	
	\emph{Step 4.}
	Denote by $\mathscr G$ the minimal $\sigma$-field containing all the information about the genealogical structure of the branching Brownian motions $\{(\bar X^{(m),\gamma}_t)_{t\geq 0}: \gamma \in \mathcal U\}$, i.e. the $\sigma$-field generated by the death-times $\{\bar \zeta_\gamma^{(m)}: \gamma \in \mathcal U\}$.
	It is clear that the Brownian motions $\hat X^\alpha_\cdot$ and $\hat X^\beta_\cdot$ are independent of the $\sigma$-field  $\mathscr G$. 
	Therefore, using the result in Step 1, it can be shown that, for any $T\geq 0$ and $\varrho \geq 0$, there exists a constant $\C\label{const:LTM}(T,\varrho)<\infty$, which is independent of the choice of the arbitrary $(\alpha,\beta)\in \mathcal R$, such that
	\begin{align}
		&\tilde {\mathbb E}\SB{\RB{1+\sup_{z\in \mathbb R}L^{\alpha,\beta}_{T,z}}^{1+\varrho} \middle| \mathscr G}
		\leq \tilde {\mathbb E}\SB{\RB{1+\sup_{z\in \mathbb R}\hat L^{\alpha,\beta}_{T,z}}^{1+\varrho} \middle| \mathscr G}
		\\&=\tilde {\mathbb E}\SB{\RB{1+\sup_{z\in \mathbb R}\hat L^{\alpha,\beta}_{T,z}}^{1+\varrho}} = \Cr{const:LTM}(T,\varrho).
	\end{align}
	
	\emph{Step 5.}
	Using Jensen's inequality, we can verify that for any $\varrho \geq 0$, 
	\begin{align}
		&\tilde{\mathbb E}\SB{\RB{\sum_{\alpha,\beta \in I_{[0,T]}^{(m)}: \alpha \prec \beta} \RB{1 + \sup_{z\in \mathbb R} L^{\alpha,\beta}_{T,z}} }^{1+\varrho}} 
		\leq \tilde{\mathbb E}\SB{\RB{\sum_{\alpha,\beta \in \bar I_{[0,T]}^{(m)}: \alpha \prec \beta} \RB{1 + \sup_{z\in \mathbb R} L^{\alpha,\beta}_{T,z}} }^{1+\varrho}} 
		\\&\leq \tilde{\mathbb E}\SB{\ABS{\mathcal R \cap \RB{\bar I_{[0,T]}^{(m)} \times \bar I_{[0,T]}^{(m)}}}^{\varrho}\sum_{\alpha,\beta \in \bar I_{[0,T]}^{(m)}: \alpha\prec \beta} \RB{1 + \sup_{z\in \mathbb R} L^{\alpha,\beta}_{T,z}}^{1+\varrho} } 
		\\& = \tilde{\mathbb E}\SB{\ABS{\mathcal R \cap \RB{\bar I_{[0,T]}^{(m)} \times \bar I_{[0,T]}^{(m)}}}^{\varrho}\sum_{\alpha,\beta \in \bar I_{[0,T]}^{(m)}: \alpha\prec \beta} \tilde{\mathbb E} \SB{\RB{1 + \sup_{z\in \mathbb R} L^{\alpha,\beta}_{T,z}}^{1+\varrho} \middle| \mathscr G} } 
		\\&\leq \Cr{const:LTM}(T,\varrho) \tilde{\mathbb E}\SB{\ABS{\bar I_{[0,T]}^{(m)} }^{2+2\varrho} } 
		< \infty.
	\end{align}
	The last inequality is due to the standard theory of branching processes \cite[p. 103]{MR0163361}, and our assumption that the number of offspring at each branching event of the branching Brownian particle system $\{(\bar X_t^\alpha)_{t\geq 0}: \alpha \in \mathcal U\}$ is exactly $m$. 
	\begin{extra}
		(This will be explained with more details in Note \ref{note:06}.)
	\end{extra}
	
	\begin{extra}
		\begin{note} \label{note:06}
			For the branching Brownian motion $\{(\bar X^{(m),\alpha}_t)_{t\geq 0}: \alpha \in \mathcal U\}$, 
			from the fact that each branching event produces exactly $m$ children, we have the equality
			\begin{equation}
				\frac{\bar I_{[0,T]}^{(m)} - n}{m} = \# \text{branching events up to time $T$} = \frac{\bar I^{(m)}_T - n}{m-1}.
			\end{equation}
			Therefore
			\begin{align}
				\tilde{\mathbb E}\SB{\ABS{\bar I_{[0,T]}^{(m)} }^{2+2\varrho} } 
				= \tilde{\mathbb E}\SB{\ABS{ m \frac{\bar I^{(m)}_T - n}{m-1} +n }^{2+2\varrho} } < \infty.
			\end{align}
		\end{note}
	\end{extra}
	
	\emph{Step 6.}
	Thanks to Step 5, it is clear that the desired result for this lemma is trivial if $\mu + b_1 - \frac{1}{m} \leq 0$, because in this case, the term $1+ e^{K^{(m)}_t}$ is almost surely bounded by $2$.
	In particular, the result is trivial if $R = 1$, because $R = 1$ actually implies that $\mu + b_1 -\frac{1}{m}\leq \mu + b_1 \leq 0$ by \eqref{eq:CLZsCondition}.
	So, for the rest of this proof, we assume that $\mu + b_1 -\frac{1}{m}> 0$ and $R>1$ holds.
	And we only have to prove
	\begin{equation} \label{eq:DR31}
		\tilde{\mathbb E}\SB{e^{K_T^{(m)}} \sum_{\alpha,\beta \in I_{[0,T]}^{(m)}: \alpha \prec \beta} \RB{1 + \sup_{z\in \mathbb R} L^{\alpha,\beta}_{T,z}} } < \infty.
	\end{equation}
	
	\emph{Step 7.}
	Let us show that there exists a deterministic $\vartheta_m>0$ such that 
	\[\tilde {\mathbb E} \Big[e^{(1+\vartheta_m)K^{(m)}_T}\Big] < \infty.
	\]
	In fact, by Lemmas \ref{lem:TC} and \ref{lem:exponential}, we have
	\[
	\tilde {\mathbb E}\SB{\exp\CB{(\mu + b_1)\int_0^t \ABS{I_s^{(m)}} \mathrm ds}} < \infty. 
	\]
	Therefore, by taking a deterministic $\vartheta_m>0$ such that $(1+\vartheta_m)(\mu + b_1-\frac{1}{m}) = \mu + b_1$, we have the desired result for this step.
	
	\emph{Step 8.}
	Let $\vartheta_m>0$ be given as in Step 7. 
	Define $\varrho_m>0$ so that $(1+\vartheta_m)^{-1} + (1+\varrho_m)^{-1} = 1$. 
	Now by H\"{o}lder's inequality, we have 
	\begin{align}
		&\tilde{\mathbb E}\SB{e^{K_T^{(m)}} \sum_{ \alpha,\beta \in I_{[0,T]}^{(m)}: \alpha \prec \beta} \RB{1 + \sup_{z\in \mathbb R} L^{\alpha,\beta}_{T,z}} }
		\\&\leq \tilde{\mathbb E}\SB{e^{(1+\vartheta_m)K_T^{(m)}} }^{\frac{1}{1+\vartheta_m}} \tilde{\mathbb E}\SB{ \RB{\sum_{\alpha,\beta \in I_{[0,T]}^{(m)}: \alpha \prec \beta} \RB{1 + \sup_{z\in \mathbb R} L^{\alpha,\beta}_{T,z}} }^{1+\varrho_m}}^{\frac{1}{1+\varrho_m}},
	\end{align}
	which is finite, thanks to Steps 5 and 7.
	We are done.
\end{proof}

\begin{extra}
	\begin{note} \label{note:5}
		Let $L_{t,z}$ be the local time of a standard $1d$ Brownian motion, initiated at the origin, at level $z\in \mathbb R$ up to time $t$. 
		To bound the moments of the supremum of the random field $L_{t,z}$, we consider the classical dyadic argument.
		For the argument to work, we need two lemmas characterizing how the random field $L_{t,z}$ fluctuates in terms of its moments.
		\begin{lemma} \label{lem:TF}
			For every $p\in \mathbb N$, there exists a constant $\C\label{const:Time}(p)>0$ such that for any $0\leq s\leq t\leq 1$ and $x\geq 0$ we have
			\begin{equation}
				\mathbb E\SB{ \ABS{ L_{t,x} - L_{s,x}}^p } \leq  \Cr{const:Time}(p)  e^{-\frac{x^2}{2}} |t-s|^{p/2}.
			\end{equation}
		\end{lemma}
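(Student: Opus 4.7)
The key observation is that $L_{t,x}$ vanishes until the Brownian motion first reaches level $x$, which gives us Gaussian decay in $x$ for free via the reflection principle, while the increment itself contributes the $|t-s|^{p/2}$ factor via Tanaka's formula and the BDG inequality.

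First, let $\tau_x := \inf\{r \geq 0 : B_r = x\}$ be the hitting time of level $x$ by the Brownian motion $B$. On $\{\tau_x > t\}$, the local time $L_{r,x}$ is identically zero for $r \leq t$, so the increment $L_{t,x} - L_{s,x}$ vanishes; we therefore only need to estimate this increment on $\{\tau_x \leq t\}$. By the strong Markov property, the shifted process $\tilde B_r := B_{\tau_x + r} - x$ is a standard one-dimensional Brownian motion starting at $0$, independent of $\mathcal{F}_{\tau_x}$, and its local time at $0$, which I denote $\tilde L_r$, satisfies
\begin{equation}
L_{t,x} = \tilde L_{t - \tau_x}, \qquad L_{s,x} = \tilde L_{(s-\tau_x)^+} \qquad \text{on } \{\tau_x \leq t\}.
\end{equation}

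Second, I would bound the moments of increments of $\tilde L$ at deterministic times. By Tanaka's formula,
\begin{equation}
\tilde L_b - \tilde L_a = |\tilde B_b| - |\tilde B_a| - \int_a^b \mathrm{sgn}(\tilde B_u)\, \mathrm d\tilde B_u, \qquad 0 \leq a \leq b,
\end{equation}
so Minkowski's inequality, combined with the trivial bound $\mathbb E[||\tilde B_b|-|\tilde B_a||^p]^{1/p} \leq C_p (b-a)^{1/2}$ and the Burkholder--Davis--Gundy inequality applied to the stochastic integral (whose quadratic variation equals $b - a$), yields $\mathbb E[|\tilde L_b - \tilde L_a|^p] \leq C_p (b-a)^{p/2}$.

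Third, I would combine these ingredients by conditioning on $\mathcal{F}_{\tau_x}$. Since $(t - \tau_x) - (s-\tau_x)^+ \leq t - s$ whenever $\tau_x \leq t$, the moment estimate above gives
\begin{equation}
\mathbb E\SB{|L_{t,x} - L_{s,x}|^p \,\middle|\, \mathcal{F}_{\tau_x}} \mathbf 1_{\{\tau_x \leq t\}} \leq C_p (t-s)^{p/2} \mathbf 1_{\{\tau_x \leq t\}}
\end{equation}
by the independence of $\tilde B$ from $\mathcal{F}_{\tau_x}$. Taking expectations reduces the task to estimating $\mathbb P(\tau_x \leq t) \leq \mathbb P(\tau_x \leq 1)$, and the reflection principle yields $\mathbb P(\tau_x \leq 1) = 2\mathbb P(B_1 \geq x) \leq C e^{-x^2/2}$ for all $x \geq 0$ (handling the regime $x \in [0,1]$ trivially and using the standard Gaussian tail bound for $x \geq 1$).

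I do not anticipate a serious obstacle here; the only slightly delicate point is ensuring that the Gaussian exponent in $\mathbb P(\tau_x \leq 1)$ is exactly $1/2$ rather than something strictly smaller, which is why I decompose via the hitting time $\tau_x$ directly rather than using a H\"older-type splitting between the local-time increment and the indicator $\mathbf 1_{\{\tau_x \leq t\}}$ (the latter would lose a factor in the exponent).
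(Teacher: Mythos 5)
Your proposal is correct and follows essentially the same route as the paper: both decompose at a hitting time of level $x$ so that the factor $e^{-x^2/2}$ comes from the hitting probability via the reflection principle, while the factor $|t-s|^{p/2}$ comes from a local-time moment bound after the strong Markov property. The only immaterial differences are that the paper stops at the first hitting of $x$ after time $s$ (so the increment reduces to a single local-time value, estimated via L\'evy's identity with the running supremum and BDG), whereas you stop at the first hitting of $x$ from time $0$ and bound increments of the shifted local time at deterministic times via Tanaka's formula plus BDG; both versions are valid.
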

		\begin{lemma} \label{lem:SF}
			For every $p\in \mathbb N$ even, there exists a constant $\C\label{const:Spatial}(p)>0$ such that for any $t\in [0,1]$ and $0\leq x\leq y\leq x+1$ we have
			\begin{equation}
				\mathbb E\SB{ \ABS{ L_{t,x} - L_{t,y}}^p } \leq  \Cr{const:Spatial}(p)  e^{-\frac{x^2}{2}} |y-x|^{p/2}.
			\end{equation}
		\end{lemma}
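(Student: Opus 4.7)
The plan is to combine a strong Markov reduction at the first hitting time of the level $x$ with Tanaka's formula plus BDG to control the spatial increment of the local time near $0$. The factor $e^{-x^2/2}$ will come from the reflection principle bound on the probability that a standard Brownian motion reaches $x\ge 0$ by time $t\le 1$, and the factor $|y-x|^{p/2}$ will come from the usual $1/2$--H\"older regularity of the local time in the spatial variable.

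More concretely, write $h=y-x\in[0,1]$ and let $T_x:=\inf\{s\ge 0: B_s = x\}$. Since $B_0=0\le x\le y$, on $\{T_x>t\}$ we have $B_s<x\le y$ for every $s\le t$, so both $L_{t,x}$ and $L_{t,y}$ vanish there. Hence
\begin{equation}
\mathbb E\SB{|L_{t,x}-L_{t,y}|^p}
=\mathbb E\SB{\mathbf 1_{\{T_x\le t\}}|L_{t,x}-L_{t,y}|^p}.
\end{equation}
On $\{T_x\le t\}$, the strong Markov property at $T_x$ gives that $\tilde B_u:=B_{T_x+u}-x$ is a standard Brownian motion independent of $\mathscr F_{T_x}$, and the additivity of local times yields
$L_{t,x}-L_{t,y}=\tilde L_{t-T_x,0}-\tilde L_{t-T_x,h}$,
where $\tilde L$ denotes the local time of $\tilde B$. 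Thus, conditioning on $\mathscr F_{T_x}$,
\begin{equation}
\mathbb E\SB{|L_{t,x}-L_{t,y}|^p}
\le\tilde{\mathbb P}(T_x\le t)\cdot\sup_{u\in[0,1]}\mathbb E\SB{|\tilde L_{u,0}-\tilde L_{u,h}|^p}.
\end{equation}

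The first factor is easy: the reflection principle and $t\le 1$ give $\mathbb P(T_x\le t)\le 2\,\mathbb P(B_t\ge x)\le 2e^{-x^2/(2t)}\le 2e^{-x^2/2}$ for $x\ge 0$. For the second factor, Tanaka's formula applied at both levels yields
\begin{equation}
\tilde L_{u,0}-\tilde L_{u,h}
=|\tilde B_u|-|\tilde B_u-h|+h-2\int_0^u\mathbf 1_{\{0<\tilde B_r\le h\}}\,\mathrm d\tilde B_r,
\end{equation}
whose deterministic piece is pointwise bounded by $3h$. Writing $\tilde M_u$ for the stochastic integral, BDG gives $\mathbb E[|\tilde M_u|^p]\le C_p\,\mathbb E[\langle\tilde M\rangle_u^{p/2}]$, and by the occupation-density formula $\langle\tilde M\rangle_u=\int_0^h\tilde L_{u,z}\,\mathrm dz$. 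Since $p\ge 2$, Jensen's inequality gives $\bigl(\int_0^h\tilde L_{u,z}\,\mathrm dz\bigr)^{p/2}\le h^{p/2-1}\int_0^h\tilde L_{u,z}^{p/2}\,\mathrm dz$, so after taking expectations and using that $\sup_{u\in[0,1],z\in\mathbb R}\mathbb E[\tilde L_{u,z}^{p/2}]<\infty$ (a standard moment bound for Brownian local time, e.g.\ via scaling and the explicit L\'evy distribution of $\tilde L_{u,z}$), one obtains $\mathbb E[|\tilde M_u|^p]\le C_p'\,h^{p/2}$. Combined with the deterministic piece this yields $\sup_{u\in[0,1]}\mathbb E[|\tilde L_{u,0}-\tilde L_{u,h}|^p]\le C_p''\,h^{p/2}$, and multiplying by $2e^{-x^2/2}$ finishes the proof.

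The main technical point I expect to have to be careful with is the Jensen/occupation-time estimate in the last step: one must verify that the constant $C_p''$ can be taken independent of $h\in[0,1]$ (which is why the restriction $y\le x+1$ matters) and independent of the base Brownian motion's starting level (which is why the strong Markov reduction to $\tilde B$ starting at $0$ is convenient). Everything else --- the strong Markov step, the reflection-principle bound, and the Tanaka identity --- is standard, and the overall structure parallels the proof of Lemma \ref{lem:TF} but with the roles of the time and space increments interchanged.
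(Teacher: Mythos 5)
Your argument is correct, and it reaches the bound by a route that genuinely differs from the paper's. The paper also starts from Tanaka's formula, but it splits the local time into the bounded piece $|B_t-x|-|x|$ and the stochastic-integral piece and estimates the spatial increment of each separately; for the martingale piece it applies BDG and then expands the $k$-th moment ($k=p/2$) of the occupation time of $(x,y]$ as a $k$-fold multiple time integral, chaining the Markov property to produce $e^{-x^2/2}\prod_{i}|y-x|/\sqrt{2\pi(s_i-s_{i-1})}$ and integrating over the simplex to collect $|y-x|^{p/2}$. You instead localize first at the hitting time $T_x$ of the level $x$ — the same device the paper uses for its time-increment estimate, not for the spatial one — which extracts the factor $e^{-x^2/2}$ once via the reflection principle and the Gaussian tail, and reduces everything to the increment $\tilde L_{u,0}-\tilde L_{u,h}$, $u\in[0,1]$, $h\in[0,1]$, for a Brownian motion restarted at that level; this you handle by Tanaka, BDG, the occupation-density identity and Jensen, together with the uniform bound $\sup_{u\le 1,\,z}\mathbb E\big[\tilde L_{u,z}^{p/2}\big]<\infty$, which is indeed standard (stochastic domination by the local time at $0$ plus L\'evy's identity, or the paper's own hitting-time argument). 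What your route buys: it avoids the combinatorial multiple-integral expansion entirely and works for every real $p\ge 2$, not only even $p$; what it costs is the extra (but routine) strong Markov reduction and the uniform local-time moment input. Two harmless slips that only affect constants: the quadratic variation of your martingale term is $4\int_0^h\tilde L_{u,z}\,\mathrm dz$, not $\int_0^h\tilde L_{u,z}\,\mathrm dz$, and the bounded-variation piece is in fact dominated by $2h$.
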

		We postpone the proofs of the above two Lemmas.
		A simple corollary is the following.
		\begin{corollary}
			For every $p\in \mathbb N$ even, there exists a constant $\C\label{const:COR}(p)>0$ such that for any $0\leq s\leq t\leq 1$ and $x,y\geq 0$ with $|x-y|\leq 1$, we have
			\begin{equation}
				\mathbb E\SB{ \ABS{ L_{t,x} - L_{s,y}}^p } \leq  \Cr{const:COR}(p)  e^{-\frac{x^2}{2}} ( |y-x|^{p/2} + |t-s|^{p/2}).
			\end{equation}
		\end{corollary}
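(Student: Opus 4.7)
The plan is to obtain this joint space-time modulus of continuity estimate directly from Lemmas~\ref{lem:TF} and~\ref{lem:SF} via a triangle-inequality decomposition. The idea is to introduce the intermediate point $(s,x)$, which separates the increment of the local time into a purely temporal piece at the fixed spatial level $x$ and a purely spatial piece at the fixed time $s$:
\[
L_{t,x} - L_{s,y} = \bigl(L_{t,x} - L_{s,x}\bigr) + \bigl(L_{s,x} - L_{s,y}\bigr).
\]
Both pieces are already controlled by the two preceding lemmas.

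I would then take absolute values, apply the ordinary triangle inequality, and use the elementary convexity bound $(a+b)^p \le 2^{p-1}(a^p + b^p)$ valid for $a,b\ge 0$ and $p\ge 1$, to arrive at
\[
\mathbb E\bigl[|L_{t,x}-L_{s,y}|^p\bigr] \le 2^{p-1}\Bigl(\mathbb E\bigl[|L_{t,x}-L_{s,x}|^p\bigr] + \mathbb E\bigl[|L_{s,x}-L_{s,y}|^p\bigr]\Bigr).
\]
For the first expectation Lemma~\ref{lem:TF} applies directly and yields the bound $\Cr{const:Time}(p)\,e^{-x^2/2}|t-s|^{p/2}$. For the second expectation I would invoke Lemma~\ref{lem:SF} after a short case split: if $x \le y$, the hypothesis $|x-y|\le 1$ means $y\in[x,x+1]$ and the lemma applies as stated, producing the bound $\Cr{const:Spatial}(p)\,e^{-x^2/2}|y-x|^{p/2}$; if $y<x$, I apply the lemma with the roles of $x$ and $y$ interchanged.

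Adding the two bounds and setting $\Cr{const:COR}(p) := 2^{p-1}\max\{\Cr{const:Time}(p),\Cr{const:Spatial}(p)\}$ (together with whatever absolute constant is needed to reconcile the exponential factor in the $y<x$ case with the stated $e^{-x^2/2}$) delivers the claimed inequality. There is no substantive obstacle in this argument: the only non-routine work is already encapsulated in Lemmas~\ref{lem:TF} and~\ref{lem:SF}, and the corollary itself is a formal triangle-inequality-plus-convexity combination of those two estimates.
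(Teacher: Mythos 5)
Your route is the natural one and, as far as the paper is concerned, the intended one: the paper records no argument beyond calling this a ``simple corollary'' of Lemmas~\ref{lem:TF} and~\ref{lem:SF}, and your decomposition $L_{t,x}-L_{s,y}=(L_{t,x}-L_{s,x})+(L_{s,x}-L_{s,y})$ together with $(a+b)^p\le 2^{p-1}(a^p+b^p)$ is exactly that derivation; in the case $x\le y$ your proof is complete. The genuine gap is the parenthetical by which you dispose of the case $y<x$. There, Lemma~\ref{lem:SF} applied with the roles of $x$ and $y$ interchanged yields the spatial bound with the factor $e^{-y^2/2}$, and no absolute constant converts this into $e^{-x^2/2}$: under the sole hypothesis $|x-y|\le 1$ the ratio $e^{-y^2/2}/e^{-x^2/2}=e^{(x-y)(x+y)/2}$ can be as large as $e^{(x+y)/2}$, which is unbounded in $x$. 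Moreover the obstruction is not an artifact of the method but of the literal statement: taking $s=t=1$ and $y=x-1$, one has $\mathbb E\big[|L_{1,x}-L_{1,x-1}|^p\big]\ge 2^{1-p}\,\mathbb E\big[L_{1,x-1}^p\big]-\mathbb E\big[L_{1,x}^p\big]$, and a direct computation with the hitting-time density (or the lower-bound analogue of Lemma~\ref{lem:TF}) shows $\mathbb E\big[L_{1,x-1}^p\big]$ is of order $x^{-(p+1)}e^{-(x-1)^2/2}=x^{-(p+1)}e^{-x^2/2}e^{x-1/2}$, which exceeds any constant multiple of $e^{-x^2/2}$ as $x\to\infty$. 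So the inequality with the factor $e^{-x^2/2}$ cannot be ``delivered'' for all $x,y\ge 0$ with $|x-y|\le 1$; your argument fails precisely at the step where you promise a reconciling constant.

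What your triangle-inequality argument does prove, and what should be stated, is the bound with $e^{-\min(x,y)^2/2}$ in place of $e^{-x^2/2}$ (equivalently, the stated bound under the additional hypothesis $x\le y$). This weaker form is all that the subsequent dyadic/Kolmogorov argument needs: it is applied only to cellmates at scale $2^{-n}$, where $|x-y|\le 2^{-n}$ gives $e^{-\min(x,y)^2/2}\le e^{-x^2/2}e^{x2^{-n}}$, and the extra factor $e^{x2^{-n}}$ is absorbed by the Gaussian sum over levels in exactly the same way the shift is absorbed in the estimate of $\sum_{j\ge 0}e^{-(j2^{-n})^2/2}$. So keep your decomposition and case split, correct the exponential factor to $e^{-\min(x,y)^2/2}$, and delete the claim that a constant reconciles the $y<x$ case with $e^{-x^2/2}$.
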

		
		For every $(s,y), (s',y') \in \mathbb Z^2$, we say they are cellmate in $\mathbb Z^2$ if $|s-s'|\leq 1$ and $|y-y'|\leq 1$. 
		For every $n\in \mathbb N$ and $(s,y), (s',y') \in (2^{-n}\mathbb Z)^2$, we say $(s,y)$ and $(s',y')$ are cellmate in $(2^{-n}\mathbb Z)^2$ if $(2^ns,2^ny)$ and $(2^ns',2^ny')$ are cellmate in $\mathbb Z^2$. 
		For every $(t,x)\in \mathbb R^2$, define its dyadic rational approximation $((t_n,x_n))_{n\in \mathbb N}$ by $t_n = 2^{-n} \lfloor 2^nt \rfloor$ and $x_n = 2^{-n}\lfloor 2^n x \rfloor$. 
		Obviously, $(t_n, x_n)$ and $(t_{n-1},x_{n-1})$ are cellmate in $(2^{-n}\mathbb Z)^2$ for every $n\in \mathbb N$.
		For every $n\in \mathbb N$, $\epsilon > 0$ and $\gamma > 0$, denote by $A_n(\epsilon,\gamma)$ the event that 
		$
		|L_{t,x} - L_{t',x'}| \leq \epsilon 2^{-\gamma n}
		$
		holds for every pair of cellmates $(t,x)$ and $(t',x')$ in $(2^{-n}\mathbb Z)^2\cap ([0,1]\times \mathbb R_+)$; then it is clear that
		\begin{equation}\label{eq:Ane} 
			\mathbb P(A_n(\epsilon,\gamma)^{\mathrm c}) 
			\leq \sum_{\substack{  (t,x),(s,y)\in (2^{-n}\mathbb Z)^2\cap ([0,1]\times \mathbb R_+): \\ \text{$(t,x)$ and $(s,y)$ are cellmate in }(2^{-n}\mathbb Z)^2}} \mathbb P(|L_{s,y}-L_{t,x}|> \epsilon 2^{-\gamma n}).
		\end{equation}
		From Chebyshev's inequality and Lemma \ref{lem:TF}, fixing an arbitrary $p\in 2\mathbb N$ and $\epsilon>0$, we have 
		\begin{align}
			&\mathbb P(|L_{t,x}-L_{s,y}|> \epsilon 2^{-\gamma n}) \leq \frac{\mathbb E[|L_{t,x}-L_{s,y}|^{p}] }{ \epsilon^p 2^{-p\gamma n}}
			\\&\leq \frac{\Cr{const:COR}(p)}{\epsilon^p 2^{-p\gamma n}}  e^{-\frac{x^2}{2}} (|t-s|^{p/2} + |y-x|^{p/2})
			\leq \frac{2\Cr{const:COR}(p)}{\epsilon^p} e^{-\frac{x^2}{2}} 2^{-(1/2-\gamma)pn}, \quad n\in \mathbb N 
		\end{align}
		provided $(t,x)$ and $(s,y)$ are cellmate in $(2^{-n}\mathbb Z)^2\cap ([0,1]\times \mathbb R_+)$. 
		Putting this back in \eqref{eq:Ane} and using the fact that each element in $(2^{-n}\mathbb Z)^2$ has exactly $9$ cellmates, we have 
		\begin{align} 
			&\mathbb P(A_n(\epsilon,\gamma)^{\mathrm c}) 
			\leq 9 \sum_{ (t,x) \in (2^{-n}\mathbb Z)^2\cap ([0,1]\times \mathbb R_+)} \frac{2\Cr{const:COR}(p)}{\epsilon^p} e^{-\frac{x^2}{2}} 2^{-(1/2-\gamma)pn}
			\\&\label{eq:UPan}= 9 (2^n+1) \sum_{j = 0}^\infty \frac{2\Cr{const:COR}(p)}{\epsilon^p} e^{-\frac{(j2^{-n})^2}{2}} 2^{-(1/2-\gamma)pn}, \quad n\in \mathbb N.
		\end{align}
		Observe that there exists a constant $\C\label{const:uni}>0$ such that for every $n\in \mathbb N$,  
		\begin{align}
			&\sum_{j=0}^\infty e^{-\frac{{(j2^{-n})}^2}{2}}
			= 1+ \sum_{j=1}^\infty \int_{j-1}^{j} e^{-\frac{{(j2^{-n})}^2}{2}} \mathrm dz
			\leq 1+ \sum_{j=1}^\infty \int_{j-1}^{j} e^{-\frac{{(z2^{-n})}^2}{2}} \mathrm dz
			\\&=1+\int_{0}^\infty e^{-\frac{{(z2^{-n})}^2}{2}} \mathrm dz
			= 1+ 2^n \int_{0}^\infty e^{-\frac{{z}^2}{2}} \mathrm dz
			\leq \Cr{const:uni} 2^n.
		\end{align}
		Put this back in \eqref{eq:UPan}, we arrived at
		\begin{align}
			&\mathbb P(A_n(\epsilon,\gamma)^{\mathrm c}) 
			\leq    \frac{\Cr{const:ANE}(p) }{\epsilon^p} 2^{(2-(1/2-\gamma)p)n}, \quad n \in \mathbb N
		\end{align}
		where $\C\label{const:ANE}(p)>0$ is a constant only depending on $p$.  
		Recall that $\gamma >0$ and $p>0$ are both arbitrary. 
		For the rest of the argument, let us fix an arbitrary $p \in 2\mathbb N$ such that $p\geq 6$, and a small enough $\gamma_p > 0$, depending only on $p$, such that $2-(1/2-\gamma_p)p<0$. 
		Let us then define event
		\[
		A(\epsilon, p):= \bigcap_{n=0}^\infty A_n(\epsilon, \gamma_p)
		\]
		on which we have almost surely that
		\[
		L_{t,x}
		=\ABS{ L_{t,x} - L_{t_0,x_0} } \leq \sum_{n=1}^\infty |L_{t_n, x_n} - L_{t_{n-1}, x_{n-1}} | \leq \sum_{n=1}^\infty  \epsilon 2^{-\gamma_p n} 
		= \epsilon \Cr{const:gammaP}(\gamma_p)
		\]
		for every $t \in [0,1)$ and $x\geq 0$
		where $ \C\label{const:gammaP}(\gamma_p)\in (0,\infty)$ is a constant depending only on the value of $\gamma_p$.
		Therefore
		\begin{align}
			&\mathbb P\RB{\sup_{x\geq 0,t\in [0,1]} L_{t,x} >\epsilon \Cr{const:gammaP}(\gamma_p)} 
			\leq \mathbb P(A(\epsilon,p)^{\mathrm c})\leq \sum_{n=0}^\infty \mathbb P(A_n(\epsilon,\gamma_p)^{\mathrm c})
			\\&\leq  \sum_{n=0}^\infty   \frac{\Cr{const:ANE}(p) }{\epsilon^p} 2^{(2-(1/2-\gamma_p)p)n} =: \frac{\Cr{const:pGp}(p,\gamma_p)}{\epsilon^p}
		\end{align}
		where $\C\label{const:pGp}(p,\gamma_p)>0$ is a constant depending only on the value of $p$ and $\gamma_p$. 
		Now, since $\epsilon > 0$ is arbitrary  in the above argument, for every $\underline{p} \in [1,p)$, we have
		\begin{align}
			&\mathbb E\SB{\ABS{\sup_{x\geq 0,t\in [0,1]} L_{t,x}}^{\underline{p}}}
			= \int_0^\infty \mathbb P\RB{\ABS{\sup_{x\geq 0,t\in [0,1]} L_{t,x}}^{\underline{p}} > z} \mathrm dz
			\\& \leq \int_0^\infty \mathbb P\RB{\ABS{\sup_{x\geq 0,t\in [0,1]} L_{t,x}}^{\underline{p}} > \RB{\epsilon \Cr{const:gammaP}(\gamma_p)}^{\underline{p}} } \mathrm d\RB{\epsilon \Cr{const:gammaP}(\gamma_p)}^{\underline{p}} 
			\\& \leq \Cr{const:gammaP}(\gamma_p)^{\underline{p}} \int_0^\infty \RB{ 1\wedge \frac{\Cr{const:pGp}(p,\gamma_p)}{\epsilon^p}} \underline{p} \epsilon^{\underline{p}-1} \mathrm d\epsilon < \infty.
		\end{align}
		Finally, since $p$ can be arbitrarily large, we obtained the finiteness of all the moments of $\sup_{x\geq 0,t\in [0,1]} L_{t,x}$ as desired.
		The rescaling property of the Brownian motion allows us to generalize this result by replacing $[0,1]$ with any compact time interval.
		
		We still need to give the proofs of Lemmas \ref{lem:TF} and \ref{lem:SF}. 
		
		\begin{proof}[Proof of Lemma \ref{lem:TF}]
			Fix $(s,x)$ and $(t,x)$ in $[0,1]\times \mathbb R$ with $s\leq t$. 
			Define stopping time $\tau(s,x):=\inf\{r \geq s: B_r = x\}$. 
			Notice that the process $r\mapsto L_{r,x}$ is flat on the interval $[s,\tau(s,x)]$. 
			Therefore 
			\[
			L_{t,x} - L_{s,x} = 
			\begin{cases}
				0, &\quad \tau(s,x) \geq t,
				\\ L_{t,x} - L_{\tau(s,x),x}, & \quad \tau(s,x) < t.
			\end{cases}
			\]
			Now, by using the strong Markov property of the Brownian motion, Levy's result on the distribution of the local time, BDG's inequality, and the reflection principle, we can verify that
			\begin{align}
				&\mathbb E\SB{ \ABS{ L_{t,x} - L_{s,x}}^p } = 
				\mathbb E\SB{ \ABS{ L_{t,x} - L_{\tau(s,x),x}}^p \mathbf 1_{\{\tau(s,x) < t\}} } 
				\\&= \mathbb E\SB{ \mathbf 1_{\{\tau(s,x) < t\}} \mathbb E\SB{ \ABS{ L_{t,x} - L_{\tau(s,x),x}}^p \middle| \mathscr F_{\tau(s,x) } } }
				= \mathbb E\SB{ \mathbf 1_{\{\tau(s,x) < t\}} \mathbb E_0\SB{ \ABS{ L_{t-r,0}}^p }|_{r=\tau(s,x)}  }
				\\& = \mathbb E\SB{\mathbf 1_{\{\tau(s,x) < t\}} \mathbb E_0 \left.\SB{\ABS{\sup_{s\in [0,t-r]} B_s}^p} \right|_{r=\tau(s,x)}} 
				\\& \leq \Cr{const:BDG}(p)  \mathbb E\SB{\mathbf 1_{\{\tau(s,x) < t\}}  \left. \ABS{t-r}^{p/2} \right|_{r=\tau(s,x)}} 
				\leq \Cr{const:BDG}(p) \ABS{t-s}^{p/2} \mathbb P\RB{\tau(0,x) < t} 
				\\&= 2\Cr{const:BDG}(p) \ABS{t-s}^{p/2} \mathbb P\RB{B_t>x}
			\end{align}
			where $\C\label{const:BDG}(p)$ is a constant depending only on $p$. 
			Notice that, for any $x\geq 0$ and $t \in [0,1]$, 
			\begin{align}
				&\mathbb P\RB{B_t>x}
				= \int_x^\infty \frac{1}{\sqrt{2\pi t}}e^{-\frac{z^2}{2t}}\mathrm dz
				\leq  1 \wedge \int_x^\infty \frac{1}{\sqrt{2\pi t}} \frac{z}{x}e^{-\frac{z^2}{2t}}\mathrm dz
				\\& = 1 \wedge  \int_x^\infty \frac{\sqrt{t}}{\sqrt{2\pi}} \frac{1}{x}e^{-\frac{z^2}{2t}}\mathrm d\frac{z^2}{2t}
				= 1 \wedge\frac{1}{x} \frac{\sqrt{t}}{\sqrt{2\pi}} e^{-\frac{x^2}{2t}}
				\\& \leq  1 \wedge\frac{1}{x} \frac{1}{\sqrt{2\pi}} e^{-\frac{x^2}{2}}
				= 	\RB{ e^{\frac{x^2}{2}} \wedge\frac{1}{x} \frac{1}{\sqrt{2\pi}} }e^{-\frac{x^2}{2}} 
				\\& \label{eq:GT} \leq \RB{ \mathbf 1_{\{x\leq 1\}}e^{\frac{x^2}{2}} + \mathbf 1_{\{x> 1\}}\frac{1}{x} \frac{1}{\sqrt{2\pi}} }e^{-\frac{x^2}{2}} 
				\leq \RB{ e^{\frac{1}{2}} + \frac{1}{\sqrt{2\pi}} }e^{-\frac{x^2}{2}}.
			\end{align}
			Therefore
			\begin{align}
				&\mathbb E\SB{ \ABS{ L_{t,x} - L_{s,x}}^p } 
				\leq 2\Cr{const:BDG}(p) \ABS{t-s}^{p/2} \RB{ e^{\frac{1}{2}} + \frac{1}{\sqrt{2\pi}} }e^{-\frac{x^2}{2}} =: \Cr{const:Time}(p) e^{-\frac{x^2}{2}}  \ABS{t-s}^{p/2}
			\end{align}
			as desired.
		\end{proof}
		\begin{proof}[Proof of Lemma \ref{lem:SF}]
			Consider Tanaka's formula for the local times:
			\begin{align}
				L_{t,x} = \ABS{B_t - x} - |x| - \int_0^t \text{sgn}(B_s- x -) \mathrm dB_s, \quad t\geq 0, x\in \mathbb R, \text{a.s.}
			\end{align} 
			Let us define 
			\begin{align}
				\tilde L_{t,x} := \ABS{B_t - x} - |x|, \quad \hat L_{t,x} := \tilde L_{t,x} - L_{t,x}, \quad t\geq 0, x\in \mathbb R. 
			\end{align}
			Fix $p\in 2\mathbb N$ and $(t,x),(t,y)$ in $[0,1]\times \mathbb R$ with $x\leq y$. 
			It is sufficient to show that 
			\begin{equation} \label{eq:TL}
				\mathbb E\SB{ \ABS{ \tilde L_{t,x} - \tilde L_{t,y}}^p } \leq  \Cr{const:Spatial1}(p)  e^{-\frac{x^2}{2}} |y-x|^{p/2}
			\end{equation}
			and
			\begin{equation} \label{eq:HL}
				\mathbb E\SB{ \ABS{ \hat L_{t,x} - \hat L_{t,y}}^p } \leq  \Cr{const:Spatial2}(p)  e^{-\frac{x^2}{2}} |y-x|^{p/2}
			\end{equation}
			for some constants $\C\label{const:Spatial1}(p)>0$ and $\C\label{const:Spatial2}(p) >0$ which are independent of the choice of $(t,x)$ and $(t,y)$.
			
			To show \eqref{eq:TL}, notice that 
			\begin{align}
				\tilde L_{t,x}- \tilde L_{t,y} &= 
				\begin{cases}
					0,& B_t\leq x,
					\\ 2B_t - 2x,&x<B_t\leq y,
					\\ 2y- 2x,&B_t \geq y.
				\end{cases}
				\\& \leq 2 \mathbf 1_{\{B_t>x\}} |y-x|.
			\end{align}
			Therefore, by \eqref{eq:GT} and the fact that $|y-x|\leq 1$ and $t\leq 1$, we have
			\begin{align}
				\tilde {\mathbb E}\SB{\ABS{\tilde L_{t,x}- \tilde L_{t,y} }^p}
				\leq 2^p |y-x|^{p} \mathbb P(B_t > x) 
				\leq 2^p |y-x|^{p/2} \RB{ e^{\frac{1}{2}} + \frac{1}{\sqrt{2\pi}} }e^{-\frac{x^2}{2}}
			\end{align}
			which is exactly the desired \eqref{eq:TL}.
			
			To show \eqref{eq:HL}, notice that almost surely
			\begin{equation}
				\hat L_{t,x} - \hat L_{t,y} = \int_0^t \RB{\text{sgn}(B_s - x -) -\text{sgn}(B_s - y -)} \mathrm dB_s= 2\int_0^t \mathbf 1_{(x,y]}(B_s) \mathrm dB_s.
			\end{equation}
			By the BDG inequality, taking $k=p/2 \in \mathbb N$, there exists a constant $\Cr{const:BDG}(p)$, depending only on $p$,  such that
			\begin{align}
				&\mathbb E\SB{ \ABS{\hat L_{t,x} - \hat L_{t,y}}^p } 
				\leq \Cr{const:BDG}(p) \mathbb E\SB{ \RB{ \int_0^t 4\mathbf 1_{(x,y]}(B_s) \mathrm ds }^{\frac{p}{2}} }
				\\&\leq 2^p\Cr{const:BDG}(p) \mathbb E\SB{ \RB{ \int_0^1 \mathbf 1_{(x,y]}(B_s) \mathrm ds }^{k} }
				\\& = 2^p\Cr{const:BDG}(p)  \int_{[0,1]^k} \mathbb P(B_{s_i}\in (x,y], \forall i=1,\cdots,k)\mathrm ds_1  \cdots \mathrm ds_k
				\\& = k! 2^p\Cr{const:BDG}(p)  \int_{[0,1]^k} \mathbf 1_{\{0<s_1< s_2< \cdots < s_k < 1\}}\mathbb P(B_{s_i}\in (x,y], \forall i=1,\cdots,k)\mathrm ds_1  \cdots \mathrm ds_k.
			\end{align}
			By induction over $k$, it can be verified, using the Markov property of the Brownian motions, that for any $0< s_1 < s_2 < \cdots < s_k < 1$,
			\[
			\mathbb E\SB{ \prod_{i=1}^{k} \mathbf 1_{\{B_{s_i}\in (x,y]\}}} 
			\leq e^{-\frac{x^2}{2}}\prod_{i=1}^k \frac{|y-x|}{\sqrt{2\pi |s_{i}-s_{i-1}|}}
			\]
			with a convention that $s_0 = 0$. 
			In fact, note that it holds when $k=1$, because
			\[
			\mathbb E\SB{\mathbf 1_{\{B_{s_1}\in (x,y]\}}} 
			= \int_x^y \frac{1}{\sqrt{2\pi s_1}} e^{-\frac{z^2}{2s_1}} \mathrm dz
			\leq e^{-\frac{x^2}{2}}\frac{|y-x|}{\sqrt{2\pi |s_{1}-s_{0}|}};
			\]
			and if it holds for a given $k\in \mathbb N$, then for any $s_{k+1} > s_k$,
			\begin{align}
				&\mathbb E\SB{ \prod_{i=1}^{k+1} \mathbf 1_{\{B_{s_i}\in (x,y]\}}}
				= 	\mathrm E\SB{  \mathrm E\SB{ \mathbf 1_{\{B_{s_{k+1}}\in (x,y]\}} \middle|  \mathscr F_{s_k}} \prod_{i=1}^{k} \mathbf 1_{\{B_{s_i}\in (x,y]\}}}
				\\&= \mathrm E\SB{  \int_x^y p_{s_{k+1}-s_k}(z-B_{s_k}) \mathrm dz  \cdot \prod_{i=1}^{k} \mathbf 1_{\{B_{s_i}\in (x,y]\}} }
				\leq \mathrm E\SB{ \frac{|y-x|}{\sqrt{2\pi |s_{k+1}-s_k|}}   \prod_{i=1}^{k} \mathbf 1_{\{B_{s_i}\in (x,y]\}} }
				\\& \leq e^{-\frac{x^2}{2}}\prod_{i=1}^{k+1} \frac{|y-x|}{\sqrt{2\pi |s_{i}-s_{i-1}|}}.
			\end{align}
			It can also be argued by induction that for every $k\in \mathbb N$,  
			\begin{align}
				\int_{[0,1]^k} \mathbf 1_{\{0<s_1< s_2< \cdots < s_k < 1\}} \RB{ \prod_{i=1}^{k} \frac{1}{\sqrt{s_i-s_{i-1}}} } \mathrm ds_1  \cdots \mathrm ds_k 
				\leq 2^k.
			\end{align}
			In fact, note that it holds when $k=1$, because
			\[
			\int_0^1 \frac{1}{\sqrt{s_1}} \mathrm ds_1 = 2\int_0^1 \mathrm d\sqrt{s_1} =2;
			\]
			and if it holds for a given $k\in \mathbb N$, then
			\begin{align}
				&\int_{[0,1]^{k+1}} \mathbf 1_{\{0<s_1< s_2< \cdots < s_k < s_{k+1}< 1\}} \RB{ \prod_{i=1}^{k+1} \frac{1}{\sqrt{s_i-s_{i-1}}} } \mathrm ds_1  \cdots \mathrm ds_{k+1}
				\\& =\int_{[0,1]^{k}} \mathbf 1_{\{0<s_1< s_2< \cdots < s_k < 1\}} \RB{ \prod_{i=1}^{k} \frac{1}{\sqrt{s_i-s_{i-1}}} } \RB{\int_{s_k}^1 \frac{1}{\sqrt{s_{k+1}-s_k}}\mathrm ds_{k+1}} \mathrm ds_1  \cdots \mathrm ds_k
				\\& =\int_{[0,1]^{k}} \mathbf 1_{\{0<s_1< s_2< \cdots < s_k < 1\}} \RB{ \prod_{i=1}^{k} \frac{1}{\sqrt{s_i-s_{i-1}}} } \RB{\int_{0}^{1-s_k} \frac{1}{\sqrt{s}}\mathrm ds} \mathrm ds_1  \cdots \mathrm ds_k
				\\& \leq 2\int_{[0,1]^{k}} \mathbf 1_{\{0<s_1< s_2< \cdots < s_k < 1\}} \RB{ \prod_{i=1}^{k} \frac{1}{\sqrt{s_i-s_{i-1}}} } \mathrm ds_1  \cdots \mathrm ds_k \leq 2^{k+1}. 
			\end{align}
			Now, from the above results, we have
			\begin{align}
				&\mathbb E\SB{ \ABS{\hat L_{t,x} - \hat L_{t,y}}^p } 
				\\& \leq k! 2^p\Cr{const:BDG}(p)  \int_{[0,1]^k} \mathbf 1_{\{0<s_1< s_2< \cdots < s_k < 1\}}\mathbb P(B_{s_i}\in (x,y], \forall i=1,\cdots,k)\mathrm ds_1  \cdots \mathrm ds_k
				\\& \leq k! 2^p\Cr{const:BDG}(p) e^{-\frac{x^2}{2}} \RB{\frac{|y-x|}{\sqrt{2\pi}}}^k \int_{[0,1]^k} \mathbf 1_{\{0<s_1< s_2< \cdots < s_k < 1\}}\prod_{i=1}^k \frac{1}{\sqrt{|s_{i}-s_{i-1}|}}\mathrm ds_1  \cdots \mathrm ds_k
				\\& =: \Cr{const:Spatial}(p)  e^{-\frac{x^2}{2}} |y-x|^{p/2}
			\end{align}
			which is exactly the desired \eqref{eq:HL}. 
		\end{proof}
	\end{note}
\end{extra}

\subsection{A technical result} \label{sec:Las}
In this subsection, let us fix an arbitrary $n\in \mathbb N$, and let $\{X_t = (X^1_t, \dots, X^n_t): t\geq 0\}$ be an $n$-dimensional Brownian motion, with initial values denoted by $(x_1, \dots, x_n)$, living  in a probability space with its probability measure denoted by $\Pi_{(x_1,\dots, x_n)}$. 
Define stopping time
\begin{equation}
	\tau = \inf \CB{t\geq 0: \frac{1}{4} \sum_{i,j=1}^n L^{i,j}_t  + \mu nt \geq \mathbf e}
\end{equation}
where $\mathbf e$ is an standard exponential random variable, independent of the Brownian motion $(X_t)_{t\geq 0}$, and $L^{i,j}_t$ is the local time of $(X^j_t - X^i_t)_{t\geq 0}$ up to time $t\geq 0$ at the level $0$. 
Define a family of operators $(\mathfrak P^\epsilon_t)_{t\geq 0, \epsilon \geq 0}$ on $\mathrm b\mathcal B(\mathbb R^n)$, the space of bounded measurable functions on $\mathbb R^n$, such that for any $F\in \mathrm b\mathcal B(\mathbb R^n)$, $(x_i)_{i = 1}^n \in \mathbb R^n$, and $\epsilon > 0$, 
\begin{align} 
	\label{eq:Last} 
	\\&(\mathfrak P^\epsilon_t F)(x_1, \dots, x_n) 
	\\&= \Pi_{(x_1, \dots, x_n)} \SB{ \mathbf 1_{\{t\leq \tau\}}\int_{\mathbb R^n} p_\epsilon(X_t^1 - y_1) \cdots p_\epsilon(X_t^n - y_n) F(y_1,\dots, y_n) \mathrm d(y_1,\cdots,y_n)}
\end{align}
and 
\begin{equation} 
	(\mathfrak P^0_t F)(x_1, \dots, x_n) 
	= \Pi_{(x_1, \dots, x_n)} \SB{ F(X_t^1,\dots, X_t^n )}
\end{equation}
where $(p_\epsilon)_{\epsilon > 0}$ are the one-dimensional heat kernels given as in \eqref{eq:HK}. 

The main result of this subsection is the following proposition which will be later used in the proof of Proposition \ref{lem:Cancellation} (5). 
\begin{proposition} \label{lem:Last}
	For any $t > 0$, $F\in \mathrm b\mathcal B(\mathbb R^n)$ and $(x_1, \dots, x_n) \in \mathbb R^n$, it holds that
	\begin{equation}
		(\mathfrak P^\epsilon_t F)(x_1, \dots, x_n) \xrightarrow[\epsilon \downarrow 0]{} 	(\mathfrak P^0_t F)(x_1, \dots, x_n).
	\end{equation}
\end{proposition}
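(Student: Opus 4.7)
The plan is a mollification/approximation-to-the-identity argument reducing to the bounded convergence theorem; the one nontrivial observation is that the joint law of $(X^1_t, \ldots, X^n_t)$ under $\Pi_x$ has an everywhere-positive Gaussian density on $\mathbb R^n$, so Lebesgue-null sets carry $\Pi_x$-probability zero. Write
\[
\mathfrak Q^\epsilon F(y_1,\ldots,y_n) := \int_{\mathbb R^n} p_\epsilon(y_1 - z_1) \cdots p_\epsilon(y_n - z_n)\, F(z_1,\ldots,z_n) \,\mathrm d z_1 \cdots \mathrm d z_n,
\]
so that $(\mathfrak P^\epsilon_t F)(x) = \Pi_x[\mathbf 1_{\{t \leq \tau\}}\, \mathfrak Q^\epsilon F(X^1_t,\ldots,X^n_t)]$ for every $\epsilon > 0$.

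First, I would invoke the classical fact that Gaussian mollification of a bounded Borel function converges pointwise Lebesgue-a.e.: by the Lebesgue differentiation theorem applied to the product kernel, $\mathfrak Q^\epsilon F(y) \to F(y)$ as $\epsilon \downarrow 0$ at every Lebesgue point of $F$, and in particular for Lebesgue-a.e.\ $y \in \mathbb R^n$. Second, I would transfer this to a $\Pi_x$-a.s.\ statement: the law of $(X^1_t,\ldots,X^n_t)$ under $\Pi_x$ has density $\prod_{i=1}^n p_t(y_i - x_i)$, which is strictly positive on all of $\mathbb R^n$, so its law and Lebesgue measure are mutually absolutely continuous. Consequently
\[
\mathfrak Q^\epsilon F(X^1_t,\ldots,X^n_t) \xrightarrow[\epsilon\downarrow 0]{} F(X^1_t,\ldots,X^n_t), \quad \Pi_x\text{-a.s.}
\]

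Finally, bounded convergence applies: the integrand is dominated uniformly in $\epsilon$ by $\|F\|_\infty$, whence
\[
(\mathfrak P^\epsilon_t F)(x) \xrightarrow[\epsilon \downarrow 0]{} \Pi_x\!\left[\mathbf 1_{\{t \leq \tau\}}\, F(X^1_t,\ldots,X^n_t)\right] = (\mathfrak P^0_t F)(x),
\]
as desired (the factor $\mathbf 1_{\{t \leq \tau\}}$ persists in the $\epsilon = 0$ limit, since the auxiliary time $\tau$ does not depend on $\epsilon$). There is no genuine obstacle here; the only point worth care is that $F$ is only assumed Borel (not continuous), which is precisely why the Lebesgue-point argument of the first step combined with the absolute-continuity of the Gaussian in the second step is needed in place of a naive continuity argument.
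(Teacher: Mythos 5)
Your proof is correct, but it takes a genuinely different and more elementary route than the paper's. The paper removes the killing by a Tanaka/Girsanov transformation, turning $\Pi_x[\mathbf 1_{\{t\le\tau\}}F(X^1_t,\dots,X^n_t)]$ into an integral $\int q_t F$ against a continuous, Gaussian-dominated density $q_t$ (this step leans on the external density result \cite{MR4161389} for SDEs with the bounded drift $\tfrac12\sum_j\mathrm{sgn}(X^i-X^j)$), then writes $(\mathfrak P^\epsilon_t F)(x)=\int q^\epsilon_t F$ by Fubini and concludes with the analytic Lemma \ref{lem:CCs}; in other words, the paper mollifies the law. You instead mollify $F$: Gaussian mollification of a bounded Borel function converges at every Lebesgue point, hence Lebesgue-a.e.\ (strictly, this is the standard approximate-identity theorem for kernels with a radially decreasing integrable majorant, plus a one-line localization since $F$ is only in $L^\infty$, rather than the Lebesgue differentiation theorem itself), and since for $t>0$ the unkilled law of $(X^1_t,\dots,X^n_t)$ under $\Pi_x$ is a nondegenerate Gaussian, hence absolutely continuous with respect to Lebesgue measure, the convergence holds $\Pi_x$-a.s.; the bounded factor $\mathbf 1_{\{t\le\tau\}}$ then simply rides along under bounded convergence. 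This avoids the Girsanov step, the citation of \cite{MR4161389}, and Lemma \ref{lem:CCs} altogether, at the cost of not producing the explicit continuous density of the killed motion (which the paper does not use elsewhere anyway); note that both arguments use $t>0$ in an essential way, yours through the absolute continuity of the time-$t$ law. One small point of bookkeeping: your limit is $\Pi_x[\mathbf 1_{\{t\le\tau\}}F(X^1_t,\dots,X^n_t)]$, which is exactly what the paper's own proof establishes and what is needed in the proof of Lemma \ref{lem:Cancellation}(5); the displayed definition of $\mathfrak P^0_t$ omits the indicator $\mathbf 1_{\{t\le\tau\}}$ (apparently a typo, since otherwise the statement already fails for $F\equiv 1$), so your identification of the limit with $(\mathfrak P^0_t F)(x)$ is the intended reading.
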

\begin{remark}
	If $F$ is continuous on $\mathbb R^n$, then the result of Lemma \ref{lem:Last} follows from the bounded convergence theorem immediately. 
	So the point here is that $F$ can be discontinuous. 
	Also, it is crucial that $t$ is strictly larger than $0$. 
\end{remark}

Before we give the proof of Proposition \ref{lem:Last}, we mention an analytical fact.
Its proof is elementary, and therefore omitted.
\begin{lemma} \label{lem:CCs}
	Suppose that $h$ is a bounded measurable function on $\mathbb R^n$ and $q$ is a non-negative continuous function on $\mathbb R^n$ such that $\bar q$ is integrable (w.r.t. the Lebesgue measure on $\mathbb R^n$) where for any $(y_1,\dots, y_n)$ in $\mathbb R^n$,
	\begin{equation}\label{eq:DD3}
		\bar q(y_1, \dots, y_n):= \sup_{|z_i|\leq 1, i=1,\dots,n} q(y_1+z_1, \dots, y_n+z_n). 
	\end{equation}
	Then
	\begin{equation}
		\int_{\mathbb R^n} h(y_1,\dots, y_n) q^\epsilon (y_1, \dots, y_n) \mathrm d(y_1, \dots, y_n)   \xrightarrow[\epsilon \downarrow 0]{}	\int_{\mathbb R^n} h(y_1,\dots, y_n) q (y_1, \dots, y_n) \mathrm d(y_1, \dots, y_n) 
	\end{equation}
	where
	\begin{equation}
		q^\epsilon (y_1, \dots, y_n) :=  \int_{\mathbb R^n} p_\epsilon(z_1 - y_1) \dots p_\epsilon(z_n - y_n) q (z_1,\dots, z_n)\mathrm d(z_1,\cdots, z_n).
	\end{equation}
\end{lemma}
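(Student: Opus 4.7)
The plan is to reduce the claim to the $L^1$-convergence $q^\epsilon \to q$ in $L^1(\mathbb R^n)$, which I will obtain by combining pointwise convergence (from continuity of $q$) with $L^1$-norm preservation (from Fubini), closed via a Scheffé-type argument; the conclusion then follows trivially from boundedness of $h$.

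First I would note that $q \in L^1(\mathbb R^n)$: setting all $z_i=0$ in the supremum defining $\bar q$ gives $q \leq \bar q$, so $\|q\|_1 \leq \|\bar q\|_1 < \infty$. Writing $g_\epsilon(w) := p_\epsilon(w_1)\cdots p_\epsilon(w_n)$, so that $q^\epsilon = q * g_\epsilon$ is a convolution with a probability density, I would then establish pointwise convergence $q^\epsilon(y) \to q(y)$ for every $y \in \mathbb R^n$. Writing $q^\epsilon(y) - q(y) = \int g_\epsilon(w)[q(y+w) - q(y)]\,dw$ and splitting this integral over $\{|w|_\infty \leq \delta\}$ and $\{|w|_\infty > \delta\}$, the first piece is made smaller than any prescribed $\eta > 0$ by choosing $\delta$ small, using the continuity of $q$ at $y$. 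The tail piece is dominated by $\|q\|_1 \sup_{|w|_\infty > \delta} g_\epsilon(w) + q(y)\,\mathbb P(|\sqrt{\epsilon}Z|_\infty > \delta)$, where $Z$ is a standard $n$-dimensional Gaussian, and both summands vanish as $\epsilon \downarrow 0$ for fixed $\delta > 0$, using the explicit Gaussian estimate $\sup_{|w|_\infty > \delta} g_\epsilon(w) = O(\epsilon^{-n/2} e^{-\delta^2/(2\epsilon)})$.

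Next, Fubini applied to the non-negative function $(y,w) \mapsto g_\epsilon(w) q(y+w)$ yields $\|q^\epsilon\|_1 = \int g_\epsilon(w)\,dw \cdot \|q\|_1 = \|q\|_1$ exactly. Since $(q - q^\epsilon)^+ \leq q \in L^1(\mathbb R^n)$ and $(q - q^\epsilon)^+ \to 0$ pointwise by the previous step, the dominated convergence theorem gives $\int (q - q^\epsilon)^+\,dy \to 0$; the $L^1$-norm equality $\|q^\epsilon\|_1 = \|q\|_1$ then forces $\int (q^\epsilon - q)^+\,dy \to 0$ as well, so $\|q^\epsilon - q\|_{L^1} \to 0$. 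Combined with boundedness of $h$, the claim follows from $|\int h(q^\epsilon - q)\,dy| \leq \|h\|_\infty \|q^\epsilon - q\|_{L^1}$.

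The main obstacle is the pointwise convergence step: since $q$ is not assumed bounded, a direct application of dominated convergence under the expectation $\mathbb E[q(y + \sqrt{\epsilon}Z)]$ is not available, and the tail contribution $\int_{|w|_\infty > \delta} g_\epsilon(w) q(y+w)\,dw$ has to be bounded separately using the uniform-in-$w$ Gaussian decay of $g_\epsilon$ away from the origin together with the integrability $\|q\|_1 < \infty$ supplied by the $\bar q$-hypothesis.
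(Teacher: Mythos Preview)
Your proof is correct, and it reaches the same intermediate goal as the paper---namely $\|q^\epsilon - q\|_{L^1(\mathbb R^n)} \to 0$, after which boundedness of $h$ finishes the job---but the way you get there is genuinely different. The paper writes $\int |q^\epsilon - q|\,dy \le \Pi\big[A(B_\epsilon)\big]$ with $A(z):=\int |q(y+z)-q(y)|\,dy$, then uses the $\bar q$-hypothesis as a dominating function to obtain $A(z)\to 0$ as $z\to 0$ (continuity of translation in $L^1$), and finally applies bounded convergence to $A(B_\epsilon)$ using $|A|\le 2\|q\|_{L^1}$. You instead prove pointwise convergence $q^\epsilon(y)\to q(y)$ directly and combine it with the exact norm identity $\|q^\epsilon\|_1=\|q\|_1$ via a Scheff\'e argument. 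Your route has the minor advantage that it only uses the $\bar q$-hypothesis to deduce $q\in L^1$, so it would go through under the weaker assumption that $q$ is non-negative, continuous, and integrable; the paper's argument uses $\bar q\in L^1$ more substantively, as the dominant for $|q(y+z)-q(y)|$ when $|z|_\infty\le 1$. On the other hand, the paper's route avoids the slightly fiddly tail estimate in your pointwise-convergence step.
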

\begin{extra}
	We include the proof of Lemma \ref{lem:CCs} here for the sake of completeness.
	\begin{proof}[Proof of Lemma \ref{lem:CCs}]
		Note that 
		\begin{align}
			&\ABS{ \int_{\mathbb R^n} h(y_1,\dots, y_n) q^\epsilon (y_1, \dots, y_n) \mathrm d(y_1, \dots, y_n)  -\int_{\mathbb R^n} h(y_1,\dots, y_n) q (y_1, \dots, y_n) \mathrm d(y_1, \dots, y_n) }
			\\& \leq  \|h\|_\infty  \int_{\mathbb R^n} \ABS{q^\epsilon (y_1, \dots, y_n) - q (y_1, \dots, y_n) }\mathrm d(y_1, \dots, y_n) 
			\\&  =\|h\|_\infty \int  \ABS{ \Pi \SB{q(B_\epsilon^1+y_1, \dots, B_\epsilon^n+y_n) - q(y_1, \dots, y_n) } } \mathrm d(y_1, \dots, y_n) 
			\\&\leq \|h\|_\infty    \Pi \SB{ A(B_\epsilon^1, \dots, B_\epsilon^n) } 
		\end{align}
		where $\{(B^1_\epsilon,\dots, B^n_\epsilon): \epsilon \geq 0\}$ is a $n$-dimensional standard Brownian motion under the probability $\Pi$ and for any $(z_1,\dots, z_n)\in \mathbb R^n$,
		\begin{equation}
			A(z_1,\dots, z_n):= \int \ABS{ q(z_1+y_1,\dots, z_n+y_n) - q(y_1,\dots, y_n) } \mathrm d(y_1,\dots, y_n).
		\end{equation}
		By the dominated convergence theorem and the conditions that $q$ is continuous and that $\bar q$ is integrable, we have
		\begin{equation}
			\lim_{(z_1,\dots,z_n)\to (0,\dots, 0)}A(z_1,\dots, z_n) = A(0) = 0
		\end{equation}
		and that 
		\begin{equation}
			\sup_{(z_1,\dots, z_n)\in \mathbb R^n} |A(z_1,\dots, z_n) | \leq  2 \|q\|_{L_1(\mathbb R^n)} < \infty. 
		\end{equation}
		By those, and bounded convergence theorem, we have 
		\begin{equation}
			\Pi \SB{ A(B_\epsilon^1, \dots, B_\epsilon^n) } \xrightarrow[\epsilon \to 0]{} 0 
		\end{equation}
		which implies the desired result.
	\end{proof}
\end{extra}

\begin{proof}[Proof of Lemma \ref{lem:Last}]
	Fix arbitrary $t > 0$, $F\in \mathrm b\mathcal B(\mathbb R^n)$ and $(x_1, \dots, x_n) \in \mathbb R^n$.
	We note that
	\begin{align}
		&\Pi_{(x_1,\dots,x_n)} \SB{\mathbf 1_{\{t\leq \tau\}}  F(X^1_t, \dots, X^n_t)}
		\\&= \Pi_{(x_1, \dots, x_n)} \SB{ \exp\CB{- \frac{1}{4} \sum_{i,j=1}^n L^{i,j}_t  - \mu n t} F(X^1_t, \dots, X^n_t)}.
	\end{align}
	By using Tanaka's formula and Gilsanov transformation, we can get that 
	\begin{align}\label{eq:GS}
		&\Pi_{(x_1,\dots,x_n)} \SB{ \mathbf 1_{\{t\leq \tau\}}  F(X^1_t,\dots,X^n_t)} 
		\\&= e^{n(n^2-1) \frac{t}{24} - \mu n t}\tilde \Pi_{(x_1,\dots,x_n)}  \SB{ \frac{e^{- \frac{1}{4}\sum_{i,j = 1}^n |X^j_t-X^i_t|}}{e^{-\frac{1}{4}\sum_{i,j=1}^n |x_j - x_i|}}F(X^1_t, \dots, X^n_t)}
	\end{align}
	where $\tilde \Pi_{(x_1,\dots,x_n)}$ is a new probability measure under which $\{(X^i_s)_{s\geq 0}:i=1,\dots, n\}$ is a family of stochastic processes satisfying the SDEs
	\begin{align}\label{eq:GSDE}
		\begin{cases}
			\mathrm dX^i_s = \frac{1}{2}\sum_{j = 1}^n \text{sgn}(X^i_s - X^j_s) \mathrm dt + \mathrm d B^i_s, &\quad i = 1,\dots, n;
			\\ X^i_0 = x_i, &\quad i = 1,\dots, n.
		\end{cases}
	\end{align}
	Here, $\text{sgn}(x) := x \cdot |x|^{-1}$ for $x\in \mathbb R\setminus \{0\}$ and  $\text{sgn}(0):=0$; $\{(B^i_s)_{s\geq 0}: i = 1, \dots, n\}$ is a family of standard independent Brownian motions. \begin{extra}
		(We will verify this Gilsanov transformation result in Note \ref{note:233}.)
	\end{extra}
	
	\begin{extra}
		\begin{note}\label{note:233}
			Recall Tanaka's formula
			\begin{equation}
				L^{i,j}_t = |X^j_t - X^i_t| - |x_j - x_i| - \int_0^t \text{sgn}(X^j_s-X^i_s) \mathrm d(X^j_s- X^i_s), \quad t\geq 0.
			\end{equation}
			Therefore
			\begin{equation}
				e^{-\frac{1}{4}\sum_{i,j = 1}^n L^{i,j}_t} = \frac{e^{- \frac{1}{4}\sum_{i,j = 1}^n |X^j_t-X^i_t|}}{e^{-\frac{1}{4}\sum_{i,j=1}^n |x_j - x_i|}} e^{M_t}
			\end{equation}
			where $(M_t)_{t\geq 0}$ is a local martingale defined by
			\begin{equation}
				M_t := \frac{1}{4} \sum_{i,j=1}^n \int_0^t \text{sgn}(X^j_s-X^i_s) \mathrm d(X^j_s- X^i_s), \quad t\geq 0.
			\end{equation}
			Observe that
			\begin{align}
				&\AB{M_t} = \frac{1}{16} \sum_{i,j=1}^n \sum_{i',j'=1}^n \int_0^t \text{sgn}(X^j_s-X^i_s) \text{sgn}(X^{j'}_s-X^{i'}_s) \mathrm d\AB{X^j_s- X^i_s, X^{j'}_s- X^{i'}_s}
				\\&= \frac{1}{16} \sum_{i,j=1}^n \sum_{i',j'=1}^n \int_0^t  \text{sgn}(X^j_s-X^i_s) \text{sgn}(X^{j'}_s-X^{i'}_s) \RB{\mathbf 1_{j=j'}- \mathbf 1_{j=i'}-\mathbf 1_{i=j'}+\mathbf 1_{i=i'}}\mathrm ds
				\\& = \frac{1}{4} \sum_{\alpha,\beta,\gamma = 1}^n \int_0^t \text{sgn}(X^\alpha_s-X^\beta_s) \text{sgn}(X^\alpha_s-X^\gamma_s) \mathrm ds.
			\end{align}
			For almost every $s\geq 0$, w.r.t. Lebesgue measure, we know that $\{X^i_s: i =1,\dots n\}$ are different from one another; so, we can write $\hat X^1_s < \dots < \hat X^n_s$ as an ordering rearrangement of $\{X^i_s: i =1,\dots n\}$.
			Then,
			\begin{align}
				&\AB{M_t}
				= \frac{1}{4} \int_0^t \sum_{\alpha,\beta,\gamma = 1}^n \text{sgn}(\hat X^\alpha_s-\hat X^\beta_s) \text{sgn}(\hat X^\alpha_s-\hat X^\gamma_s) \mathrm ds
				\\& = \frac{1}{4} \int_0^t \sum_{\alpha = 1}^n \RB{\sum_{\beta=1}^n \text{sgn}(\hat X^\alpha_s-\hat X^\beta_s)}^2 \mathrm ds
				\\& = \frac{1}{4} \int_0^t \sum_{\alpha = 1}^n \RB{\alpha - 1 - (n- \alpha)}^2 \mathrm ds=\frac{1}{12}n(n^2-1)t.
			\end{align}
			It is known that $(e^{M_t- \frac{1}{2}\AB{M_t}})_{t\geq 0}$ is a local martingale \cite[Lemma 19.22]{MR4226142}. 
			From \cite[Theorem 19.24]{MR4226142}, we know that $(e^{M_t- \frac{1}{2}\AB{M_t}})_{t\geq 0}$ is a true martingale. 
			From \cite[Lemma 19.19]{MR4226142}, we can assume, without loss of generality that, there exists a new probability measure such that
			\begin{equation}
				\frac{\mathrm d\tilde \Pi_{(x_1,\dots, x_n)}}{\mathrm d\Pi_{(x_1,\dots, x_n)}} \Big|_{\mathscr F_t} = e^{M_t - \frac{1}{2}\AB{M_t}}, \quad t\geq 0. 
			\end{equation}
			Now, we can verify that \eqref{eq:GS} does hold. 
			We still need to verify that $\{(X^i_t)_{t\geq 0}: i = 1,\dots, n\}$ satisfy SDEs  \eqref{eq:GSDE} under this new probability measure.
			To do this, let us define continuous semi-martingales $\{(B^i_t)_{t\geq 0}:i=1,\dots, n\}$, with initial values $0$, according to the SDEs \eqref{eq:GSDE}. 
			From \cite[Proposition 19.21]{MR4226142}, we know that $\AB{B^i_t, B^j_t} = \mathbf 1_{i=j}t$ for every $i,j \in \{1,\dots, n\}$ and $t\geq 0$.
			So we only have to verify that, under the probability $\tilde \Pi_{(x_1,\dots, x_n)}$, $\{(B^i_t)_{t\geq 0}:i=1,\dots, n\}$ are local martingales \cite[Proposition 19.3]{MR4226142}. 
			In fact, we observe that 
			\begin{equation}
				\mathrm d  e^{M_t-\frac{1}{2}\AB{M_t}} = e^{M_t-\frac{1}{2}\AB{M_t}}\mathrm d\RB{M_t-\frac{1}{2}\AB{M_t}}+\frac{1}{2}e^{M_t-\frac{1}{2}\AB{M_t}}\mathrm d \AB{M_t}
				= e^{M_t-\frac{1}{2}\AB{M_t}}\mathrm dM_t
			\end{equation}
			and that
			\begin{align}
				&\mathrm d\AB{X^\alpha_t,M_t} = \frac{1}{4} \sum_{i,j=1}^n \text{sgn}(X^j_t - X^i_t) \mathrm d\AB{X^\alpha_t,X^j_t - X^i_t}
				\\&= \frac{1}{4} \sum_{i,j=1}^n \text{sgn}(X^j_t - X^i_t) \RB{\mathbf 1_{\alpha = j} - \mathbf 1_{\alpha = i}} \mathrm dt
				=  \frac{1}{2}\sum_{i=1}^n \text{sgn}(X^\alpha_t - X^i_t) \mathrm dt.
			\end{align}
			Therefore, 
			\begin{align}
				&\mathrm dX^\alpha_t -  \frac{1}{e^{M_t-\frac{1}{2}\AB{M_t}} }\mathrm d\AB{X^\alpha_t, e^{M_t-\frac{1}{2}\AB{M_t}} }
				= \mathrm dX^\alpha_t -\mathrm d\AB{X^\alpha_t, M_t }
				\\& =\mathrm dX^\alpha_t  - \frac{1}{2} \sum_{j=1}^n \text{sgn}(X^\alpha_t - X^j_t) \mathrm dt = \mathrm dB^i_t.
			\end{align}
			Now, from \cite[Theorem 19.20]{MR4226142}, we know that, under the probability $\tilde \Pi_{(x_1,\dots, x_n)}$, $\{(B^i_t)_{t\geq 0}:i=1,\dots, n\}$ are local martingales, as desired.
		\end{note}
	\end{extra}
	
	It is known from \cite[Theorem 1.2]{MR4161389} that, under the probability $\tilde \Pi_{(x_1,\dots, x_n)}$, the random vector $(X_t^1,\dots, X_t^n)$ has a continuous density, denoted by $\tilde q_{t}$, with respect to the Lebesgue measure on $\mathbb R^n$.
	Therefore, we have 
	\begin{align}
		&\Pi_{(x_1,\dots, x_n)} \SB{ \mathbf 1_{\{t\leq \tau\}}  F(X^1_t, \dots, X^n_t)}
		\\&= e^{n(n^2-1) \frac{t}{24} - \mu nt + \frac{1}{4}\sum_{i,j = 1}^n |x_i - x_j|} \times {}
		\\& \qquad \int_{\mathbb R^n} e^{-\frac{1}{4}\sum_{i,j=1}^n |z_i-z_j|} \tilde q_{t} (z_1,\dots, z_n) F(z_1,\dots, z_n) \mathrm d(z_1,\dots, z_n) 
		\\ & \label{eq:df}=  \int_{\mathbb R^n} q_{t} (z_1,\dots, z_n) F(z_1,\dots, z_n) \mathrm d(z_1,\dots, z_n)
	\end{align}
	where
	\begin{align}
		q_t(z_1,\dots, z_n) := e^{n(n^2-1) \frac{t}{24} - \mu nt + \frac{1}{4}\sum_{i,j = 1}^n |x_i - x_j| -\frac{1}{4}\sum_{i,j=1}^n |z_i-z_j|}\tilde q_t(z_1,\dots, z_n). 
	\end{align}
	It is also known from  \cite[Theorem 1.2]{MR4161389} that $\tilde q_t$ is dominated by the $n$-dimensional heat kernel, up to certain centering and scaling.
	In particular, we can verify that $\bar q_t$ is integrable w.r.t. the Lebesgue measure on $\mathbb R^n$ where for any $(y_1,\dots, y_n)\in \mathbb R^n$, 
	\begin{equation}
		\bar q_t(y_1,\dots, y_n) := \sup_{|z_i|\leq 1, i=1,\dots,n} q_t(y_1+z_1, \dots, y_n+z_n). 
	\end{equation}
	
	Since $F \in \mathrm b\mathcal B(\mathbb R^n)$ in \eqref{eq:df} is arbitrary, by Fubini's theorem, we have
	\begin{equation}
		(\mathfrak P_t^\epsilon F)(x_1, \dots, x_n)= \int_{\mathbb R^n} q_t^\epsilon (y_1, \dots, y_n)  F(y_1,\dots, y_n) \mathrm d(y_1,\cdots,y_n)
	\end{equation}
	where
	\begin{align}
		q_t^\epsilon (y_1, \dots, y_n) =  \int_{\mathbb R^n} p_\epsilon(z_1 - y_1) \dots p_\epsilon(z_n - y_n) q_{t} (z_1,\dots, z_n)\mathrm d(z_1,\cdots, z_n).
	\end{align}
	Now the desired result follows from Lemma \ref{lem:CCs}.
\end{proof}

\begin{bibdiv}
	\begin{biblist}
\bib{athreya1998probability}{book}{
	title={Probability and semilinear partial differential equations},
	author={Athreya, S.},
	date={1998},
	publisher={University of Washington},
	note={Ph.D. Thesis},
}		
		
\bib{bib:ABM20}{article}{
   author={Athreya, S.},
   author={Butkovsky, O.},
   author={Mytnik, L.},
   title={Strong existence and uniqueness for stable stochastic differential
   equations with distributional drift},
   journal={Ann. Probab.},
   volume={48},
   date={2020},
   number={1},
   pages={178--210},
}

\bib{bib:ABLM22}{article}{
	author={Athreya, Siva},
	author={Butkovsky, Oleg},
	author={L\^{e}, Khoa},
	author={Mytnik, Leonid},
	title={Well-posedness of stochastic heat equation with distributional
		drift and skew stochastic heat equation},
	journal={Comm. Pure Appl. Math.},
	volume={77},
	date={2024},
	number={5},
	pages={2708--2777},
}

\bib{MR1813840}{article}{
			author={Athreya, S.},
			author={Tribe, R.},
			title={Uniqueness for a class of one-dimensional stochastic PDEs using
				moment duality},
			journal={Ann. Probab.},
			volume={28},
			date={2000},
			number={4},
			pages={1711--1734},
		}

\bib{barnes2023effect}{article}{
		author={Barnes, C.}, 
		author={Mytnik, L.}, 
		author={Sun, Z.},
		title={Effect of small noise on the speed of reaction-diffusion equations with non-Lipschitz drift},
		date={2023},
		journal={arXiv preprint},
		eprint={https://arxiv.org/abs/2107.09377},
		note={To appear in Ann. Inst. Henri Poincar\'e Probab. Stat.},
}

\bib{barnes2022coming}{article}{
	author={Barnes, C.},
	author={Mytnik, L.},
	author={Sun, Z.},
	title={On the coming down from infinity of coalescing Brownian motions},
	journal={Ann. Probab.},
	volume={52},
	date={2024},
	number={1},
	pages={67--92},
}

\bib{bib:bass_chen01}{article}{
   author={Bass, R. F.},
   author={Chen, Z.-Q.},
   title={Stochastic differential equations for Dirichlet processes},
   journal={Probab. Theory Related Fields},
   volume={121},
   date={2001},
   number={3},
   pages={422--446},
}

\bib{MR0788182}{article}{
	author={Borodin, A. N.},
	title={Distribution of the supremum of increments of Brownian local time},
	language={Russian},
	note={Problems of the theory of probability distributions, IX},
	journal={Zap. Nauchn. Sem. Leningrad. Otdel. Mat. Inst. Steklov. (LOMI)},
	volume={142},
	date={1985},
	pages={6--24, 195},
}

\bib{bib:BLM23}{article}{
	author={Butkovsky, Oleg}, 
	author={L\^e, K.},
	author={Mytnik, L.}, 
	title={Stochastic equations with singular drift driven by fractional Brownian motion},
	date={2023},
	journal={arXiv preprint},
	eprint={https://arxiv.org/abs/2302.11937},
}

\bib{butkovsky2024weak}{article}{
	author={Butkovsky, Oleg},
	author={Mytnik, Leonid},
	title={Weak uniqueness for singular stochastic equations},
	date={2024},
	journal={arXiv preprint},
	eprint={https://arxiv.org/abs/2405.13780},
}

\bib{bib:CG16}{article}{
   author={Catellier, R. },
   author={Gubinelli, M. },
   title={Averaging along irregular curves and regularisation of ODEs},
   journal={Stochastic Process. Appl.},
   volume={126},
   date={2016},
   number={8},
   pages={2323--2366},
}

\bib{MR1102676}{book}{
	author={Chung, K. L. },
	author={Williams, R. J. },
	title={Introduction to stochastic integration},
	series={Probability and its Applications},
	edition={2},
	publisher={Birkh\"{a}user Boston, Inc., Boston, MA},
	date={1990},
	pages={xvi+276},
}

\bib{bib:chery_engel05}{book}{
   author={Cherny, A. S.},
   author={Engelbert, H.-J.},
   title={Singular stochastic differential equations},
   series={Lecture Notes in Mathematics},
   volume={1858},
   publisher={Springer-Verlag, Berlin},
   date={2005},
   pages={viii+128},
}

\bib{MR3582808}{article}{
	author={Durrett, Rick},
	author={Fan, Wai-Tong},
	title={Genealogies in expanding populations},
	journal={Ann. Appl. Probab.},
	volume={26},
	date={2016},
	number={6},
	pages={3456--3490},
}

\bib{MR4278798}{article}{
	author={Fan, W.-T. L.},
	title={Stochastic PDEs on graphs as scaling limits of discrete
		interacting systems},
	journal={Bernoulli},
	volume={27},
	date={2021},
	number={3},
	pages={1899--1941},
}

\bib{bib:flandoli15}{book}{
   author={Flandoli, F.},
   title={Random perturbation of PDEs and fluid dynamic models},
   series={Lecture Notes in Mathematics},
   volume={2015},
   publisher={Springer, Heidelberg},
   date={2011},
   pages={x+176},
}

\bib{MR3940763}{article}{
	author={Foucart, C.},
	title={Continuous-state branching processes with competition: duality and
		reflection at infinity},
	journal={Electron. J. Probab.},
	volume={24},
	date={2019},
	number={33},
	pages={33--38},
}

\bib{han2022exponential}{article}{
	author={Han, Y.},
	title={Exponential ergodicity of stochastic heat equations with H\"older coefficients},
	date={2023},
	journal={arXiv preprint},
	eprint={https://arxiv.org/abs/2211.08242},
}

\bib{MR0163361}{book}{
	author={Harris, T. E.},
	title={The theory of branching processes},
	series={Die Grundlehren der mathematischen Wissenschaften, Band 119},
	publisher={Springer-Verlag, Berlin; Prentice Hall, Inc., Englewood
		Cliffs, N.J.},
	date={1963},
	pages={xiv+230},
}

\bib{bib:HS81}{article}{
   author={Harrison, J. M.},
   author={Shepp, L. A.},
   title={On skew Brownian motion},
   journal={Ann. Probab.},
   volume={9},
   date={1981},
   number={2},
   pages={309--313},
}

\bib{bib:HLM17}{article}{
   author={Hu, Y.},
   author={L\^{e}, K.},
   author={Mytnik, L.},
   title={Stochastic differential equation for Brox diffusion},
   journal={Stochastic Process. Appl.},
   volume={127},
   date={2017},
   number={7},
   pages={2281--2315},
}

\bib{bib:gyon98}{article}{
   author={Gy\"{o}ngy, I.},
   title={Existence and uniqueness results for semilinear stochastic partial
   differential equations},
   journal={Stochastic Process. Appl.},
   volume={73},
   date={1998},
   number={2},
   pages={271--299},
}

\bib{bib:gyon_p93a}{article}{
   author={Gy\"{o}ngy, I.},
   author={Pardoux, \'{E}.},
   title={On quasi-linear stochastic partial differential equations},
   journal={Probab. Theory Related Fields},
   volume={94},
   date={1993},
   number={4},
   pages={413--425},
}

\bib{bib:gyon_p93b}{article}{
   author={Gy\"{o}ngy, I.},
   author={Pardoux, \'{E}.},
   title={On the regularization effect of space-time white noise on
   quasi-linear parabolic partial differential equations},
   journal={Probab. Theory Related Fields},
   volume={97},
   date={1993},
   number={1-2},
   pages={211--229},
}

\bib{MR1011252}{book}{
	author={Ikeda, N.},
	author={Watanabe, S.},
	title={Stochastic differential equations and diffusion processes},
	series={North-Holland Mathematical Library},
	volume={24},
	edition={2},
	publisher={North-Holland Publishing Co., Amsterdam; Kodansha, Ltd.,
		Tokyo},
	date={1989},
	pages={xvi+555},
}

\bib{MR4226142}{book}{
	author={Kallenberg, O.},
	title={Foundations of modern probability},
	series={Probability Theory and Stochastic Modelling},
	volume={99},
	publisher={Springer, Cham},
	date={2021},
	pages={xii+946},
}

\bib{MR0958288}{article}{
	author={Konno, N.},
	author={Shiga, T.},
	title={Stochastic partial differential equations for some measure-valued
		diffusions},
	journal={Probab. Theory Related Fields},
	volume={79},
	date={1988},
	number={2},
	pages={201--225},
}

\bib{bib:Kremp_Perk22}{article}{
   author={Kremp, H.},
   author={Perkowski, N.},
   title={Multidimensional SDE with distributional drift and L\'{e}vy noise},
   journal={Bernoulli},
   volume={28},
   date={2022},
   number={3},
   pages={1757--1783},
}

\bib{bib:krylov_rockner05}{article}{
   author={Krylov, N. V.},
   author={R\"{o}ckner, M.},
   title={Strong solutions of stochastic equations with singular time
   dependent drift},
   journal={Probab. Theory Related Fields},
   volume={131},
   date={2005},
   number={2},
   pages={154--196},
}

\bib{bib:Le20}{article}{
   author={L\^{e}, K.},
   title={A stochastic sewing lemma and applications},
   journal={Electron. J. Probab.},
   volume={25},
   date={2020},
   number={38},
   pages={55},
}

\bib{MR3729616}{article}{
	author={Kyprianou, A. E.},
	author={Pagett, S. W.},
	author={Rogers, T.},
	author={Schweinsberg, J.},
	title={A phase transition in excursions from infinity of the ``fast''
		fragmentation-coalescence process},
	journal={Ann. Probab.},
	volume={45},
	date={2017},
	number={6A},
	pages={3829--3849},
}

\bib{bib:legall84}{article}{
   author={Le Gall, J.-F.},
   title={One-dimensional stochastic differential equations involving the
   local times of the unknown process},
   conference={
      title={Stochastic analysis and applications},
      address={Swansea},
      date={1983},
   },
   book={
      series={Lecture Notes in Math.},
      volume={1095},
      publisher={Springer, Berlin},
   },
   isbn={3-540-13891-9},
   date={1984},
   pages={51--82},
}

\bib{MR4161389}{article}{
	author={Menozzi, S.},
	author={Pesce, A.},
	author={Zhang, X.},
	title={Density and gradient estimates for non degenerate Brownian SDEs
		with unbounded measurable drift},
	journal={J. Differential Equations},
	volume={272},
	date={2021},
	pages={330--369},
}
	\bib{MR4259374}{article}{
		author={Mueller, C.},
		author={Mytnik, L.},
		author={Ryzhik, L.},
		title={The speed of a random front for stochastic reaction-diffusion
			equations with strong noise},
		journal={Comm. Math. Phys.},
		volume={384},
		date={2021},
		number={2},
		pages={699--732},
	}

\bib{koskela2024bernoulli}{article}{
  title={Bernoulli factories and duality in Wright--Fisher and Allen--Cahn models of population genetics},
  author={Koskela, J.},
  author = {{\L}atuszy{\'n}ski, K.},
  author = {Span{\`o}, D.},
  journal={Theoret. Population Biology},
  date={2024},
  volume = {156},
  pages = {40--45},
}

\bib{MR1346264}{article}{
	author={M\"{u}ller, C.},
	author={Tribe, R.},
	title={Stochastic p.d.e.'s arising from the long range contact and long
		range voter processes},
	journal={Probab. Theory Related Fields},
	volume={102},
	date={1995},
	number={4},
	pages={519--545},
}

\bib{MR2773025}{article}{
   author={Mytnik, L.},
   author={Perkins, E.},
   title={Pathwise uniqueness for stochastic heat equations with H\"{o}lder
   continuous coefficients: the white noise case},
   journal={Probab. Theory Related Fields},
   volume={149},
   date={2011},
   number={1-2},
   pages={1--96},
}

\bib{MR0030716}{article}{
	author={Otter, R.},
	title={The multiplicative process},
	journal={Ann. Math. Statistics},
	volume={20},
	date={1949},
	pages={206--224},
}

\bib{bib:priola12}{article}{
   author={Priola, E.},
   title={Pathwise uniqueness for singular SDEs driven by stable processes},
   journal={Osaka J. Math.},
   volume={49},
   date={2012},
   number={2},
   pages={421--447},
}

\bib{MR1725357}{book}{
	author={Revuz, D.},
	author={Yor, M.},
	title={Continuous martingales and Brownian motion},
	series={Grundlehren der mathematischen Wissenschaften},
	volume={293},
	edition={3},
	publisher={Springer-Verlag, Berlin},
	date={1999},
	pages={xiv+602},
}

\begin{extra}
\bib{Sharpe1988General}{book}{
	author={Sharpe, M.},
	title={General theory of Markov processes},
	series={Pure and Applied Mathematics},
	volume={133},
	publisher={Academic Press, Inc., Boston, MA},
	date={1988},
	pages={xii+419},
}
\end{extra}

	\bib{MR1271224}{article}{
		author={Shiga, T.},
		title={Two contrasting properties of solutions for one-dimensional
			stochastic partial differential equations},
		journal={Canad. J. Math.},
		volume={46},
		date={1994},
		number={2},
		pages={415--437},
	}
	
\bib{MR948717}{article}{
	author={Shiga, T.},
	title={Stepping stone models in population genetics and population
		dynamics},
	conference={
		title={Stochastic processes in physics and engineering},
		address={Bielefeld},
		date={1986},
	},
	book={
		series={Math. Appl.},
		volume={42},
		publisher={Reidel, Dordrecht},
	},
	date={1988},
	pages={345--355},
}

\bib{bib:tanake_tsu74}{article}{
   author={Tanaka, H.},
   author={Tsuchiya, M.},
   author={Watanabe, S.},
   title={Perturbation of drift-type for L\'{e}vy processes},
   journal={J. Math. Kyoto Univ.},
   volume={14},
   date={1974},
   pages={73--92},
}

\bib{MR1339735}{article}{
	author={Tribe, R.},
	title={Large time behavior of interface solutions to the heat equation
		with Fisher-Wright white noise},
	journal={Probab. Theory Related Fields},
	volume={102},
	date={1995},
	number={3},
	pages={289--311},
}

\bib{bib:veret80}{article}{
   author={Veretennikov, A. J.},
   title={Strong solutions and explicit formulas for solutions of stochastic
   integral equations},
   language={Russian},
   journal={Mat. Sb. (N.S.)},
   volume={111(153)},
   date={1980},
   number={3},
   pages={434--452, 480},
}

\bib{MR876085}{article}{
	author={Walsh, J. B.},
	title={An introduction to stochastic partial differential equations},
	conference={
		title={\'{E}cole d'\'{e}t\'{e} de probabilit\'{e}s de Saint-Flour, XIV---1984},
	},
	book={
		series={Lecture Notes in Math.},
		volume={1180},
		publisher={Springer, Berlin},
	},
	date={1986},
	pages={265--439},
}

\bib{bib:zz17}{article}{
	author={Zhang, X.},
	author={Zhao, G.},
	title={Heat kernel and ergodicity of sdes with distributional
		drifts},
	date={2018},
	journal={arXiv preprint},
	eprint={https://arxiv.org/abs/1710.10537},
}

\bib{bib:zvonkin74}{article}{
   author={Zvonkin, A. K.},
   title={A transformation of the phase space of a diffusion process that
   will remove the drift},
   language={Russian},
   journal={Mat. Sb. (N.S.)},
   volume={93(135)},
   date={1974},
   pages={129--149, 152},
}
	\end{biblist}
\end{bibdiv}
\end{document}